\def\ol#1{\overline{#1}}% 		overline
\def\wh#1{\widehat{#1}}% 	wide hat
\def\wt#1{\widetilde{#1}}% 	wide tilde
\theoremstyle{plain}
    \newtheorem{theorem}{Theorem}[section]
    \newtheorem{proposition}[theorem]{Proposition}
    \newtheorem{lemma}[theorem]{Lemma}
    \newtheorem{corollary}[theorem]{Corollary}
      \newtheorem{proposition-definition}[theorem]{Proposition-Definition}
\theoremstyle{definition}
    \newtheorem{definition}[theorem]{Definition}
    \newtheorem{convention}[theorem]{Convention}
    \newtheorem{remark}[theorem]{Remark}
\def\Alphabet{A,B,C,D,E,F,G,H,I,J,K,L,M,N,O,P,Q,R,S,T,U,V,W,X,Y,Z}%  Capitalized Alphabet
\def\alphabet{a,b,c,d,e,f,g,h,i,j,k,l,m,n,o,p,q,r,s,t,u,v,w,x,y,z}%	lowercase alphabet
\def\endpiece{xxx}%									marks end of list
\def\makeAlphabet[#1]{\expandafter\makeA#1,xxx,}%		Ex. \makeAlphabet[A,B]
\def\makealphabet[#1]{\expandafter\makea#1,xxx,}%		Ex. \makealphabet[c,d]
\def\makeA#1,{\def\temp{#1}\ifx\temp\endpiece\else%
\mkbb{#1}\mkfrak{#1}\mkbf{#1}\mkcal{#1}\mkscr{#1}\mkbs{#1}\expandafter\makeA\fi}%
\def\makea#1,{\def\temp{#1}\ifx\temp\endpiece\else\mkfrak{#1}\mkbf{#1}\mkbs{#1}\expandafter\makea\fi}%
\def\mkbb#1{\expandafter\def\csname bb#1\endcsname{\mathbb{#1}}}%      Define bb
\def\mkfrak#1{\expandafter\def\csname fr#1\endcsname{\mathfrak{#1}}}%    Define frak
\def\mkbf#1{\expandafter\def\csname b#1\endcsname{\mathbf{#1}}}%           Define bold letters
\def\mkcal#1{\expandafter\def\csname c#1\endcsname{\mathcal{#1}}}%       Define calligraphy
\def\mkscr#1{\expandafter\def\csname s#1\endcsname{\mathscr{#1}}}%       Define script
\def\mkbs#1{\expandafter\def\csname bs#1\endcsname{{\boldsymbol{#1}}}}%       Define bold symbol
\def\makeop[#1]{\xmakeop#1,xxx,}%					Ex. \makeop[Hom,Spec]
\def\mkop#1{\expandafter\def\csname #1\endcsname{{\mathrm{#1}}}} % 
\def\xmakeop#1,{\def\temp{#1}\ifx\temp\endpiece\else\mkop{#1}\expandafter\xmakeop\fi}%
\def\makeup[#1]{\xmakeup#1,xxx,}%					Ex. \makeop[Hom,Spec]
\def\mkup#1{\expandafter\def\csname #1\endcsname{{\mathrm{#1}\,}}} % 
\def\xmakeup#1,{\def\temp{#1}\ifx\temp\endpiece\else\mkup{#1}\expandafter\xmakeup\fi}%
\newcommand*{\sheafhom}{\cH\kern -.5pt om}
\newcommand{\et}{{\text{\'{e}t}}}
\begin{document}
%---the title------------------------------------------------------------------------------------------------------------------------
\title[Hyodo--Kato theory with syntomic coefficients]{Hyodo--Kato theory with syntomic coefficients}
\author[Kazuki Yamada]{Kazuki Yamada}

%\date{\today}
\address{Department of Mathematics, Gakushuin University, 1-5-1 Mejiro, Toshima-ku, Tokyo, JAPAN}
\email{k.yamada@gakushuin.ac.jp}

\begin{abstract}
	The purpose of this article is to establish theories concerning $p$-adic analogues of Hodge cohomology and Deligne--Beilinson cohomology with coefficients in variations of mixed Hodge structures.
	We first study log overconvergent $F$-isocrystals as coefficients of Hyodo--Kato cohomology.
	In particular, we prove a rigidity property of Hain--Zucker type for mixed log overconvergent $F$-isocrystals.
	In the latter half of the article, we give a new definition of syntomic coefficients as coefficients of $p$-adic Hodge cohomology and syntomic cohomology, and prove some fundamental properties concerning base change and admissibility.
	In particular, we will see that our framework of syntomic coefficients depends only on the choice of a branch of the $p$-adic logarithm, but not on the choice of a uniformizer of the base ring.
	The rigid analytic reconstruction of Hyodo--Kato theory studied by Ertl and the author plays a key role throughout this article.
\end{abstract}
\thanks{The author is supported by KLL 2019 Ph.D Program Research Grant of Keio University and KAKENHI Grant Numbers 18H05233, 22K13899, and 23KJ0332.}
\maketitle

%%%%%%%%%%%%%%%%%%%%%%%%%%%%%%%%%%%%%%%%%%%%%%%%%%%

%%%%%%%%%%%%%%%%%
%
\section*{Introduction}\label{sec: introduction}
%
%%%%%%%%%%%%%%%%%
It is classically known as a central result of the Hodge theory that the cohomology groups of complex algebraic varieties carry mixed Hodge structures, which are sometimes called Hodge cohomology.
Steenbrink and Zucker \cite{SZ} introduced the notion of a variation of mixed Hodge structures as a coefficient of Hodge cohomology.
The theory of variations of mixed Hodge structures is extended by Saito to the theory of mixed Hodge modules, and plays important roles in arithmetic geometry.
In particular, a variation of mixed Hodge structures is also thought as a coefficient of Deligne--Beilinson cohomology, which is defined as the derived Hom of the Hodge cohomology complex, and related with the special values of $L$-functions in the context of the Beilinson conjecture.
The purpose of this article is to establish a $p$-adic analogue of the theory of Hodge cohomology and Deligne--Beilinson cohomology with coefficients in variations of mixed Hodge structures.

We first review previous research.
Let $V$ be a complete discrete valuation ring of mixed characteristic $(0,p)$ with fraction field $K$ and perfect residue field $k$.
Let $L$ be the fraction field of the ring of Witt vectors $W:=W(k)$.
For any proper schemes $X$ over $V$ with semistable reduction, Hyodo and Kato \cite{HK} constructed a map
	\[\Psi_\pi^\crys\colon R\Gamma_\crys(X_k/W^0)\rightarrow R\Gamma_\dR(X_K/K)\]
which depends on the choice of a uniformizer $\pi\in V$, and proved that $\Psi_\pi^\crys$ induces a quasi-isomorphism after tensoring $K$ with $R\Gamma_\crys(X_k/W^0)$.
This map is called the (crystalline) Hyodo--Kato map.
Here we endow $X$ and $X_k$ with the semistable log structures, and $R\Gamma_\crys(X_k/W^0)$ denotes the log crystalline cohomology over $W$ equipped with the log structure associated to the monoid homomorphism $\bbN\rightarrow W; 1\mapsto 0$.

The rational cohomology groups $H^n_\crys(X_k/W^0)_L:=H^n_\crys(X_k/W^0)\otimes_WL$ are finite-dimensional $L$-vector spaces, endowed with a Frobenius-linear automorphism $\varphi$ and an $L$-linear endomorphism $N$ such that $N\varphi=p\varphi N$.
On the other hand, the de Rham cohomology groups $H^n_\dR(X_K/K)$ are endowed with finite descending filtrations $F^\bullet$.
As a consequence of the semistable comparison theorem \cites{Ts,Fa,Ni,Bei2}, those structures give an object called an admissible filtered $(\varphi,N)$-module through the identification via $\Psi_\pi^\crys$, which include informations as rich as the absolute Galois representation on the \'{e}tale cohomology group $H^n_\et(X_{\ol K},\bbQ_p)$, and is regarded as a $p$-adic analogue of Hodge cohomology.

We note that, in Beilinson's proof of the semistable comparison theorem, he introduced Beilinson--Hyodo--Kato cohomology for any varieties over $K$, which is also equipped with endomorphisms $\varphi$ and $N$ as above.
Using this, Nekov\'{a}\v{r} and Nizio{\l} defined in \cite{NN} syntomic cohomology for any varieties over $K$ as a $p$-adic analogue of Deligne--Beilinson cohomology.

Coefficients of $p$-adic Hodge cohomology and syntomic cohomology were considered by Tsuji \cite{Ts1} and Faltings \cite{Fa}, as filtered log $F$-crystals. 
D\'{e}glise and Nizio\l{} \cite{DN} also gave a definition of syntomic coefficients as modules over a motivic dg algebra, however the relation with the works of Tsuji and Faltings is not clear.

In the good reduction case, the rigid analytic version of syntomic cohomoloy was introduced by Gros \cite{Gros} and established by Besser \cite{Bes}. Moreover, the theory of its coefficients and the reinterpretation as extension groups were developed by Bannai \cites{Ban,Ban2}.
This syntomic cohomology plays important roles in computation of $p$-adic regulators \cite{Ban,Ban3,BK,BKT,Bes2,Bes3,Bes4,BdJ,BdJ2,Sp} and applied to researches of special values of $p$-adic $L$-functions \cites{BBdJR,BK2,BDR}.

In the semistable case, a rigid analytic reinterpretation of Hyodo--Kato theory was first studied by Gro\ss e-Kl\"{o}nne in \cite{GK3}, and developed by Ertl and the author in \cite{EY1}.
However his construction of the Hyodo--Kato map is combinatorially complicated, and not suitable for computation and generalization to cohomology with coefficients.

In \cite{EY}, Ertl and the author gave a very simple and direct construction of the rigid analytic Hyodo-Kato map $\Psi^\rig_{\pi,\log}$ by choosing a uniformizer $\pi\in V$ and a branch $\log\colon K^\times\rightarrow K$ of the $p$-adic logarithm.
This Hyodo--Kato map is in fact independent of the choice of a uniformizer, and compatible with the crystalline Hyodo--Kato map and Gro\ss e-Kl\"{o}nne's Hyodo--Kato map if $\log$ is taken to be $\log(\pi)=0$.
While $\Psi^\rig_{\pi,\log}$ loses information of the integral structure, it is useful for explicit computation.

In this article, we study coefficients of rigid analytic Hyodo--Kato theory using extensively the viewpoint developed in \cite{EY}.
Let $k^0$ be $\Spec k$ equipped with the log structure associated to the monoid homomorphism $\bbN\rightarrow k; 1\mapsto 0$.
For a fine log scheme $Y$ over $k^0$, we consider a log overconvergent $F$-isocrystal $(\sE,\Phi)$ over $W^\varnothing$, where $W^\varnothing$ is $\Spwf W$ equipped with the trivial log structure, as a coefficient of Hyodo--Kato cohomology.
The object $(\sE,\Phi)$ is defined as a pair of a sheaf $\sE$ on the log overconvergent site on $Y$ over $W^\varnothing$ and an isomorphism $\Phi\colon\sigma^*\sE\xrightarrow{\cong}\sE$, however, it is locally interpreted by a locally free sheaf with an integrable log connection on a $p$-adic analytic space.
The log connection is naturally extended to Kim--Hain complex (Definition \ref{def: Kim--Hain}), and we define the Hyodo--Kato cohomology $R\Gamma_\HK(Y,(\sE,\Phi))$ with coefficients in $(\sE,\Phi)$ as the (gluing of) cohomology of a kind of de Rham type complex.

It would be worth emphasizing that the log connection is taken over $W^\varnothing$ (not $W^0$), hence it implicitly includes the information of monodromy.
Indeed, there exists a canonical functor
	\begin{equation}\label{eq: isocrystal on point}
	\Mod_L^\fin(\varphi,N)\rightarrow F\Isoc^\dagger(Y/W^\varnothing)^\unip;\ M\mapsto M^a
	\end{equation}
from the category of $(\varphi,N)$-modules to the category of unipotent log overconvergent $F$-isocrystals on $Y$ over $W^\varnothing$, which is an equivalence if $Y=k^0$ (Proposition \ref{prop: isoc on point}).
Moreover we will show that this equivalence is compatible with the Hyodo--Kato cohomology in the sense that we have an isomorphism
	\begin{equation}
	H^n_\HK(Y,M^a)\cong H^n_\HK(Y)\otimes M
	\end{equation}
as $(\varphi,N)$-modules (Proposition \ref{prop: pseudo-const}).

In the complex case, a rigidity property for unipotent variation of mixed Hodge structures was studied by Hain and Zucker \cite{HZ}, and a $p$-adic analogue for non-logarithmic unipotent $F$-isocrystals was studied by Chiarellotto \cite{Ch}.
We prove the following rigidity property for unipotent log overconvergent $F$-isocrystals over $W^\varnothing$.

\begin{theorem}[Theorem \ref{thm: mixed Mod}]
	Let $Y$ be a connected strictly semistable log scheme over $k^0$ and assume that there exists a $k$-rational point $y$ in the smooth locus of $Y$.
	Let $\pi_1^\unip(Y,y)$ be the tannakian fundamental group of unipotent log overconvergent isocrystals. 
	Then there exists a canonical equivalence
		\[F\Isoc^\dagger(Y/W^\varnothing)^\mix\cong\Mod^\mix_L(\varphi,\wh\frU(\Lie\pi_1^\unip(Y,y))),\]
	between the category of mixed log overconvergent $F$-isocrystals and the category of finite-dimensional $L$-vector spaces equipped with a continuous left $\wh\frU(\Lie\pi_1^\unip(Y,y))$-action and a $\sigma$-semilinear automorphism $\varphi$ which is compatible with the Frobenius on $\wh\frU(\Lie\pi_1^\unip(Y,y))$.
\end{theorem}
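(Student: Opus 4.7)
The approach is to first establish a Tannakian equivalence for unipotent log overconvergent $F$-isocrystals using the fiber functor $\omega_y$ at the smooth $k$-rational point $y$, and then to reduce the mixed case to the unipotent one by a rigidity argument showing that every mixed object is in fact unipotent (in the sense of being pulled back from $k^0$ via the functor \eqref{eq: isocrystal on point}).

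\textbf{Tannakian step.} The smooth $k$-rational point $y$ gives a fiber functor $\omega_y$ from $F\Isoc^\dagger(Y/W^\varnothing)^\unip$ to finite-dimensional $L$-vector spaces equipped with a $\sigma$-semilinear Frobenius $\varphi$. By construction, $F\Isoc^\dagger(Y/W^\varnothing)^\unip$ is a neutral Tannakian category over $L$ whose pro-unipotent Tannakian fundamental group is $\pi_1^\unip(Y,y)$. Standard Tannakian duality for pro-unipotent groups then identifies this category with the category of finite-dimensional continuous $\wh\frU(\Lie\pi_1^\unip(Y,y))$-modules, with $\Phi$ corresponding to $\varphi$ and compatibly with the induced Frobenius on $\pi_1^\unip(Y,y)$. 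The weight filtration on a mixed isocrystal transports under this correspondence to a filtration on the module compatible with both $\varphi$ and the Lie algebra action, so the desired equivalence on the mixed categories will follow from the rigidity below.

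\textbf{Rigidity step.} The heart of the argument is the claim that every mixed log overconvergent $F$-isocrystal on $Y/W^\varnothing$ is unipotent. I proceed by induction on the length of the weight filtration $W_\bullet$. The unipotent subcategory is closed under extensions in the mixed category, so the inductive step is automatic; the base case asserts that every pure object $(\sE,\Phi)$ in $F\Isoc^\dagger(Y/W^\varnothing)^\mix$ is unipotent. Restrict $(\sE,\Phi)$ to a rigid-analytic neighborhood of $y$ where the log structure is trivial; there it becomes a pure overconvergent $F$-isocrystal, and the $p$-adic rigidity of Chiarellotto cited in the introduction identifies it locally with the constant isocrystal on the underlying $(\varphi,N)$-module $M:=\omega_y(\sE)$. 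Using connectedness and strict semistability of $Y$, one propagates this local identification across the log strata to all of $Y$, identifying $(\sE,\Phi)$ with $M^a$ via \eqref{eq: isocrystal on point}, so $(\sE,\Phi)$ is unipotent as required. Combined with Proposition \ref{prop: isoc on point} at the point $k^0$, this closes the induction.

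\textbf{Main obstacle.} The delicate part is the global propagation in the rigidity step: extending the local trivialization around $y$ across the log strata of $Y$ while preserving compatibility with the log connection, the Frobenius, and the monodromy operator. The existence of the $k$-rational point in the smooth locus is essential precisely because Chiarellotto-style rigidity is directly available only where the log structure is trivial; the strict semistability of $Y$ together with the local description of the log connection via Kim--Hain complexes (as recalled in the text) should allow one to spread the identification along each log stratum and patch globally, with the compatibility of $\varphi$ and the Lie-algebra action on both sides following from the functoriality of $\omega_y$.
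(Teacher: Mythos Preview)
There is a genuine misunderstanding of what must be proved, and a concrete step that fails.

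First, in the paper the category $F\Isoc^\dagger(Y/W^\varnothing)^\mix$ is by definition a full subcategory of $F\Isoc^\dagger(Y/W^\varnothing)^\unip$ (Definition \ref{def: mixed isoc}). So your ``rigidity step'' claim that every mixed object is unipotent is vacuous. The Tannakian equivalence
\[
F\Isoc^\dagger(Y/W^\varnothing)^\unip\cong\Mod_L^\fin(\varphi,\widehat{\frU}(\Lie\pi_1^\unip(Y,y)))
\]
is already in place (this is \eqref{eq: FIsoc Mod}), and both ``mixed'' subcategories are full subcategories on either side. What must actually be shown is that the equivalence matches these subcategories: a unipotent $(\sE,\Phi)$ admits a global weight filtration if and only if its fiber $\eta_y(\sE,\Phi)$ is mixed as a $\varphi$-module. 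The forward direction is immediate from the definitions; the substantive direction is the converse, and your outline never addresses it.

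Second, the method you do sketch---reducing to pure objects and arguing that a pure object is constant by Chiarellotto-type local rigidity plus propagation across strata---is false in this setting. The paper explicitly remarks (immediately after Theorem \ref{thm: mixed Mod}) that in the logarithmic case $H^1_\rig(Y/W^\varnothing)$ can have weight $0$, so $\Gr^W_0\widehat{\frU}(\Lie\pi_1^\unip(Y,y))$ may be infinite-dimensional and pure objects need not be constant; an explicit non-constant pure example on the reduction of a Tate curve is given. Thus the ``propagation'' you describe cannot succeed in general.

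The actual argument is a weight computation on the completed enveloping algebra: using \eqref{eq: H1 and Lie} and Proposition \ref{prop: coh is mixed}, one shows that $\fra/\fra^2$ has weights in $[-2,0]$, hence $W_0\widehat{\frU}(\Lie\pi_1^\unip(Y,y))=\widehat{\frU}(\Lie\pi_1^\unip(Y,y))$. Because the structure map $\widehat{\frU}(\Lie\pi_1^\unip(Y,y))\to\End_L(M)$ is Frobenius-compatible, its image lands in $W_0\End_L(M)$, so the Lie action preserves the weight filtration on $M=\eta_y(\sE,\Phi)$. Each $W_nM$ is then a submodule and corresponds, via \eqref{eq: FIsoc Mod}, to a subobject $W_n(\sE,\Phi)$, yielding the desired global weight filtration. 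This is the missing idea.
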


Here $\wh\frU(\Lie\pi_1^\unip(Y,y))$ the completion of the universal enveloping algebra of the Lie algebra with respect to the augmentation ideal, which admits a natural Frobenius action induced from the absolute Frobenius on $Y$.
The above theorem asserts that, for every unipotent log overconvergent $F$-isocrystal, both of the Frobenius structure and the mixedness are determined by those at the fiber at a $k$-rational point in the smooth locus.

In the later half of this article, we will newly give a definition of syntomic coefficients, and study $p$-adic Hodge cohomology and syntomic cohomology with syntomic coefficients.
Choose a uniformizer $\pi\in V$.
Let $V^\sharp$ be $\Spwf V$ equipped with the canonical log structure (associated to the closed point) and $\tau_\pi\colon k^0\hookrightarrow V^\sharp$ the exact closed immersion which sends $\pi$ to the canonical generator of the log structure of $k^0$.
Let $\cX$ be a strictly semistable weak formal log scheme over $V^\sharp$, and $Y_\pi:=\cX\times_{V^\sharp,\tau_\pi}k^0$ the reduction of $\cX$ with respect to $\pi$.
A log overconvergent isocrystal $\sE$ on $Y_\pi$ over $W^\varnothing$ naturally defines a locally free sheaf $\sE_\dR$ with a log connection on $\cX_\bbQ$, the generic fiber of $\cX$.
The constructions and results in \cite{EY} for the trivial coefficient immediately extend to any log overconvergent $F$-isocrystal, and we obtain the Hyodo--Kato map
	\[\Psi_{\pi,\log}\colon R\Gamma_\HK(Y_\pi,(\sE,\Phi))\rightarrow R\Gamma_\dR(\cX,\sE_\dR):=R\Gamma(\cX_\bbQ,\sE_\dR\otimes\omega^\bullet_{\cX/V^\sharp,\bbQ})\]
for any choice of a branch $\log\colon K^\times\rightarrow K$ of the $p$-adic logarithm.
(We omit the upper subscript $\rig$ of $\Psi^\rig_{\pi,\log}$.)
This satisfies the following properties.

\begin{theorem}[Results in \S \ref{sec: HK map}]
	\label{thm: 2}
	\begin{enumerate}
	\item (Quasi-isomorphy). For any choice of $\pi$ and $\log$,
		\[\Psi_{\pi,\log,K}:=\Psi_{\pi,\log}\otimes 1\colon R\Gamma_\HK(Y_\pi,(\sE,\Phi))\otimes_LK\rightarrow R\Gamma_\dR(\cX,\sE_\dR)\]
		is a quasi-isomorphism if $\sE$ is unipotent.
	\item (Dependence on $\log$). For two choices $\log$ and $\log'$ of branches of the $p$-adic logarithm, we have
		\[\Psi_{\pi,\log',K}=\Psi_{\pi,\log,K}\circ\exp\left(-\frac{\log'(\xi)}{\mathrm{ord}_\pi(\xi)}\cdot N\right),\]
		where $\xi$ is any non-zero element of the maximal ideal of $V$ such that $\log(\xi)=0$.
	\item\label{eq: item-3} (Base change property and independence from $\pi$).
		Consider an inclusion $V\subset V'$ of complete discrete valuation rings of mixed characteristic $(0,p)$ and a commutative diagram
			\[\xymatrix{
			\cX'\ar[d]_f\ar[r]&V'^\sharp\ar[d]\\
			\cX\ar[r]&V^\sharp
			}\]
		where $\cX'$ and $\cX$ are strictly semistable over $V'^\sharp$ and $V^\sharp$, respectively.
		For uniformizers $\pi'\in V'$ and $\pi\in V$, there exists a canonical functor
			\[\ol f_{\pi,\pi'}^*\colon F\Isoc^\dagger(Y_\pi/W^\varnothing)\rightarrow F\Isoc^\dagger(Y'_{\pi'}/W'^\varnothing),\]
		where the ring $W'$ and the log scheme $Y'_{\pi'}=\cX'\times_{V'^\sharp,\tau_{\pi'}}k'^0$ are defined obviously.
		Moreover $f$ induces canonical morphisms
		\begin{eqnarray*}
		&&f^*_\HK\colon R\Gamma_\HK(\cX,(\sE,\Phi))_\pi\rightarrow R\Gamma_\HK(\cX',\overline{f}^*_{\pi,\pi'}(\sE,\Phi))_{\pi'},\\
		&&f^*_\dR\colon R\Gamma_\dR(\cX,\sE_\dR)\rightarrow R\Gamma_\dR(\cX',f^*\sE_\dR),
		\end{eqnarray*}
	for any $(\sE,\Phi)\in F\Isoc^\dagger(Y_\pi/W^\varnothing)$, which are compatible with the Hyodo--Kato maps in the sense that
		\begin{equation*}
		\Psi_{\pi',\log}\circ f_\HK^*=f_\dR^*\circ\Psi_{\pi,\log}.
		\end{equation*}
	
		In particular, the Hyodo--Kato map is independent of the choice of a uniformizer, up to canonical isomorphisms.
	\end{enumerate}
\end{theorem}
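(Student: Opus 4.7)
The plan is to bootstrap from the trivial coefficient case of \cite{EY} to general $(\sE,\Phi)$, exploiting the fact that the construction of $\Psi_{\pi,\log}$ and of both cohomologies is visibly linear in $\sE$. For (1), I carry out a dévissage. By unipotency, $\sE$ admits a finite filtration by sub-isocrystals whose graded pieces are isomorphic to the trivial log overconvergent $F$-isocrystal. Both $R\Gamma_\HK(Y_\pi,-)\otimes_L K$ and $R\Gamma_\dR(\cX,-)$ send short exact sequences of (unipotent) log overconvergent isocrystals to distinguished triangles, since they are defined as hypercohomology of de Rham type complexes whose formation is exact in $\sE$. Because $\Psi_{\pi,\log,K}$ is a natural transformation, it extends to a morphism of the associated long exact sequences, and induction on the filtration length together with the five-lemma reduces the quasi-isomorphy to the trivial coefficient case, which is the main result of \cite{EY}.

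For (2), any two branches differ by $\log'-\log = c\cdot\mathrm{ord}_\pi$ for some $c\in K$, and evaluation at $\xi$ identifies $c = \log'(\xi)/\mathrm{ord}_\pi(\xi)$. At the level of the Kim--Hain complex, $\Psi_{\pi,\log}$ is obtained by specializing a formal monodromy variable at $\log(\pi)$; running the construction with $\log'$ in place of $\log$ specializes the same variable at $\log(\pi)+c$. Since this variable acts as $N$, the shift is precisely realized by the automorphism $\exp(-c\cdot N)$ of the Kim--Hain complex. The plan is to spell this out locally on an affine cover of $\cX$ using the explicit local formulas of \cite{EY} adapted pointwise to $\sE$, and then to verify compatibility with the gluing data used to globalize $\Psi_{\pi,\log}$.

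For (3), exactness of the closed immersions $\tau_\pi$ and $\tau_{\pi'}$ ensures that $f$ induces a strict morphism $Y'_{\pi'}\to Y_\pi$ of fine log schemes compatible with $W'^\varnothing\to W^\varnothing$; pullback along this morphism on the log overconvergent site defines the functor $\ol f^*_{\pi,\pi'}$. The Kim--Hain complex with coefficients and the de Rham complex are contravariantly functorial in the whole datum $(\cX,\pi,\sE,\Phi)$, yielding $f^*_\HK$ and $f^*_\dR$; compatibility with $\Psi_{\pi,\log}$ is immediate from the local description of the map, since the construction uses only the data $(\pi,\log)$ and is visibly natural in $f$. Independence from $\pi$ then follows by specializing to $V=V'$, $\cX=\cX'$, $f=\id$ and $\pi'\neq\pi$, which produces a canonical isomorphism between the Hyodo--Kato cohomologies attached to distinct uniformizers that is compatible with the two Hyodo--Kato maps.

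The main obstacle is the explicit computation in (2): one must carefully track how the chosen branch enters the rigid analytic construction of $\Psi_{\pi,\log}$ with coefficients and verify that the resulting discrepancy is \emph{exactly} $\exp(-cN)$, rather than merely equal to it up to lower-order corrections. Once (2) is in hand, (1) and (3) are essentially formal consequences of dévissage and naturality, resting on the earlier work in \cite{EY} together with the functoriality established in Section~\ref{sec: HK map}.
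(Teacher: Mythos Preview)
Your treatment of (1) and (2) is essentially correct and matches the paper: Proposition~\ref{prop: HK qis} reduces to the trivial coefficient via the unipotent filtration exactly as you describe, and Proposition~\ref{prop: log} is a direct computation on the generators $u^{[i]}$ of the Kim--Hain complex, using that $\Psi_{\pi,\log}(u^{[i]}) = (-\log\pi)^i/i!$ and $N(u^{[i]}) = u^{[i-1]}$.

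Your argument for (3), however, has a genuine gap. The claim that compatibility with $\Psi_{\pi,\log}$ is ``immediate from the local description'' and ``visibly natural in $f$'' is false. Write $\pi = vw\pi'^\ell$ with $v\in 1+\frm'$, $w\in\boldsymbol{\mu}'$, and $\ell$ the ramification index of $K'/K$. The comparison $s\mapsto ws'^\ell$ forces $d\log s\mapsto\ell\,d\log s'$, so the map on Kim--Hain complexes must send $u^{[i]}\mapsto\ell^i u'^{[i]}$. Applying $\Psi_{\pi',\log}$ then gives $(-\ell\log\pi')^i/i!$, whereas $f^*_\dR\circ\Psi_{\pi,\log}(u^{[i]})$ gives $(-\log\pi)^i/i!$. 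These disagree because $\log\pi = \log v + \ell\log\pi'$ and $\log v\neq 0$ in general. So the square does \emph{not} commute by pure naturality.

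The paper's Proposition~\ref{prop: unif} resolves this by introducing an auxiliary base $\widetilde{\cS}$, the exactification of $k'^0\hookrightarrow\cS\times_{W^\varnothing}\cS'$, over which both $s$ and $s'$ live as independent sections, and a bisimplicial object $\widetilde{\cZ}_{\bullet,\bullet}$ over it. On $\widetilde{\cZ}$ one has an explicit isomorphism of Kim--Hain complexes
\[
\gamma(u^{[i]}) := \exp\Bigl(\log\Bigl(\tfrac{ws'^\ell}{s}\Bigr)\cdot\tfrac{N}{\ell}\Bigr)(\ell^i u'^{[i]}).
\]
Restricted to the locus where $s=ws'^\ell$ this collapses to $u^{[i]}\mapsto\ell^i u'^{[i]}$, recovering $f^*_\HK$; restricted to $V'^\sharp$ (where $s=\pi$, $s'=\pi'$) it produces exactly the correction by $\log v$ needed to close the diagram, which is then checked by a binomial-identity computation. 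This construction is the substantive content of (3), and your outline omits it entirely. Relatedly, your identification of (2) as ``the main obstacle'' is backwards: (2) is a short computation, while (3) is where the real work lies. (A minor point: the morphism $Y'_{\pi'}\to Y_\pi$ is not strict when $\ell>1$, contrary to your claim.)
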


For a proper $\cX$, a (unipotent) syntomic coefficient $\frE=(\sE,\Phi,F^\bullet)$ on $\cX$ is defined as a pair of a unipotent log overconvergent $F$-isocrystal $(\sE,\Phi)$ on $Y_\pi$ over $W^\varnothing$ and a Hodge filtration $F^\bullet$ on $\sE_\dR$.
Then the $p$-adic Hodge cohomology $R\Gamma_{p\mathrm{H}}(\cX,\frE)_{\pi,\log}$ is defined as the patching of $R\Gamma_\HK(Y_\pi,(\sE,\Phi))$ and $R\Gamma_\dR(\cX,\sE_\dR)$ by $\Psi_{\pi,\log}$, and the syntomic cohomology $R\Gamma_\syn(\cX,\frE)_\pi$ is defined as a homotopy limit which mimics the absolute $p$-adic Hodge cohomology.
A syntomic coefficient $\frE$ is called admissible if the differentals of $R\Gamma_\dR(\cX,\sE_\dR)$ are strictly compatible with the Hodge filtration and if the $p$-adic Hodge cohomology groups $H^n_{p\mathrm{H}}(\cX,\frE)$ are admissible as filtered $(\varphi,N)$-modules.
Denote by $\Syn_\pi(\cX/V^\sharp)$ and $\Syn_\pi^\ad(\cX/V^\sharp)$ the exact categories of syntomic coefficients and admissible syntomic coefficients, respectively.
Then we will show the following properties.

\begin{theorem}[Results in \S \ref{sec: syn coh}]\label{thm: 3}
	\begin{enumerate}
	\item\label{item 1}(Base change property and independence from $\pi$).
		Let $f\colon\cX'\rightarrow\cX$ and $\pi,\pi'$ be as in Theorem \ref{thm: 2} \eqref{eq: item-3}.
		Then $f$ induces a canonical functor
			\[f^*_{\pi,\pi'}\colon\Syn_\pi(\cX/V^\sharp)\rightarrow\Syn_{\pi'}(\cX'/V'^\sharp)\]
		and canonical morphisms
			\begin{align*}
			&f^*_{p\mathrm{H}}\colon R\Gamma_{p\mathrm{H}}(\cX,\frE)_{\pi,\log}\rightarrow R\Gamma_{p\mathrm{H}}(\cX',f^*_{\pi,\pi'}\frE)_{\pi',\log},\\
			&f^*_\syn\colon R\Gamma_\syn(\cX,\frE)_\pi\rightarrow R\Gamma_\syn(\cX',f^*_{\pi,\pi'}\frE)_{\pi'}
			\end{align*}
		for any $\frE\in\Syn_\pi(\cX/V^\sharp)$.
			
		In particular, the category of syntomic coefficients, $p$-adic Hodge cohomology, and syntomic cohomology are independent from the choice of a uniformizer, up to canonical isomorphisms.
	\item (Properties and examples of admissible syntomic coefficients).
		\begin{enumerate}
		\item The admissibility of a syntomic coefficient is independent of the choice of $\log$.
		\item Fix a branch of the $p$-adic logarithm $\log\colon K^\times\rightarrow K$.
			One can define functors
			\begin{equation}\label{eq: intro MF Syn}
			\MF_K^\ad(\varphi,N)\rightarrow\Syn_\pi^\ad(\cX/V^\sharp);\ M\mapsto M(\pi,\log)^a_\pi
			\end{equation}
			for any uniformizer $\pi\in V$, which are
			\begin{itemize}
			\item compatible with each other through the equivalences in \eqref{item 1},
			\item equivalences if $\cX=V^\sharp$,
			\item compatible with the $p$-adic Hodge cohomology in the sense that we have
			\begin{equation}
			H^n_{p\mathrm{H}}(\cX,M(\pi,\log)^a_\pi)_{\pi,\log}\cong H^n_{p\mathrm{H}}(\cX)_{\pi,\log}\otimes M
			\end{equation}
			as filtered $(\varphi,N)$-modules.
			\end{itemize}
			Here $\MF_K^\ad(\varphi,N)$ denotes the category of admissible filtered $(\varphi,N)$-modules.
		\item Assume that $\cX$ is a weak completion of a proper strictly semistable log scheme.
			Then a syntomic coefficient on $\cX$ is admissible if it can be written as an iterated extension of syntomic coefficients of the form $M(\pi,\log)^a_\pi$ for $M\in\MF_K^\ad(\varphi,N)$.
		\end{enumerate}
	\item\label{eq: item-ext} (Interpretation as extension groups).
		\begin{enumerate}
		\item For $\frE,\frE'\in\Syn_\pi(\cX/V^\sharp)$, we have canonical isomrophisms
				\begin{eqnarray*}
				\Hom_{\Syn_\pi(\cX/V^\sharp)}(\frE',\frE)&\xrightarrow{\cong}& H^0_\syn(\cX,\sheafhom(\frE',\frE))_\pi\\
				\Ext^1_{\Syn_\pi(\cX/V^\sharp)}(\frE',\frE)&\xrightarrow{\cong}& H^1_\syn(\cX,\sheafhom(\frE',\frE))_\pi.
				\end{eqnarray*}
		\item For $\frE\in\Syn_\pi^\ad(\cX/V^\sharp)$, we may regard $R\Gamma_{p\mathrm{H}}(\cX,\frE)_{\pi,\log}$ as an object of the derived category $ D^b(\MF_K^\ad(\varphi,N))$, and there exists a canonical isomorphism
			\[R\Gamma_\syn(\cX,\frE)_\pi\cong R\Hom_{\MF_K^\ad(\varphi,N)}(L,R\Gamma_{p\mathrm{H}}(\cX,\frE)_{\pi,\log}).\]
		\end{enumerate}
	\end{enumerate}
\end{theorem}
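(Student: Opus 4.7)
The plan is to reduce everything to the corresponding statements for unipotent log overconvergent $F$-isocrystals (Theorem \ref{thm: 2}) together with the elementary behaviour of Hodge filtrations under pullback, exploiting the definition of $R\Gamma_{p\mathrm{H}}(\cX,\frE)_{\pi,\log}$ as the homotopy pullback of $R\Gamma_\HK(Y_\pi,(\sE,\Phi))\otimes_L K$ and $R\Gamma_\dR(\cX,\sE_\dR)$ along $\Psi_{\pi,\log,K}$, and of $R\Gamma_\syn(\cX,\frE)_\pi$ as a homotopy limit mimicking absolute $p$-adic Hodge cohomology.

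For part (1), I would build $f^*_{\pi,\pi'}$ by combining the functor $\overline f^*_{\pi,\pi'}$ on log overconvergent $F$-isocrystals from Theorem \ref{thm: 2}(\ref{eq: item-3}) with ordinary pullback of the Hodge filtration on $\sE_\dR$, checking that strictness/Griffiths-transversality conditions built into the definition of a syntomic coefficient are preserved. The induced morphisms $f^*_{p\mathrm{H}}$ and $f^*_\syn$ then come by functoriality of the defining homotopy limits; the crucial commuting square $\Psi_{\pi',\log}\circ f^*_\HK=f^*_\dR\circ\Psi_{\pi,\log}$ is guaranteed by Theorem \ref{thm: 2}(\ref{eq: item-3}). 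Independence from $\pi$ is the special case in which $f$ is the identity on underlying weak formal log schemes, so the transition morphisms are canonical isomorphisms. For (2)(a), the explicit dependence formula in Theorem \ref{thm: 2} shows that varying $\log$ post-composes $\Psi_{\pi,\log,K}$ with the automorphism $\exp\bigl(-\tfrac{\log'(\xi)}{\ord_\pi(\xi)}N\bigr)$, which is an isomorphism of $(\varphi,N)$-modules; hence the $p$-adic Hodge cohomology as a filtered $(\varphi,N)$-module is invariant up to canonical isomorphism and admissibility is preserved. For (2)(b), I construct $M\mapsto M(\pi,\log)^a_\pi$ by attaching to $M^a\in F\Isoc^\dagger(Y_\pi/W^\varnothing)^\unip$ produced by (\ref{eq: isocrystal on point}) the filtration transported to $M^a_\dR$ along $\Psi_{\pi,\log}$; Proposition \ref{prop: pseudo-const} then yields the claimed isomorphism of filtered $(\varphi,N)$-modules on cohomology and therefore admissibility of $M(\pi,\log)^a_\pi$. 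The case $\cX=V^\sharp$ becomes an equivalence because (\ref{eq: isocrystal on point}) is an equivalence for $Y=k^0$ and filtrations on $\sE_\dR$ over $V^\sharp$ coincide with filtrations on the fibre $M_K$. For (2)(c), closedness of $\MF_K^\ad(\varphi,N)$ under extensions inside $\MF_K(\varphi,N)$, applied degree-by-degree to the long exact sequence in $p$-adic Hodge cohomology of an iterated extension, gives the admissibility statement; strictness of the de Rham differentials for the total coefficient propagates inductively from strictness on each layer.

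Part (3) is the main obstacle. For (3)(a), I would unravel the homotopy limit defining $R\Gamma_\syn(\cX,\sheafhom(\frE',\frE))_\pi$: its $H^0$ parametrises a morphism of isocrystals compatible with $\Phi$ together with a morphism of filtered de Rham realizations that are homotopy-compatible under $\Psi_{\pi,\log}$, which is exactly a morphism in $\Syn_\pi(\cX/V^\sharp)$; the $H^1$ groups then classify the same data at the level of $1$-cocycles and correspond bijectively to Yoneda $1$-extensions via a standard Baer-sum construction. For (3)(b), I would adapt the strategy of D\'eglise--Nizio\l\ \cite{DN}: the abelian category $\MF_K^\ad(\varphi,N)$ has cohomological dimension one, and $R\Hom_{\MF_K^\ad(\varphi,N)}(L,-)$ applied to a complex presented as a homotopy pullback of its $(\varphi,N)$-part and its filtered $K$-vector-space part is computed as the corresponding cone of the comparison morphism, which agrees termwise with the defining homotopy limit of $R\Gamma_\syn$. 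The technical heart is to match the two homotopy limits on the nose: one must verify that the quasi-isomorphism of Theorem \ref{thm: 2}(1) applied to $\sheafhom(\frE',\frE)$ propagates through the limit, so that $R\Gamma_{p\mathrm{H}}(\cX,\frE)_{\pi,\log}$ is recognised as a complex of admissible filtered $(\varphi,N)$-modules rather than merely of filtered $(\varphi,N)$-modules, which requires admissibility in each cohomological degree and strictness of the Hyodo--Kato comparison, both supplied by the admissibility assumption together with part (2)(c).
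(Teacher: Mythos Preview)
Your overall plan is close to the paper's, but two steps contain genuine gaps.

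In (2)(a), your claim that $\exp(cN)$ (with $c\in K$) is an isomorphism of $(\varphi,N)$-modules is false: since $N\varphi=p\varphi N$ and $\varphi$ is $\sigma$-semilinear, one computes $\varphi\circ\exp(cN)=\exp(\sigma(c)p^{-1}N)\circ\varphi$, which does not equal $\exp(cN)\circ\varphi$ in general. Hence $H^n_{p\mathrm{H}}(\cX,\frE)_{\pi,\log}$ and $H^n_{p\mathrm{H}}(\cX,\frE)_{\pi,\log'}$ are typically \emph{not} isomorphic as filtered $(\varphi,N)$-modules, and you cannot conclude admissibility this way. The paper's argument (Proposition~\ref{prop: adm log}) is more careful: for any sub-$(\varphi,N)$-module $M\subset H^n_\HK(\cX,\frE)_\pi$, the operator $\exp(cN)$ preserves $M_K$ (because $N$ does), so the two induced Hodge filtrations on $M_K$ differ by this $K$-linear automorphism and hence have the same graded dimensions; thus $t_H(M)$ is unchanged, while $t_N(M)$ depends only on $\varphi$, and admissibility follows.

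In (2)(b)--(2)(c) you omit the essential arithmetic input. The tensor formula reduces admissibility of $M(\pi,\log)^a_\pi$ to admissibility of the trivial coefficient $\frO_\cX$, but you never establish the latter. The paper (Lemma~\ref{lem: trivial coeff}) invokes the semistable comparison theorem together with the compatibility of the rigid Hyodo--Kato map $\Psi_{\pi,\log_\pi}$ with the crystalline one; this is exactly where the hypothesis that $\cX$ be the weak completion of a proper strictly semistable log scheme enters. Strictness of $R\Gamma_\dR(\cX)$ for the trivial coefficient likewise needs external input (the $E_1$-degeneration of \cite{Ts} or \cite{LP}), which then propagates to pseudo-constant coefficients by a dimension count (Corollary~\ref{cor: example of adm}). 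Finally, your description of $M(\pi,\log)^a_\pi$ as ``filtration transported along $\Psi_{\pi,\log}$'' is not well-defined: $\Psi_{\pi,\log}$ is a map on cohomology, not on coefficients. The paper's actual construction multiplies $N$ by the ramification index $e$ and twists the Hodge filtration on $M_K$ by $\exp(-e\log(\pi)\cdot N)$; this specific twist is forced by the computation of Lemma~\ref{lem: hk map of pconst}, and without it the compatibility with $f^*_{\pi,\pi'}$ asserted in (2)(b) fails.
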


Note that, if we associate syntomic coefficients to filtered $(\varphi,N)$-modules naively, then they are not compatible with inverse images.
The functor \eqref{eq: intro MF Syn} is defined by twisting the Hodge filtration by $\exp(-e\log(\pi)\cdot N)$, where $e$ is the ramification index of $K$ over $\bbQ_p$.

Because of Theorem \ref{thm: 3} \eqref{eq: item-ext}, we may regard the syntomic cohomology as an absolute $p$-adic Hodge cohomology (a $p$-adic analogue of the Deligne--Beilinson cohomology).

Note that we suppose the properness of $\cX$ in order to consider Hodge filtrations of the de Rham cohomology.
However we may consider the log structure associated to a horizontal divisor on $\cX$,
so we may deal with compactifiable cases.

One can also show that, if $\cX$ is smooth and $(\sE,\Phi)$ is induced from a non-logarithmic overconvergent $F$-isocrystal, then $\Psi_{\pi,\log}$ is independent of $\pi$ and $\log$, and coincides with the base change map of the non-logarithmic rigid cohomology (Proposition \ref{prop: smooth case}).
Therefore our theory of syntomic coefficients is compatible with the construction in \cite{Ban2} of the syntomic coefficients of Besser's rigid syntomic cohomology for smooth schemes.

Our definitions of syntomic coefficients and the Hyodo--Kato map are useful for direct computation.
For example, our construction would be useful for explicit computation of $p$-adic polylogarithms on semistable abelian varieties.

\tableofcontents

The contents of this article are as follows.
In Part \ref{part: Log isocrystal}, we will study cohomology theory for log schemes over a finite field.
In \S \ref{sec: weak formal schemes}, we will recall the notion of weak formal schemes which is slightly generalized in \cite{EY}, as a preparation.
In \S \ref{sec: Isoc}, we will introduce log rigid cohomology with coefficients in a log overconvergent isocrystal over general base.
From \S \ref{sec: absolute isoc}, we will focus on log overconvergent isocrystals over $W^\varnothing$, which will be considered as the coefficients of Hyodo--Kato cohomology in \S \ref{sec: HK coh}.
The tannakian fundamental group of log overconvergent isocrystals is studied in \S \ref{sec: tannakian}--\ref{sec: mixed}.
As an application, we will prove rigidity properties for the Frobenius structure.

In Part \ref{part: mixed char}, we will study the cohomology theory for (strictly semistable) weak formal schemes over a ring of mixed characteristic.
We first prepare some notions and notations concerning de Rham cohomology in \S \ref{sec: de rham}.
The main ingredient of the Hyodo--Kato theory is the Hyodo--Kato map which will be studied in \S \ref{sec: HK map}.
In particular, we will show that the Hyodo--Kato map is independent of the choice of a uniformizer and induces a quasi-isomorphism.
We will introduce syntomic coefficients in \S \ref{sec: syn} and their admissibility in \S \ref{sec: syn coh} by using the Hyodo--Kato map.
We will also define and study the $p$-adic Hodge cohomology and the syntomic cohomology with syntomic coefficients in \S \ref{sec: syn coh}.

%%%%%%%%%%%%%%%%%
%
\part{Theory in positive characteristic: Hyodo--Kato cohomology with coefficients in log overconvergent $F$-isocrystals}\label{part: Log isocrystal}
%
%%%%%%%%%%%%%%%%%
In the first half of this part, we will introduce log overconvergent ($F$-)isocrystals as coefficients of Hyodo--Kato cohomology.
A log overconvergent ($F$-)isocrystal is locally corresponds to a coherent locally free sheaf with an integrable log connection, passing through a Taylor isomorphism and a log stratification.
Using this identification, the Hyodo--Kato cohomology with coefficients will be defined as the hypercohomology of a de Rham complex associated to the log connection.
This construction is an overconvergent analogue of Shiho's log analytic cohomology in \cites{Shi1,Shi2,Shi3}.

In the latter half of this part, we will focus on log overconvergent ($F$-)isocrystals over $W^\varnothing$, and study the tannakian fundamental group of unipotent log overconvergent isocrystals over $W^\varnothing$.
Those results are based on the methods of Chiarellotto \cite{Ch} and Chiarellotto--Le Stum \cites{CL1,CL2} for unipotent and mixed ($F$-)isocrystals on smooth schemes over finite fields.

%%%%%%%%%%%%%%%%%%%%
\section{Weak formal schemes}\label{sec: weak formal schemes}
%%%%%%%%%%%%%%%%%%%%
In this section, we recall some basic definitions and propositions concerning weak formal schemes given in \cite{EY}.
Our definition of weak formal schemes was slightly generalized from the original definition in \cite{Me}, so that a weak formal scheme is not necessarily adic over the base.

Let $R$ be a noetherian ring with an ideal $I$.
We first recall the notion of weakly complete algebras.

\begin{definition}[{\cite{MW}}]
	Let $A$ be an $R$-algebra.
	The {\it weak completion} $A^\dagger$ of $A$ with respect to $(R,I)$ is the $R$-subalgebra of $\widehat{A}$ consisting of elements $\xi$ which can be written as
	\[\xi = \sum_{i\geq 0}P_i(x_1,\ldots,x_r),\]
where $x_1,\ldots,x_r\in A$, $P_i(X_1,\ldots,X_r)\in I^i\cdot R[X_1,\ldots,X_r]$, such that there exists a constant $c>0$ satisfying
	\[c(i + 1)\geq\mathrm{deg}P_i\]
for all $i\geq 0$.

	An $R$-algebra $A$ is said to be {\it weakly complete} with respect to $(R,I)$ if $A^\dagger = A$, and {\it weakly complete finitely generated (wcfg)} with respect to $(R,I)$ if there exists a surjection $R[t_1,\ldots,t_n]^\dagger \rightarrow  A$ over $R$. 
	A wcfg algebra is naturally regarded as a topological $R$-algebra for the $I$-adic topology.
\end{definition}

For any $n\geq 0$, we denote  by $R_{[n]}$ the polynomial algebra $R[s_1,\ldots,s_n]$ and by $I_{[n]}\subset R_{[n]}$  the ideal generated by $I$ and $s_1,\ldots,s_n$.

\begin{definition}[{\cite[Definition 1.3]{EY}}]\label{def: pwcfg}
	A topological $R$-algebra $A$ is {\it pseudo-weakly complete finitely generated (pseudo-wcfg)} with respect to $(R,I)$ if there exists an ideal of definition $J$ of the topology of $A$ and a finite generating system $f_1,\ldots,f_n\in A$ of $J$ such that $A$ is $I_{[n]}$-adically wcfg over $R_{[n]}$ with respect to the map
		\[R_{[n]} = R[s_1,\ldots,s_n] \rightarrow  A,\ s_i\mapsto f_i.\]
	This means that there exists an integer $k\geq 0$ and an $R_{[n]}$-linear surjection 
		\[\rho\colon R_{[n]}[t_1,\ldots,t_k]^{\dagger} \rightarrow  A,\]
	whose domain denotes the $I_{[n]}$-adic weak completion of $R_{[n]}[t_1,\ldots,t_n]$.
	In this case, we call $\rho$ a {\it representation} of $A$.
	A morphism of pseudo-wcfg algebras with respect to $(R,I)$ is a morphism of topological $R$-algebras.
\end{definition}

\begin{remark}
	By \cite[Corollary 1.5]{EY}, for a pseudo-wcfg algebra $A$, the condition in Definition \ref{def: pwcfg} holds for any ideal of definition $J$ and any generating system $f_1,\ldots,f_n\in A$ of $J$.
\end{remark}

Weak formal schemes in our context are defined as a gluing of weak formal spectra of pseudo-wcfg algebras.

\begin{definition}[{\cite[Definition 1.6]{EY}}]
	Let $A$ be a pseudo-wcfg algebra with a representation $R_{[n]}[t_1,\ldots,t_k]^\dagger \rightarrow  A$.
	For $f\in A$, we denote by $A_f^\dagger$ the $I_{[n]}$-adic weak completion of $A_f$.
	Then $A_f^\dagger$ is independent of the choice of representation. 
	
	We define $\Spwf A$ to be the ringed space whose underlying topological space is $\Spec A/J$ for some ideal of definition $J$ of $A$ and the structure sheaf $\cO_{\Spwf A}$ is  defined by
		\[\Gamma(\Spec (A/J)_{\overline{f}},\cO_{\Spwf A}): = A_f^\dagger,\]
	for $f\in A$ and $\overline{f}$ its image in $A/J$.
	
	Note that the underlying topological space of $\Spec A/J$ is independent of the choice of $J$, and the sheaf $\cO_{\Spwf A}$ is well-defined.
\end{definition}

\begin{definition}[{\cite[Definition 1.9]{EY}}]
	A {\it weak formal scheme} with respect to $(R,I)$ is a locally topologically ringed space which admits an open covering $\{U_i\}$ such that each $U_i$ is isomorphic to $\Spwf A$ for some pseudo-wcfg $R$-algebra $A$.
\end{definition}

This generalization allows us to define weak completion of a weak formal scheme along a locally closed weak formal subscheme.

\begin{proposition-definition}[{\cite[Proposition-Definition 1.27]{EY}}]
	The natural inclusion from the category of homeomorphic closed immersions of weak formal schemes to the category of immersions of weak formal schemes has a right adjoint of the form $(Z\hookrightarrow\cZ)\mapsto(Z\hookrightarrow \cZ_Z)$.
	We call $\cZ_Z$ the weak completion of $\cZ$ along $Z$.
\end{proposition-definition}

The smoothness and \'{e}taleness of morphisms of weak formal schemes are defined by the infinitesimal lifting property for first order thickenings as follows.
We also introduce the notion of strong smoothness and strong \'{e}taleness, which are convenient for discussions concerning (log) rigid cohomology.

\begin{definition}[{\cite[Definition 1.32]{EY}}]\label{def: smoothness}
	Let $f\colon\cZ'\rightarrow\cZ$ be a morphism of weak formal schemes.
	\begin{enumerate}
	\item We say $f$ is \textit{smooth} (resp.\ \textit{\'{e}tale}) if for any commutative diagram of weak formal schemes
	\begin{equation}\label{eq: test diagram}
	\xymatrix{
	\cY'\ar[r]^-i\ar[d]^-{\theta'}&\cY\ar[d]^\theta\\
	\cZ'\ar[r]^-f&\cZ}
	\end{equation}
	where $i$ is a closed immersion defined by a square zero ideal, there exists locally on $\cY$ (resp.\ uniquely) a morphism $g\colon\cY\rightarrow\cZ'$ such that $g\circ i=\theta'$ and $f\circ g=\theta$.
	\item We say $f$ is \textit{strongly smooth} (resp.\ \textit{strongly \'{e}tale}) if it is smooth and for any commutative diagram \eqref{eq: test diagram} where $i$ is a homeomorphic closed immersion and $\cY$ is adic over $\Spwf R$, there exists locally on $\cY$ (resp.\ uniquely) a morphism $g\colon\cY\rightarrow\cZ'$ such that $g\circ i=\theta'$ and $f\circ g=\theta$.
	\end{enumerate}
\end{definition}

\begin{remark}
	Suppose that $(R,I)$ is a discrete valuation ring with the maximal ideal, and weak formal schemes $\cZ$ and $\cZ'$ are adic over $\Spwf R$. 
	Then by \cite[Propositions 1.36 and 1.44]{EY}, for a morphism $f\colon \cZ'\rightarrow\cZ$, the following conditions are equivalent to each other:
	\begin{enumerate}
	\item $f$ is smooth (resp.\ \'{e}tale),
	\item $f$ is strongly smooth (resp.\ strongly \'{e}tale),
	\item The morphism $\wh{f}\colon\wh\cZ'\rightarrow\wh\cZ$ induced by $f$ via completion is smooth (resp.\ \'{e}tale) as a morphism of formal schemes.
	\end{enumerate}
\end{remark}

One may consider log structures on weak formal schemes and related notions (e.g.\ fineness) as well as those on schemes.
In this article, we always consider log structures as sheaves with respect to the Zariski topology, and deal only with fine log structures.
For a (weak formal) log scheme $\cZ$, we often denote its underlying (weak formal) scheme by $\cZ$ again and its log structure by $\cN_\cZ$.
The log smoothness, \'{e}taleness, strong log smoothness, and strong log \'{e}taleness are defined by lifting properties similarly to Definition \ref{def: smoothness} (see \cite[Definition 1.52]{EY}).
A morphism of weak formal log schemes $f\colon \cZ'\rightarrow\cZ$ is called \textit{strict} if $f^\ast\cN_\cZ\rightarrow\cN_{\cZ'}$ is an isomorphism.
For an immersion of weak formal log schemes, one can define the canonical exactification by using weak completion in the same way as \cite[Proposition-Definition 2.10, Corollary 2.11]{Shi3}:

\begin{proposition-definition}[{\cite[Proposition-Definition 1.50]{EY}}]\label{def: exactification}
	The natural inclusion from the category of homeomorphic exact closed immersions of fine weak formal log schemes to the category of immersions of fine weak formal log schemes has a right adjoint of the form $(Z\hookrightarrow\cZ)\mapsto(Z\hookrightarrow\cZ^{\mathrm{ex}}_Z)$.
	We call $\cZ^{\mathrm{ex}}_Z$ the {\it exactification} of $Z\hookrightarrow\cZ$.
\end{proposition-definition}

Next we recall the construction of a dagger space $\cZ_\bbQ$ associated to a weak formal scheme $\cZ$.
Assume that $R$ is a complete discrete valuation ring of mixed characteristic $(0,p)$ with the maximal ideal $I$, and let $K$ be the fraction field of $R$.
We note that, for a weak formal scheme $\cZ$ which is adic over $R$, one may associate a dagger space $\cZ_\bbQ$ by tensoring $K$ with the local coordinate rings.
The overconvergent analogue of Raynaud's theorem was proved by Langer and Muralidharan \cite[Theorem 1.1]{LM}.
We extend the definition of $\cZ_\bbQ$ for weak formal schemes which are not necessary adic over $R$, in the same way as \cite[(0.2.6)]{Ber2}.

\begin{proposition}[{\cite[Lemma 1.28 and Proposition 1.29]{EY}}]\label{prop: gen fib}
	Let $R$ and $K$ be as above.
	Let $A$ be a pseudo-wcfg algebra over $R$ and $J\subset A$ be an ideal of definition.
	Take a generating system $a_1,\ldots,a_r$ of $J$ and set
		\begin{align}\label{eq: gen fib}
		A\left[\frac{J}p\right]^\dagger&:=\left(A[x_1,\ldots,x_r]^\dagger/(px_1-a_1,\ldots,px_r-a_r)\right)/(\text{$p$-torsion}).
		\end{align}
	Then $A[\frac Jp]^\dagger$ is a wcfg algebra over $R$ independent of the choice of $a_1,\ldots,a_r$, and satisfies the following universality:
	Any morphism $\tau\colon A\rightarrow B$ of pseudo-wcfg algebras with $B$ $p$-torsion free and $\tau(J)\subset pB$ factors uniquely through a morphism $A[\frac Jp]^\dagger\rightarrow B$.
	
	For any $n\geq 0$, the morphism $A[\frac{J^{n+1}}p]^\dagger\rightarrow A[\frac{J^n}p]^\dagger$ induced by this universality induces an open immersion
	\[\Sp\left(A\left[\frac{J^n}p\right]^\dagger\otimes_RK\right)\hookrightarrow\Sp\left(A\left[\frac{J^{n+1}}p\right]^\dagger\otimes_RK\right)\]
	pf a Wierstra\ss domain.
	The dagger space
		\[\bigcup_{n\geq 0}\Sp\left(A\left[\frac{J^n}p\right]^\dagger\otimes_RK\right)\]
	is independent of the choice of $J$.
\end{proposition}

\begin{definition}[{\cite[Definition 1.30]{EY}}]\label{def: gen fib}
	Let $R$ and $K$ be as above and $\cZ$ a weak formal scheme over $R$.
	For an ideal of definition $\cJ$ of $\cZ$, the constructions \eqref{eq: gen fib} for affine open weak formal subschemes glue to each other by the universality, and define a weak formal scheme $\cZ[\frac{\cJ}p]^\dagger$ which is adic over $R$.
	
	If we fix an ideal of definition $\cJ$ and let $\cZ_n:=\cZ[\frac{\cJ^n}p]^\dagger$, then $\{\cZ_n\}_n$ form an inductive system whose transition maps induce open immersions $\cZ_{n,\bbQ}\hookrightarrow\cZ_{n+1,\bbQ}$.
	Let $\cZ_\bbQ:=\bigcup_{n\geq 1}\cZ_{n,\bbQ}$, which is a dagger space over $K$ and independent of the choice of $\cJ$ by Proposition \ref{prop: gen fib}.
	We call $\cZ_\bbQ$ the {\it generic fiber} of $\cZ$.
	For a morphism $f\colon\cZ'\rightarrow\cZ$, we often denote again by $f$ the morphism $\cZ'_\bbQ\rightarrow\cZ_\bbQ$ induced by $f$, if there is no afraid of confusion.
\end{definition}

For a weak formal scheme $\cZ$, the specialization maps $\mathrm{sp}\colon\cZ_{n,\bbQ}\rightarrow\cZ_n$ for all $n$ together induce a map $\mathrm{sp}\colon\cZ_\bbQ\rightarrow\cZ$.
For an $\cO_\cZ$-module $\cF$, we may associate an $\cO_{\cZ_\bbQ}$-module $\cF_\bbQ:=\mathrm{sp}^*\cF=\mathrm{sp}^{-1}\otimes_{\mathrm{sp}^{-1}\cO_\cZ}\cO_{\cZ_\bbQ}$.

If $f\colon\cZ'\rightarrow\cZ$ is a morphism of fine weak formal log schemes, then the log de Rham complex $\omega^1_{\cZ'/\cZ}$ is associated.
By \cite[Proposition 1.62]{EY} we have canonical isomorphisms $\omega^1_{\cZ'/\cZ,\bbQ}|_{\cZ'_{n,\bbQ}}\cong\omega^1_{\cZ'_n/\cZ}\otimes_RK$ for each $n$, where we endow $\cZ'_n$ with the pull-back log structure from $\cZ'$.
Consequently, the differentials $\omega^m_{\cZ'_n/\cZ}\rightarrow\omega^{m+1}_{\cZ'_n/\cZ}$ induce the global differential $\omega^m_{\cZ'/\cZ,\bbQ}\rightarrow\omega^{m+1}_{\cZ'/\cZ,\bbQ}$.
Thus we obtain a complex $\omega^\bullet_{\cZ'/\cZ,\bbQ}$ of sheaves on $\cZ'_\bbQ$.

%%%%%%%%%%%%%%
\section{Log rigid cohomology with coefficients}\label{sec: Isoc}
%%%%%%%%%%%%%%%

In this section, we will introduce the notions of log connections, log stratifications, Taylor isomorphisms, and log ovreconvergent isocrystals.
Then we will see that a log overconvergent isocrystal is locally interpreted by a log connection, and define log rigid cohomology with coefficients in log overconvergent isocrystals.

\begin{convention}
	According to \cite{EY}, we work with the following convention in the rest of the article:
	For a log scheme $Y$, we always assume that its underlying scheme is separated, locally of finite type, and admits an affine covering indexed by a countable set.
	For a weak formal log scheme $\cZ$, we always assume that there exists an ideal of definition $\cJ$ of $\cZ$ such that the closed log subscheme of $\cZ$ defined by $\cJ$ satisfies the above conditions.
\end{convention}

%%%%%%%%%%%%%%
\subsection{Log connections}\label{subsec: Log connection}
%%%%%%%%%%%%%%

Let $R$ be a discrete valuation ring of characteristic $(0,p)$ with the maximal ideal $I$ and residue field $k$.
In this section, all pseudo-wcfg algebras and weak formal schemes are considered as those with respect to $(R,I)$.
For any pseudo-wcfg algebra $A$, let $A^\varnothing$ be the weak formal log scheme whose underlying space is $\Spwf A$ and whose log structure is trivial.

\begin{definition}
	A {\it fine weak formal log scheme with Frobenius} is a pair $(\cZ,\phi)$ of a fine weak formal log scheme and an endomorphism $\phi$ on $\cZ$ which lifts the absolute Frobenius on $\cZ\times_{R^\varnothing}k^\varnothing$.
	A morphism $f\colon(\cZ',\phi')\rightarrow(\cZ,\phi)$ of fine weak formal log schemes with Frobenius is a morphism $f\colon\cZ'\rightarrow\cZ$ of weak formal log schemes such that $\phi\circ f=f\circ\phi'$.
	A morphism of fine weak formal log schemes with Frobenius is said to be {\it log smooth} (resp.\ \textit{strongly log smooth}) if it is so as a morphism of fine weak formal log schemes. 
\end{definition}

In what follows, we consider a fine weak formal log scheme $\cT$ or a fine weak formal log scheme with Frobenius $(\cT,\sigma)$ as a base.

\begin{definition}
	Let $\cZ$ be a fine weak formal log scheme over $\cT$. 
	\begin{enumerate}
	\item Let $\cE$ be a coherent locally free sheaf on $\cZ_\bbQ$.
		An {\it integrable log connection} on $\cE$ over $\cT$ is an $\cO_{\cT_\bbQ}$-linear map
			\[\nabla\colon \cE\rightarrow\cE\otimes_{\cO_{\cZ_\bbQ}}\omega^1_{\cZ/\cT,\bbQ}\]
		which satisfies
		\begin{itemize}
		\item $\nabla(f\alpha)=f\nabla(\alpha)+\alpha\otimes df$ for any local sections $\alpha\in\cE$ and $f\in\cO_{\cZ_\bbQ}$,
		\item $\nabla^1\circ\nabla=0$, where $\nabla^1\colon\cE\otimes\omega^1_{\cZ/\cT,\bbQ}\rightarrow\cE\otimes\omega^2_{\cZ/\cT,\bbQ}$ is induced from $\nabla$ by $\nabla^1(\alpha\otimes\eta):=\nabla(\alpha)\wedge\eta+\alpha\otimes d\eta$ for local sections $\alpha\in\cE$ and $\eta\in\omega^1_{\cZ/\cT,\bbQ}$.
		\end{itemize}	
	\item We define the category $\MIC(\cZ/\cT)$ as follows:
		\begin{itemize}
		\item An object of $\MIC(\cZ/\cT)$ is a pair $(\cE,\nabla)$ of a coherent locally free sheaf $\cE$ on $\cZ_\bbQ$ and an integrable log connection $\nabla$ on $\cE$ over $\cT$, 
		\item A morphism $f\colon(\cE',\nabla')\rightarrow(\cE,\nabla)$ in $\MIC(\cZ/\cT)$ is an $\cO_{\cZ_\bbQ}$-linear homomorphism $f\colon\cE'\rightarrow\cE$ which is compatible with the log connections.
		\end{itemize}
	\end{enumerate}
\end{definition}

For an object $(\cE,\nabla)\in\MIC(\cZ/\cT)$, we may define $\nabla^k\colon\cE\otimes\omega^k_{\cZ/\cT,\bbQ}\rightarrow\cE\otimes\omega^{k+1}_{\cZ/\cT,\bbQ}$ for any $k\in\bbN$, in a similar way as $\nabla^1$.
Then they define a complex $\cE\otimes\omega^\bullet_{\cZ/\cT,\bbQ}$.
We often denote $\nabla^k$ simply by $\nabla$.

For a commutative diagram
	\[\xymatrix{
	\cZ'\ar[d]^f\ar[r]&\cT'\ar[d]^g\\
	\cZ\ar[r]&\cT,
	}\]
we define a functor
	\begin{equation}
	(f,g)^*\colon\MIC(\cZ/\cT)\rightarrow\MIC(\cZ'/\cT')
	\end{equation}
by $(f,g)^*(\cE,\nabla):=(f^*\cE,f^*\nabla)$ where $f^*\nabla$ is the integrable log connection which maps a local section
	\[f^*\alpha=1\otimes\alpha\in\cO_{\cZ'_\bbQ}\otimes_{f^{-1}\cO_{\cZ_\bbQ}}f^{-1}\cE=f^*\cE\]
to the image of $\nabla(\alpha)$ under the map
	\[f^{-1}(\cE\otimes_{\cO_{\cZ_\bbQ}}\omega^1_{\cZ/\cT,\bbQ})=f^{-1}\cE\otimes_{f^{-1}\cO_{\cZ_\bbQ}}f^{-1}\omega^1_{\cZ/\cT,\bbQ}\xrightarrow{\id\otimes f^\sharp} f^{-1}\cE\otimes_{f^{-1}\cO_{\cZ_\bbQ}}\omega^1_{\cZ'/\cT',\bbQ}=f^*\cE\otimes_{\cO_{\cZ'_\bbQ}}\omega^1_{\cZ'/\cT',\bbQ},\]
where $f^\sharp\colon f^{-1}\omega^1_{\cZ/\cT,\bbQ}\rightarrow\omega^1_{\cZ'/\cT',\bbQ}$ is the natural map induced by $f$.

In particular, a morphism $f\colon\cZ'\rightarrow\cZ$ over $\cT$ induces a functor
	\begin{equation}
	f^*:=(f,\id_\cT)^*\colon\MIC(\cZ/\cT)\rightarrow\MIC(\cZ'/\cT).
	\end{equation}

In addition, for a fine weak formal log scheme with Frobenius $(\cZ,\phi)$ over $(\cT,\sigma)$, we have a functor
	\begin{equation}
	\phi^*:=(\phi,\sigma)^*\colon\MIC(\cZ/\cT)\rightarrow\MIC(\cZ/\cT).
	\end{equation}

\begin{definition}\label{def: FMIC}
	Let $(\cZ,\phi)$ be a fine weak formal log scheme with Frobenius over $(\cT,\sigma)$.
	We define the category $F\MIC((\cZ,\phi)/(\cT,\sigma))$ as follows:
	\begin{itemize}
	\item An object of $F\MIC((\cZ,\phi)/(\cT,\sigma))$ is a triple $(\cE,\nabla,\Phi)$ where $(\cE,\nabla)\in\MIC(\cZ/\cT)$ and $\Phi\colon\phi^*(\cE,\nabla)\xrightarrow{\cong}(\cE,\nabla)$ is an isomorphism in $\MIC(\cZ/\cT)$, which we call a {\it Frobenius structure} on $(\cE,\nabla)$.
	\item A morphism $f\colon(\cE',\nabla',\Phi')\rightarrow(\cE,\nabla,\Phi)$ in $F\MIC((\cZ,\phi)/(\cT,\sigma))$ is a morphism $f\colon(\cE',\nabla')\rightarrow(\cE,\nabla)$ in $\MIC(\cZ/\cT)$ which is compatible with the Frobenius structures.
	\end{itemize}
	
	For an object $(\cE,\nabla,\Phi)\in F\MIC((\cZ,\phi)/(\cT,\sigma))$, the composition $\cE\xrightarrow{\phi^*}\phi^*\cE\xrightarrow{\Phi}\cE$ and the action of $\phi$ on $\omega^\bullet_{\cZ/\cT,\bbQ}$ together induce a $\phi$-semilinear endomorphism on $\cE\otimes\omega^\bullet_{\cZ/\cT,\bbQ}$, which we often denote by $\varphi$.
\end{definition}

For a morphism $f\colon(\cZ',\phi')\rightarrow(\cZ,\phi)$ of fine weak formal log schemes with Frobenius over $(\cT,\sigma)$ and $(\cE,\nabla,\Phi)\in F\MIC((\cZ,\phi)/(\cT,\sigma))$, the isomorphism
	\begin{equation*}
	f^*\Phi\colon \phi'^*f^*\cE=f^*\phi^*\cE\rightarrow f^*\cE
	\end{equation*}
defines a Frobenius structure on $f^*(\cE,\nabla)$.
Therefore we obtain a functor
	\begin{equation}\label{eq: f*Phi}
	f^*\colon F\MIC((\cZ,\phi)/(\cT,\sigma))\rightarrow F\MIC((\cZ',\phi')/(\cT,\sigma)).
	\end{equation}

%%%%%%%%%%%%%%%%
\subsection{Log stratifications}\label{subsec: Log stratification}
%%%%%%%%%%%%%%%%
The definition of log infinitesimal neighborhood by Kato can be extended to weak formal log schemes.

\begin{proposition-definition}[An overconvergent analogue of {\cite[Remark 5.8]{Ka},\cite[Proposition-Definition 3.2.1]{Shi1}}]
	The natural inclusion from the category of exact closed immersions $\cZ'\hookrightarrow \cZ$ of fine weak formal log schemes such that $\cZ'$ is defined in $\cZ$ by an ideal $\cJ$ with $\cJ^{n+1}=0$ to the category of immersions of fine weak formal log schemes has a right adjoint of the form $(Z\mapsto \cZ)\hookrightarrow (Z\mapsto\cZ')$.
	We call this $\cZ'$ the $n$-th log infinitesimal neighborhood of $Z$ in $\cZ$.
\end{proposition-definition}

\begin{proof}
	For an immersion $Z\hookrightarrow\cZ$, let $\cJ$ be the ideal of the exactification $Z\hookrightarrow\cZ^{\mathrm{ex}}$.
	Then the $n$-th infinitesimal neighborhood of $Z$ in $\cZ$ is given as the exact closed weak formal subscheme of $\cZ^{\mathrm{ex}}$ defined by $\cJ^{n+1}$.
\end{proof}

For a fine weak formal log scheme $\cZ$ over $\cT$ and $n\in\bbN$, let $\cZ\hookrightarrow\cZ(n)$ be the exactification of the diagonal embedding of $\cZ$ into the $(n+1)$-fold product of $\cZ$ over $\cT$.
Let
	\begin{align}\label{eq: projections and diagonal}
	p_i\colon\cZ(1)\rightarrow\cZ \ \ (i=1,2), && p_{ij}\colon\cZ(2)\rightarrow\cZ(1)\ \  (1\leq i<j\leq 3),&& \Delta\colon\cZ\rightarrow\cZ(1)
	\end{align}
be the $i$-th projection, the $(i,j)$-th projection, and the diagonal morphism, respectively.

For a fine weak formal log scheme with Frobenius $(\cZ,\phi)$ over $(\cT,\sigma)$, the product of $\phi$ naturally induces $\phi(n)$ on $\cZ(n)$.
The morphisms $p_i$, $p_{i,j}$, and $\Delta$ are compatible with the Frobenius lifts.

For $n\in\bbN$, let $\cZ^n$ be the $n$-th infinitesimal neighborhood of $\cZ$ in $\cZ(1)$.
Note that the canonical morphism $\cZ_\bbQ\rightarrow\cZ^n_\bbQ$ is homeomorphic, and hence we can regard $\cO_{\cZ^n_\bbQ}$ as a sheaf on $\cZ_\bbQ$.
Moreover, we regard $\cO_{\cZ^n_\bbQ}$ as a left $\cO_{\cZ_\bbQ}$-module by the morphism $\rho_{1,n}^\sharp\colon\cO_{\cZ_\bbQ}\rightarrow\cO_{\cZ^n_\bbQ}$ induced by the composition $\rho_{1,n}\colon\cZ^n\rightarrow\cZ(1)\xrightarrow{p_1}\cZ$, and as a right $\cO_{\cZ_\bbQ}$-module by the morphism $\rho_{2,n}^\sharp\colon\cO_{\cZ_\bbQ}\rightarrow\cO_{\cZ^n_\bbQ}$ induced by the composition $\rho_{2,n}\colon\cZ^n\rightarrow\cZ(1)\xrightarrow{p_2}\cZ$.
Let $\Delta_n\colon\cZ^n\rightarrow\cZ(1)$ be the canonical morphism and $\Delta_n^\sharp\colon\Delta_n^{-1}\cO_{\cZ(1)_\bbQ}\rightarrow\cO_{\cZ^n_\bbQ}$ be the induced morphism.
When we consider Frobenius structures, let $\phi^n\colon\cZ^n\rightarrow\cZ^n$ be the morphism induced by $\phi(1)$ on $\cZ(1)$.
For $n,n'\in\bbN$, let
\[q_{n,n'}\colon\cZ^n\times_{\rho_{2,n},\cZ,\rho_{1,n'}}\cZ^{n'}\rightarrow\cZ(1)\]
be the morphism induced from the morphisms
\begin{align*}\cZ^n\times_{\rho_{2,n},\cZ,\rho_{1,n'}}\cZ^{n'}\rightarrow\cZ^n\xrightarrow{\rho_{1,n}}\cZ,&&
\cZ^n\times_{\rho_{2,n},\cZ,\rho_{1,n'}}\cZ^{n'}\rightarrow\cZ^{n'}\xrightarrow{\rho_{2,n'}}\cZ
\end{align*}
where the first morphism of each composition is the canonical projection.
Let
\[\delta_{n,n'}\colon\cZ^n\times_{\rho_{2,n},\cZ,\rho_{1,n'}}\cZ^{n'}\rightarrow\cZ^{n+n'}\]
be the morphism induced from $q_{n,n'}$ by the universality of $\cZ^{n+n'}$.

\begin{definition}
	Let $\cZ$ be a fine weak formal log scheme over $\cT$.
	\begin{enumerate}
	\item Let $\cE$ be a coherent locally free sheaf on $\cZ_\bbQ$.
		A {\it log stratification} on $\cE$ over $\cT$ is a family of $\cO_{\cZ^n_\bbQ}$-linear isomorphisms
			\[\epsilon_n\colon\cO_{\cZ^n_\bbQ}\otimes_{\cO_{\cZ_\bbQ}}\cE\xrightarrow{\cong}\cE\otimes_{\cO_{\cZ_\bbQ}}\cO_{\cZ^n_\bbQ}\]
		for $n\in\bbN$ satisfying the following conditions:
		\begin{itemize}
		\item $\epsilon_0=\id$,
		\item For $m>n$, $\epsilon_m$ modulo $\Ker(\cO_{\cZ^m_\bbQ}\rightarrow\cO_{\cZ^n_\bbQ})$ coincides with $\epsilon_n$,
		\item For any $n$ and $m$,
			\begin{eqnarray*}
			(\id\otimes\delta_{n,m}^*)\circ\epsilon_{n+m}=(\epsilon_n\otimes\id)\circ(\id\otimes\epsilon_{m})\circ(\delta_{n,m}^*\otimes\id)\colon\\
			\cO_{\cZ^{n+m}_\bbQ}\otimes\cE\rightarrow\cE\otimes\cO_{\cZ^n_\bbQ}\otimes\cO_{\cZ^m_\bbQ}
			\end{eqnarray*}
			holds.
		\end{itemize}
	\item We define the category $\Str(\cZ/\cT)$ as follows:
		\begin{itemize}
		\item An object of $\Str(\cZ/\cT)$ is a pair $(\cE,\{\epsilon_n\})$ of a coherent locally free sheaf $\cE$ on $\cZ_\bbQ$ and a log stratification $\{\epsilon_n\}$ on $\cE$ over $\cT$,
		\item A morphism $f\colon(\cE',\{\epsilon'_n\})\rightarrow(\cE,\{\epsilon_n\})$ in $\Str(\cZ/\cT)$ is an $\cO_{\cZ_\bbQ}$-linear homomorphism $f\colon\cE'\rightarrow\cE$ which is compatible with the log connections.
		\end{itemize}
	\end{enumerate}
\end{definition}

For a commutative diagram
	\[\xymatrix{
	\cZ'\ar[d]^f\ar[r]&\cT'\ar[d]^g\\
	\cZ\ar[r]&\cT,
	}\]
we define a functor
	\begin{equation}
	(f,g)^*\colon\Str(\cZ/\cT)\rightarrow\Str(\cZ'/\cT')
	\end{equation}
by $(f,g)^*(\cE,\{\epsilon_n\}):=(f^*\cE,\{f^*\epsilon_n\})$ where $f^*\epsilon_n$ is the composition
	\[\cO_{\cZ'^n_\bbQ}\otimes_{\cO_{\cZ'_\bbQ}}f^*\cE=f^{n,*}(\cO_{\cZ^n_\bbQ}\otimes_{\cO_{\cZ_\bbQ}}\cE)\xrightarrow{f^{n,*}(\epsilon_n)}f^{n,*}(\cE\otimes_{\cO_{\cZ_\bbQ}}\cO_{\cZ^n_\bbQ})=f^*\cE\otimes_{\cO_{\cZ'_\bbQ}}\cO_{\cZ'^n_\bbQ},\]
	and $f^n\colon\cZ'^n\rightarrow\cZ^n$ is induced by $(f,f)\colon\cZ'\times_{\cT'}\cZ'\rightarrow\cZ\times_\cT\cZ$.
	
In particular, a morphism $f\colon\cZ'\rightarrow\cZ$ over $\cT$ induces a functor
	\begin{equation}
	f^*:=(f,\id_\cT)^*\colon\Str(\cZ/\cT)\rightarrow\Str(\cZ'/\cT).
	\end{equation}

In addition, for a fine weak formal log scheme $(\cZ,\phi)$ over $(\cT,\sigma)$, we have a functor
	\begin{equation}
	\phi^*:=(\phi,\sigma)^*\colon\Str(\cZ/\cT)\rightarrow\Str(\cZ/\cT).
	\end{equation}

\begin{definition}
	Let $(\cZ,\phi)$ be a weak formal log scheme with Frobenius over $(\cT,\sigma)$.
	We define the category $F\Str((\cZ,\phi)/(\cT,\sigma))$ as follows:
	\begin{itemize}
	\item An object of $F\Str((\cZ,\phi)/(\cT,\sigma))$ is a triple $(\cE,\{\epsilon_n\},\Phi)$ where $(\cE,\{\epsilon_n\})\in\Str(\cZ/\cT)$ and $\Phi\colon\phi^*(\cE,\{\epsilon_n\})\xrightarrow{\cong}(\cE,\{\epsilon_n\})$ is an isomorphism in $\Str(\cZ/\cT)$, which we call a {\it Frobenius structure} on $(\cE,\{\epsilon_n\})$.
	\item A morphism $f\colon(\cE',\{\epsilon'_n\},\Phi')\rightarrow(\cE,\{\epsilon_n\},\Phi)$ in $F\Str((\cZ,\phi)/(\cT,\sigma))$ is a morphism $f\colon(\cE',\{\epsilon'_n\})\rightarrow(\cE,\{\epsilon_n\})$ in $\Str(\cZ/\cT)$ which is compatible with the Frobenius structures.
	\end{itemize}
\end{definition}

Note that, a morphism $f\colon(\cZ',\phi')\rightarrow(\cZ,\phi)$ of fine weak formal log schemes with Frobenius over $(\cT,\sigma)$ induces a functor
	\begin{equation}\label{eq: f*Phi Str}
	f^*\colon F\Str((\cZ,\phi)/(\cT,\sigma))\rightarrow F\Str((\cZ',\phi')/(\cT,\sigma)).
	\end{equation}
in a similar way as \eqref{eq: f*Phi}.

Next we study the correspondence between log connections and log stratifications.
For a fine weak formal log scheme $\cZ$ over $\cT$ and an element $x\in\Gamma(\cZ,\cN_\cZ)$, we denote by $u(x)\in\Gamma(\cZ(1),\cN_{\cZ(1)})$ the unique element such that $p_2^*(x)=p_1^*(x)u(x)$.
Note that $u(x)$ belongs to $\Gamma(\cZ(1),\cO_{\cZ(1)}^\times)\subset \Gamma(\cZ(1),\cN_{\cZ(1)})$.
For $n\in\bbN$, let $u^n(x)$ be the image of $u(x)$ in $\Gamma(\cZ^n,\cO_{\cZ^n}^\times)$.

\begin{proposition}[{An overconvergent analogue of \cite[Proposition 3.2.5]{Shi1}}]\label{prop: omega}
	Let $\cZ$ be a fine weak formal log scheme over $\cT$.
	There is a canonical isomorphism of $\cO_{\cZ_\bbQ}$-modules
		\begin{equation}\label{eq: omega}
		\Psi\colon\Ker(\cO_{\cZ^1_\bbQ}\rightarrow\cO_{\cZ_\bbQ})\cong\omega^1_{\cZ/\cT,\bbQ},
		\end{equation}
	which satisfies the following condition:
	For a section $x$ of $\cN_{\cZ}$, the image of $u^1(x)-1\in\Ker(\cO_{\cZ^1_\bbQ}\rightarrow\cO_{\cZ_\bbQ})$ 
	by $\Psi$ is equal to $d\log x$.
\end{proposition}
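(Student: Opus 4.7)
The plan is to construct the inverse map $\Psi^{-1}\colon\omega^1_{\cZ/\cT,\bbQ}\to\cI$, where $\cI:=\Ker(\cO_{\cZ^1_\bbQ}\to\cO_{\cZ_\bbQ})$, by appealing to the universal property of the sheaf of log differentials, and then verify bijectivity by a local calculation using log smoothness. First I would record that $\cI$ is a square-zero ideal: by definition $\cZ^1$ is the first log infinitesimal neighborhood of $\cZ$ in $\cZ(1)$, hence the ideal of $\cZ$ in $\cZ^1$ has square zero, and inverting $p$ preserves this. The two projections $\rho_{i,1}^\sharp\colon\cO_{\cZ_\bbQ}\to\cO_{\cZ^1_\bbQ}$ ($i=1,2$) are sections of the augmentation, and their difference lands in $\cI$. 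Similarly, exactness of the closed immersion $\cZ\hookrightarrow\cZ(1)$ (which is built into the construction of $\cZ(1)$ via exactification in Definition \ref{def: exactification}) implies that for each $x\in\cN_\cZ$ the two pullbacks of $x$ to $\cN_{\cZ^1}$ agree modulo the subgroup $1+\cI\subset\cO_{\cZ^1_\bbQ}^\times$, which yields the unique $u(x)\in\cO_{\cZ^1_\bbQ}^\times$ with $\rho_{2,1}^\sharp(x)=\rho_{1,1}^\sharp(x)u(x)$.

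Next I would define two assignments
\[
D\colon\cO_{\cZ_\bbQ}\to\cI,\quad D(f):=\rho_{2,1}^\sharp(f)-\rho_{1,1}^\sharp(f),
\qquad
D_{\log}\colon\cN_\cZ\to\cI,\quad D_{\log}(x):=u(x)-1,
\]
and check, using only that $\cI^2=0$, that $D$ is an $\cO_{\cT_\bbQ}$-linear derivation, that $D_{\log}$ is a monoid homomorphism into the additive group $\cI$ (since $(1+D_{\log}(x))(1+D_{\log}(y))=1+D_{\log}(x)+D_{\log}(y)$ in $1+\cI$), and that the compatibility $D(\alpha(x))=\alpha(x)D_{\log}(x)$ with the structural map $\alpha\colon\cN_\cZ\to\cO_\cZ$ holds. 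By the universal property of $\omega^1_{\cZ/\cT,\bbQ}$, the pair $(D,D_{\log})$ factors uniquely through a morphism $\omega^1_{\cZ/\cT,\bbQ}\to\cI$ sending $df\mapsto D(f)$ and $d\log x\mapsto D_{\log}(x)$. Define $\Psi$ as the inverse, pending bijectivity.

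To prove that this morphism is an isomorphism, I would reduce to the local case by choosing, at each point, an étale morphism $\cZ\to\cT\langle t_1,\dots,t_r,m_1,\dots,m_s\rangle^\dagger$ where the first $r$ coordinates are ordinary and the last $s$ come from a chart of the log structure — this is possible by log smoothness. Then $\omega^1_{\cZ/\cT,\bbQ}$ is locally free with basis $dt_i,\;d\log m_j$, and a direct computation of the exactified self-product $\cZ(1)$ in these coordinates, followed by reduction modulo the square of the augmentation ideal, shows that $\cI$ is locally free with the corresponding basis $\rho_{2,1}^\sharp(t_i)-\rho_{1,1}^\sharp(t_i)$ and $u(m_j)-1$. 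Comparing bases gives bijectivity.

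The main obstacle is the explicit local description of $\cZ^1_\bbQ$. Unlike Shiho's formal case, one has to track the pseudo-wcfg structure through the formation of the diagonal, its exactification (Definition \ref{def: exactification}), the first infinitesimal neighborhood, and finally the generic fiber functor of Definition \ref{def: gen fib}. One must verify that these operations commute appropriately so that $\cO_{\cZ^1_\bbQ}/\cI^2$ really is, locally, a free $\cO_{\cZ_\bbQ}$-module on $1,\rho_{2,1}^\sharp(t_i)-\rho_{1,1}^\sharp(t_i),u(m_j)-1$. Given that weak completion and rationalization both behave well on \'etale coordinate charts, this is a matter of careful bookkeeping that parallels the formal case, and no essentially new phenomenon arises.
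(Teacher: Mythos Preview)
Your argument is correct and is essentially the standard one—construct the candidate inverse via the universal property of log differentials and then check bijectivity in local coordinates—which is presumably what Shiho does in the cited reference. The paper, however, does not redo this in the pseudo-wcfg setting. Instead it argues in two lines: when $\cZ$ is $p$-adic, Shiho's proof applies verbatim; for general $\cZ$, the generic fiber is by construction the union $\cZ_\bbQ=\bigcup_k\cZ_{k,\bbQ}$ over $p$-adic weak formal schemes $\cZ_k$ (Definition~\ref{def: gen fib}), and the isomorphism on each $\cZ_{k,\bbQ}$ assembles to one on $\cZ_\bbQ$.

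The practical difference is exactly the obstacle you identified. Your route requires tracking the diagonal, exactification, first infinitesimal neighborhood, and generic-fiber functor through the pseudo-wcfg formalism by hand; the paper's reduction replaces that bookkeeping with the (easier) observation that these constructions are compatible with passing to the inductive system $\{\cZ_k\}$, so one never leaves the $p$-adic situation already handled by Shiho. Your approach is more self-contained and makes the map and its inverse explicit; the paper's is shorter but leans entirely on the cited result.
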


\begin{proof}
	For a pseudo-wcfg algebra $A$ and a finite $A$-module $M$, the $A$-algebra $A\oplus M$ defined by $(a,m)(a',m'):=(aa',am'+a'm)$ is also pseudo-wcfg.
	Thus for any coherent sheaf $\cE$ on $\cZ$, we may construct a weak formal scheme $\cZ\oplus\cE$ by gluing the above construction.
	Let $\rho\colon\cZ\oplus\cE\rightarrow\cZ$ and $\iota\colon\cZ\rightarrow\cZ\oplus\cE$ be the morphisms induced by the canonical injection and projection of the coordinate rings, respectively. 
	We endow $\cZ\oplus\cE$ with the pull-back log structure $\cN_{\cZ\oplus\cE}:=\rho^*\cN_\cZ$, and regard it as a weak formal log scheme over $\cT$.
	
	Let $\mathrm{Def}(\cZ/\cT,\cE)$ be the set of retractions over $\cT$ of $\iota$.
	Note that $\mathrm{Def}(\cZ/\cT,\cE)$ is non-empty, because it contains $\rho$.
	Then by \cite[Lemma 1.51]{EY} we see that there exists a natural simply transitive action of $\Hom_{\cO_\cZ}(\omega^1_{\cZ/\cT},\cE)$ on $\mathrm{Def}(\cZ/\cT,\cE)$.
	In particular, we have a bijection
	\begin{align*}
	\Hom_{\cO_\cZ}(\omega^1_{\cZ/\cT},\cE)\rightarrow\mathrm{Def}(\cZ/\cT,\cE);\ \alpha\mapsto \alpha\cdot \rho.
	\end{align*}
	Using this bijection, the statement follows by the same arguments as in \cite[\S 3.2]{Shi1}.
\end{proof}

\begin{remark}
	Let $(\cZ,\phi)$ be a smooth weak formal log scheme with Frobenius over $(\cT,\sigma)$.
	Since the functor $\Psi$ is canonical, it commutes with the natural action of $\phi$ on the both sides of \eqref{eq: omega}.
\end{remark}

\begin{lemma}\label{lem: OZn is free}
	If $\cZ$ is a log smooth weak formal log scheme over $\cT$, then locally on $\cZ$ there exist elements $x_1,\ldots,x_d\in\Gamma(\cZ,\cN_\cZ)$ such that
	the morphism of weak formal schemes
	\[\cZ(1)\rightarrow\cZ\times_{\Spwf R}\Spwf R\llbracket t_1,\ldots,t_d\rrbracket\]
	defined by $p_1\colon\cZ(1)\rightarrow\cZ$ and an association $t_i\mapsto u(x_i)-1$ is an isomorphism.
\end{lemma}

\begin{proof}
	We note that $p_1\colon\cZ(1)\rightarrow\cZ$ is log smooth and $p_1^*\cN_\cZ\rightarrow\cN_{\cZ(1)}$ is an isomorphism by construction, hence the underlying morphism of $p_1$ is smooth as a morphism of weak formal schemes \cite[Lemma 1.59]{EY}.
	Moreover the morphism $\cZ(1)\rightarrow\cZ\times_\cT\cZ$ is log \'{e}tale \cite[Proposition 1.56]{EY}.
	Therefore by \cite[Proposition 1.54]{EY} we have isomorphisms
	\[\Omega^1_{\cZ(1)/\cZ}=\omega^1_{\cZ(1)/\cZ}\cong\omega^1_{\cZ\times_\cT\cZ/\cZ}\cong\omega^1_{\cZ/\cT},\]
	where the sheaves of (log) differentials are with respect to the first projection.
	
	Since $\omega^1_{\cZ/\cT}$ is locally free \cite[Proposition 1.54]{EY}, one can find locally a basis of $\omega^1_{\cZ/\cT}$ of the form $d\log x_1,\ldots,d\log x_d$ with $x_i\in\Gamma(\cZ,\cN_\cZ)$.
	As $d\log p_1^*(x_i)=0$ in $\omega^1_{\cZ(1)/\cZ}$, we have $d\log p_2^*(x_i)=u(x)^{-1}du(x)$.
	Thus the elements $du(x_i)=d(u(x_i)-1)$ for $i=1,\ldots,d$ give a basis of $\Omega^1_{\cZ(1)/\cZ}$, hence the proposition follows from the strong fibration lemma \cite[Proposition 1.38]{EY}.
\end{proof}

\begin{proposition}\label{prop: conn str}
	\begin{enumerate}
	\item\label{item: conn str1} Let $\cZ$ be a log smooth weak formal log scheme over $\cT$.
		Then there exists a canonical equivalence of categories
			\begin{equation}\label{eq: MIC STR 1}\Str(\cZ/\cT)\xrightarrow{\cong}\MIC(\cZ/\cT).\end{equation}
	\item\label{item: conn str2} Let $(\cZ,\phi)$ be a log smooth weak formal log scheme with Frobenius over $(\cT,\sigma)$.
		Then there exists a canonical equivalence of categories
			\begin{equation}\label{eq: MIC STR 2}F\Str((\cZ,\phi)/(\cT,\sigma))\xrightarrow{\cong}F\MIC((\cZ,\phi)/(\cT,\sigma)).\end{equation}
	\end{enumerate}
\end{proposition}

\begin{proof}
	We associate a formal groupoid $\sZ$ to $\cZ$ over $\cT$ in the same way as \cite[Example 1.2.10]{Shi2}.
	By Lemma \ref{lem: OZn is free} and the equation $\delta_{n,m}^*u^{n+m}(x_i)=u^n(x_i)\otimes u^{m}(x_i)$ for any $n,m\in\bbN$, we see that the elements $(u^n(x_i)-1)$'s satisfy the condition of \cite[Definition 1.2.2]{Shi2}, i.e. $\sZ$ is differentially log smooth.
	Now \eqref{item: conn str1} follows by applying \cite[Porposition 1.2.7]{Shi2} to $\sZ$.
	Finally, \eqref{item: conn str2} follows immediately from \eqref{item: conn str1}.
\end{proof}

According to the constructions in \cite[\S 3.2]{Shi1}, the equivalence \eqref{eq: MIC STR 1} is described as follows:

Let $\cZ$ be a weak formal log scheme over $\cT$, and $\cE$ a coherent locally free sheaf on $\cZ_\bbQ$.
If a log stratification $\{\epsilon_n\}$ on $\cE$ is given, then for any local section $\alpha\in\cE$ we have
	\[\epsilon_1(1\otimes\alpha)-\alpha\otimes 1\in \Ker(\cE\otimes_{\cO_{\cZ_\bbQ}}\cO_{\cZ^1_\bbQ}\rightarrow\cE\otimes_{\cO_{\cZ_\bbQ}}\cO_{\cZ_\bbQ})=\cE\otimes_{\cO_{\cZ_\bbQ}}\Ker(\cO_{\cZ^1_\bbQ}\rightarrow\cO_{\cZ_\bbQ}),\]
and the log connection $\nabla$ is defined by
	\begin{equation}\label{eq: log connection}
	\nabla(\alpha):=(\mathrm{id}\otimes\Psi)(\epsilon_1(1\otimes\alpha)-\alpha\otimes 1).
	\end{equation}

Conversely, if $\cZ$ is log smooth, a log stratification is restored from an integrable connection as follows.
Considering locally, take elements $x_1,\ldots,x_d\in\cN_\cZ$ as in Lemma \ref{lem: OZn is free}, and 
let $\partial^{\log}_i\colon\cE\rightarrow\cE$ for $i=1,\ldots,d$ be the differential operators defined by
	\begin{equation}\label{eq: log diff op}
	\nabla(\alpha)=\sum_{i=1}^d\partial^{\log}_i(\alpha)d\log x_i.
	\end{equation}
	
\begin{proposition}\label{prop: taylor series}
	Let $\cZ$ be a log smooth weak formal log scheme over $\cT$ and $(\cE,\nabla)\in\MIC(\cZ/\cT)$.
	Suppose that there exist elements $x_1,\ldots,x_d\in\Gamma(\cZ,\cN_\cZ)$ as in Lemma \ref{lem: OZn is free}.
	Then the corresponding log stratification is determined by
	\begin{equation}
		\label{eq: taylor}\epsilon_n(1\otimes\alpha)
		=\sum_{\mathbf{k}=(k_i)_i\in\bbN^d}\frac{(\partial_1^{\log{}})^{k_1}\circ\cdots\circ(\partial_d^{\log{}})^{k_d}(\alpha)}{k_1!\cdots k_d!}\otimes(\log u(x_1))^{k_1}\cdots(\log u(x_d))^{k_d} \ \mathrm{mod }\  \cI^{n+1}
	\end{equation}
	for any $\alpha\in \cE$, where
	\[\log u(x):=\log(1+(u(x)-1))=\sum_{n\geq 1}\frac{(-1)^{n-1}}{n}(u(x)-1)^n,\]
and $\cI$ is the ideal of $\cZ_\bbQ$ in $\cZ(1)_\bbQ$.
\end{proposition}

To prove the proposition, we will prepare some notions following \cite[\S 3.2]{Shi1} and prove a few lemmas.
For a coherent sheaf on $\cZ$ and $n\in\bbN$, let
\[\cD\mathrm{iff}^n(\cE,\cE):=\sheafhom_{\cO_{\cZ_\bbQ}}(\cO_{\cZ^n_\bbQ}\otimes\cE,\cE).\]
For $f\in\cD\mathrm{iff}^n(\cE,\cE)$ and $g\in\cD\mathrm{iff}^m(\cE,\cE)$ we define the product $f\ast g\in\cD\mathrm{iff}^{n+m}(\cE,\cE)$ to be the composition
\[\cO_{\cZ^{n+m}_\bbQ}\otimes\cE\xrightarrow{\delta_{n,m}^*\otimes 1}\cO_{\cZ^n_\bbQ}\otimes\cO_{\cZ^m_\bbQ}\otimes\cE\xrightarrow{1\otimes g}\cO_{\cZ^n}\otimes\cE\xrightarrow{f}\cE.\]
Then
\[\cD\mathrm{iff}(\cE,\cE):=\varinjlim_n\cD\mathrm{iff}^n(\cE,\cE)\]
carries a natural $\varprojlim_n\cO_{\cZ^n}$-module structure and an $\cO_\cZ$-algebra structure given by $\ast$.

Suppose elements $x_1,\ldots,x_d\in\Gamma(\cZ,\cN_\cZ)$ as in Lemma \ref{lem: OZn is free} exist.
For $\mathbf{m}=(m_i)_i\in\bbN^d$, put $\xi_{\mathbf{m}}:=\prod_{i=1}^d(u(x_i)-1)^{m_i}\in\Gamma(\cZ(1)_{\bbQ},\cO_{\cZ(1)_\bbQ})$, and denote its image in $\Gamma(\cZ^n_{\bbQ},\cO_{\cZ^n_\bbQ})$ by $\xi_{\mathbf{m},n}$.
Then by Lemma \ref{lem: OZn is free}, the elements $\xi_{\mathbf{m},n}$ for $\lvert\mathbf{m}\rvert\leq n$ give a free basis of $\cO_{\cZ^n}$ over $\cO_\cZ$.
Let $\xi_{\mathbf{m},n}^*\in\cD\mathrm{iff}^n(\cO_{\cZ_\bbQ},\cO_{\cZ_\bbQ})$ be the dual basis.
Its image in $\cD\mathrm{iff}(\cO_{\cZ_\bbQ},\cO_{\cZ_\bbQ})$ is independent of $n\geq\lvert\mathbf{m}\rvert$, which we denote by $\xi_{\mathbf{m}}^*$.
For $i=1,\ldots,d$, let $\mathbf{1}_i\in\bbN^d$ be the elements whose $i$-th entry is $1$ and other entries are $0$.

\begin{lemma}\label{lem: dual basis}
	The product of $\cD\mathrm{iff}(\cO_{\cZ_\bbQ},\cO_{\cZ_\bbQ})$ is commutative, and we have equations
	\begin{align}
	\label{eq: dual basis 2}&\xi_{\mathbf{m}}^*=\xi_{m_1\mathbf{1}_1}^*\ast \xi_{m_2\mathbf{1}_2}^*\ast\cdots\ast\xi_{m_d\mathbf{1}_d}^*&&(\mathbf{m}=(m_i)_i\in\bbN^d),\\
	\label{eq: dual basis 1}&\xi_{k\mathbf{1}_i}^*=\frac{1}{k!}\xi_{\mathbf{1}_i}^*\ast (\xi_{\mathbf{1}_i}^*-1)\ast\cdots\ast(\xi_{\mathbf{1}_i}^*-k+1)&&(k\geq 1,\ 1\leq i\leq d).
	\end{align}
\end{lemma}
\begin{proof}
	Note that, for any $n,m\in\bbN$ and $i=1,\ldots,d$, we have
	\begin{align*}
	\delta_{n,m}^*(u^{n+m}(x_i)-1)&=u^n(x_i)\otimes u^m(x_i)-1\otimes 1\\
	&=(u^n(x_i)-1)\otimes (u^m(x_i)-1)+(u^n(x_i)-1)\otimes 1+1\otimes(u^m(x_i)-1).
	\end{align*}
	Therefore, for any $\mathbf{m},\mathbf{n},\mathbf{r}\in\bbN^d$, we have
	\begin{align}
	\label{eq: dual basis proof}&\xi_{\mathbf{m},\lvert\mathbf{m}\rvert}^*\ast \xi_{\mathbf{n},\lvert\mathbf{n}\rvert}^*(\xi_{\mathbf{r},\lvert\mathbf{m}\rvert+\lvert\mathbf{n}\rvert})\\
	\nonumber&=\xi_{\mathbf{m},\lvert\mathbf{m}\rvert}^*\circ(1\otimes\xi_{\mathbf{n},\lvert\mathbf{n}\rvert}^*)\circ\delta^*_{\lvert\mathbf{m}\rvert,\lvert\mathbf{n}\rvert}(\xi_{\mathbf{r},\lvert\mathbf{m}\rvert+\lvert\mathbf{n}\rvert})\\
	\nonumber&=\xi_{\mathbf{m},\lvert\mathbf{m}\rvert}^*\circ(1\otimes\xi_{\mathbf{n},\lvert\mathbf{n}\rvert}^*)\left(\prod_{i=1}^d\left(\sum_{\substack{a_i,b_i,c_i\in\bbN\\ a_i+b_i+c_i=r_i}}\frac{r_i!}{a_i!b_i!c_i}\left(u^{\lvert\mathbf{m}\rvert}(x_i)-1\right)^{a_i+b_i}\otimes\left(u^{\lvert\mathbf{n}\rvert}(x_i)-1\right)^{b_i+c_i}\right)\right)\\
	\nonumber&=\prod_{i=1}^d\sum_{(\sharp)}\frac{r_i!}{a_i!b_i!c_i!},
	\end{align}
	where the sum $(\sharp)$ is take over $a_i,b_i,c_i\in\bbN$ satisfying
	\begin{align*}
	a_i+b_i+c_i=r_i,&&a_i+b_i=m_i,&&b_i+c_i=n_i.
	\end{align*}
	This shows the commutativity of the product.
	
	If $m_i=0$ for some $i$, we have 
	\[\xi_{\mathbf{m},\lvert\mathbf{m}\rvert}^*\ast\xi_{k\mathbf{1}_i,k}^*(\xi_{\mathbf{r},\lvert\mathbf{m}\rvert+k})=\begin{cases}1&(\mathbf{r}=\mathbf{m}+k\mathbf{1}_i)\\ 0&(\text{otherwise})\end{cases}\]
	for any $k\in\bbN$ as a special case of \eqref{eq: dual basis proof},
	namely we have $\xi_{\mathbf{m}}^*\ast\xi_{k\mathbf{1}_i}^*=\xi_{\mathbf{m}+k\mathbf{1}_i}^*$.
	This implies the equation \eqref{eq: dual basis 2}.
	
	Again as a special case of \eqref{eq: dual basis proof}, for any $k\in\bbN$ and $i=1,\ldots,d$ we have
	\begin{equation}\label{eq: dual basis proof 1}
	\xi_{k\mathbf{1}_i}^*\ast \xi_{\mathbf{1}_i}^*=k\xi_{\mathbf{m}}^*+(k+1)\xi_{(k+1)\mathbf{1}_i}^*.
	\end{equation}
	Now the equation \eqref{eq: dual basis 1} follows again by induction on $k$.
	Indeed, \eqref{eq: dual basis 1} clearly holds for $k=1$.
	If \eqref{eq: dual basis 1} holds for $k=\ell$, then by \eqref{eq: dual basis proof 1} we have
	\begin{align*}
	\xi_{(\ell+1)\mathbf{1}_i}^*&
	=\frac{1}{\ell+1}\xi_{\ell\mathbf{1}_i}^*\ast\left(\xi_{\mathbf{1}_i}^*-\ell\right)=\frac{1}{(\ell+1)!}\xi_{\mathbf{1}_i}^*\ast(\xi_{\mathbf{1}_i}^*-1)\ast\cdots\ast(\xi_{\mathbf{1}_i}^*-\ell),
	\end{align*}
	hence \eqref{eq: dual basis 1} holds also for $k=\ell+1$.
\end{proof}

For positive integers $k,n$ with $k\leq n$, we define a rational number $q_{n,k}$ by the equation of polynomials
\begin{equation}\label{eq: qnk}
X(X-1)\cdots(X-n+1)=\sum_{k=1}^nq_{n,k}X^k.
\end{equation}

\begin{lemma}\label{lem: Taylor expansion}
	For any $k\geq 1$, we have an equation of formal power series
	\[(\log(1+X))^k:=\left(\sum_{n\geq 1}\frac{(-1)^{n-1}}{n}X^n\right)^k=\sum_{n\geq k}\frac{k!}{n!}q_{n,k}X^n.\]
\end{lemma}

\begin{proof}
	One can show that
	\[\frac{d^n}{dX^n}\left(\log(1+X)\right)^k=\frac{1}{(1+X)^n}\sum_{j=1}^nq_{n,j}k(k-1)\cdots(k-j+1)\left(\log(1+X)\right)^{k-j}\]
	by the induction on $n$.
	Thus we have
	\[\frac{d^n}{dX^n}\left(\log(1+X)\right)^k|_{X=0}=\begin{cases}
	k!q_{n,k}&(n\geq k)\\ 0&(n<k).
	\end{cases}\]
	This shows our assertion.
\end{proof}

\begin{proof}[Proof of Proposition \ref{prop: taylor series}]
	We follow the construction of \cite[\S 3.2]{Shi1}.
	Let $\epsilon_1\colon\cO_{\cZ^1_\bbQ}\otimes\cE\rightarrow\cE\otimes\cO_{\cZ^1_\bbQ}$ be the $\cO_{\cZ^1_\bbQ}$-linear homomorphism defined by $\epsilon_1(1\otimes\alpha)=(1\otimes\Psi)^{-1}(\nabla(\alpha))+\alpha\otimes 1$ for any $\alpha\in\cE$.
	Let $\varphi\colon\cD\mathrm{iff}(\cO_{\cZ_\bbQ},\cO_{\cZ_\bbQ})\rightarrow\cD\mathrm{iff}(\cE,\cE)$ be the order-preserving $\varprojlim_n\cO_{\cZ^n_\bbQ}$-linear ring homomorphism such that the induced morphism $\varphi^1\colon\cD\mathrm{iff}^1(\cO_{\cZ_\bbQ},\cO_{\cZ_\bbQ})\rightarrow\cD\mathrm{iff}^1(\cE,\cE)$ associates $(1\otimes f)\circ\epsilon_1$ to $f\in\cD\mathrm{iff}^1(\cO_{\cZ_\bbQ},\cO_{\cZ_\bbQ})$.
	Note that such $\varphi$ uniquely exists (see \cite[Proposition 3.2.9]{Shi1}).
	Then for any $\alpha\in\cE$ and $i=1,\ldots,d$ we have
	\begin{align}\label{eq: taylor series proof}
	\varphi^*(\xi_{\mathbf{1}_i}^*)(1\otimes\alpha)&=\varphi^{1,*}(\xi_{\mathbf{1}_i,1}^*)(1\otimes\alpha)=(1\otimes\xi_{\mathbf{1}_i,1}^*)\circ\epsilon_1(1\otimes\alpha)\\
	\nonumber&=(1\otimes\xi_{\mathbf{1}_i,1}^*)\left(\alpha\otimes 1+\sum_{j=1}^d\partial^{\log}_j(\alpha)\otimes (u(x_i)-1)\right)=\partial^{\log}_i(\alpha).\end{align}
	
	For $n\geq 1$, let
	\[\varphi^{n,*}\colon \sheafhom(\sheafhom(\cO_{\cZ^n_\bbQ}\otimes\cE,\cE),\cE)\rightarrow\sheafhom(\sheafhom(\cO_{\cZ^n_\bbQ},\cO_{\cZ_\bbQ}),\cE)\]
	be the homomorphism induced by $\varphi$.
	By Lemma \ref{lem: dual basis} and \eqref{eq: qnk}, for any $\mathbf{m}\in\bbN^d$ we have
	\[\xi_{\mathbf{m}}^*=\prod_{i=1}^d\left(\sum_{k_i\leq m_i}\frac{q_{m_i,k_i}}{m_i!}(\xi_{\mathbf{1}_i}^*)^{\ast k_i}\right)=\sum_{\substack{\mathbf{k},\mathbf{m}\in\bbN^d\\ \mathbf{k}\leq\mathbf{m}}}\frac{q_{m_1,k_1}\cdots q_{m_d,k_d}}{m_1!\cdots m_d!}(\xi_{\mathbf{1}_1}^*)^{\ast k_1}\ast\cdots\ast(\xi_{\mathbf{1}_d}^*)^{k_d},\]
	where the inequality $\mathbf{k}\leq\mathbf{m}$ means that $k_i\leq m_i$ for all $i$.
	This with \eqref{eq: taylor series proof} implies that
		\[\varphi^{n,*}(\xi_{\mathbf{m},n}^*)(1\otimes\alpha)=\sum_{\substack{\mathbf{k},\mathbf{m}\in\bbN^d\\ \mathbf{k}\leq\mathbf{m}}}\frac{q_{m_1,k_1}\cdots q_{m_d,k_d}}{m_1!\cdots m_d!}(\partial_1^{\log})^{k_1}\circ\cdots\circ(\partial^{\log}_d)^{k_d}(\alpha).\]
	
	As explained in the proof of \cite[Proposition 3.2.11]{Shi1}, $\epsilon_n$ is defined to be the composite
	\[\cO_{\cZ^n_\bbQ}\otimes\cE\rightarrow\sheafhom(\sheafhom(\cO_{\cZ^n_\bbQ}\otimes\cE,\cE),\cE)\xrightarrow{\varphi^{n,*}}\sheafhom(\sheafhom(\cO_{\cZ^n_\bbQ},\cO_{\cZ_\bbQ}),\cE)\cong\cE\otimes\cO_{\cZ^n_\bbQ},\]
	hence we have
	\begin{align}
	\label{eq: taylor proof 2}\epsilon_n(1\otimes\alpha)&=\sum_{\substack{\mathbf{m}\in\bbN^d\\ \lvert\mathbf{m}\rvert\leq n}}\varphi^{n,*}(\xi_{\mathbf{m},n}^*)(1\otimes\alpha)\otimes\xi_{\mathbf{m},n}\\
	\nonumber&=\sum_{\substack{\mathbf{k},\mathbf{m}\in\bbN^d\\ \lvert\mathbf{m}\rvert\leq n, \mathbf{k}\leq\mathbf{m}}}\frac{q_{m_1,k_1}\cdots q_{m_d,k_d}}{m_1!\cdots m_d!}(\partial_1^{\log})^{k_1}\circ\cdots\circ(\partial^{\log}_d)^{k_d}(\alpha)\otimes\xi_{\mathbf{m},n}.
	\end{align}
	
	On the other hand, by Lemma \ref{lem: Taylor expansion} the right hand side of \eqref{eq: taylor} is written as
	\begin{align*}
	&\sum_{\mathbf{k}\in\bbN^d}\frac{(\partial_1^{\log{}})^{k_1}\circ\cdots\circ(\partial_d^{\log{}})^{k_d}(\alpha)}{k_1!\cdots k_d!}\otimes\prod_{i=1}^d\sum_{m_i\geq k_i}\frac{k_i!}{m_i!}q_{m_i,k_i}(u(x_i)-1)^{m_i}\ \mathrm{mod}\ \cI^{n+1}\\
	&=\sum_{\substack{\mathbf{k},\mathbf{m}\in\bbN^d\\ \mathbf{k}\leq\mathbf{m}}}\frac{q_{m_1,k_1}\cdots q_{m_d,k_d}}{m_1!\cdots m_d!}(\partial_1^{\log})^{k_1}\circ\cdots\circ(\partial^{\log}_d)^{k_d}(\alpha)\otimes\xi_{\mathbf{m}}\ \mathrm{mod}\ \cI^{n+1}.
	\end{align*}
	This coincides with \eqref{eq: taylor proof 2} and we finish the proof.
\end{proof}

\begin{corollary}\label{cor: horizontal}
	Let $\cZ$ be a log smooth weak formal log scheme over $\cT$.
	Let $(\cE,\nabla)\in\MIC(\cZ/\cT)$ and denote by $\{\epsilon_n\}$ the log stratification corresponding to $\nabla$.
	A local section $\alpha\in\cE$ is horizontal with respect to $\nabla$ if and only if $\epsilon_1(1\otimes\alpha)=\alpha\otimes 1$.
\end{corollary}

\begin{proof}
	Immediately follows from \eqref{eq: log connection} or \eqref{eq: taylor}.
\end{proof}

%%%%%%%%%%%%%%%%%%%
\subsection{Taylor isomorphisms}\label{subsec: Taylor isomorphism}
%%%%%%%%%%%%%%%%%%%

\begin{definition}
	Let $\cZ$ be a weak formal log scheme over $\cT$.
	\begin{enumerate}
	\item Let $\cE$ be a coherent locally free sheaf on $\cZ_\bbQ$.
		A {\it Taylor isomorphism} on $\cE$ over $\cT$ is an $\cO_{\cZ(1)_\bbQ}$-linear isomorphism $\epsilon\colon p_2^*\cE\xrightarrow{\cong}p_1^*\cE$ satisfying
			\begin{align}\label{eq: cocycle condition}
			\Delta^*(\epsilon)=\mathrm{id}&&\text{and}&&p_{12}^*(\epsilon)\circ p_{23}^*(\epsilon)=p_{13}^*(\epsilon).
			\end{align}
	\item	We define the category $\Str^\dagger(\cZ/\cT)$	as follows:
		\begin{itemize}
		\item An object of $\Str^\dagger(\cZ/\cT)$ is a pair $(\cE,\epsilon)$ of a coherent locally free sheaf $\cE$ on $\cZ_\bbQ$ and a Taylor isomorphism $\epsilon$ on $\cE$ over $\cT$,
		\item A morphism $f\colon(\cE',\epsilon')\rightarrow(\cE,\epsilon)$ in $\Str^\dagger(\cZ/\cT)$ is an $\cO_{\cZ_\bbQ}$-linear homomorphism $f\colon\cE'\rightarrow\cE$ which is compatible with the Taylor isomorphisms.
		\end{itemize}
	\end{enumerate}
\end{definition}

For a commutative diagram of widenings
	\[\xymatrix{
	\cZ'\ar[d]^f\ar[r]&\cT'\ar[d]^g\\
	\cZ\ar[r]&\cT,
	}\]
we define a functor
	\begin{equation}
	(f,g)^*\colon\Str^\dagger(\cZ/\cT)\rightarrow\Str^\dagger(\cZ'/\cT')
	\end{equation}
by $(f,g)^*(\cE,\epsilon):=(f^*\cE,f^*\epsilon)$ where $f^*\epsilon$ is the composition
	\[p'^*_2f^*\cE=f(1)^*p_2^*\cE\xrightarrow{f(1)^*(\epsilon)}f(1)^*p_1^*\cE=p'^*_1f^*\cE,\]
and $f(1)\colon\cZ'(1)\rightarrow\cZ(1)$ is induced by $(f,f)\colon\cZ'\times_{\cT'}\cZ'\rightarrow\cZ\times_\cT\cZ$.

In particular, a morphism $f\colon\cZ'\rightarrow\cZ$ over $\cT$ induces a functor
	\begin{equation}
	f^*:=(f,\id_\cT)^*\colon\Str^\dagger(\cZ/\cT)\rightarrow\Str^\dagger(\cZ'/\cT).
	\end{equation}
	
In addition, for a weak formal log scheme with Frobenius $(\cZ,\phi)$ over $(\cT,\sigma)$, we have a functor
	\begin{equation}
	\phi^*:=(\phi,\sigma)^*\colon\Str^\dagger(\cZ/\cT)\rightarrow\Str^\dagger(\cZ/\cT).
	\end{equation}
		
\begin{definition}
	Let $(\cZ,\phi)$ be a weak formal log scheme with Frobenius over $(\cT,\sigma)$.
	We define the category $F\Str^\dagger((\cZ,\phi)/(\cT,\sigma))$ as follows:
	\begin{itemize}
	\item An object of $F\Str^\dagger((\cZ,\phi)/(\cT,\sigma))$ is a triple $(\cE,\epsilon,\Phi)$ where $(\cE,\epsilon)\in\Str^\dagger(\cZ/\cT)$ and $\Phi\colon\phi^*(\cE,\epsilon)\xrightarrow{\cong}(\cE,\epsilon)$ is an isomorphism in $\Str^\dagger(\cZ/\cT)$, which we call a {\it Frobenius structure} on $(\cE,\epsilon)$.
	\item A morphism $f\colon(\cE',\epsilon',\Phi')\rightarrow(\cE,\epsilon,\Phi)$ in $F\Str^\dagger((\cZ,\phi)/(\cT,\sigma))$ is a morphism $f\colon(\cE',\epsilon')\rightarrow(\cE,\epsilon)$ in $\Str^\dagger(\cZ/\cT)$ which is compatible with the Frobenius structures.
	\end{itemize}
\end{definition}

Note that, a morphism $f\colon(\cZ',\phi')\rightarrow(\cZ,\phi)$ of weak formal log schemes with Frobenius over $(\cT,\sigma)$ induces a functor
	\begin{equation}\label{eq: f*Phi Taylor}
	f^*\colon F\Str^\dagger((\cZ,\phi)/(\cT,\sigma))\rightarrow F\Str^\dagger((\cZ',\phi')/(\cT,\sigma)).
	\end{equation}
in a similar way as \eqref{eq: f*Phi}.

\begin{lemma}\label{lem: Taylor Str}
	\begin{enumerate}
	\item\label{item: Str ff} Let $\cZ$ be a weak formal log scheme over $\cT$.
		There exists a canonical fully faithful functor
		\begin{equation}\label{eq: Taylor Str 1}\Str^\dagger(\cZ/\cT)\rightarrow \Str(\cZ/\cT).\end{equation}
	\item\label{item: FStr ff} Let $(\cZ,\phi)$ be a weak formal log scheme with Frobenius over $(\cT,\sigma)$.
		There exists a canonical fully faithful functor
		\begin{equation}\label{eq: Taylor Str 2}
		F\Str^\dagger((\cZ,\phi)/(\cT,\sigma))\rightarrow F\Str((\cZ,\phi)/(\cT,\sigma)).
		\end{equation}
	\end{enumerate}
\end{lemma}

\begin{proof}
	A Taylor morphism on $\cZ(1)_\bbQ$ induces a log stratification on $\cZ^n$ by taking the inverse image by $\Delta_n\colon\cZ^n\rightarrow\cZ(1)$.
	This defines a faithful functor \eqref{eq: Taylor Str 1}.
	
	Let $\cJ$ be the ideal of the closed immersion $\cZ\hookrightarrow\cZ(1)$.
	Then there exists an ideal of definition $\cI$ of $\cZ(1)$ containing $\cJ$.
	Since the natural map from a wcfg algebra $A$ into its completion with respect to an ideal of definition is injective, we see that $\cO_{\cZ(1)}\rightarrow\varprojlim_m\cO_{\cZ(1)}/\cI^m$ is injective.
	Thus the map $\cO_{\cZ(1)}\rightarrow\varprojlim_m\cO_{\cZ(1)}/\cJ^m$ is also injective.
	This implies that the functor \eqref{eq: Taylor Str 1} is full, and \eqref{item: Str ff} holds.
	Moreover \eqref{item: FStr ff} also follows by the same reason.
\end{proof}

\begin{definition}
	\begin{enumerate}
	\item Let $\cZ$ be a weak formal log scheme over $\cT$.
		We define the category $\MIC^\dagger(\cZ/\cT)$ to be the essential image of the composite
		\[\Str^\dagger(\cZ/\cT)\xrightarrow{\eqref{eq: Taylor Str 1}}\Str(\cZ/\cT)\xrightarrow{\eqref{eq: MIC STR 1}}\MIC(\cZ/\cT).\] 
	\item Let $(\cZ,\phi)$ be a weak formal log scheme with Frobenius over $(\cT,\sigma)$.
		Similarly to the above, we define the category $F\MIC^\dagger((\cZ,\phi)/(\cT,\sigma))$ to be the essential image of the composite
		\[F\Str^\dagger((\cZ,\phi)/(\cT,\sigma))\xrightarrow{\eqref{eq: Taylor Str 2}}F\Str((\cZ,\phi)/(\cT,\sigma))\xrightarrow{\eqref{eq: MIC STR 2}} F\MIC((\cZ,\phi)/(\cT,\sigma)).\]
	\item An object $(\cE,\nabla)\in\MIC(\cZ/\cT)$ (resp.\  $(\cE,\nabla,\Phi)\in F\MIC((\cZ,\phi)/(\cT,\sigma))$) or its log connection $\nabla$ is said to be {\it overconvergent} if $(\cE,\nabla)\in\MIC^\dagger(\cZ/\cT)$ (resp.\  $(\cE,\nabla,\Phi)\in F\MIC^\dagger((\cZ,\phi)/(\cT,\sigma))$).
	\end{enumerate}
\end{definition}

By definition we obtain the following.

\begin{proposition}
	\begin{enumerate}
	\item Let $\cZ$ be a log smooth weak formal log scheme over $\cT$.
		Then there exists a canonical equivalence of categories
			\begin{equation}\label{eq: Taylor MIC 1}\Str^\dagger(\cZ/\cT)\cong\MIC^\dagger(\cZ/\cT).\end{equation}
	\item Let $(\cZ,\phi)$ be a log smooth weak formal log scheme with Frobenius over $(\cT,\sigma)$.
		Then there exists a canonical equivalence of categories
			\begin{equation}\label{eq: Taylor MIC 2}F\Str^\dagger((\cZ,\phi)/(\cT,\sigma))\cong F\MIC^\dagger((\cZ,\phi)/(\cT,\sigma)).\end{equation}
	\end{enumerate}
\end{proposition}

\begin{remark}
	Through the equivalences in the above proposition, a Taylor isomorphism is restored from the corresponding log connection by the right hand side of \eqref{eq: taylor} without modulo $\cI^{n+1}$.
\end{remark}

%%%%%%%%%%%%%%%%%%%%
\subsection{Log overconvergent isocrystals}\label{subsec: Log overconvergent isocrystal}
%%%%%%%%%%%%%%%%%%%%

\begin{definition}\label{def: widening}
	\begin{enumerate}
	\item  A {\it widening} is a triple $(Z,\cZ,i)$ where $Z$ is a fine log scheme over $k$, $\cZ$ is a weak formal log scheme with respect to $(R,I)$, and $i\colon Z\hookrightarrow\cZ$ is a homeomorphic exact closed immersion over $R$.
		A morphism of widenings $(Z',\cZ',i')\rightarrow(Z,\cZ,i)$ is a pair $(f_0,f)$ of morphisms $f_0\colon Z'\rightarrow Z$ and $f\colon\cZ'\rightarrow\cZ$ such that $f\circ i'=i\circ f_0$.
		We often denote $(f_0,f)$ by $f$ for simplicity.
	\item An {\it $F$-widening} is a quadruple $(Z,\cZ,i,\phi)$ where $(Z,\cZ,i)$ is a widening and $\phi\colon\cZ\rightarrow\cZ$ is a lift of the absolute Frobenius on $\cZ\times_{R^\varnothing}k^\varnothing$.
		For an $F$-widening $(Z,\cZ,i,\phi)$, the absolute Frobenius $F_Z$ on $Z$ and $\phi$ on $\cZ$ define a morphism $(F_Z,\phi)\colon(Z,\cZ,i)\rightarrow(Z,\cZ,i)$, which we denote again by $\phi$.
		A morphism of $F$-widenings $f\colon(Z',\cZ',i',\phi')\rightarrow(Z,\cZ,i,\phi)$ is a morphism $f\colon(Z',\cZ',i')\rightarrow(Z,\cZ,i)$ of widenings such that $f\circ\phi'=\phi\circ f$.	
	\end{enumerate}
\end{definition}

In what follows, we consider a widening $(T,\cT,\iota)$ or an $F$-widening $(T,\cT,\iota,\sigma)$ as a base.
For a morphism $f\colon(Z',\cZ',i')\rightarrow(Z,\cZ,i)$ of widenings, we often denote the induced morphism $\cZ'_\bbQ\rightarrow\cZ_\bbQ$ again by $f$, if there is no afraid of confusion.

\begin{proposition-definition}\label{prop-def: log overconvergent site}
	Let $Y$ be a fine log scheme over $T$.
	We define the a category $\OC(Y/\cT)=\OC(Y/(T,\cT,\iota))$ of $Y$ over $(T,\cT,\iota)$ as follows:
	\begin{itemize}
	\item An object of $\OC(Y/\cT)$ is a quintuple $(Z,\cZ,i,h,\theta)$ where $h\colon(Z,\cZ,i)\rightarrow(T,\cT,\iota)$ is a morphism of widenings, and $\theta\colon Z\rightarrow Y$ is a morphism over $T$.
	\item A morphism $f\colon(Z',\cZ',i',h',\theta')\rightarrow(Z,\cZ,i,h,\theta)$ in $\OC(Y/\cT)$ is a morphism $f\colon(Z',\cZ',i')\rightarrow(Z,\cZ,i)$ of widenings over $(T,\cT,\iota)$ such that $\theta\circ f_0=\theta'$.
	\end{itemize}
	
	We define a covering family in $\OC(Y/\cT)$ to be a family of morphisms $\{f_\lambda\colon (Z_\lambda,\cZ_\lambda,i_\lambda,h_\lambda,\theta_\lambda)\rightarrow(Z,\cZ,i,h,\theta)\}_\lambda$ such that
		\begin{itemize}
		\item The morphism $f_{\lambda}\colon\cZ_\lambda\rightarrow\cZ$ is strict for any $\lambda$,
		\item The family $\{f_{\lambda}\colon\cZ_{\lambda,\bbQ}\rightarrow\cZ_\bbQ\}_\lambda$ is an admissible covering, 
		\item The morphism $(f_{\lambda,0},i_\lambda)\colon Z_\lambda\rightarrow Z\times_\cZ\cZ_\lambda$ is an isomorphism for any $\lambda$.
		\end{itemize}
	Then this defines a Grothendieck topology on $\OC(Y/\cT)$.
	We call $\OC(Y/\cT)$ the {\it log overconvergent site}.
	
	An object $(Z,\cZ,i,h,\theta)$ of $\OC(Y/\cT)$ is said to be \textit{strongly log smooth} if $h\colon\cZ\rightarrow\cT$ is strongly log smooth.
\end{proposition-definition}

\begin{proof}
	To prove that coverings are stable by base change, it suffices to show the following claim:
	For a strict morphism $f\colon\cZ'\rightarrow\cZ$ and any $g\colon\cZ''\rightarrow\cZ$, there exists a canonical isomorohism $\cZ'_\bbQ\times_{\cZ_\bbQ}\cZ''_\bbQ\cong (\cZ'\times_\cZ\cZ'')_\bbQ$.
	We may suppose that $\cZ=\Spwf A$, $\cZ'=\Spwf B$, and $\cZ''=\Spwf C$ are affine.
	Let $J\subset A$, $J'\subset B$, and $J''\subset C$ be ideals of definition such that $JB\subset J'$ and $JC\subset J''$.
	As $f$ is strict, the underlying weak formal scheme of $\cZ'\times_\cZ\cZ''$ is given as the fiber product in the category of weak formal schemes.
	Therefore we have $\cZ'\times_\cZ\cZ''=\Spwf B\otimes_A^\dagger C$ and $L:=J'(B\otimes_A^\dagger C)+J''(B\otimes_A^\dagger C)$ is an ideal of definition of $B\otimes_A^\dagger C$.
	
	For integers $k\geq 0$, there exist natural homomorphisms
	\[B\left[\frac{J'^k}p\right]^\dagger\otimes_{A\left[\frac{J^k}p\right]^\dagger}^\dagger C\left[\frac{J''^k}p\right]^\dagger\rightarrow(B\otimes_A^\dagger C)\left[\frac{L^k}p\right]^\dagger,\]
	and they induce a morphism
	\begin{equation}\label{eq: site proof}
	\cZ'_\bbQ\times_{\cZ_\bbQ}\cZ''_\bbQ\rightarrow (\cZ'\times_\cZ\cZ'')_\bbQ.
	\end{equation}
	
	On the other hand, for integeres $k\geq 0$ we also have natural morphisms
	\[(B\otimes_A^\dagger C)\left[\frac{L^{2k}}{p}\right]^\dagger\rightarrow\left(B\left[\frac{J'^k}p\right]^\dagger\otimes_{A\left[\frac{J^k}p\right]^\dagger}^\dagger C\left[\frac{J''^k}p\right]^\dagger\right)/(\text{$p$-torsion}),\]
	and they induce a morphism $(\cZ'\times_\cZ\cZ'')_\bbQ\rightarrow \cZ'_\bbQ\times_{\cZ_\bbQ}\cZ''_\bbQ$.
	By construction it is easy to see that this morphism is the inverse of \eqref{eq: site proof}, and the claim is proved.
\end{proof}

\begin{remark}
	Unlike \cite[Definition 2.1.3]{Shi2}, for an object $(Z,\cZ,i,h,\theta)\in\OC(Y/\cT)$ we do not suppose that $\cZ$ is $p$-adic.
	Instead we define a covering family of $\OC(Y/\cT)$ by the topology of the associated dagger spaces.
\end{remark}

Note that an object $(Z,\cZ,i,h,\theta)$ of $\OC(Y/\cT)$ is expressed by a commutative diagram
	\[\xymatrix{
	&Z\ar[ld]_\theta\ar[d]^{h_0}\ar[r]^i&\cZ\ar[d]^{h}\\
	Y\ar[r]&T\ar[r]^\iota&\cT.
	}\]

\begin{proposition}
	The category $\OC(Y/\cT)$ has products and fiber products.
\end{proposition}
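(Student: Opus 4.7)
The plan is to construct products and fiber products in $\OC(Y/\cT)$ by combining the fiber-product constructions in the category of fine log schemes of finite type over $\bbF_p$ and in the category of weak formal log schemes over $\bbZ_p^\varnothing$, and then correcting the exactness of the resulting closed immersion by means of the exactification functor of Proposition-Definition \ref{def: exactification}.

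Concretely, given two morphisms $f_j\colon X_j\to X$ in $\OC(Y/\cT)$ with $X_j=(Z_j,\cZ_j,i_j,h_j,\theta_j)$ and $X=(Z,\cZ,i,h,\theta)$, I would first form $Z':=Z_1\times_Z Z_2$ in fine log schemes of finite type over $\bbF_p$ (which exists by Kato's construction), and in parallel $\cW':=\cZ_1\times_\cZ\cZ_2$ in weak formal log schemes over $\bbZ_p^\varnothing$. The closed immersions $i_j$ assemble into an immersion $Z'\hookrightarrow\cW'$ that is a composition of a closed and an open immersion but is in general not exact. Setting $\cZ':=(\cW')^{\mathrm{ex}}$ to be its exactification and $i'\colon Z'\hookrightarrow\cZ'$ the induced homeomorphic exact closed immersion, together with the evident morphisms $h'\colon\cZ'\to\cT$ and $\theta'\colon Z'\to Y$, one obtains the candidate fiber product $(Z',\cZ',i',h',\theta')$. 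The binary product is the special case in which $Z'=Z_1\times_Y Z_2$ and $\cW'=\cZ_1\times_\cT\cZ_2$, reflecting the compatibility $\theta_1\circ\mathrm{pr}_1=\theta_2\circ\mathrm{pr}_2$ forced by the fact that both $\theta_j$ land in the same $Y$.

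Checking the universal property is then formal. Any test object $W=(Z_W,\cZ_W,i_W,h_W,\theta_W)$ in $\OC(Y/\cT)$ with morphisms to $X_j$ whose compositions into $X$ agree gives, by the universal properties of fiber products in the constituent categories, a unique compatible morphism $\cZ_W\to\cW'$ on the weak-formal side and $Z_W\to Z'$ on the log-scheme side. Because $i_W$ is already a homeomorphic exact closed immersion, the adjointness in Proposition-Definition \ref{def: exactification} lifts the map $\cZ_W\to\cW'$ uniquely through the exactification $\cZ'=(\cW')^{\mathrm{ex}}$, giving the required unique morphism $W\to(Z',\cZ',i',h',\theta')$.

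The main obstacle I anticipate is verifying that $(\cW')^{\mathrm{ex}}$ is still a widening in the sense of Definition \ref{def: widening}; namely, that its underlying weak formal scheme is flat over $\bbZ_p$. Flatness of $\cZ_1,\cZ_2,\cZ$ over $\bbZ_p$ should pass to the fiber product $\cW'$, and the localization and weak completion operations used in the exactification should preserve flatness over $\bbZ_p$ in the pseudo-wcfg framework of \cite{EY}, but this has to be confirmed carefully. Once this flatness issue is settled, the rest of the argument is a routine diagram chase building on the universal properties already established.
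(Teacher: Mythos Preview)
Your approach is essentially the same as the paper's: form the fiber products on the $Z$-side and on the $\cZ$-side separately, then exactify. One small sharpening is available and is what the paper does for \emph{fiber products}: if $(Z_j,\cZ_j,i_j)\to(Z,\cZ,i)$ are morphisms of widenings, then the induced immersion
\[
Z_1\times_Z Z_2\hookrightarrow\cZ_1\times_\cZ\cZ_2
\]
is already a homeomorphic exact closed immersion, so no exactification is needed. Indeed, exactness of closed immersions is stable under fiber products over a common base (pullback of log structures commutes with the fiber-product construction, and each $i_j$ is strict onto its image), and homeomorphic closed immersions are likewise stable under fiber products. Thus the paper simply takes $\cZ_1\times_\cZ\cZ_2$ as the fiber product, whereas your exactification step is harmless but superfluous in this case. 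Exactification is genuinely needed only for the \emph{product}, where the immersion $Z_1\times_Y Z_2\hookrightarrow\cZ_1\times_\cT\cZ_2$ has no reason to be exact because the log structure on the left involves $\cN_Y$.

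Your flatness concern is well placed; the paper does not address it explicitly either, and one should argue that exactification (an \'etale morphism followed by weak completion) preserves $\bbZ_p$-flatness, or else pass to the $p$-torsion-free quotient as is done elsewhere (cf.\ Definition \ref{def: gen fib}).
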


\begin{proof}
	The product $(Z,\cZ,i,h,\theta)\times(Z',\cZ',i',h',\theta')$ in $\OC(Y/\cT)$ is the object expressed by a commutative diagram
		\[\xymatrix{
		&Z\times_Y Z'\ar[ld]\ar[d]\ar[r]&(\cZ\times_\cT\cZ')_{Z\times_YZ'}\ar[d]\\
	Y\ar[r]&T\ar[r]^\iota&\cT,
		}\]
	where $(\cZ\times_\cT\cZ')_{Z\times_YZ'}$ is the exactification of $(i,i')\colon Z\times_YZ'\hookrightarrow\cZ\times_\cT\cZ'$.
	
	The fiber product $(Z,\cZ,i)\times_{(Z'',\cZ'',i'')}(Z',\cZ',i')$ in $\OC(Y/\cT)$ is the object expressed by a commutative diagram
		\[\xymatrix{
		&Z\times_{Z''} Z'\ar[ld]\ar[d]\ar[r]&\cZ\times_{\cZ''}\cZ'\ar[d]\\
	Y\ar[r]&T\ar[r]^\iota&\cT.
		}\]
\end{proof}

For a widening $(Z,\cZ,i)$, let $\{\cZ_n\}$ be an inductive system of $p$-adic weak formal schemes over $\cZ$ of Definition \ref{def: gen fib} given by using the ideal of $Z$ in $\cZ$ as an ideal of definition of $\cZ$.
We endow $\cZ_n$ with the pull-back log structure of $\cZ$.
Let $Z_n:=Z\times_\cZ\cZ_n$ and $i_n\colon Z_n\hookrightarrow\cZ_n$ the canonical exact closed immersion.
Then we obtain an inductive system $\{(Z_n,\cZ_n,i_n)\}_n$ of widenings over $(Z,\cZ,i)$ in a canonical way, which we call the family of {\it universal enlargements} of $(Z,\cZ,i)$.

Let $\sE$ be a sheaf on $\OC(Y/\cT)$.
Then for any object $(Z,\cZ,i,h,\theta)\in\OC(Y/\cT)$ we may associate a sheaf $\sE_\cZ=\sE_{(Z,\cZ,i,h,\theta)}$ on $\cZ_\bbQ$ as follows:
Let $\{(Z_n,\cZ_n,i_n)\}_n$ be the family of universal enlargements of $(Z,\cZ,i)$, and let $\frU\subset\cZ_{\bbQ}$ be an affinoid open subset.
As $\frU$ is quasi-compact, it is contained in $\cZ_{n,\bbQ}$ for some $n$.
Since $\cZ_n$ is $p$-adic, there by \cite[Lemma 4.7]{LM} exist an admissible blow-up $\cZ'\rightarrow\cZ_n$ and an open subset $\cU\subset\cZ'$ such that $\frU=\cU_\bbQ$.
Now we let
\begin{equation}\label{eq: induced sheaf on Z}
\Gamma(\frU,\sE_{\cZ}):=\Gamma((U,\cU,i_{\cU},h_\cU,\theta_\cU),\sE)
\end{equation}
where $\cU$ is endowed with the pull-back log structure of $\cZ_n$, $U:=\cU\times_{\cZ_n}Z_n$, and $i_\cU$, $h_\cU$, and $\theta_\cU$ are defined in the obvious way.
By definition of the topology of $\OC(Y/\cT)$,  that $\sE$ is a sheaf immediately implies that \eqref{eq: induced sheaf on Z} is well-defined and satisfies the sheaf condition for a covering family consisting of affinoid open subsets.
Since the topology of $\cZ_{\bbQ}$ is generated by affinoid open subsets, \eqref{eq: induced sheaf on Z} is extended to a sheaf $\sE_{\cZ}$ on $\cZ_{\bbQ}$.

We denote by $\sO_{Y/\cT}$ the sheaf on $\OC(Y/\cT)$ defined by $\Gamma((Z,\cZ,i,h,\theta),\sO_{Y/\cT}):=\Gamma(\cZ_\bbQ,\cO_{\cZ_\bbQ})$.
Then we have $\sO_{Y/\cT,\cZ}=\cO_{\cZ_\bbQ}$.

\begin{definition}\label{def: isocrystal}
	A {\it log overconvergent isocrystal} on $Y$ over $(T,\cT,\iota)$ is an $\sO_{Y/\cT}$-module $\sE$ on $\OC(Y/\cT)$, such that
	\begin{itemize}
	\item For any $(Z,\cZ,i,h,\theta)\in\OC(Y/\cT)$, the sheaf $\sE_\cZ$ on $\cZ_\bbQ$ is a coherent locally free $\cO_{\cZ_\bbQ}$-module,
	\item For any morphism $f\colon(Z',\cZ',i',h',\theta')\rightarrow(Z,\cZ,i,h,\theta)$, the natural morphism $f^*\sE_{\cZ}\rightarrow\sE_{\cZ'}$ is an isomorphism.
	\end{itemize}
	A morphism $f\colon\sE'\rightarrow\sE$ of log overconvergent isocrystals is an $\sO_{Y/\cT}$-linear homomorphism.
	We denote by
		\[\Isoc^\dagger(Y/\cT)=\Isoc^\dagger(Y/(T,\cT,\iota))\]
	the category of log overconvergent isocrystals on $Y$ over $(T,\cT,\iota)$.
\end{definition}

Let $\varrho\colon (T',\cT',\iota')\rightarrow(T,\cT,\iota)$ be a morphism of widenings and consider a commutative diagram
	\[\xymatrix{
	Y'\ar[r]\ar[d]^\rho&T'\ar[d]^{\varrho_0}\\
	Y\ar[r]&T.
	}\]
Then we may regard an object of $\OC(Y'/\cT')$ as an object of $\OC(Y/\cT)$ via $\rho$ and $\varrho$.
This defines a functor $\OC(Y'/\cT')\rightarrow\OC(Y/\cT)$, which is continuous and cocontinous.
Moreover the induced functor between the topoi preserves log overconvergent isocrystals, namely we obtain a functor
	\begin{equation}\label{eq: pull back isoc}
	(\rho,\varrho)^*\colon\Isoc^\dagger(Y/\cT)\rightarrow\Isoc^\dagger(Y'/\cT').
	\end{equation}
	
In particular, a morphism $f\colon Y'\rightarrow Y$ over $T$ induces a functor
	\begin{equation}
	f^*:=(f,\id_{(T,\cT,\iota)})^*\colon\Isoc^\dagger(Y/\cT)\rightarrow\Isoc^\dagger(Y'/\cT).
	\end{equation}

In addition, the absolute Frobenius $F_T$ on $T$ and a Frobenius lift $\sigma$ on $\cT$ together define an endomorphism $\sigma$ on $(T,\cT,\iota)$.
This and the absolute Frobenius $F_Y$ on $Y$ together define a functor
	\begin{equation}\label{eq: Frob of isoc}
	\sigma^*:=(F_Y,\sigma)^*\colon\Isoc^\dagger(Y/\cT)\rightarrow\Isoc^\dagger(Y/\cT).
	\end{equation}

\begin{definition}
	A {\it log overconvergent $F$-isocrystal} on $Y$ over $(T,\cT,\iota,\sigma)$ is a pair $(\sE,\Phi)$ of a log overconvergent isocrystal $\sE\in\Isoc^\dagger(Y/\cT)$ and an isomorphism $\Phi\colon \sigma^*\sE\xrightarrow{\cong}\sE$, which we call a {\it Frobenus structure} on $\sE$.
	A morphism $f\colon(\sE',\Phi')\rightarrow(\sE,\Phi)$ of log overconvergent $F$-isocrystals is an $\sO_{Y/\cT}$-linear homomorphism $f\colon\sE'\rightarrow\sE$ which is compatible with the Frobenius strucures.
	We denote by
		\[F\Isoc^\dagger(Y/\cT)=F\Isoc^\dagger(Y/(T,\cT,\iota,\sigma))\]
	the category of log overconvergnet $F$-isocrystals on $Y$ over $(T,\cT,\iota,\sigma)$.
\end{definition}

The structure sheaf $\sO_{Y/\cT}$ with a Frobenius structure given by the natural identification $\sigma^*\sO_{Y/\cT}=\sO_{Y/\cT}$ is a log overconvergent $F$-isocrystal, which we denote again by $\sO_{Y/\cT}$.

Let $\varrho\colon(T',\cT',\iota',\sigma')\rightarrow(T,\cT,\iota,\sigma)$ be a morphism of $F$-widenings and consider a commutative diagram
	\[\xymatrix{
	Y'\ar[r]\ar[d]^\rho&T'\ar[d]^{\varrho_0}\\
	Y\ar[r]&T.
	}\]
Then the functor $(\rho,\varrho)^*$ in \eqref{eq: pull back isoc} induces a functor
	\begin{equation}
	(\rho,\varrho)^*\colon F\Isoc^\dagger(Y/\cT)\rightarrow F\Isoc^\dagger(Y'/\cT'),
	\end{equation}
which associates to $(\sE,\Phi)\in F\Isoc^\dagger(Y/\cT)$ an object $(\rho,\varrho)^*\sE$ with the Frobenius structure
	\[(\rho,\varrho)^*\Phi\colon\sigma'^*\circ(\rho,\varrho)^*\sE=(\rho,\varrho)^*\circ\sigma^*\sE\xrightarrow[\cong]{(\rho,\varrho)^*(\Phi)}(\rho,\varrho)^*\sE.\]

We note that, for sheaves $\sE$ and $\sE'$ on $\OC(Y/\cT)$ and an object $(Z,\cZ,i,h,\theta)\in\OC(Y/\cT)$, we have $\sheafhom(\sE',\sE)_\cZ=\sheafhom(\sE'_\cZ,\sE_\cZ)$.
Indeed, if we take an object $(U,\cU,i_\cU,h_\cU,\theta_\cU)$ as in \eqref{eq: induced sheaf on Z} and write as $\sE(\cU):=\Gamma((U,\cU,i_\cU,h_\cU,\theta_\cU),\sE)$, then
we have
\[\Gamma(\frU,\sheafhom(\sE',\sE)_\cZ)=\Hom(\sE'(\cU),\sE(\cU))=\Gamma(\frU,\sheafhom(\sE'_\cZ,\sE_\cZ)).\]
	
\begin{definition}\label{def: tensor and hom}
	Let $Y$ be a fine log scheme over $T$.
	For $\sE,\sE'\in\Isoc^\dagger(Y/\cT)$, the tensor product and internal Hom as $\sO_{Y/\cT}$-modules define those as log overconvergent isocrystals, which we denote by $\sE\otimes\sE'$ and $\sheafhom(\sE,\sE')$.
	
For $(\sE,\Phi),(\sE',\Phi')\in F\Isoc^\dagger(Y/\cT)$, we define $(\sE,\Phi)\otimes(\sE',\Phi')$ by
	\begin{equation*}\label{eq: tensor phi}\sigma^*(\sE\otimes\sE')=\sigma^*\sE\otimes\sigma^*\sE'\xrightarrow{\Phi\otimes\Phi'}\sE\otimes\sE',
	\end{equation*}
and $\sheafhom((\sE',\Phi'),(\sE,\Phi))$ by
	\begin{eqnarray*}
	\label{eq: hom phi}
	\sigma^*\sheafhom(\sE',\sE)=\sheafhom(\sigma^*\sE',\sigma^*\sE)&\rightarrow&\sheafhom(\sE',\sE)\\
	\nonumber f&\mapsto&\Phi\circ f\circ\Phi'^{-1}.
	\end{eqnarray*}
We call $\sE^\vee:=\sheafhom(\sE,\sO_{Y/\cT})$ (resp.\ $(\sE,\Phi)^\vee:=\sheafhom((\sE,\Phi),\sO_{Y/\cT})$) the dual of $\sE$ (resp.\ $(\sE,\Phi)$).
\end{definition}

%%%%%%%%%%%%%%%
\subsection{Realization and log rigid cohomology with coefficients}\label{subsec: Realization}
%%%%%%%%%%%%%%%

Let $Y$ be a fine log scheme over $T$.
For an object $(Z,\cZ,i,h,\theta)\in\OC(Y/\cT)$ and $n\geq 0$, let $\cZ\hookrightarrow\cZ(n)$ be as in \S \ref{subsec: Log stratification}.
Let $i(n)\colon Z\hookrightarrow\cZ(n)$ be the base change of the diagonal embedding $Z\hookrightarrow\cZ(n)$.
Then there is a natural morphism $h(n)\colon(Z,\cZ(n),i(n))\rightarrow(T,\cT,\iota)$, and we have $(Z,\cZ(n),i(n),h(n),\theta)\in\OC(Y/\cT)$.

The morphisms $p_i$, $p_{i,j}$, $\Delta$ in \eqref{eq: projections and diagonal} induce morphisms in $\OC(Y/\cT)$, which we denote by the same symbols.
For $\sE\in\Isoc^\dagger(Y/\cT)$, by the definition of a log overconvergent isocrystal we obtain a canonical $\cO_{\cZ(1)_\bbQ}$-linear isomorphism
	\[\epsilon_\cZ=\epsilon_{(Z,\cZ,i,h,\theta)}\colon p_2^*\sE_\cZ\xrightarrow{\cong}\sE_{\cZ(1)}\xleftarrow{\cong}p_1^*\sE_\cZ,\]
which satisfies the conditions \eqref{eq: cocycle condition}.
Therefore we obtain a functor
	\begin{equation}\label{eq: realization Str}
	R_\cZ=R_{(Z,\cZ,i,h,\theta)}\colon \Isoc^\dagger(Y/\cT)\rightarrow\Str^\dagger(\cZ/\cT);\ \sE\mapsto(\sE_\cZ,\epsilon_\cZ).
	\end{equation}

For $(\sE,\Phi)\in F\Isoc^\dagger(Y/\cT)$, $(Z,\cZ,i,h,\theta)\in\OC(Y/\cT)$, and a Frobenius lift $\phi$ on $\cZ$ compatible with $\sigma$, the morphism $(F_Z,\phi)\colon(Z,\cZ,i,h\circ\phi,\theta\circ F_Z)\rightarrow(Z,\cZ,i,h,\theta)$ induces an isomorphism
	\[\phi^*\sE_{(Z,\cZ,i,h,\theta)}\xrightarrow{\cong}\sE_{(Z,\cZ,i,h\circ\phi,\theta\circ F_Z)}.\]
Since $h\circ\phi=\sigma\circ h$ and $\theta\circ F_Z=F_Y\circ\theta$, we have $\sE_{(Z,\cZ,i,h\circ\phi,\theta\circ F_Z)}=(\sigma^*\sE)_{(Z,\cZ,i,h,\theta)}$.
Thus we obtain an $\cO_{\cZ_\bbQ}$-linear isomorphism
	\[\Phi_{\cZ,\phi}=\Phi_{(Z,\cZ,i,h,\theta),\phi}\colon \phi^*\sE_{(Z,\cZ,i,h,\theta)}\xrightarrow{\cong}\sE_{(Z,\cZ,i,h\circ\phi,\theta\circ F_Z)}=(\sigma^*\sE)_{(Z,\cZ,i,h,\theta)}\xrightarrow[\Phi]{\cong}\sE_{(Z,\cZ,i,h,\theta)},\]
which is compatible with $\epsilon_\cZ$.
Therefore we obtain a functor
	\begin{equation}\label{eq: realization FStr}
	R_{\cZ,\phi}=R_{(Z,\cZ,i,h,\theta),\phi}\colon F\Isoc^\dagger(Y/\cT)\rightarrow F\Str^\dagger((\cZ,\phi)/(\cT,\sigma));\ (\sE,\Phi)\mapsto(\sE_\cZ,\epsilon_\cZ,\Phi_{\cZ,\phi}).
	\end{equation}
We call the functors $R_\cZ$ and $R_{\cZ,\phi}$ the {\it realization functors} at $(Z,\cZ,i,h,\theta)$ (with respect to $\phi$).

When $\cZ$ is log smooth over $\cT$, then we often denote the composites
\begin{align*}
&\Isoc^\dagger(Y/\cT)\xrightarrow{R_\cZ}\Str^\dagger(\cZ/\cT)\xrightarrow{\eqref{eq: Taylor MIC 1}}\MIC^\dagger(\cZ/\cT),\\
&F\Isoc^\dagger(Y/\cT)\xrightarrow{R_\cZ}\Str^\dagger((\cZ,\phi)/(\cT,\sigma))\xrightarrow{\eqref{eq: Taylor MIC 1}}\MIC^\dagger((\cZ,\phi)/(\cT,\sigma))
\end{align*}
again by $R_\cZ=R_{(Z,\cZ,i,h,\theta)}$ and $R_{\cZ,\phi}=R_{(Z,\cZ,i,h,\theta),\phi}$, respectively, and call them the {\it realization functors}.

\begin{proposition}\label{prop: tensor hom realization}
	Let $Y$ be a fine log scheme over $T$, and let $(Z,\cZ,i,h,\theta)\in\OC(Y/\cT)$.
	Suppose that $\cZ$ is log smooth over $\cT$.
	For $\sE,\sE'\in\Isoc^\dagger(Y/\cT)$, denote by $\nabla$ and $\nabla'$ the log connections on $\cZ_\bbQ$ induced by $\sE$ and $\sE'$, respectively.
	Then the realizations of $\sE\otimes\sE'$ and $\sheafhom(\sE',\sE)$ at $(Z,\cZ,i,h,\theta)$ are given by the coherent locally free sheaves $\sE_\cZ\otimes_{\cO_{\cZ_\bbQ}}\sE'_\cZ$ and $\sheafhom_{\cO_{\cZ_\bbQ}}(\sE'_\cZ,\sE_\cZ)$ with the log connections
		\begin{align}
		\label{eq: tensor nabla}\sE_\cZ\otimes_{\cO_{\cZ_\bbQ}}\sE'_\cZ&\rightarrow\sE_\cZ\otimes_{\cO_{\cZ_\bbQ}}\sE'_\cZ\otimes_{\cO_{\cZ_\bbQ}}\omega^1_{\cZ/\cT,\bbQ}\\
		\nonumber\alpha\otimes\alpha'&\mapsto\nabla(\alpha)\otimes\alpha'+\alpha\otimes\nabla'(\alpha'),\\
		\label{eq: sheafhom nabla}\sheafhom_{\cO_{\cZ_\bbQ}}(\sE'_\cZ,\sE_\cZ)&\rightarrow\sheafhom_{\cO_{\cZ_\bbQ}}(\sE'_\cZ,\sE_\cZ\otimes_{\cO_{\cZ_\bbQ}}\omega^1_{\cZ/\cT,\bbQ})=\sheafhom_{\cO_{\cZ_\bbQ}}(\sE'_\cZ,\sE_\cZ)\otimes_{\cO_{\cZ_\bbQ}}\omega^1_{\cZ/\cT,\bbQ}\\
		\nonumber f&\mapsto\nabla\circ f-(f\otimes\id)\circ\nabla'.
		\end{align}
	respectively.
	Moreover let $\phi$ be a Frobenius lift on $\cZ$.
	Then for any $(\sE,\Phi),(\sE',\Phi')\in F\Isoc^\dagger(Y/\cT)$, the realizations of $(\sE,\Phi)\otimes(\sE',\Phi')$ and $\sheafhom((\sE',\Phi'),(\sE,\Phi))$ at $(Z,\cZ,i,h,\theta)$ with respect to $\phi$ are given by the above log connections and the Frobenius structures defined in a similar manner as in Definition \ref{def: tensor and hom}.
\end{proposition}

\begin{proof}
	Considering locally, we may take  a basis $d\log x_1,\ldots,d\log x_d$ of $\omega^1_{\cZ/\cT,\bbQ}$ and define differential operators $\partial_j\colon\sE_\cZ\rightarrow\sE_\cZ$ and $\partial'_j\colon\sE'_\cZ\rightarrow\sE'_\cZ$ as in \eqref{eq: log diff op}. (We omit the superscript $\log$ in the notation of log differential operators.)
	We denote the Taylor isomorphisms and the log stratifications on $\sE_\cZ$, $\sE'_\cZ$, $(\sE\otimes\sE')_\cZ$ by $\epsilon$, $\epsilon'$, $\wt\epsilon$, and $\{\epsilon_n\}$, $\{\epsilon'_n\}$, $\{\wt\epsilon_n\}$, respectively.
	
	As $\wt\epsilon$ is given by
	\[\epsilon\otimes\epsilon'\colon p_2^*(\sE_\cZ\otimes\sE'_\cZ)=p_2^*\sE_\cZ\otimes p_2^*\sE'_\cZ\rightarrow p_1^*\sE_\cZ\otimes p_1^*\sE_\cZ=p_1^*(\sE_\cZ\otimes\sE'_\cZ),\]
	the log stratification $\wt\epsilon_n$ is written as the composite
	\[\cO_{\cZ^n_\bbQ}\otimes\sE_\cZ\otimes\sE'_\cZ\xrightarrow{\epsilon_n\otimes 1}\sE_\cZ\otimes\cO_{\cZ^n_\bbQ}\otimes\sE'_\cZ\xrightarrow{1\otimes\epsilon'_n}\sE_\cZ\otimes\sE'_\cZ\otimes\cO_{\cZ^n_\bbQ}.\]
	Note that, for any section $x\in\cN_\cZ$ and positive integer $r$, the image of $(\log u(x))^r$ in $\cO_{\cZ^1_\bbQ}$ is equal to $u^1(x)-1$ if $r=1$ and vanishes if $r>1$.
	Thus by Proposition \ref{prop: taylor series} we have
	\begin{align*}
	\wt\epsilon_1(1\otimes\alpha\otimes\alpha')&=(1\otimes\epsilon'_1)\left(\left(\alpha\otimes 1+\sum_{i=1}^d\partial_i(\alpha)\otimes (u^1(x_i)-1)\right)\otimes\alpha'\right)\\
	&=\alpha\otimes\left(\alpha'\otimes 1+\sum_{j=1}^d\partial'_j(\alpha')\otimes(u^1(x_j)-1)\right)+\sum_{i=1}^d\partial_i(\alpha)\otimes\alpha'\otimes(u^1(x_i)-1)\\
	&=\alpha\otimes\alpha' \otimes 1+\sum_{i=1}^d\left(\partial_i(\alpha)\otimes\alpha'+\alpha\otimes\partial'_j(\alpha')\right)\otimes(u^1(x_j)-1).
	\end{align*}
	Then it follows by \eqref{eq: log connection} that the log connection $\wt\nabla$ on $(\sE\otimes\sE')_\cZ=\sE_\cZ\otimes\sE'_\cZ$ is given by
	\begin{align*}
	\wt\nabla(\alpha\otimes\alpha')&=\sum_{i=1}^d\left(\partial_i(\alpha)\otimes\alpha'+\alpha\otimes\partial'_j(\alpha')\right)\otimes d\log x_i
	=\nabla(\alpha)\otimes\alpha'+\alpha\otimes\nabla'(\alpha').
	\end{align*}
	
	Next, we denote by $\wh\nabla$ and $\{\wh{\epsilon}_n\}$ the log connection \eqref{eq: sheafhom nabla} and the corresponding log stratification, respectively.
	Let $\breve\epsilon$ and $\{\breve\epsilon_n\}$ be the Taylor isomorphism and the log stratification on $\sheafhom(\sE',\sE)_\cZ$, respectively.
	Then
	\[\breve\epsilon\colon p_2^*\sheafhom(\sE',\sE)_\cZ=\sheafhom(p_2^*\sE'_\cZ,p_2^*\sE_\cZ)\rightarrow\sheafhom(p_1^*\sE'_\cZ,p_1^*\sE_\cZ)=p_1^*\sheafhom(\sE',\sE)_\cZ\]
	is given by $\psi\mapsto \epsilon\circ \psi\circ\epsilon'^{-1}$.
	For $f\in\sheafhom(\sE',\sE)_\cZ$, denote by $\breve\epsilon_1(1\otimes f)$ the image of $1\otimes f$ by the composite
	\[\cO_{\cZ^1_\bbQ}\otimes\sheafhom(\sE',\sE)_\cZ\xrightarrow{\breve\epsilon_1}\sheafhom(\sE',\sE)_\cZ\otimes\cO_{\cZ^1_\bbQ}=\sheafhom_{\cO_{\cZ^1_\bbQ}}(\sE'_\cZ\otimes\cO_{\cZ^1_\bbQ},\sE\otimes\cO_{\cZ^1_\bbQ})\]
	Then $\breve\epsilon_1$ is characterized as the unique $\cO_{\cZ^1_\bbQ}$-linear homomorphism satisfying $\breve{\epsilon}_1(1\otimes f)\circ\epsilon'_1=\epsilon_1\circ (1\otimes f)$ for any $f$.
	Thus it suffices to show the equality
	\begin{equation}\label{eq: epsilon_1 claim}
	\wh\epsilon_1(1\otimes f)\circ\epsilon'_1=\epsilon_1\circ(1\otimes f).
	\end{equation}
	
	Let $\wh\partial_1,\ldots,\wh\partial_d$ be the differential operators on $\sheafhom(\sE'_\cZ,\sE_\cZ)$ induced by $\wh\nabla$.
	As
	\[\wh\nabla(f)(\alpha')=\sum_{i=1}^d\partial_i\circ f(\alpha')d\log x_i-\sum_{i=1}^df\circ\partial'_i(\alpha')d\log x_i\]
	for any $\alpha'\in\sE'$, we have $\wh\partial_i(f)=\partial_i\circ f-f\circ\partial'_i$.
	Thus we have
	\begin{align*}
	\wh\epsilon_1(1\otimes f)\circ\epsilon'_1(1\otimes\alpha')&=\left(f\otimes 1+\sum_{i=1}^d\wh\partial_i(f)\otimes(u^1(x_i)-1)\right)\left(\alpha'\otimes 1+\sum_{i=1}^d\partial'_i(\alpha')\otimes (u^1(x_i)-1)\right)\\
	&=f(\alpha')\otimes 1+\sum_{i=1}^d\wh\partial_i(f)(\alpha')\otimes (u^1(x_i)-1)+\sum_{i=1}^df\circ\partial'_i(\alpha')\otimes(u^1(x_i)-1)\\
	&=f(\alpha')\otimes 1+\sum_{i=1}^d\partial_i\circ f(\alpha')\otimes(u^1(x_i)-1)\\
	&=\epsilon_1\circ(1\otimes f)(1\otimes\alpha')
	\end{align*}
	for any $\alpha'\in\sE'$, hence we get \eqref{eq: epsilon_1 claim} as claimed.
	
	Finally, the descriptions of Frobenius structures are obvious.
\end{proof}

In order to show the equivalence between log overconvergent isocrystals and local data of coherent locally free sheaves with log stratifications (or log connections), we first prepare some propositions.
The following proposition is a log overconvergent analogue of \cite[Proposition 2.2.17]{Ber2}.

\begin{proposition}\label{prop: isoc indep 1}
	\begin{enumerate}
	\item\label{item: isoc indep 1} Let $f_j\colon(Z',\cZ',i')\rightarrow(Z,\cZ,i)$ for $j=1,2$ be morphisms of widenings over $(T,\cT,\iota)$ such that the morphisms $f_k:=f_{1,k}=f_{2,k}\colon Z'\rightarrow Z$.
		Then there exists a canonical natural equivalence $\varepsilon_{f_1,f_2}\colon f_2^*\cong f_1^*$ between two functors $\Str^\dagger(\cZ/\cT)\rightarrow\Str^\dagger(\cZ'/\cT)$.
		Moreover, if $(\cE,\epsilon)$ is an object of $\Str^\dagger(\cZ/\cT)$ and $\alpha$ is a horizontal section of $\cE$ with respect to the corresponding log connection, then we have an equality
			\begin{equation}\label{eq: horizontal section}
			\varepsilon_{f_1,f_2} (f_2^*(\alpha))=f_1^*(\alpha).
			\end{equation}
		If three morphisms $f_j\colon(Z',\cZ',i')\rightarrow(Z,\cZ,i)$ are given, we have an equality
			\begin{equation}\label{eq: epsilon compatible}\varepsilon_{f_2,f_3}\circ\varepsilon_{f_1,f_2}=\varepsilon_{f_1,f_3}\colon f_3^*\cong f_1^*.
			\end{equation}
	\item\label{item: isoc indep 2} Let $f_j\colon(Z',\cZ',i',\phi')\rightarrow(Z,\cZ,i,\phi)$ for $j=1,2$ be morphisms of $F$-widenings over $(T,\cT,\iota,\sigma)$ such that the morphisms $f_k:=f_{1,k}=f_{2,k}\colon Z'\rightarrow Z$.
		Then there exists a canonical natural equivalence $\varepsilon_{f_1,f_2}\colon f_2^*\cong f_1^*$ between two functors $F\Str^\dagger((\cZ,\phi)/(\cT,\sigma))\rightarrow F\Str^\dagger((\cZ',\phi')/(\cT,\sigma))$.
		Moreover the equalities \eqref{eq: horizontal section} and \eqref{eq: epsilon compatible} also hold.
	\end{enumerate}
\end{proposition}
	
\begin{proof}
	We first prove \eqref{item: isoc indep 1}.
	Let $g\colon\cZ'\rightarrow\cZ(1)$ be the morphism induced by $(f_1,f_2)\colon\cZ'\rightarrow\cZ\times_{\cT}\cZ$.
	Let $(\cE,\epsilon)\in \Str^\dagger(\cZ/\cT)$.
	Since $p_j\circ g=f_j$ ($j=1,2$), there is an isomorphism of $\cO_{\cZ'_\bbQ}$-modules
		\[\varepsilon_{f_1,f_2}:=g^*(\epsilon)\colon f_2^*\cE=g^*p_2^*\cE\xrightarrow{\cong} g^*p_1^*\cE=f_1^*\cE.\]
	It is enough to show that $\varepsilon_{f_1,f_2}$ is compatible with $f_1^*\epsilon$ and $f_2^*\epsilon$, that is the commutativity of the following diagram:
		\[\xymatrix{
		p'^*_2f_2^*\cE\ar@{=}[r]\ar[d]_{p'^*_2(\varepsilon_{f_1,f_2})=(g\circ p'_2)^*(\epsilon)} &
		f_2(1)^*p_2^*\cE\ar[rr]^{f_2(1)^*(\epsilon)}&&
		f_2(1)^*p_1^*\cE\ar@{=}[r]&
		p'^*_1f_2^*\cE\ar[d]^{p'^*_1(\varepsilon_{f_1,f_2})=(g\circ p'_1)^*(\epsilon)}\\
		p'^*_2f_1^*\cE\ar@{=}[r]&
		f_1(1)^*p_2^*\cE\ar[rr]^{f_1(1)^*(\epsilon)}&&
		f_1(1)^*p_1^*\cE\ar@{=}[r]&
		p'^*_1f_1^*\cE,
		}\]
	where $p'_j\colon \cZ'(1)\rightarrow\cZ'$ ($j=1,2$) are the canonical projections.
	Let $h_1\colon\cZ'(1)\rightarrow\cZ(2)$ be the morphism induced by $(f_1,g)\colon\cZ'\times_\cT\cZ'\rightarrow\cZ\times_\cT\cZ\times_\cT\cZ$, and let $h_2\colon\cZ'(1)\rightarrow\cZ(2)$ be the morphism induced by $(g,f_2)\colon\cZ'\times_\cT\cZ'\rightarrow\cZ\times_\cT\cZ\times_\cT\cZ$.
	Then we have
		\begin{align*}
		f_1(1)=p_{12}\circ h_1,&&f_2(1)=p_{23}\circ h_2,&&g\circ p'_1=p_{12}\circ h_2,&&g\circ p'_2=p_{23}\circ h_1.
		\end{align*}
	These equations and the cocycle condition \eqref{eq: cocycle condition} give
		\begin{eqnarray*}
		f_1(1)^*(\epsilon)\circ(g\circ p'_2)^*(\epsilon)
		&=&(p_{12}\circ h_1)^*(\epsilon)\circ(p_{23}\circ h_1)^*(\epsilon)
		=h_1^*(p_{12}^*(\epsilon)\circ p_{23}^*(\epsilon))
		=(p_{13}\circ h_1)^*(\epsilon),\\
		(g\circ p'_1)^*(\epsilon)\circ f_2(1)^*(\epsilon)
		&=&(p_{12}\circ h_2)^*(\epsilon)\circ(p_{23}\circ h_2)^*(\epsilon)
		=h_2^*(p_{12}^*(\epsilon)\circ p_{23}^*(\epsilon))
		=(p_{13}\circ h_2)^*(\epsilon).
		\end{eqnarray*}
	Since $p_{13}\circ h_1=p_{13}\circ h_2$, we obtained the desired commutativity.
		
	The equality \eqref{eq: horizontal section} follows immediately from Corollary \ref{cor: horizontal}.
	
	Finally assume that another morphism $f_3\colon(Z',\cZ',i')\rightarrow(Z,\cZ,i)$ is given.
	If we let $G\colon\cZ'\rightarrow\cZ(2)$ be the morphism induced by $(f_1,f_2,f_3)\colon\cZ'\rightarrow\cZ\times_\cT\cZ\times_\cT\cZ$, then we have $\varepsilon_{f_i,f_j}=G^*p_{i,j}^*(\epsilon)$ for any $i$ and $j$.
	Therefore the cocycle condition \eqref{eq: cocycle condition} implies \eqref{eq: epsilon compatible}.
	
	To prove \eqref{item: isoc indep 2},we have to show that $\varepsilon_{f_1,f_2}$ is compatible with the Frobenius structures, that is, the following diagram commutes for any object $(\cE,\epsilon,\Phi)\in F\Str^\dagger((\cZ,\phi)/(\cT,\sigma))$:
	\[\xymatrix{
	\phi'^*f_2^*\cE\ar[d]_-{f_2^*(\Phi)}\ar[rr]^-{\phi'^*(\varepsilon_{f_1,f_2})}&&\phi'^*f_1^*\cE\ar[d]^-{f_1^*(\Phi)}\\
	f_2^*\cE\ar[rr]^-{\varepsilon_{f_1,f_2}}&&f_1^*\cE.
	}\]
	By definition of $\varepsilon_{f_1,f_2}$, this diagram is written as
	\[\xymatrix{
	g^*p_2^*\phi^*\cE\ar[d]_-{g^*p_2^*(\Phi)}\ar[rr]^-{g^*(\phi^*\epsilon)}&&g^*p_1^*\phi^*\cE\ar[d]^-{g^*p_1^*(\Phi)}\\
	g^*p_2^*\cE\ar[rr]^-{g^*(\epsilon)}&&g^*p_1^*\cE.
	}\]
	The commutativity of this diagram follows from the fact that $\Phi\colon\phi^*\cE\rightarrow\cE$ defines a morphism in $\Str^\dagger(\cZ/\cT)$.
\end{proof}

The following is an adaptation of \cite[Proposition 5.2.6]{Shi1} to our setting.

\begin{proposition}\label{prop: realization}
	Let $Y$ be a fine log scheme over $T$.
	Assume that there exists a strongly log smooth object $(Y,\cZ,i,h,\id_Y)\in\OC(Y/\cT)$ whose first entry is $Y$.
	Then the realization functor $R_\cZ$ is an equivalence of categories.
	
	If moreover there exists a lift of Frobenius $\phi$ on $\cZ$ which is compatible with $\sigma$, then the realization functor $R_{\cZ,\phi}$ is also an equivalence of categories.
\end{proposition}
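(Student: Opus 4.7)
The plan is to construct an explicit quasi-inverse to $R_\cZ$. Denote the given object by $\cV := (Y,\cZ,i,h,\id_Y)$. Given $(\cE,\epsilon) \in \Str^\dagger(\cZ/\cT)$, I will manufacture a log overconvergent isocrystal $\sE$ on $Y/\cT$ whose realization at $\cV$ is $(\cE,\epsilon)$, and then verify that the two constructions are mutually quasi-inverse. The Frobenius version will then be a formal consequence.

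First, I would define $\sE$ on an arbitrary object $\cV' = (Z',\cZ',i',h',\theta') \in \OC(Y/\cT)$. Composing $\theta'$ with $i$ gives a map $Z' \to \cZ$ over $\cT$, and by the log smoothness of $h_{\bbZ_p}\colon\cZ\to\cT$, this lifts locally (in the sense of admissible opens on the generic fibers of the universal enlargements of $\cZ'$) to morphisms of weak formal log schemes $g_\lambda\colon \cW_\lambda \to \cZ$, where $\{\cW_\lambda\}$ is an admissible open covering of $\cZ'_\bbQ$. Each $g_\lambda$ assembles, together with the obvious data, into a morphism of widenings $\tilde g_\lambda\colon \cW_\lambda^{\mathrm{wd}} \to \cV$ in $\OC(Y/\cT)$. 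Set $\sE_{\cW_\lambda} := g_\lambda^*\cE$. On overlaps $\cW_\lambda \cap \cW_\mu$, the two morphisms $\tilde g_\lambda, \tilde g_\mu$ restrict to morphisms of widenings with the same map on the underlying fine log schemes, so Proposition \ref{prop: isoc indep 1} provides a canonical isomorphism $\varepsilon_{\tilde g_\lambda,\tilde g_\mu}\colon g_\mu^*\cE \xrightarrow{\cong} g_\lambda^*\cE$; the cocycle identity \eqref{eq: epsilon compatible} ensures these glue to a coherent locally free sheaf $\sE_{\cZ'}$ on $\cZ'_\bbQ$ which is independent of the choice of covering and local lifts.

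Next I would promote this assignment $\cV' \mapsto \sE_{\cZ'}$ to an $\sO_{Y/\cT}$-module on $\OC(Y/\cT)$. For any morphism $f\colon\cV'' \to \cV'$ in $\OC(Y/\cT)$, refining the coverings on both sides so that lifts $g_\lambda$ on $\cW_\lambda \subset \cZ'_\bbQ$ pull back to lifts $g_\lambda \circ f$ on $f^{-1}(\cW_\lambda) \subset \cZ''_\bbQ$ yields the transition isomorphism $f^*\sE_{\cZ'} \xrightarrow{\cong} \sE_{\cZ''}$, again well defined by Proposition \ref{prop: isoc indep 1}. Using $\cV = \cV$ with the identity lift shows $R_\cZ(\sE) = (\cE,\epsilon)$; using local lifts from $\cZ(1)$ back to $\cZ$ (via the two projections, whose difference is captured by $\epsilon$) shows the Taylor isomorphism attached to $\sE_\cZ$ by the realization is precisely $\epsilon$. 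For essential surjectivity at the level of Taylor isomorphisms on all objects, one checks that applying the construction to $R_\cZ(\sE)$ for a given $\sE \in \Isoc^\dagger(Y/\cT)$ recovers $\sE$, because on an arbitrary $\cV'$ both $\sE_{\cZ'}$ and the output of the construction are obtained by pulling $\sE_\cZ$ back along the local lifts and gluing via the realization Taylor isomorphism of $\sE$, which equals $\epsilon$ by definition. Full faithfulness is verified by the same local pullback argument: a morphism of $\sO_{Y/\cT}$-modules is determined by its values on a single object, which here may be taken to be $\cV$.

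The Frobenius-equivariant statement is immediate from the non-equivariant one. Given $(\cE,\epsilon,\Phi) \in F\Str^\dagger((\cZ,\phi)/(\cT,\sigma))$, the Frobenius structure $\Phi$ together with the compatibility $\sigma \circ h_{\bbZ_p} = h_{\bbZ_p} \circ \phi$ induces a Frobenius structure on the constructed $\sE$, and the quasi-inverse functor respects these structures by the Frobenius-compatible version of Proposition \ref{prop: isoc indep 1}(2). The main obstacle is the local existence and admissibility of lifts in Step 1: one must check that the lifting property for log smooth morphisms of weak formal log schemes, combined with the universal enlargement procedure of Definition \ref{def: gen fib} and the fact that $i'$ is a homeomorphic exact closed immersion, genuinely produces a cofinal family of admissible open coverings of $\cZ'_\bbQ$ on which lifts to $\cZ$ exist. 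This is the overconvergent analogue of the standard étale-local lifting argument in the crystalline setting, and requires careful bookkeeping with pseudo-wcfg algebras rather than being a direct citation.
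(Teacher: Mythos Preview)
Your proposal is correct and follows essentially the same strategy as the paper: build a quasi-inverse by locally lifting $\cZ'\to\cT$ through the log-smooth $\cZ\to\cT$, pull back $(\cE,\epsilon)$, and glue via Proposition~\ref{prop: isoc indep 1}. The one place where you make life harder than necessary is the locality in Step~1. You pass to the generic fiber and look for lifts on an admissible open cover of $\cZ'_\bbQ$ built from universal enlargements, and you flag this as the main technical obstacle. The paper instead observes that the lift $f\colon\cZ'\to\cZ$ exists Zariski locally on the weak formal log scheme $\cZ'$ itself (this is just the infinitesimal lifting property for log smooth morphisms, using that $i'\colon Z'\hookrightarrow\cZ'$ is a homeomorphic exact closed immersion and that the log structures are Zariski). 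Working at the weak-formal level keeps you inside the site $\OC(Y/\cT)$ without any bookkeeping about admissible covers or pseudo-wcfg models, and the gluing data from Proposition~\ref{prop: isoc indep 1} then lives directly on Zariski overlaps. With that simplification your argument matches the paper's proof; the Frobenius statement is handled identically.
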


\begin{proof}
	We construct a quasi-inverse of $R_\cZ$ as follows.
	Let $(\cE,\epsilon)\in \Str^\dagger(\cZ/\cT)$ and $(Z',\cZ',i',h',\theta')\in\OC(Y/\cT)$.
	Let $\cJ$ be an ideal of definition of $\cZ'$.
	For any integer $n\geq 0$, let $\cZ'_n:=\cZ'[\frac{\cJ^n}p]^\dagger$ and $Z'_n:=Z'\times_{\cZ'}\cZ'_n$.
	We denote by $(Z'_n,\cZ'_n,i'_n,h'_n,\theta'_n)\in\OC(Y/\cT)$ the object induced from $(Z',\cZ',i',h',\theta')$.
	
	Since $\cZ$ is strongly log smooth over $\cT$, by \cite[Proposition 1.61]{EY} there locally on $\cZ'_n$ exists a morphism $f\colon \cZ'_n\rightarrow\cZ$ which makes the following diagram commutative:
		\[\xymatrix{
		Z'_n\ar[r]^-{i'_n}\ar[d]&\cZ'_n\ar[d]\ar[ld]^-f\\
		\cZ\ar[r]&\cT,
		}\]
	where the left vertical morphism is the composite $Z'_n\xrightarrow{\theta'_n} Y\xrightarrow{i}\cZ$.
	
	Then the definition $\cE_{(Z'_n,\cZ'_n,i'_n,h'_n,\theta'_n)}:=f^*\cE$ is well-defined.
	Indeed, if there are two morphisms $f_j\colon\cZ'_n\rightarrow\cZ$ for $j=1,2$ making the above diagram commutative, then by Proposition \ref{prop: isoc indep 1} we obtain a canonical isomorphism $\varepsilon_{f_1,f_2}\colon f^*_2\cE\cong f^*_1\cE$ satisfying the compatibility \eqref{eq: epsilon compatible}.
	
	Thus we may define a sheaf $\cE_{(Z',\cZ',i',h',\theta')}$ on $\cZ'_\bbQ$ by gluing $\cE_{(Z'_n,\cZ'_n,i'_n,h'_n,\theta'_n)}$ for all $n$, and a sheaf $\sE$ on $\OC(Y/\cT)$ by setting $\Gamma(\sE,(Z',\cZ',i',h',\theta')):=\Gamma(\cZ'_\bbQ,\cE_{(Z',\cZ',i',h',\theta')})$.
	This gives a quasi-inverse of $R_\cZ$.
	
	If moreover a lift of Frobenius $\phi$ on $\cZ$ is given, the isomorphism
		\[(\sigma^*\sE)_{(Z',\cZ',i',h',\theta')}=\sE_{(Z',\cZ',i',\sigma\circ h',F_Y\circ\theta')}=f^*\phi^*\cE\xrightarrow{f^*(\Phi)}f^*\cE=\sE_{(Z',\cZ',i',h',\theta')}\]
	induces a Frobenius structure on $\sE$, and we get a quasi-inverse of $R_{\cZ,\phi}$.
\end{proof}

Combining Proposition \ref{prop: conn str}, Lemma \ref{lem: Taylor Str}, and Proposition \ref{prop: realization}, we obtain the following.

\begin{corollary}\label{cor: realization of isoc}
	Let $Y$ be a fine log scheme over $T$.
	If there exists a strongly log smooth object $(Y,\cZ,i,h,\id_Y)\in\OC(Y/\cT)$ whose first entry is $Y$, then we have a commutative diagram
		\[\xymatrix{
		\Isoc^\dagger(Y/\cT)\ar[r]^-{R_\cZ}_-\cong & \Str^\dagger(\cZ/\cT)\ar[r]_-\cong\ar[d] & \MIC^\dagger(\cZ/\cT)\ar[d]\\
		& \Str(\cZ/\cT)\ar[r]_-\cong & \MIC(\cZ/\cT),
		}\]
	where the horizontal functors are equivalences and the vertical functors are fully faithful.
	
	If moreover a Frobenius lift $\phi$ on $\cZ$ compatible with $\sigma$ exists, then we also have a commutative diagram
	\[\xymatrix{
		F\Isoc^\dagger(Y/\cT)\ar[r]^-{R_{\cZ,\phi}}_-\cong & F\Str^\dagger((\cZ,\phi)/(\cT,\sigma))\ar[r]_-\cong\ar[d] & F\MIC^\dagger((\cZ,\phi)/(\cT,\sigma))\ar[d]\\
		& F\Str((\cZ,\phi)/(\cT,\sigma))\ar[r]_-\cong & F\MIC((\cZ,\phi)/(\cT,\sigma)),
		}\]
	where the horizontal functors are equivalences and the vertical functors are fully faithful.
\end{corollary}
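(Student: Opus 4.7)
The plan is to assemble the corollary directly from the three results cited just above it, treating it essentially as a bookkeeping statement about which functors have been constructed and what relations among them have already been established.

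First I would handle the top rows of the two diagrams. The equivalence $R_\cZ\colon\Isoc^\dagger(Y/\cT)\xrightarrow{\cong}\Str^\dagger(\cZ/\cT)$ is exactly the content of Proposition \ref{prop: realization} applied to the hypothesized object $(Y,\cZ,i,h,\id_Y)\in\OC(Y/\cT)$ with $h_{\bbZ_p}$ smooth. For the equivalence $\Theta\colon\Str^\dagger(\cZ/\cT)\xrightarrow{\cong}\MIC^\dagger(\cZ/\cT)$, the functor $\Theta$ of Proposition \ref{prop: conn str} is an equivalence between $\Str(\cZ/\cT)$ and $\MIC(\cZ/\cT)$ because $\cZ$ is smooth over $\cT$ (which follows from the smoothness of $h_{\bbZ_p}$, as $\cT$ is the base). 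By the very definition of $\MIC^\dagger(\cZ/\cT)$ as the essential image of $\Str^\dagger(\cZ/\cT)$ under the composite $\Str^\dagger\hookrightarrow\Str\xrightarrow{\Theta}\MIC$, the restricted functor $\Theta\colon\Str^\dagger(\cZ/\cT)\to\MIC^\dagger(\cZ/\cT)$ is essentially surjective; it remains fully faithful because $\Theta$ on the larger categories is fully faithful and $\Str^\dagger\hookrightarrow\Str$ is fully faithful by Lemma \ref{lem: Taylor Str}.

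Next I would address the vertical arrows. The left vertical functor $\Str^\dagger(\cZ/\cT)\to\Str(\cZ/\cT)$ is fully faithful by Lemma \ref{lem: Taylor Str}. The right vertical functor $\MIC^\dagger(\cZ/\cT)\to\MIC(\cZ/\cT)$ is fully faithful because it is the inclusion of an essentially image which is by definition full. The commutativity of the right-hand square is tautological: both paths from $\Str^\dagger$ to $\MIC$ factor through the same composite used in the definition of $\MIC^\dagger$. The commutativity of the outer rectangle (involving $R_\cZ$ and the two $\Theta$'s) is then automatic from the two squares.

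For the Frobenius-equivariant diagram, the argument is entirely parallel: the realization $R_{\cZ,\phi}$ is an equivalence by the $F$-version of Proposition \ref{prop: realization} (the last sentence of its statement), the equivalence $\Theta\colon F\Str\xrightarrow{\cong}F\MIC$ is Proposition \ref{prop: conn str}(2), and the fully faithful inclusion $F\Str^\dagger\hookrightarrow F\Str$ is Lemma \ref{lem: Taylor Str}(2). Defining $F\MIC^\dagger$ as the essential image of the composite $F\Str^\dagger\hookrightarrow F\Str\xrightarrow{\Theta}F\MIC$ gives the fully faithful vertical functor on the right and an equivalence $\Theta\colon F\Str^\dagger\xrightarrow{\cong}F\MIC^\dagger$.

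There is no real obstacle here: the only point that requires a mild check is the commutativity of the right-hand square at the level of the sub-categories $\Str^\dagger$ and $\MIC^\dagger$, and this is built into the definition of $\MIC^\dagger$ (respectively $F\MIC^\dagger$). The whole proof is a one-line assembly, so I would write it simply as ``Combine Proposition \ref{prop: conn str}, Lemma \ref{lem: Taylor Str}, and Proposition \ref{prop: realization} together with the definitions of $\MIC^\dagger(\cZ/\cT)$ and $F\MIC^\dagger((\cZ,\phi)/(\cT,\sigma))$.''
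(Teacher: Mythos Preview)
Your proposal is correct and matches the paper's approach exactly: the paper introduces the corollary with the sentence ``Combining Proposition \ref{prop: conn str}, Lemma \ref{lem: Taylor Str}, and Proposition \ref{prop: realization}, we obtain the following,'' which is precisely the one-line assembly you arrive at.
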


In general, an object $(Y,\cZ,i,h,\id_Y)$ as in Proposition \ref{prop: realization} would not exist globally on $Y$, hence we need local constructions.

\begin{definition}
	Let $Y$ be a fine log scheme over $T$,
	A {\it local embedding datum} for $Y$ over $(T,\cT,\iota)$ is a a finite family $(Z_\lambda,\cZ_\lambda,i_\lambda,h_\lambda,\theta_\lambda)_{\lambda\in\Lambda}$ of strongly log smooth objects in $\OC(Y/\cT)$ such that $\{\theta_\lambda\colon Z_\lambda\rightarrow Y\}_\lambda$ is a Zariski covering by strict open immersions.
	
	A {\it local embedding $F$-datum} for $Y$ over $(T,\cT,\iota,\sigma)$ is a finite family $(Z_\lambda,\cZ_\lambda,i_\lambda,h_\lambda,\theta_\lambda,\phi_\lambda)_{\lambda\in\Lambda}$ consisting of a local embedding datum $(Z_\lambda,\cZ_\lambda,i_\lambda,h_\lambda,\theta_\lambda)_{\lambda\in\Lambda}$ and Frobenius lifts $\phi_\lambda$ on $\cZ_\lambda$ which are compatible with $\sigma$.
\end{definition}

We note that at least one local embedding datum exists in the following case.

\begin{proposition}
	A local embedding ($F$-)datum exists for any fine log scheme over $T$.
\end{proposition}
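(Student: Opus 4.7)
The plan is to construct the embedding Zariski-locally via a standard monoid-and-polynomial-algebra model, and then to define a Frobenius lift explicitly on that model. Fix a chart $P\to\Gamma(\cT,\cN_\cT)$. Since $Y$ is fine of finite type over $T$, by the general existence of compatible charts for morphisms of fine log schemes, we may cover $Y$ by finitely many affine open weak formal log subschemes $Z_\lambda=\Spec B_\lambda$ such that each $Z_\lambda\to T$ admits a chart extending the given one on $\cT$: a fine monoid $Q_\lambda$ together with homomorphisms $P\to Q_\lambda$ and $Q_\lambda\to\Gamma(Z_\lambda,\cN_{Z_\lambda})$ realizing the log structure on $Z_\lambda$.

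Choose finitely many generators $t_1,\ldots,t_{m_\lambda}\in B_\lambda$ as a $\Gamma(T,\cO_T)\otimes_{\bbZ[P]}\bbZ[Q_\lambda]$-algebra, yielding a surjection
\[
\Gamma(T,\cO_T)[Q_\lambda][t_1,\ldots,t_{m_\lambda}]\twoheadrightarrow B_\lambda
\]
with kernel $J_\lambda$. Let $\cZ_\lambda$ be the weak completion (in the sense of Proposition-Definition 1.11) of $\Spwf\Gamma(\cT,\cO_\cT)[Q_\lambda][t_1,\ldots,t_{m_\lambda}]^\dagger$ along the closed subscheme defined by $J_\lambda$, endowed with the log structure associated to $Q_\lambda$. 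By construction the induced $i_\lambda\colon Z_\lambda\hookrightarrow\cZ_\lambda$ is a homeomorphic exact closed immersion with $\cN_{Z_\lambda}=i_\lambda^*\cN_{\cZ_\lambda}$, and the structure morphism $h_\lambda\colon\cZ_\lambda\to\cT$ is smooth since its algebraic analogue is log smooth---a base change of the log smooth morphism $\Spec\bbZ[Q_\lambda]\to\Spec\bbZ[P]$ composed with a polynomial extension---and weak completion preserves log smoothness.

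For the $F$-datum, define $\phi_\lambda$ on $\Gamma(\cT,\cO_\cT)[Q_\lambda][t_1,\ldots,t_{m_\lambda}]^\dagger$ by $\sigma$ on coefficients, $q\mapsto q^p$ for $q\in Q_\lambda$, and $t_i\mapsto t_i^p$. Since $\phi_\lambda$ multiplies the total degree of any monomial by $p$ and agrees with $\sigma$ on the coefficient ring, it respects the growth bounds in the definition of weak completion and descends to $\cZ_\lambda$; it visibly reduces modulo $p$ to the absolute Frobenius (and therefore restricts to $F_{Z_\lambda}$ on $Z_\lambda$), and it is compatible with $\sigma$ by construction.

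The only nontrivial step is the first, and it is where the hypothesis on $\cT$ is used: the assumption that a chart exists on $\cT$ is precisely what allows us to invoke the standard result producing Zariski-local charts $Q_\lambda$ for $Y\to T$ extending the fixed $P\to\Gamma(\cT,\cN_\cT)$. The remaining steps---forming $\cZ_\lambda$ by the weak completion prescription, verifying smoothness, and writing down $\phi_\lambda$---are then direct, with no further obstruction.
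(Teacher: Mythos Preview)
Your overall strategy---cover $Y$ by affine charted opens, embed each into a weak formal log scheme modeled on a monoid algebra with polynomial variables, then write down an explicit Frobenius lift---is the right one, and is presumably what the paper's reference to \cite[Proposition 2.2.11]{Shi2} unpacks to. The Frobenius construction is fine.

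There is, however, a real gap in the smoothness step. You assert that $\Spec\bbZ[Q_\lambda]\to\Spec\bbZ[P]$ is log smooth, but this is not true for an arbitrary homomorphism $P\to Q_\lambda$ of fine monoids: by Kato's criterion it requires $P^{\mathrm{gp}}\to Q_\lambda^{\mathrm{gp}}$ to be injective with cokernel of torsion order invertible on the base. Since $Y\to T$ is \emph{not} assumed log smooth, there is no reason a chart for this morphism should satisfy these conditions. (For instance, $P=\bbN^2\to Q=\bbN$, $(a,b)\mapsto a+b$, already fails injectivity on groupifications.) So your $h_\lambda\colon\cZ_\lambda\to\cT$ need not be smooth as constructed.

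The standard fix is to replace $Q_\lambda$ by a free monoid: choose generators $q_1,\ldots,q_r$ of $Q_\lambda$, giving a surjection $\bbN^r\twoheadrightarrow Q_\lambda$, and take the log structure on the ambient space to come from $P\oplus\bbN^r$ (with $e_i\mapsto x_i$). Then $P\to P\oplus\bbN^r$ trivially satisfies Kato's criterion, so the resulting $\cZ'_\lambda\to\cT$ is log smooth. The closed immersion $Z_\lambda\hookrightarrow\cZ'_\lambda$ is no longer exact in general, but you then apply the exactification of Proposition-Definition \ref{def: exactification}, which is \'etale over $\cZ'_\lambda$ and hence still smooth over $\cT$. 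The Frobenius lift (defined as you do, by $p$-th powers on monoid and polynomial generators) extends to the exactification by functoriality.
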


\begin{proof}
	Since open immersion is strongly log \'{e}tale, we may suppose that $\cT$ is affine and admits a chart.
	(Note that log structures in this article are Zariski sheaves.)
	Then the assertion without Frobenius lift is given in \cite[Lemma 2.6]{EY}.
	A Frobenius lift also can be obtain by using the construction of its proof, after shrinking $\cT$ so that $\sigma$ admits a chart.
\end{proof}

Let $Y$ be a fine log scheme over $T$ and $(Z_\lambda,\cZ_\lambda,i_\lambda,h_\lambda,\theta_\lambda)_{\lambda\in\Lambda}$ be a local embedding datum for $Y$.
For $m\in\bbN$ and $\underline{\lambda}=(\lambda_0,\ldots,\lambda_m)\in\Lambda^{m+1}$, let
	\begin{equation}\label{eq: product of embedding data}
	(Z_{\underline{\lambda}},\cZ_{\underline{\lambda}},i_{\underline{\lambda}},h_{\underline{\lambda}},\theta_{\underline{\lambda}}):=(Z_{\lambda_0},\cZ_{\lambda_0},i_{\lambda_0},h_{\lambda_0},\theta_{\lambda_0})\times\cdots\times(Z_{\lambda_m},\cZ_{\lambda_m},i_{\lambda_m},h_{\lambda_m},\theta_{\lambda_m}),
	\end{equation}
where the product is taken in $\OC(Y/\cT)$.
If Frobenius lifts $\phi_\lambda$ are given, let $\phi_{\underline{\lambda}}$ be the Frobenius lift on $\cZ_{\underline{\lambda}}$ induced by $\phi_{\lambda_0}\times\cdots\times\phi_{\lambda_m}$ on $\cZ_{\lambda_0}\times_\cT\cdots\times_\cT\cZ_{\lambda_m}$.
We denote by
	\begin{align*}
	&\mathrm{pr}_j\colon(Z_{\underline{\lambda}},\cZ_{\underline{\lambda}},i_{\underline{\lambda}},h_{\underline{\lambda}},\theta_{\underline{\lambda}})\rightarrow(Z_{\lambda_j},\cZ_{\lambda_j},i_{\lambda_j},h_{\lambda_j},\theta_{\lambda_j})&&\text{for }\ 0\leq j\leq m\\
	&\mathrm{pr}_{j,k}\colon(Z_{\underline{\lambda}},\cZ_{\underline{\lambda}},i_{\underline{\lambda}},h_{\underline{\lambda}},\theta_{\underline{\lambda}})\rightarrow(Z_{\lambda_j},\cZ_{\lambda_j},i_{\lambda_j},h_{\lambda_j},\theta_{\lambda_j})\times(Z_{\lambda_k},\cZ_{\lambda_k},i_{\lambda_k},h_{\lambda_k},\theta_{\lambda_k})&&\text{for }\ 0\leq j<k\leq m,
	\end{align*}
the canonical projections.

\begin{definition}\label{def: MIC descent}
	Let $Y$ be a fine log scheme over $T$ and  $(Z_\lambda,\cZ_\lambda,i_\lambda,h_\lambda,\theta_\lambda)_{\lambda\in\Lambda}$ a local embedding datum for $Y$.
	We define the category $\MIC^\dagger((\cZ_\lambda)_{\lambda\in\Lambda}/\cT)$ as follows:
	\begin{itemize}
	\item An object of $\MIC^\dagger((\cZ_\lambda)_{\lambda\in\Lambda}/\cT)$ is a family $\{(\cE_\lambda,\nabla_\lambda),\rho_{\lambda_0,\lambda_1}\}$ of $(\cE_\lambda,\nabla_\lambda)\in\MIC^\dagger(\cZ_\lambda/\cT)$ for $\lambda\in\Lambda$ and isomorphisms $\rho_{\lambda_0,\lambda_1}\colon \mathrm{pr}_1^*(\cE_{\lambda_1},\nabla_{\lambda_1})\xrightarrow{\cong}\mathrm{pr}_0^*(\cE_{\lambda_0},\nabla_{\lambda_0})$ for $(\lambda_0,\lambda_1)\in\Lambda^2$, such that for any $(\lambda_0,\lambda_1,\lambda_2)\in\Lambda^3$ the diagram
			\[\xymatrix{
			\mathrm{pr}_2^*(\cE_{\lambda_2},\nabla_{\lambda_2})\ar[rr]^{\mathrm{pr}^*_{0,2}(\rho_{\lambda_0,\lambda_2})}\ar[rd]_{\mathrm{pr}^*_{1,2}(\rho_{\lambda_1,\lambda_2})} && \mathrm{pr}_0^*(\cE_{\lambda_0},\nabla_{\lambda_0})\\
			& \mathrm{pr}_1^*(\cE_{\lambda_1},\nabla_{\lambda_1})\ar[ru]_{\mathrm{pr}_{0,1}^*(\rho_{\lambda_0,\lambda_1})}&
			}\]
		commutes.
	\item A morphism $f\colon \{(\cE'_\lambda,\nabla'_\lambda),\rho'_{\lambda_0,\lambda_1}\}\rightarrow\{(\cE_\lambda,\nabla_\lambda),\rho_{\lambda_0,\lambda_1}\}$ in $\MIC^\dagger((\cZ_\lambda)_{\lambda\in\Lambda}/\cT)$ is a family $f=\{f_\lambda\}_\lambda$ of morphisms $f_\lambda\colon(\cE'_\lambda,\nabla'_\lambda)\rightarrow(\cE_\lambda,\nabla_\lambda)$ in $\MIC^\dagger(\cZ_\lambda/\cT)$ which are compatible with $\rho'_{\lambda_0,\lambda_1}$ and $\rho_{\lambda_0,\lambda_1}$.
	\end{itemize}
	
	We define the category $\Str^\dagger((\cZ_\lambda)_{\lambda\in\Lambda}/\cT)$ in a similar manner.
	
	Moreover, for a local embedding $F$-datum $(Z_\lambda,\cZ_\lambda,i_\lambda,h_\lambda,\theta_\lambda,\phi_\lambda)_{\lambda\in\Lambda}$ for $Y$, we define the categories $F\MIC^\dagger((\cZ_\lambda,\phi_\lambda)_{\lambda\in\Lambda}/(\cT,\sigma))$ and $F\Str^\dagger((\cZ_\lambda,\phi_\lambda)_{\lambda\in\Lambda}/(\cT,\sigma))$ in a similar manner as above.
\end{definition}

Since log overconvergent isocrystals satisfy descent with respect to $\{Z_\lambda\}_\lambda$ and there are canonical equivalences
	\begin{eqnarray*}
	&&\Isoc^\dagger(Z_{\underline{\lambda}}/\cT)\xrightarrow{\cong}\Str^\dagger(\cZ_{\underline{\lambda}}/\cT)\xrightarrow{\cong}\MIC^\dagger(\cZ_{\underline{\lambda}}/\cT),\\
	&&F\Isoc^\dagger(Z_{\underline{\lambda}}/\cT)\xrightarrow{\cong}F\Str^\dagger((\cZ_{\underline{\lambda}},\phi_{\underline{\lambda}})/(\cT,\sigma))\xrightarrow{\cong}F\MIC^\dagger((\cZ_{\underline{\lambda}},\phi_{\underline{\lambda}})/(\cT,\sigma)),
	\end{eqnarray*}
we obtain the following corollary to Proposition \ref{prop: realization}.

\begin{corollary}\label{cor: descent realization}
	Let $Y$ be a fine log scheme over $T$.
	For a local embedding datum $(Z_\lambda,\cZ_\lambda,i_\lambda,h_\lambda,\theta_\lambda)_{\lambda\in\Lambda}$ for $Y$, there exist canonical equivalences
		\begin{equation}
		\Isoc^\dagger(Y/\cT)\xrightarrow{\cong}\Str^\dagger((\cZ_\lambda)_{\lambda\in\Lambda}/\cT)\xrightarrow{\cong}\MIC^\dagger((\cZ_\lambda)_{\lambda\in\Lambda}/\cT).
		\end{equation}
	For a local embedding $F$-datum $(Z_\lambda,\cZ_\lambda,i_\lambda,h_\lambda,\theta_\lambda,\phi_\lambda)_{\lambda\in\Lambda}$, there exist canonical equivalences
		\begin{equation}\label{eq: descent realization}
		F\Isoc^\dagger(Y/\cT)\xrightarrow{\cong} F\Str^\dagger((\cZ_\lambda,\phi_\lambda)_{\lambda\in\Lambda}/(\cT,\sigma))\xrightarrow{\cong} F\MIC^\dagger((\cZ_\lambda,\phi_\lambda)_{\lambda\in\Lambda}/(\cT,\sigma)).
		\end{equation} 
\end{corollary}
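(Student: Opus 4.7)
The approach is to combine Zariski-type descent on the log overconvergent site with the single-chart realization equivalence of Proposition \ref{prop: realization} applied termwise to the local embedding datum.

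First, the defining conditions on a local embedding datum---namely that $\{Z_\lambda\}$ is a Zariski covering of $Y$ with $\cN_{Z_\lambda}=\theta_\lambda^*\cN_Y$---imply that the family $\{(Z_\lambda,\cZ_\lambda,i_\lambda,h_\lambda,\theta_\lambda)\}_\lambda$ is a covering family in $\OC(Y/\cT)$ in the sense of Definition \ref{def: log overconvergent site}. Since a log overconvergent isocrystal is by definition an $\sO_{Y/\cT}$-module, sheaf descent identifies $\Isoc^\dagger(Y/\cT)$ with the category of families $(\sE_{\lambda})_\lambda$ in $\Isoc^\dagger(Z_\lambda/\cT)$ together with gluing isomorphisms on the products $(Z_{\lambda_0},\cZ_{\lambda_0},\ldots)\times(Z_{\lambda_1},\cZ_{\lambda_1},\ldots)$ in $\OC(Y/\cT)$, satisfying the usual cocycle condition on triple products.

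Second, I claim that for every multi-index $\underline\lambda\in\Lambda^{m+1}$ the weak formal log scheme $\cZ_{\underline\lambda}$ is smooth over $\cT$. Indeed, $\cZ_{\underline\lambda}$ is the exactification of an iterated fiber product of smooth morphisms, and smoothness is preserved under fiber product and under the étale morphism supplied by exactification (Proposition-Definition \ref{def: exactification}). Moreover $Z_{\underline\lambda}$ appears as the first entry of $(Z_{\underline\lambda},\cZ_{\underline\lambda},\ldots)$, so Proposition \ref{prop: realization} together with the fully faithful embedding $\Str^\dagger(\cZ_{\underline\lambda}/\cT)\hookrightarrow\Str(\cZ_{\underline\lambda}/\cT)$ (Lemma \ref{lem: Taylor Str}) and the equivalence $\Theta$ (Proposition \ref{prop: conn str}) supplies equivalences
\[
R_{\cZ_{\underline\lambda}}\colon\Isoc^\dagger(Z_{\underline\lambda}/\cT)\xrightarrow{\cong}\Str^\dagger(\cZ_{\underline\lambda}/\cT)\xrightarrow[\Theta]{\cong}\MIC^\dagger(\cZ_{\underline\lambda}/\cT).
\]
By naturality of the realization functor (equation \eqref{eq: realization Str}), inverse images of log overconvergent isocrystals along morphisms of widenings translate to inverse images of Taylor isomorphisms. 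Applying these observations termwise to the descent diagram identifies the descent data for $\Isoc^\dagger(Y/\cT)$ with the data of an object of $\Str^\dagger((\cZ_\lambda)_{\lambda\in\Lambda}/\cT)$: the binary gluing isomorphism becomes $\rho_{\lambda_0,\lambda_1}$, and the ternary cocycle becomes the triangle commutativity of Definition \ref{def: MIC descent}. Composing termwise with $\Theta$ produces the second equivalence.

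The $F$-version proceeds identically by tracking Frobenius structures through every step using the functor $R_{\cZ_{\underline\lambda},\phi_{\underline\lambda}}$ of \eqref{eq: realization FStr} and the $F$-case of Proposition \ref{prop: realization}; no additional argument is needed. The main obstacle is the bookkeeping on triple overlaps: on the isocrystal side the cocycle lives on the single object $(Z_{\lambda_0,\lambda_1,\lambda_2},\cZ_{\lambda_0,\lambda_1,\lambda_2},\ldots)$, while on the stratification side it is tested via three distinct projections $\mathrm{pr}_{i,j}^*$ to the $\cZ_{\lambda_i,\lambda_j}$. Matching these requires Proposition \ref{prop: isoc indep 1} to canonically identify the pullback functors arising from the various comparison morphisms; once this identification is in place the verification is formal.
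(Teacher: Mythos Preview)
Your argument is correct and follows essentially the same route as the paper: combine Zariski descent for log overconvergent isocrystals along $\{Z_\lambda\to Y\}$ with the single-chart realization equivalence of Proposition~\ref{prop: realization} applied to each $\cZ_{\underline\lambda}$, then transport via $\Theta$. One small wording issue: the family $\{(Z_\lambda,\cZ_\lambda,\ldots)\}$ is not literally a covering family \emph{in} $\OC(Y/\cT)$ in the sense of Definition~\ref{def: log overconvergent site} (there is no common target object with first entry $Y$), so the descent you invoke is rather Zariski descent along $\{Z_\lambda\to Y\}$ for sheaves on the site, exactly as the paper phrases it.
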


Let $(Z_\lambda,\cZ_\lambda,i_\lambda,h_\lambda,\theta_\lambda)_{\lambda\in\Lambda}$ be a local embedding datum for $Y$.
Then
	\begin{equation}\label{eq: simplicial system}
	(Z_m,\cZ_m,i_m,h_m,\theta_m):=\coprod_{\underline{\lambda}\in\Lambda^{m+1}}(Z_{\underline{\lambda}},\cZ_{\underline{\lambda}},i_{\underline{\lambda}},h_{\underline{\lambda}},\theta_{\underline{\lambda}})
	\end{equation}
for $m\geq 0$ give a simplicial object $(Z_\bullet,\cZ_\bullet,i_\bullet,h_\bullet,\theta_\bullet)$ in $\OC(Y/\cT)$.
In particular we have a simplicial widening $(Z_\bullet,\cZ_\bullet,i_\bullet)$ over $(T,\cT,\iota)$.
For an object $\sE\in\Isoc^\dagger(Y/\cT)$, we have its realizations $(\sE_{\cZ_m},\nabla_{\cZ_m})\in\MIC^\dagger(\cZ_m/\cT)$ with isomorphisms $f_\tau^*(\sE_{\cZ_m},\nabla_{\cZ_m})\xrightarrow{\cong}(\sE_{\cZ_n},\nabla_{\cZ_n})$ where $f_\tau\colon(Z_n,\cZ_n,i_n)\rightarrow(Z_m,\cZ_m,i_m)$ is induced by a morphism $\tau\colon\{0,\ldots,m\}\rightarrow\{0,\ldots,n\}$ in the simplex category.
Then the complexes $R\Gamma(\cZ_{m,\bbQ},\sE_{\cZ_m}\otimes\omega^\star_{\cZ_m/\cT,\bbQ})$ form a cosimplicial complex $R\Gamma(\cZ_{\bullet,\bbQ},\sE_{\cZ_\bullet}\otimes\omega^\star_{\cZ_\bullet/\cT,\bbQ})$. Note that, since the underlying Grothendieck topological space of a dagger space is defined to be that of a rigid analytic space, we may consider Godement resolution by using prime filters instead of usual points (see \cite{vdPS} and \cite[\S 3]{CCM}), and we may regard $R\Gamma(\cZ_{m,\bbQ},\sE_{\cZ_m}\otimes\omega^\star_{\cZ_m/\cT,\bbQ})$ as complexes.

Similarly, if $(Z_\lambda,\cZ_\lambda,i_\lambda,h_\lambda,\theta_\lambda,\phi_\lambda)_{\lambda\in\Lambda}$ is a local embedding $F$-datum, $\phi_{\underline{\lambda}}$ give a Frobenius lift $\phi_m$ on $\cZ_m$.
Then we obtain a simplicial $F$-widening $(Z_\bullet,\cZ_\bullet,i_\bullet,\phi_\bullet)$ over $(T,\cT,\iota,\sigma)$.

\begin{definition}
	Let $Y$ be a fine log scheme over $T$.
	\begin{enumerate}
	\item For an object $\sE\in\Isoc^\dagger(Y/\cT)$, we define the {\it log rigid cohomology} $R\Gamma_\rig(Y/\cT,\sE)$ of $Y$ over $(T,\cT,\iota)$ with coefficients in $\sE$ to be the complex associated to the cosimplicial complex $R\Gamma(\cZ_{\bullet,\bbQ},\sE_{\cZ_\bullet}\otimes\omega^\star_{\cZ_\bullet/\cT,\bbQ})$ as above, by taking a local embedding datum $(Z_\lambda,\cZ_\lambda,i_\lambda,h_\lambda,\theta_\lambda)_{\lambda\in\Lambda}$.
		Note that $R\Gamma_\rig(Y/\cT,\sE)$ is a complex of $\Gamma(\cT_\bbQ,\cO_{\cT_\bbQ})$-modules.
	\item For an object $(\sE,\Phi)\in F\Isoc^\dagger(Y/\cT)$, take a local embedding $F$-datum $(Z_\lambda,\cZ_\lambda,i_\lambda,h_\lambda,\theta_\lambda,\phi_\lambda)_{\lambda\in\Lambda}$.
		Then the endomorphisms $\varphi_m$ on $\sE_{\cZ_m}\otimes\omega^\bullet_{\cZ_m/\cT,\bbQ}$ defined as in Definition \ref{def: FMIC} induce a $\sigma$-semilinear endomorphism $\varphi$ on $R\Gamma_\rig(Y/\cT,\sE)$.
		We denote by $R\Gamma_\rig(Y/\cT,(\sE,\Phi))$ the pair of $R\Gamma_\rig(Y/\cT,\sE)$ and $\varphi$, and call it the {\it log rigid cohomology} of $Y$ over $(T,\cT,\iota,\sigma)$ with coefficients in $(\sE,\Phi)$.
	\end{enumerate}
	For the trivial coefficient $\sO_{Y/\cT}$, we denote $R\Gamma_\rig(Y/\cT):=R\Gamma_\rig(Y/\cT,\sO_{Y/\cT})$.
\end{definition}

\begin{proposition}\label{prop: log rig coh indep}
	The log rigid cohomology $R\Gamma_\rig(Y/\cT,\sE)$ and $R\Gamma_\rig(Y/\cT,(\sE,\Phi))$ are independent of the choice of a local embedding ($F$-)data up to canonical quasi-isomorphisms.
\end{proposition}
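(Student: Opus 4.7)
The plan is a cohomological-descent argument. Given two local embedding ($F$-)data $\mathcal{A}$ and $\mathcal{A}'$ indexed by $\Lambda$ and $M$ respectively, their disjoint union $\mathcal{A}\sqcup\mathcal{A}'$ indexed by $\Lambda\sqcup M$ is again a local embedding ($F$-)datum for $Y$. By symmetry it therefore suffices to prove that whenever $\mathcal{A}$ is a sub-datum of $\mathcal{A}''$ obtained by adjoining extra members indexed by a set $N$, the inclusion of index sets induces a quasi-isomorphism of associated total complexes. Proposition \ref{prop: isoc indep 1} ensures that the realization of $\sE$ on the exactified products $\cZ_{\underline\lambda}$ is compatible with pullback along projections in $\OC(Y/\cT)$, so this inclusion yields a well-defined morphism of cosimplicial log de Rham complexes; in the $F$-isocrystal case it is automatically compatible with $\varphi$ by functoriality of the realization in Frobenius lifts, so one may reduce to the case without Frobenius structure.

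To produce the quasi-isomorphism on total complexes, I would refine the cosimplicial structure on $\cZ^{\mathcal{A}''}_\bullet$ by partitioning index tuples in $(\Lambda\sqcup N)^{n+1}$ according to which positions lie in $\Lambda$ and which in $N$. Taking the spectral sequence of the resulting bisimplicial decomposition, the comparison reduces to showing that, for each fixed $\underline\lambda\in\Lambda^{p+1}$, the augmented cosimplicial complex
\[R\Gamma(\cZ_{\underline\lambda,\bbQ},\sE_{\cZ_{\underline\lambda}}\otimes\omega^\bullet_{\cZ_{\underline\lambda}/\cT,\bbQ})\longrightarrow\bigl(R\Gamma(\cZ_{\underline\lambda,\underline\mu,\bbQ},\sE\otimes\omega^\bullet_{\cZ_{\underline\lambda,\underline\mu}/\cT,\bbQ})\bigr)_{\underline\mu\in N^{\star+1}}\]
is acyclic, where $\cZ_{\underline\lambda,\underline\mu}$ denotes the exactified product in $\OC(Y/\cT)$. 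By Proposition \ref{prop: isoc indep 1} the realizations on the right identify canonically with pullbacks of $\sE_{\cZ_{\underline\lambda}}\otimes\omega^\bullet$, and the projections $\cZ_{\underline\lambda,\underline\mu,\bbQ}\rightarrow\cZ_{\underline\lambda,\bbQ}$ form the \v Cech nerve of the admissible covering of $\cZ_{\underline\lambda,\bbQ}$ pulled back from the Zariski covering $\{Z_\mu\rightarrow Y\}_{\mu\in N}$ along $\theta_{\underline\lambda}$. Hence the acyclicity reduces to the classical \v Cech acyclicity of a coherent sheaf on a quasi-compact admissible covering of a dagger space, applied term-wise to the log de Rham complex.

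The main obstacle will be verifying carefully the identification between generic fibers of exactified products of widenings and the admissible-open pullback of the Zariski covering on $\cZ_{\underline\lambda,\bbQ}$; this requires tracking how the weak-completion procedure used in the exactification interacts with the generic-fiber construction of Proposition \ref{prop: gen fib}, particularly in the presence of log structures that are only coherent on the ambient weak formal scheme before exactification. Once this geometric identification is established, the remaining steps are formal, and functoriality of the realization functors ensures that the resulting quasi-isomorphism in the derived category is canonical and independent of the intermediate choice of disjoint-union datum, yielding the claimed independence up to canonical quasi-isomorphism.
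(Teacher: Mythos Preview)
Your overall strategy (bicosimplicial comparison, reduce to fixed $\underline\lambda$, then descent along the covering coming from the second datum) is the same as the paper's. But there is a genuine gap at the step you flag as the ``main obstacle'': the identification you hope to verify is simply false. The generic fiber $\cZ_{\underline\lambda,\underline\mu,\bbQ}$ of the exactified product is \emph{not} the admissible open subset of $\cZ_{\underline\lambda,\bbQ}$ lying over $Z_{\underline\lambda}\cap Z_{\underline\mu}$; it has strictly larger relative dimension over $\cT$, coming from the extra factors $\cZ_{\mu_j}$. In particular $\omega^\bullet_{\cZ_{\underline\lambda,\underline\mu}/\cT,\bbQ}$ is not the pullback of $\omega^\bullet_{\cZ_{\underline\lambda}/\cT,\bbQ}$, so your claim that ``the realizations on the right identify canonically with pullbacks of $\sE_{\cZ_{\underline\lambda}}\otimes\omega^\bullet$'' fails for the de Rham complex, and the problem does not reduce to \v{C}ech acyclicity of a coherent sheaf.

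The paper fixes exactly this: for fixed $m$ it factors the map through the genuine open subsets $\cU_{m,\lambda'}\subset\cZ_m$ (the preimages of $Z_{\lambda'}$), uses cohomological descent for the admissible covering $\{\cU_{m,\lambda',\bbQ}\}$ of $\cZ_{m,\bbQ}$, and then invokes the strong fibration lemma \cite[Proposition~1.18]{EY} to show that $\cZ''_{m,n,\bbQ}\to\cU_{m,n,\bbQ}$ is a relative open polydisk, so that the induced map on de Rham cohomology with coefficients in the (overconvergent) realization is a quasi-isomorphism. This polydisk/Poincar\'e-lemma step is the missing ingredient in your argument. A secondary issue: your ``bisimplicial decomposition'' of $(\Lambda\sqcup N)^{\bullet+1}$ by partitioning positions is not literally a bisimplicial object because of the interleaving of $\Lambda$- and $N$-entries; the paper sidesteps this by working directly with the product bisimplicial object $(Z_m,\cZ_m,\ldots)\times(Z'_n,\cZ'_n,\ldots)$ rather than the disjoint-union datum.
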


\begin{proof}
	This is given by a standard argument of rigid cohomology theory.
	More precisely, assume that another family $(Z'_{\lambda'},\cZ'_{\lambda'},i'_{\lambda'},h'_{\lambda'},\theta'_{\lambda'})_{\lambda'\in\Lambda'}$ is given.
	Let $(Z''_{\bullet,\bullet},\cZ''_{\bullet,\bullet},i''_{\bullet,\bullet},h'_{\bullet,\bullet},\theta''_{\bullet,\bullet})$ be the bisimplicial object given by
		\[(Z''_{m,n},\cZ''_{m,n},i''_{m,n},h'_{m,n},\theta''_{m,n}):=(Z_m,\cZ_m,i_m,h_m,\theta_m)\times(Z'_n,\cZ'_n,i'_n,h'_n,\theta'_n).\]
	Then we obtain a bicosimplicial complex $R\Gamma(\cZ''_{\bullet,\bullet,\bbQ},\sE_{\cZ''_{\bullet,\bullet}}\otimes\omega^\star_{\cZ''_{\bullet,\bullet}/\cT,\bbQ})$
	with natural morphisms
		\[R\Gamma(\cZ_{\bullet,\bbQ},\sE_{\cZ_\bullet}\otimes\omega^\star_{\cZ_\bullet/\cT,\bbQ})\rightarrow R\Gamma(\cZ''_{\bullet,\bullet,\bbQ},\sE_{\cZ''_{\bullet,\bullet}}\otimes\omega^\star_{\cZ''_{\bullet,\bullet}/\cT,\bbQ})\leftarrow R\Gamma(\cZ'_{\bullet,\bbQ},\sE_{\cZ'_\bullet}\otimes\omega^\star_{\cZ'_\bullet/\cT,\bbQ}).\]
	We will prove the first morphism is a quasi-isomorphism on the associated complexes, then the second morphism is also a quasi-isomorphism by the same reason.
	(We regard (bi)cosimplicial complexes as a complex by considering the associated complexes.)
	
	It is sufficient to show that the morphism
		\begin{equation}\label{eq: m bullet}
		R\Gamma(\cZ_{m,\bbQ},\sE_{\cZ_m}\otimes\omega^\star_{\cZ_m/\cT,\bbQ})\rightarrow R\Gamma(\cZ''_{m,\bullet,\bbQ},\sE_{\cZ''_{m,\bullet}}\otimes\omega^\star_{\cZ''_{m,\bullet}/\cT,\bbQ})
		\end{equation}
	is a quasi-isomorphism for any $m\in\bbN$.
	Let $\cU_{m,\lambda'}\subset\cZ_m$ be the open weak formal log subscheme whose underlying topological space is $Z_m\cap Z_{\lambda'}\subset Z_m$ and whose log structure is the pull-back of $\cN_{\cZ_m}$.
	Then the above morphism \eqref{eq: m bullet} factors through $R\Gamma(\cU_{m,\bullet,\bbQ},\sE_{\cU_{m,\bullet}}\otimes\omega^\star_{\cU_{m,\bullet}/\cT,\bbQ})$.
	By the cohomological descent with respect to an admissible covering of a dagger space,
		\[R\Gamma(\cZ_{m,\bbQ},\sE_{\cZ_m}\otimes\omega^\star_{\cZ_m/\cT,\bbQ})\rightarrow R\Gamma(\cU_{m,\bullet,\bbQ},\sE_{\cU_{m,\bullet}}\otimes\omega^\star_{\cU_{m,\bullet}/\cT,\bbQ})\]
	is a quasi-isomorphism.
	By the strong fibration lemma \cite[Proposition 1.38]{EY}, the canonical morphism $\cZ''_{m,n,\bbQ}\rightarrow\cU_{m,n,\bbQ}$, which we denote by $f_{m,n}$, is a relative open polydisk.
	As $\sE_{\cZ''_{m,n}}=f_{m,n}^*\sE_{\cU_{m,n}}$, one can see that
		\[R\Gamma(\cU_{m,\bullet,\bbQ},\sE_{\cU_{m,\bullet}}\otimes\omega^\star_{\cU_{m,\bullet}/\cT,\bbQ})\rightarrow R\Gamma(\cZ''_{m,\bullet,\bbQ},\sE_{\cZ''_{m,\bullet}}\otimes\omega^\star_{\cZ''_{m,\bullet}/\cT,\bbQ})\]
	is also a quasi-isomorphism, by the same arguments as the proof of \cite[Proposition 2.5]{EY}.
	
	It is straight forward that these constructions are compatible with Frobenius structures.
	Therefore we proved the assertion.
\end{proof}

\begin{remark}\label{rem: affinoid}
	Let $(Z_\lambda,\cZ_\lambda,i_\lambda,h_\lambda,\theta_\lambda)_{\lambda\in\Lambda}$ be a local embedding datum of a fine log scheme $Y$ over $(T,\cT,\iota)$.
	Define objects $(Z_{\underline{\lambda}},\cZ_{\underline{\lambda}},i_{\underline{\lambda}},h_{\underline{\lambda}},\theta_{\underline{\lambda}})\in \OC(Y/\cT)$ for $\underline{\lambda}\in\Lambda^{m+1}$ and a simplicial object $(Z_\bullet,\cZ_\bullet,i_\bullet,h_\bullet,\theta_\bullet)$ as \eqref{eq: product of embedding data} and \eqref{eq: simplicial system}, respectively.
	By construction of the log rigid cohomology, there exists a spectral sequence
	\begin{equation}\label{eq: ss for open}
	E_1^{i,j}=H^j_\rig(Z_i/\cT,\sE)\cong\prod_{\underline{\lambda}\in\Lambda^{i+1}}H^j_\rig(Z_{\underline{\lambda}}/\cT,\sE)\Rightarrow H^{i+j}_\rig(Y/\cT,\sE).
	\end{equation}
	This is useful for reducing arguments concerning log rigid cohomology to the embeddable case.
	However, even if each $\cZ_\lambda$ is affine and $p$-adic, $\cZ_{\underline{\lambda},\bbQ}$ in general is not an affinoid space, hence the cohomology might not be computed by the global sections. 
		
	For this reason, the following description will also be used:
	We take a local embedding datum $(Z_\lambda,\cZ_\lambda,i_\lambda,h_\lambda,\theta_\lambda)_{\lambda\in\Lambda}$ so that $Z_\lambda$ and $\cZ_\lambda$ are affine for all $\lambda\in\Lambda$.
	Let $\cJ_{\underline{\lambda}}:=\Ker(\cO_{\cZ_{\underline{\lambda}}}\rightarrow i_{\underline{\lambda},\ast}\cO_{Z_{\underline{\lambda}}})$ and $\cZ_{\underline{\lambda},n}:=\cZ_{\underline{\lambda}}[\frac{\cJ_{\underline{\lambda}}^n}p]^\dagger$ for each $\underline{\lambda}$ and $n\geq 0$.
	Since $\cZ_{\underline{\lambda},n,\bbQ}$ is an affinoid space, the cohomology of a coherent sheaf is computed by the global section.
	For $m,n\geq 0$, let $\cZ_{m,n}:=\coprod_{\underline{\lambda}\in\Lambda^{m+1}}\cZ_{\underline{\lambda},n}$.
	Then for any $\sE\in\Isoc^\dagger(Y/\cT)$ we have
	\begin{equation*}
		R\Gamma_\rig(Y/\cT)\cong \holim_{m,n}\Gamma(\cZ_{m,n,\bbQ},\sE_{\cZ_{m,n}}\otimes\omega^\bullet_{\cZ_{m,n}/\cT,\bbQ}),
		\end{equation*}
	which is represented as the total complex of the triple complex
	\begin{equation}\label{eq: homotopy limit}
	\prod_{n\in\bbN}\Gamma(\cZ_{\bullet,n,\bbQ},\sE_{\cZ_{\bullet,n}}\otimes\omega^\star_{\cZ_{\bullet,n}/\cT,\bbQ})\xrightarrow{\eth}\prod_{n\in\bbN}\Gamma(\cZ_{\bullet,n,\bbQ},\sE_{\cZ_{\bullet,n}}\otimes\omega^\star_{\cZ_{\bullet,n}/\cT,\bbQ})\end{equation}
	where the first differential is induced by the log connection, the second differential is induced by the face maps of simplicial weak formal log schemes $\cZ_{\bullet,n}$, and the third differential $\eth$ maps $(\eta_{i,m,n})_n\in\prod_{n\in\bbN}\Gamma(\cZ_{m,n,\bbQ},\sE_{\cZ_{m,n}}\otimes\omega^i_{\cZ_{m,n}/\cT,\bbQ})$ to $(\eta_{i,m,n}-\eta_{i,m,n+1}|_{\cZ_{m,n,\bbQ}})_n$ for any $i,m\in\bbN$.
	\end{remark}

Next we discuss the functoriality of the log rigid cohomology.
Let $\varrho\colon (T',\cT',\iota')\rightarrow(T,\cT,\iota)$ be a morphism of widenings, and consider a commutative diagram
	\begin{equation}\label{eq: diag functoriality}
	\xymatrix{
	Y'\ar[d]^\rho\ar[r] & T'\ar[d]^{\varrho_0}\\
	Y\ar[r] & T.
	}\end{equation}
For an object $\sE\in\Isoc^\dagger(Y/\cT)$, we define a morphism
	\begin{equation}\label{eq: pull back log rig coh}
	(\rho,\varrho)^*\colon R\Gamma_\rig(Y/\cT,\sE)\rightarrow R\Gamma_\rig(Y'/\cT',(\rho,\varrho)^*\sE)
	\end{equation}
in the derived category as follows:
Let $(Z_\lambda,\cZ_\lambda,i_\lambda,h_\lambda,\theta_\lambda)_{\lambda\in\Lambda}$ and $(Z'_{\lambda'},\cZ'_{\lambda'},i'_{\lambda'},h'_{\lambda'},\theta'_{\lambda'})_{\lambda'\in\Lambda'}$ be local embedding data for $Y$ and $Y'$ respectively.
For $m,n\in\bbN$, let $Z''_{m,n}:=Z_m\times_YZ'_n$ and let $\cZ''_{m,n}$ be the exactification of $(i_m,i'_n)\colon Z''_{m,n}\hookrightarrow\cZ_m\times_\cT\cZ'_n$.
Then they form a bisimplicial object $(Z''_{\bullet,\bullet},\cZ''_{\bullet,\bullet},i''_{\bullet,\bullet},h''_{\bullet,\bullet},\theta''_{\bullet,\bullet})$ in $\OC(Y'/\cT')$ and a bicosimplicial complex $R\Gamma(\cZ''_{\bullet,\bullet,\bbQ},((\rho,\varrho)^*\sE)_{\cZ''_{\bullet,\bullet}}\otimes\omega^\star_{\cZ''_{\bullet,\bullet}/\cT',\bbQ})$ with morphisms
	\[R\Gamma(\cZ_{\bullet,\bbQ},\sE_{\cZ_\bullet}\otimes\omega^\star_{\cZ_\bullet/\cT,\bbQ})\rightarrow R\Gamma(\cZ''_{\bullet,\bullet,\bbQ},((\rho,\varrho)^*\sE)_{\cZ''_{\bullet,\bullet}}\otimes\omega^\star_{\cZ''_{\bullet,\bullet}/\cT',\bbQ}) \leftarrow R\Gamma(\cZ'_{\bullet,\bbQ},((\rho,\varrho)^*\sE)_{\cZ'_\bullet}\otimes\omega^\star_{\cZ'_\bullet/\cT',\bbQ}).\]
The second morphism is a quasi-isomorphism by the same reason as the proof of Proposition \ref{prop: log rig coh indep}, and therefore this induces a morphism \eqref{eq: pull back log rig coh} in the derived category.

Note that, for an $F$-widening $(T,\cT,\iota,\sigma)$, a fine log scheme $Y$ over $T$, and an object $(\sE,\Phi)\in F\Isoc^\dagger(Y/\cT)$, the endomorphism $\varphi$ on $R\Gamma_\rig(Y/\cT,(\sE,\Phi))$ coincides with the composite
\[R\Gamma_\rig(Y/\cT,\sE)\xrightarrow{(\sigma,F_Y)^*}R\Gamma(Y/\cT,\sigma^*\sE)\xrightarrow{\Phi}R\Gamma_\rig(Y/\cT,\sE),\]
where the first morphism is defined by the above construction.

Next, let $\varrho\colon (T',\cT',\iota',\sigma')\rightarrow(T,\cT,\iota,\sigma)$ be a morphism of $F$-widenings, and consider a commutative diagram \eqref{eq: diag functoriality}.
Then for an object $(\sE,\Phi)\in F\Isoc^\dagger(Y/\cT)$, the morphism \eqref{eq: pull back log rig coh} is compatible with Frobenius structures.
Hence we have a morphism 
	\begin{equation}\label{eq: pull back log rig coh F}
	(\rho,\varrho)^*\colon R\Gamma_\rig(Y/\cT,(\sE,\Phi))\rightarrow R\Gamma_\rig(Y'/\cT',(\rho,\varrho)^*(\sE,\Phi)).
	\end{equation}

\begin{proposition}\label{prop: base change coh}
	Let $(T,\cT,\iota)$ be a widening, $Y$ a quasi-compact fine log scheme over $T$, and $\varrho\colon\cT'\rightarrow\cT$ a morphism of weak formal log schemes.
	Consider a Cartesian diagram
		\[\xymatrix{
		Y'\ar[d]^-\rho\ar[r] & T'\ar[d]^-{\varrho_{0}}\ar[r]&\cT'\ar[d]^-{\varrho}\\
		Y\ar[r] & T\ar[r] & \cT.
		}\]
	Suppose that the underlying weak formal schemes of $\cT$ and $\cT'$ are weak formal spectra of complete discrete valuation rings with fraction fields denoted by $K$ and $K'$, respectively.
	
	Then for any $\sE\in\Isoc^\dagger(Y/\cT)$ the map \eqref{eq: pull back log rig coh} induces a quasi-isomorphism
		\[(\rho,\varrho)^*\colon R\Gamma_\rig(Y/\cT,\sE)\otimes_{K}K'\xrightarrow{\cong}R\Gamma_\rig(Y'/\cT',(\rho,\varrho)^*\sE).\]
		
	Moreover let $\sigma$ and $\sigma'$ be Frobenius lifts on $\cT$ and $\cT'$ which are compatible each other via $\varrho$.
	Then for any $(\sE,\Phi)\in F\Isoc^\dagger(Y/\cT)$ the map \eqref{eq: pull back log rig coh F} induces a quasi-isomorphism
		\[(\rho,\varrho)^*\colon R\Gamma_\rig(Y/\cT,(\sE,\Phi))\otimes_{K}K'\xrightarrow{\cong}R\Gamma_\rig(Y'/\cT',(\rho,\varrho)^*(\sE,\Phi))\]
	which is compatible with the Frobenius structures.
\end{proposition}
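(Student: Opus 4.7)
The plan is to compute both sides via compatible local embedding data and then reduce to a flat base change statement for coherent cohomology of dagger spaces.

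First, I would choose a local embedding datum $(Z_\lambda,\cZ_\lambda,i_\lambda,h_\lambda,\theta_\lambda)_{\lambda\in\Lambda}$ for $Y$ over $(T,\cT,\iota)$, which exists because $\cT$ is assumed affine with a global chart. Setting $Z'_\lambda:=Z_\lambda\times_T T'$ and $\cZ'_\lambda:=\cZ_\lambda\times_{\cT}\cT'$ (with the pullback log structure, which by strictness of $\varrho_{\bbZ_p}$ coincides with the one inherited from $\cZ_\lambda$), one gets a local embedding datum for $Y'$ over $(T',\cT',\iota')$; smoothness of $h'_\lambda$ follows from smoothness of $h_\lambda$ together with $\varrho_{\bbZ_p}$. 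A key point is that since $\varrho_{\bbZ_p}$ is strict, the exactifications used to define products in $\OC(Y/\cT)$ (and hence the simplicial widening $(Z_\bullet,\cZ_\bullet,i_\bullet)$) commute with the base change; thus $(Z'_\bullet,\cZ'_\bullet,i'_\bullet)$ is obtained from $(Z_\bullet,\cZ_\bullet,i_\bullet)$ by base change along $\varrho$. In the $F$-case, the pullback Frobenius lifts $\phi'_\lambda:=\phi_\lambda\times\sigma'$ on $\cZ'_\lambda$ provide a local embedding $F$-datum.

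Next, again using strictness of $\varrho_{\bbZ_p}$, the log de Rham complex is compatible with base change, i.e.\ $\omega^\bullet_{\cZ'_m/\cT',\bbQ}$ is canonically the pullback of $\omega^\bullet_{\cZ_m/\cT,\bbQ}$ along $\cZ'_m\to\cZ_m$; moreover, by definition of $(\rho,\varrho)^*\sE$ via \eqref{eq: pull back isoc} and of the realization functor, we have $((\rho,\varrho)^*\sE)_{\cZ'_m}\cong\sE_{\cZ_m}\otimes_{\cO_{\cZ_{m,\bbQ}}}\cO_{\cZ'_{m,\bbQ}}$. Consequently, both sides of the asserted quasi-isomorphism are obtained as the total complexes of cosimplicial complexes, and the natural map is degreewise induced by a comparison map on each $\cZ_m$.

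The assertion therefore reduces to showing, for each $m\geq 0$, a flat base change isomorphism of the form
\[R\Gamma(\cZ_{m,\bbQ},\sE_{\cZ_m}\otimes\omega^\bullet_{\cZ_m/\cT,\bbQ})\otimes_A A'\xrightarrow{\cong} R\Gamma(\cZ'_{m,\bbQ},\sE_{\cZ'_m}\otimes\omega^\bullet_{\cZ'_m/\cT',\bbQ}).\]
Here I would argue as follows: by smoothness and affineness of $\varrho_{\bbZ_p}$, the induced morphism $A\to A'$ of weakly complete algebras is flat. Choose a finite affine open covering $\{\cU_\alpha\}$ of $\cZ_m$; its pullback $\{\cU_\alpha\times_{\cT}\cT'\}$ covers $\cZ'_m$, and the generic fibers give admissible affinoid coverings of $\cZ_{m,\bbQ}$ and $\cZ'_{m,\bbQ}$. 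Computing with the associated \v{C}ech double complex, each term is a finitely generated module over a dagger affinoid algebra of the form $B$ (resp.\ $B\wh\otimes_A A'$), and the exchange map is then an instance of flat base change for finite modules over dagger affinoid algebras. The main obstacle is precisely this last point: to ensure flat base change for coherent cohomology on the generic fibers of (non-adic) weak formal schemes. I expect it to follow from the usual \v{C}ech-module argument together with the flatness of $A\to A'$ and the fact that the generic fiber $\cZ_{m,\bbQ}$ admits an admissible covering by dagger affinoids whose algebras are finite over those coming from the coordinates along $\cT$; the key technical lemma is a dagger analogue of Kiehl's theorem/flat base change, which I would invoke or verify via the Langer--Muralidharan framework.

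Finally, in the $F$-isocrystal case, every step in the above is functorial in the pair of Frobenius lifts, and the map $(\rho,\varrho)^*$ as in \eqref{eq: pull back log rig coh F} is a morphism of complexes with $\sigma'$-semilinear endomorphisms by construction; hence compatibility with Frobenius structures is automatic and the $F$-statement follows from the bare statement already established.
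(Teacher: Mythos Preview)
Your approach is essentially the same as the paper's: take a local embedding datum for $Y$, base change it along $\cT'\to\cT$ to get one for $Y'$, use strictness of $\varrho_{\bbZ_p}$ to identify the pulled-back log de Rham complexes, and invoke flatness of $A\to A'$. The paper's own proof is terser than yours---it simply asserts $\omega^\bullet_{\cZ\times_\cT\cT'/\cT',\bbQ}=\omega^\bullet_{\cZ/\cT,\bbQ}\otimes_AA'$ together with flatness of $A'$ over $A$ and concludes, without spelling out the \v{C}ech/coherent-cohomology base change step that you (reasonably) flag as the main technical point.
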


\begin{proof}
	Take a local embedding datum $(Z_\lambda,\cZ_\lambda,i_\lambda,h_\lambda,\theta_\lambda)_{\lambda\in\Lambda}$ for $Y$ over $\cT$ such that $Z_\lambda$ and $\cZ_\lambda$ are affine for all $\lambda\in\Lambda$ and $\Lambda$ is finite.
	Then it induces a local embedding datum for $Y'$ over $\cT'$ by taking base change by $\varrho\colon \cT'\rightarrow\cT$, and the morphism on cohomologies induced by the natural projection coincides with \eqref{eq: pull back log rig coh}.
	Put $\sE':=(\rho,\varrho)^\ast\sE$.
	For $i,n\geq 0$ and $\underline{\lambda}\in\Lambda^{i+1}$ we let $\cZ'_{\underline{\lambda},n}:=\cZ_{\underline{\lambda},n}\times_\cT\cT'$ where $\cZ_{\underline{\lambda},n}$ is defined as in Remark \ref{rem: affinoid}.
	Then by the description of log rigid cohomology in Remark \ref{rem: affinoid}, it suffices to prove that the map $H^j(\cZ'_{\underline{\lambda},n,\bbQ},\sE'_{\cZ'_{\underline{\lambda},n}}\otimes\omega^\bullet_{\cZ'_{\underline{\lambda},n}/\cT',\bbQ})\rightarrow H^j(\cZ_{\underline{\lambda},n,\bbQ},\sE_{\cZ_{\underline{\lambda},n}}\otimes\omega^\bullet_{\cZ_{\underline{\lambda},n}/\cT,\bbQ})$ is an isomorphism for any $\underline{\lambda}$, $n$ and $j$.
	But this is clear, because $\cZ'_{\underline{\lambda},n}$ and $\cZ_{\underline{\lambda},n}$ are affine and we have $\Gamma(\cZ'_{\underline{\lambda},n,\bbQ},\sE_{\cZ'_{\underline{\lambda},n}}\otimes\omega^j_{\cZ'_{\underline{\lambda},n}/\cT',\bbQ})\cong \Gamma(\cZ_{\underline{\lambda},n,\bbQ},\sE_{\cZ_{\underline{\lambda},n}}\otimes\omega^j_{\cZ_{\underline{\lambda},n}/\cT,\bbQ})\otimes_KK'$ by \cite[Proposition 1.54(4)]{EY}.
	The second assertion is proved similarly.
\end{proof}

%%%%%%%%%%%%%
\section{Log overconvergent isocrystals over $W^\varnothing$}\label{sec: absolute isoc}
%%%%%%%%%%%

Let $k$ be a perfect field of characteristic $p>0$, $W=W(k)$ the ring of Witt vectors of $k$, and $L$ the fraction field of $W$.
From now on, we consider $W$ and its ideal $pW$ as the base.
Namely, all weak formal (log) schemes in this and subsequent sections are with respect to $(W,pW)$.

Let $\iota\colon k^\varnothing\hookrightarrow W^\varnothing$ be the natural exact closed immersion.
Let $\mathcal{S}$ be $\Spwf W\llbracket s\rrbracket$ endowed with the log structure associated to the map $\bbN\rightarrow W\llbracket s\rrbracket;\ 1\mapsto s$, and $\tau\colon k^0\hookrightarrow\cS$ the exact closed immersion defined by the ideal $(p,s)$.
Let $W^0$ be the exact closed weak formal log subscheme of $\cS$ defined by the ideal generated by $s$, and let $\tau_0\colon k^0\hookrightarrow W^0$ be the exact closed immersion induced by $\tau\colon k^0\hookrightarrow\cS$.
We denote by $\sigma$ the Frobenius endomorphism on $W^\varnothing$ and its extensions to $\cS$ and $W^0$ defined by $s\mapsto s^p$.

We note that $\cS$ is strongly log smooth over $W^\varnothing$.
For any fine weak formal log scheme $\cZ$ over $\cS$ (in particular for any fine log scheme over $k^0$), we again denote by $s\in\cN_\cZ$ the image of $s\in\cN_{\cS}$.

In this section we will study fundamental properties of log overconvergent ($F$-)isocrystals over $W^\varnothing$ on strictly semistable log schemes over $k^0$, and introduce some classes of log overconvergent ($F$-)isocrystals.

\begin{definition}\label{def: strictly semistable}
	\begin{enumerate}
	\item A log scheme $Y$ over $k^0$ is said to be {\it strictly semistable} if Zariski locally there exist integers $n\geq 1,m\geq 0$ and a strict log smooth morphism
			\begin{equation}\label{eq: ss morp}Y\rightarrow(\Spec k[x_1,\ldots,x_n,y_1,\ldots,y_m]/(x_1\cdots x_n),\cN)\end{equation}
		where the log structure $\cN$ is induced by the map
			\[\bbN^n\oplus\bbN^m\rightarrow k[x_1,\ldots,x_n,y_1,\ldots,y_m]/(x_1\cdots x_n);\ ((k_i),(\ell_j))\mapsto x_1^{k_1}\cdots x_n^{k_n}y_1^{\ell_1}\cdots y_m^{\ell_m}.\]
		We call the closed subset
		\[\{y\in Y\mid \text{$\exists \alpha\in\cN_{Y,y}$ $\forall \beta\in\cN_{Y,y}$ $\alpha\beta$ is not contained in the image of $\Gamma(k^0,\cN_{k^0})$}\}\]
		with the reduced structure the {\it horizontal divisor} of $Y$.
		Namely the horizontal divisor is locally defined by $y_1\cdots y_m$ via \eqref{eq: ss morp}.
	\item A weak formal log scheme $\cZ$ over $\cS$ or a weak formal log scheme with Frobenius $(\cZ,\phi)$ over $(\cS,\sigma)$ is said to be {\it strictly semistable} if Zariski locally on $\cZ$ there exist integers $n\geq 1$, $m\geq 0$ and a strict strongly log smooth morphism
			\begin{equation}\label{eq: ss morp S}
			\cZ\rightarrow(\Spwf W\llbracket s\rrbracket[x_1,\ldots,x_n,y_1,\ldots,y_m]^\dagger/(s-x_1\cdots x_n),\cN)
			\end{equation}
		where the log structure $\cN$ is induced by the map
			\[\bbN^n\oplus\bbN^m\rightarrow W\llbracket s\rrbracket[x_1,\ldots,x_n,y_1,\ldots,y_m]^\dagger/(s-x_1\cdots x_n);\ ((k_i),(\ell_j))\mapsto x_1^{k_1}\cdots x_n^{k_n}y_1^{\ell_1}\cdots y_m^{\ell_m}.\]
		We call the closed subset
		\[\{z\in \cZ\mid \text{$\exists \alpha\in\cN_{\cZ,z}$ $\forall \beta\in\cN_{\cZ,z}$ $\alpha\beta$ is not contained in the image of $\Gamma(\cS,\cN_{\cS})$}\}\]
		with the reduced structure the {\it horizontal divisor} of $\cZ$.
		Namely the horizontal divisor is locally defined by $y_1\cdots y_m$ via \eqref{eq: ss morp S}.
	\item Let $Y$ be a strictly semistable log scheme over $k^0$.
		An object $(Z,\cZ,i,h,\theta)\in\OC(Y/\cS)$ is said to be {\it strictly semistable} if $\cZ$ is so.
	\end{enumerate}
\end{definition}

We first show that the existence of Frobenius structure implies the overconvergence of an integrable log connection.

\begin{theorem}\label{thm: Frobenius overconvergence}
	Let $(\cZ,\phi)$ be a a strictly semistable weak formal log scheme over $(\cS,\sigma)$.
	Then any object $(\cE,\nabla,\Phi)\in F\MIC((\cZ,\phi)/(W^\varnothing,\sigma))$ is overconvergent.
	In other words, we have an equality $F\MIC^\dagger((\cZ,\phi)/(W^\varnothing,\sigma))= F\MIC((\cZ,\phi)/(W^\varnothing,\sigma))$.
\end{theorem}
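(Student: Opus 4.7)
My approach would be local on $\cZ$: by the strictly semistable assumption one reduces to working Zariski-locally where $\cZ$ admits coordinates $x_1,\ldots,x_n$ with $s=x_1\cdots x_n$ and $y_1,\ldots,y_m$ defining the horizontal divisor, plus possibly additional smooth coordinates $t_k$, so that $d\log x_i$, $d\log y_j$, $dt_k$ form a local basis of $\omega^1_{\cZ/W^\varnothing,\bbQ}$. Combining Proposition~\ref{prop: conn str} with the definition of $F\MIC^\dagger$, the task reduces to showing that the formal log stratification $\{\epsilon_k\}$ associated with $\nabla$ extends from the formal neighborhoods $\cZ^k$ to a Taylor isomorphism on the dagger generic fiber $\cZ(1)_\bbQ$ of the exactified self-product; equivalently, that the Taylor series \eqref{eq: taylor} for any local section $\alpha$ of $\cE$ converges on all of $\cZ(1)_\bbQ$.

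First I would describe $\cZ(1)_\bbQ$ concretely in the chosen coordinates. The functions $\log\wt{u}(x_i)$, $\log\wt{u}(y_j)$, $t'_k-t_k$ serve as local dagger coordinates near the diagonal, and $\cZ(1)_\bbQ$ is exhausted by affinoid subdomains cut out by $|\log\wt{u}(x_i)|,|\log\wt{u}(y_j)|,|t'_k-t_k|\leq\rho$ for $\rho<1$ approaching $1$. Convergence of \eqref{eq: taylor} on $\cZ(1)_\bbQ$ is then equivalent to geometric-type growth bounds
$$\Bigl|\frac{(\partial_1^{\log})^{k_1}\cdots(\partial_d^{\log})^{k_d}(\alpha)}{k_1!\cdots k_d!}\Bigr|\cdot\rho^{k_1+\cdots+k_d}\longrightarrow 0 \quad \text{as } k_1+\cdots+k_d\to\infty,$$
for every $\rho<1$, where the norm is the supremum norm on a fixed affinoid neighborhood of the image of $Z$ in $\cZ_\bbQ$.

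The key mechanism for these bounds is Frobenius descent. Writing $\phi^*(x_i)=x_i^p\cdot u_i$ for some unit $u_i$, one has $\phi^*(d\log x_i)=p\,d\log x_i+d\log u_i$, so the compatibility $\Phi\circ\phi^*\nabla=\nabla\circ\Phi$ expresses $\partial_i^{\log}$ on $\cE$ as a $p^{-1}$-multiple of $\Phi\circ\phi^*(\partial_i^{\log})\circ\Phi^{-1}$ modulo lower-order corrections arising from $d\log u_i$. Iterating this identity $N$ times, and using that the matrices of $\Phi^{\pm 1}$ with respect to a local basis of $\cE$ are bounded on any fixed affinoid subdomain of $\cZ_\bbQ$, improves the naive convergence radius --- coming from integrability of $\nabla$ and the $p$-adic denominators $1/(k_1!\cdots k_d!)$ --- to its $p^N$-th root; letting $N\to\infty$ lets the radius approach $1$, which is exactly the required overconvergence. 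The main obstacle I expect is the quantitative bookkeeping in the iteration: tracking simultaneously the $p$-adic valuations contributed by iterated logarithmic differentiation, the matrix norms of $\Phi^{\pm 1}$ on nested affinoid shells, and the cross-terms arising from the differentials $d\log u_i$. This is a logarithmic incarnation of Dwork's classical Frobenius descent, running parallel to arguments of Kedlaya and Shiho for $p$-adic formal log-smooth schemes; the only novelty is the transport to weak formal log schemes, which is immediate from the compatibility of the generic fiber construction of Definition~\ref{def: gen fib} with exactification.
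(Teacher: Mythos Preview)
Your proposal is correct and follows essentially the same Dwork-style Frobenius descent as the paper. The one organizational difference worth noting: where you plan to iterate the identity $\partial_i^{\log}\sim p^{-1}\Phi\circ\phi^*(\partial_i^{\log})\circ\Phi^{-1}$ analytically to push the radius of convergence of the Taylor series toward $1$, the paper instead splits the argument into two phases---first using Frobenius to force the connection matrices into $M_m(A_\nu)$ (so that the Taylor series converges on \emph{some} small bidisc $\frZ(1)_{\nu_0,\nu'_0}\subset\cZ(1)_\bbQ$), and then extending $\epsilon$ geometrically via the transport $\epsilon\mapsto p_1^*(\Phi)\circ\phi(1)^*(\epsilon)\circ p_2^*(\Phi^{-1})$, using that $\phi(1)$ maps each $\frZ(1)_{\nu,\nu'}$ into a strictly smaller one. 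This geometric transport sidesteps precisely the ``quantitative bookkeeping'' you flag as the main obstacle: once $\epsilon$ exists on one domain, compatibility of $\Phi$ with $\epsilon$ automatically produces the extension, with no need to track iterated derivative norms or cross-terms from $d\log u_i$.
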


\begin{proof}
	Considering locally, we may assume that $\cZ=\Spwf A$ is affine and there exist integers $a,b,c,d$ and an \'{e}tale morphism
	\[\cZ\rightarrow\cZ':=\Spwf W\llbracket s\rrbracket[x_1,\ldots,x_a,y_1,\ldots,y_b,z_1,\ldots,z_c]^\dagger/(x_1\cdots x_a-s)\]
	such that the log structure of $\cZ$ is induced by the map 
	\[\bbN^a\oplus\bbN^b\rightarrow A, \ (\beta,\gamma)\mapsto x^\beta y^\gamma=x_1^{\beta_1}\cdots x_a^{\beta_a}y_1^{\gamma_1}\cdots y_b^{\gamma_b}.\]
	For any $1\leq i\leq a$ and $(\alpha_j)_j\in\{0,1\}^c$, let
	\[\cZ_{i,j}:=\cZ\times_{\cZ'}\Spwf W\llbracket x_i\rrbracket\left[x_1,\ldots,x_a,y_1,\ldots,y_b,z_1,\ldots,z_c,\frac{1}{z_1-\alpha_1},\ldots,\frac{1}{z_c-\alpha_c}\right]^\dagger.\]
	Then the family $\{\cZ_{i,j,\bbQ}\}_{i,j}$ is an admissible covering of $\cZ_\bbQ$, and the morphism
	\begin{align*}
	\cZ_{i,j}\rightarrow \Spwf W[x_1,\ldots,x_a,y_1,\ldots,y_b,z'_1,\ldots,z'_c]^\dagger
	\end{align*}
	defined by $z'_j\mapsto z_j-\alpha_j$ is strict \'{e}tale, if we consider the log structure generated by $x_1,\ldots,x_a$, $y_1,\ldots,y_b$, $z'_1,\ldots,z'_c$ on the codomain.
	Therefore,  after replacing symbols, we may assume that there exist an integer $n$ and a strict \'{e}tale morphism
	\[\cZ=\Spwf A\rightarrow\Spwf W[x_1,\ldots,x_m]^\dagger\]
	where we consider the log structure generated by $x_1,\ldots,x_m$ on the codomain.
	Then $\omega^1_{\cZ/W^\varnothing,\bbQ}$ is a free $\cO_{\cZ_\bbQ}$-module generated by $d\log x_1,\ldots, d\log x_m$.
	
	Since $\cZ_\bbQ$ can be covered by admissible open subsets of the form $\cU_\bbQ$ for a $p$-adic weak formal scheme $\cU$ over $\cZ$, we may moreover suppose that $\cE$ is free and there exist an integer $n$ and a surjection $A':=W[X_1,\ldots,X_m,Y_1,\ldots,Y_n]^\dagger\rightarrow A$ which maps $X_s$ to $x_s$ for any $s=1,\ldots,m$.
	We denote by $y_t\in A$ the image of $Y_t$ for $t=1,\ldots,n$.
	
	From now on, we follow the method of Berthelot's proof of the non-logarithmic version of the theorem \cite[Th\'{e}or\`{e}me 2.5.7]{Ber2}.
	Let $\partial^{\log}_s\colon\cE\rightarrow\cE$ for $s=1,\ldots,m$ be the differential operators as in \eqref{eq: log diff op}.
	For a basis $e_1,\ldots,e_r$ of $\cE$, if we write as
		\[\partial^{\log{}}_s(e_j)=\sum_{i=1}^r\xi_{i,j}^{(s)}e_i\]
	with $\xi_{i,j}^{(s)}\in A_{\bbQ}:=A\otimes_{\bbZ}\bbQ=\Gamma(\cZ_\bbQ,\cO_{\cZ_\bbQ})$, then the log connection $\nabla$ is uniquely determined by the matrices $B_s:=(\xi_{i,j}^{(s)})_{i,j}\in M_r(A_\bbQ)$ for $s=1,\ldots,m$.
	
	We first claim that there exists a basis $e_1,\ldots,e_r$ such that $B_s\in M_r(A)$ for any $s$.
	Indeed, for any choice of a basis there exists an integer $c\geq 0$ such that $B_s\in M_r(p^{-c}A)$.
	We denote again by $\phi$ the endomorphism on $A$ induced by $\phi$ on $\cZ$.
	Since $\phi$ preserves log structures and $\phi$ modulo $p$ is the absolute Frobenius, we may write as $\phi(x_s)=x_s^pg_s$ with $g_s\in 1+pA$ for each $s=1,\ldots,m$.
	Therefore we have
		\[\phi(d\log x_s)=pd\log x_s+d\log g_s=pd\log x_s+g_s^{-1}d(g_s-1)\]
	and this can be divided by $p$.
	Let $\phi^*e_j:=1\otimes e_j\in\cO_{\cZ_\bbQ}\otimes_{\phi^{-1}\cO_{\cZ_\bbQ}}\phi^{-1}\cE=\phi^*\cE$.
	Then by
		\[\phi^*\nabla(\phi^*e_j)=\sum_{s=1}^m\sum_{i=1}^r\phi(\xi_{i,j}^{(s)})\phi^*e_i\otimes\phi(d\log x_s),\]
	the matrices defining $\phi^*\nabla$ with respect to the basis $\phi^*e_1,\ldots,\phi^*e_r$ belong to $M_r(p^{1-c}A)$.
	Thus the matrices defining $\nabla$ with respect to the basis $\Phi(\phi^*e_1),\ldots,\Phi(\phi^*e_r)$ also belong to $M_r(p^{1-c}A)$.
	By repeating this argument we proved the claim.

	We will show that the series
		\begin{equation}\label{eq: taylor isom}
		\epsilon(1\otimes\zeta):=\sum_{\bsk=(k_1,\ldots,k_m)\in\bbN^m}\frac{(\partial^{\log}_1)^{k_1}\circ\cdots\circ(\partial_n^{\log})^{k_m}(\zeta)}{k_1!\cdots k_m!}\otimes(\log u(x_1))^{k_1}\cdots(\log u(x_m))^{k_m}
		\end{equation}
	for any section $\zeta\in\cE$ is well-defined as an element of $\cE\otimes\cO_{\cZ(1)_\bbQ}$.
	Note that, if we put $A(1):=\Gamma(\cZ(1),\cO_{\cZ(1)})$, there exists a surjection
		\[A'(1):=W\llbracket X'_1,\ldots,X'_m,Y'_1,\ldots,Y'_n\rrbracket[X_1,\ldots,X_m,Y_1,\ldots,Y_n]^\dagger\rightarrow A(1)\]
	which maps $X_s\mapsto p_1^*x_s$,  $Y_t\mapsto p_1^*y_t$, $X'_s\mapsto u(x_s)-1$, and $Y'_t\mapsto u(y_t)-1$.
	For $\nu=a^{-1}b\in\bbQ_{>0}$ with $a,b\in\bbN_{>0}$, let
		\begin{align*}
		A'(1)_{b/a}:=A'(1)\left[\frac{X'^a_1}{p^b},\ldots,\frac{X'^a_m}{p^b},\frac{Y'^a_1}{p^b},\ldots,\frac{Y'^a_n}{p^b}\right]^\dagger
		&&\text{and}&&A(1)_{b/a}:=A(1)\otimes^\dagger_{A'(1)}A'(1)_{b/a}.
		\end{align*}
	Note that $A(1)_{\nu,\bbQ}:=A(1)_{b/a}\otimes_{\bbZ}\bbQ$ is independent of the choice of $a$ and $b$.

	We first show that $\epsilon$ given by \eqref{eq: taylor isom} is well-defined on $\frZ_{\nu}:=\Sp A(1)_{\nu,\bbQ}\subset\cZ(1)_\bbQ$ for some $\nu\in\bbQ_{>0}$.
	We may assume that $B_1,\ldots,B_m\in M_r(A)$ by taking an appropriate basis.
	For $\nu=a^{-1}b\in\bbQ_{>0}$ and real numbers $\eta<1$ and $\lambda>1$, let $\|\cdot\|_{\eta,\lambda}$ be the norm on $A'(1)_{\nu,\bbQ}:=A'(1)_{b/a}\otimes_{\bbZ}\bbQ$ defined by
		\begin{eqnarray*}
		\left\|\sum a_{\alpha,\alpha',\beta,\beta'}X^\alpha X'^{\alpha'} Y^\beta Y'^{\beta'} \right\|_{\eta,\lambda}:=\sup\lvert a_{\alpha,\alpha',\beta,\beta'}\rvert\eta^{\lvert\alpha'\rvert+\lvert\beta'\rvert}\lambda^{\lvert\alpha\rvert+\lvert\alpha'\rvert+\lvert\beta\rvert+\lvert\beta'\rvert}\in\bbR_{\geq 0}\cup\{\infty\}.
		\end{eqnarray*}
	We denote again by $\|\cdot\|_{\eta,\lambda}$ the quotient norm on $A(1)_{\nu,\bbQ}$.
	Now it suffices that there exist some $\eta<1$ and $\lambda>1$ such that
	\[\left\|\frac{(\partial^{\log{}}_1)^{k_1}\circ\cdots\circ(\partial_n^{\log{}})^{k_m}(\zeta)}{k_1!\cdots k_m!}\otimes(\log u(x_1))^{k_1}\cdots(\log u(x_m))^{k_m}\right\|_{\eta,\lambda}\rightarrow 0\hspace{10pt}\left(\lvert \bsk\rvert\rightarrow0\right),\]
	which implies that \eqref{eq: taylor isom} converges as an element of $A(1)_{\lambda^{-1}\eta,\bbQ}$.
	
	For this, it suffices to claim that, for any $\rho>1$, there exists a $\lambda>1$ such that
	\begin{equation}\label{eq: norm claim}\|(\partial^{\log}_1)^{k_1}\circ\cdots\circ(\partial_n^{\log})^{k_m}(\zeta)\|_{\eta,\lambda}\leq \rho^{\lvert\mathbf{k}\rvert}\|\zeta\|_{\eta,\lambda}
	\end{equation}
	for any $\bsk=(k_1,\ldots,k_m)$.
	Indeed, choose arbitrary $\rho>1$ and $\eta<(\rho p)^{-1}$, and take a $\lambda$ satisfying the claim.
	Then, since $p^i\lvert i\rvert\geq p^{1/(p-1)}$ for any $i\geq 1$, we have
		\begin{equation*}\label{eq: log u-1}
		\|\log u(x_s)\|_{\eta,\lambda}\leq \max_{i\geq 1}\frac{\eta^i}{\lvert i\rvert}<\max_{i\geq 1}\frac{1}{\rho^ip^i\lvert i\rvert}\leq \rho^{-1}p^{1/(p-1)}
		\end{equation*}
	for $s=1,\ldots,m$.
	Thus we obtain
	\[\left\|\frac{(\partial^{\log{}}_1)^{k_1}\circ\cdots\circ(\partial_n^{\log{}})^{k_m}(\zeta)}{k_1!\cdots k_m!}\otimes(\log u(x_1))^{k_1}\cdots(\log u(x_m))^{k_m}\right\|_{\eta,\lambda}<
	\frac{p^{\lvert\bsk\rvert/(p-1)}\|\zeta\|_{\eta,\lambda}}{\lvert k_1!\cdots k_m!\rvert},\]
	which converges to zero when $\lvert\bsk\rvert\rightarrow 0$.
		
	Let
		\[\zeta=\sum_{j=1}^r\zeta_je_j\in \cE=\bigoplus_{j=1}^r A_\bbQ e_j.\]
	Then by
		\[\partial^{\log}_s(\zeta)=\sum_{i=1}^r\left(\sum_{j=1}^r\xi_{i,j}^{(s)}\zeta_j+\partial^{\log}_s(\zeta_i)\right)e_i,\]
	we obtain
		\begin{equation}\label{eq: par zeta}
		\|\partial^{\log}_s(\zeta)\|_{\eta,\lambda}\leq \max_{i,j}\{\|\xi_{i,j}^{(s)}\zeta_j\|_{\eta,\lambda},\|\partial^{\log}_s(\zeta_i)\|_{\eta,\lambda}\},
		\end{equation}
	where we denote $\|p_1^*\theta\|_{\eta,\lambda}$ simply by $\|\theta\|_{\eta,\lambda}$ for any $\theta\in A_{\bbQ}$.
	Since
		\[\partial^{\log}_s(x^\alpha y^\beta)=\alpha_sx^\alpha y^\beta+\sum_{t=1}^n\beta_tx^\alpha y^{\beta-1_t}\partial^{\log}_s(y_t),\]
	we have
	\begin{equation}\label{eq: norm partial}
	\|\partial_s^{\log}(\zeta_i)\|_{\eta,\lambda}\leq\max_t\{\|\zeta_i\|_{\eta,\lambda}, \lambda^{-1}\|\partial_s^{\log}(y_t)\zeta_i\|_{\eta,\lambda}\}.
	\end{equation}
	As $\partial^{\log}_s(y_t), \xi^{(s)}_{i,j}\in A$, by \cite[Lemme 2.5.3]{Ber2} there exists a $\lambda>1$ such that
		\[\|\partial_s^{\log}(y_t)\|_{\eta,\lambda},\|\xi^{(s)}_{i,j}\|_{\eta,\lambda}\leq \rho.\]
	Therefore by \eqref{eq: par zeta} and \eqref{eq: norm partial} we see that
		\begin{equation*}\label{eq: par zeta 2}
		\|\partial^{\log}_s(\zeta)\|_{\eta,\lambda}\leq\rho\|\zeta\|_{\eta,\lambda}.
		\end{equation*}
	This holds for arbitrary $\zeta$ and $\lambda$ does not depend on $\zeta$.
	So we obtain the claim \eqref{eq: norm claim}, and we see that $\epsilon$ is defined on $\frZ_\nu=\Sp A(1)_{\nu,\bbQ}$ for some $\nu\in\bbQ_{>0}$.
	
	Finally we will see that $\epsilon$ extends to whole $\cZ(1)_\bbQ$ by using Frobenius.
	We denote again by $\phi(1)$ the endomorphism on $A(1)$ induced by $\phi(1)$ on $\cZ(1)$.
	If we write $\phi(x_s)=x_s^pg_s$ with $g_s\in 1+pA$ as before, we have
		\begin{eqnarray}
		\nonumber\phi(1)(1-u(x_s))&=&1-u(x_s)u(x_s)^p=1-u(g_s)+u(g_s)(1-u(x_s)^p)\\
		\nonumber&=&p_1^*(g_s^{-1})\left(p_1^*(g_s)-p_2^*(g_s)\right)\\
		\label{eq: Frob 1-u}&&-u(g_s)\left((u(x_s)-1)^p+p(u(x_s)-1)\sum_{j=0}^{p-2}\frac{1}{j+1}\binom{p-1}{j}(u(x_s)-1)^j\right).
		\end{eqnarray}
	If we put $g_s=1+pg'_s$, we have
		\[p_1^*(g_s)-p_2^*(g_s)=p(p_1^*(g'_s)-p_2^*(g'_s)),\]
	and $p_1^*(g'_s)-p_2^*(g'_s)$ belongs to the ideal generated by $1-u(x_q)$ and $1-u(y_t)$ for all $1\leq q\leq m$ and $1\leq t\leq n$.
	Thus we can write
	\[\phi(1)(1-u(x_s))=\cA(1-u(x_s))^p+p\sum_{q=1}^m\cB_s(1-u(x_q))+p\sum_{t=1}^n\cC_t(1-u(y_t))\]
	for some $\cA,\cB_q,\cC_t\in A(1)$.
	Then for $\nu=a^{-1}b$ with $a,b\in\bbN_{>0}$, $\phi(1)\left(\frac{(1-u(x_s))^a}{p^b}\right)$ is written as a sum of elements of the form
	\[\cD:=p^{\ell-b}(1-u(x_s))^{p(a-\ell)}(1-u(x_{i_1}))\cdots (1-u(x_{i_\nu}))(1-u(y_{j_1}))\cdots (1-u(y_{j_{\mu}}))\]
	with $0\leq\ell\leq a$ and $\nu+\mu=\ell$.
	
	Let $a'=pa-(p-1)\ell$ and $b'=b-\ell$.
	Then we have
	\begin{align*}
	\frac{b}a\leq\frac{p}{p-1}&\Rightarrow\frac{b'}{a'}=\frac{b-\ell}{pa-(p-1)\ell}\leq\frac b{pa},\\
	\frac{b}a\geq\frac{p}{p-1}&\Rightarrow\frac{b'}{a'}=\frac{b-\ell}{pa-(p-1)\ell}\leq\frac b{a}-1.
	\end{align*}
	If we take $c,d\in\bbN_{>0}$ with
	\[\frac{d}c=\nu':=\max\left\{p^{-1}\nu,\nu-1\right\},\]
	then we have
	\begin{align*}
	\cD^{c}=&p^{a'd-b'c}\left(\frac{(1-u(x_s))^c}{p^d}\right)^{p(a-\ell)}\frac{(1-u(x_{i_1}))^{c}}{p^d}\cdots\frac{(1-u(x_{i_\nu}))^{c}}{p^d}\frac{(1-u(y_{j_1}))^{c}}{p^d}\cdots\frac{(1-u(y_{j_\mu}))^{c}}{p^d}.
	\end{align*}
	As $a'd-b'c\geq 0$, we see that $\phi$ induces a map $A(1)_{bc/ac}\rightarrow A(1)_{d/c}$, hence $\phi(1)_{\frZ_{\nu'}}$ factors through $\frZ_{\nu}$.
	
	Choose $\nu_0$ such that $\epsilon$ converges on $\frZ_{\nu_0}$, and define $\nu_N$ for $N\in\bbN$ recursively by
	\[\nu_{N+1}:=\max\{p^{-1}\nu_{N},\nu_N-1\}.\]
	Note that $p^{-\nu_N}\rightarrow 1-0$ when $N\rightarrow\infty$, hence we have $\bigcup_{N\in\bbN}\frZ_{\nu_N}=\cZ(1)_\bbQ$.
	By the above argument, $\phi(1)$ induces $\phi_{N}\colon\frZ_{\nu_{N+1}}\rightarrow\frZ_{\nu_N}$.
	
	If $\epsilon$ is defined on $\frZ_{\nu_N}$, then the composition
		\begin{eqnarray*}
		(p_2^*\cE)|_{\frZ_{\nu_{N+1}}}&\xrightarrow[\cong]{p_2^*(\Phi^{-1})}&(p_2^*\phi^*\cE)|_{\frZ_{\nu_{N+1}}}=(\phi(1)^*p_2^*\cE)|_{\frZ_{\nu_{N+1}}}=\phi_N^*(p_2^*\cE|_{\frZ_{\nu_N}})\\
		&\xrightarrow[\cong]{\phi_N^*(\epsilon)}&\phi_N^*(p_1^*\cE|_{\frZ_{\nu_N}})=(\phi(1)^*p_1^*\cE)|_{\frZ_{\nu_{N+1}}}=(p_1^*\phi^*\cE)|_{\frZ_{\nu_{N+1}}}\\
		&\xrightarrow[\cong]{p_1^*(\Phi)}&(p_1^*\cE)|_{\frZ_{\nu_{N+1}}}
		\end{eqnarray*}
	is an extension of $\epsilon$ to $\frZ_{\nu_{N+1}}$.
	Thus $\epsilon$ extends to whole $\cZ(1)_\bbQ$, and we finish the proof.
\end{proof}

In the non-logarithmic situation, the category of overconvergent isocrystals is abelian, since any coherent sheaf with integrable connection is locally free.
However this is not true for log overconvergent isocrystals in general.
In order to ensure abelianness, we introduce the notion of log overconvergent isocrystals with nilpotent residues, following Ogus \cite{Og} (see also \cite{Ke}).

\begin{definition}
	Let $Y$ be a strictly semistable log scheme over $k^0$.
	We say an object $\sE\in\Isoc^\dagger(Y/W^\varnothing)$ has {\it nilpotent residues} if for any strictly semistable object $(Z,\cZ,i,h,\theta)\in\OC(Y/\cS)$ the realization of $\sE$ at $(Z,\cZ,i,h',\theta)\in\OC(Y/W^\varnothing)$, where $h'$ is defined to be the composition of $h$ with the natural morphism $\cS\rightarrow W^\varnothing$, has nilpotent residues with respect to the generators of the log structure, in the sense of \cite[Definition 2.3.9]{Ke}.
	More precisely, Zariski locally on $\cZ$ we take a strict strongly log smooth morphism
		\[\cZ\rightarrow(\Spwf W\llbracket s\rrbracket[x_1,\ldots,x_n,y_1,\ldots,y_m]^\dagger/(s-x_1\cdots x_n),\cN)\]
	as in Definition \ref{def: strictly semistable}.
	Let $v$ be one of $x_i$ or $y_j$, and $\frD\subset\cZ_\bbQ$ be the closed dagger subspace defined by one of $v$.
	If we let $\Omega^1_{\cZ_\bbQ}$ the sheaf of (non-logarithmic) differentials on $\cZ_\bbQ$ and $I:=\{x_1,\ldots,x_n,y_1,\ldots,y_m\}\setminus\{v\}$, we have
		\[\Coker\Bigl(\sE_\cZ\otimes\bigl(\Omega^1_{\cZ_\bbQ}\oplus\bigoplus_{z\in I}\cO_{\cZ_\bbQ}d\log z\bigr)\rightarrow\sE_\cZ\otimes\omega^1_{\cZ/W^\varnothing,\bbQ}\Bigr)=\sE_\cZ\otimes\cO_\frD\cdot d\log v.\]
	Then the composition of $\nabla_{\cZ}\colon \sE_\cZ\rightarrow\sE_\cZ\otimes\omega^1_{\cZ/W^\varnothing,\bbQ}$ with the projection $\sE_\cZ\otimes\omega^1_{\cZ/W^\varnothing,\bbQ}\rightarrow\sE_\cZ\otimes\cO_\frD\cdot d\log v$ induces a morphism $\sE_\cZ\otimes\cO_\frD\rightarrow\sE_\cZ\otimes\cO_\frD\cdot d\log v$.
	Identifying $\sE_\cZ\otimes\cO_\frD\cdot d\log v$ with $\sE_\cZ\otimes\cO_\frD$, we obtain an endomorphism $\res_v$ on $\sE_\cZ\otimes\cO_\frD$, which we call the {\it residue} along $\frD$.
	We say $\sE$ has nilpotent residues if $\res_v$ is nilpotent for any $v$.
	
	We denote by $\Isoc^\dagger(Y/W^\varnothing)^\nr$ the full subcategory of $\Isoc^\dagger(Y/W^\varnothing)$ consisting of log overconvergent isocrystals with nilpotent residues.
\end{definition}

\begin{proposition}\label{prop: nr abelian}
	Let $Y$ be a strictly semistable log scheme over $k^0$.
	The categories $\Isoc^\dagger(Y/W^\varnothing)^\nr$ and $F\Isoc^\dagger(Y/W^\varnothing)$ are abelian.
\end{proposition}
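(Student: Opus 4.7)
The plan is to reduce both claims to a local statement on smooth strictly semistable weak formal log schemes, and to settle the $F$-case by showing that the Frobenius structure automatically forces residues to be nilpotent. Using Corollary~\ref{cor: descent realization}, formation of kernels and cokernels in $\Isoc^\dagger(Y/W^\varnothing)$ (resp.\ $F\Isoc^\dagger(Y/W^\varnothing)$) is compatible with realization, so after choosing a local embedding ($F$-)datum it suffices to work in $\MIC^\dagger(\cZ/W^\varnothing)$ (resp.\ $F\MIC^\dagger((\cZ,\phi)/(W^\varnothing,\sigma))$) for smooth strictly semistable $\cZ$. Given a morphism $f\colon(\cE',\nabla')\to(\cE,\nabla)$, the sheaf-theoretic $\Ker(f)$ and $\Coker(f)$ are coherent and inherit integrable log connections (and Frobenius structures in the $F$-case), so the content is entirely the local freeness of $\Ker(f)$ and $\Coker(f)$, together with the preservation of the residue conditions.

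For $\Isoc^\dagger(Y/W^\varnothing)^\nr$, I would argue local freeness stratum by stratum. Away from the boundary divisor the log structure is trivial and $\nabla$ is an ordinary integrable connection on a smooth dagger space over $L$; in characteristic $0$ any coherent module with integrable connection is automatically locally free, so $\Ker(f)$ and $\Coker(f)$ are locally free on the open part. Across each component $D$ of the boundary, nilpotence of the residue along $D$ provides sufficient control of the formal local structure (in the sense of Ogus \cite{Og} and Kedlaya \cite{Ke}) to conclude that a coherent subobject or quotient of a locally free log connection with nilpotent residues is again locally free. Preservation of the nilpotency condition itself is immediate, as the residue of a sub- or quotient of a nilpotent endomorphism is nilpotent.

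For $F\Isoc^\dagger(Y/W^\varnothing)$, Theorem~\ref{thm: Frobenius overconvergence} already says every object is overconvergent, so it remains to show that Frobenius forces nilpotent residues; then $F\Isoc^\dagger(Y/W^\varnothing)$ embeds as a full subcategory of the $F$-objects over $\Isoc^\dagger(Y/W^\varnothing)^\nr$ closed under kernels and cokernels. Concretely, let $D$ be a component of the boundary defined locally by a coordinate $x$, and write $\phi(x)=x^p\cdot u$ with $u\in 1+p\cO_\cZ$. Then $\phi^*(d\log x)\equiv p\cdot d\log x$ modulo $\Omega^1_{\cZ_\bbQ}$, and the compatibility $\nabla\circ\Phi=\Phi\circ\phi^*\nabla$, restricted to $D$ and projected onto the $d\log x$-component, yields $\res_D\circ\Phi|_D=p\cdot\Phi|_D\circ\sigma^*(\res_D)$. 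Since $\Phi|_D$ is an isomorphism, $\res_D$ is conjugate to $p\cdot\sigma^*(\res_D)$, so for any eigenvalue $\lambda$ of $\res_D$ over $\ol L$, $p\sigma(\lambda)$ is again an eigenvalue. As the set of eigenvalues is finite and $\sigma$ preserves the $p$-adic absolute value, iteration gives $|\lambda|_p=p^{-n}|\lambda|_p$ for all $n\geq 0$, forcing $\lambda=0$; hence $\res_D$ is nilpotent.

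The main obstacle is the local freeness step in the nilpotent-residue case. It relies on a nontrivial structure theorem for log connections with nilpotent residues which I would import from \cite{Og,Ke}, after checking that the overconvergent dagger-analytic setting here is covered by those references; this reduces, via the standard correspondence between $p$-adic weak formal models and their dagger generic fibers, to the formal log-scheme treatment given there. Everything else (additivity, functoriality of kernels/cokernels with respect to Frobenius and realization, and preservation of residue nilpotency under passage to sub- and quotients) is essentially formal.
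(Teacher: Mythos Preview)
Your proposal is correct and follows essentially the same route as the paper. The paper's proof is extremely terse: it reduces to local freeness of kernels and cokernels of realizations on a strictly semistable $\cZ$ and then cites \cite[Lemma~3.2.14]{Ke} directly, and for the $F$-case it simply asserts that a Frobenius structure implies nilpotence of residues. Your outline matches this, with the added benefit that you actually sketch the nilpotent-residue argument rather than invoking it as folklore.

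One small point of care in your eigenvalue argument: you write $\sigma^*(\res_D)$, but the conjugacy is really via $\Phi|_D$ between $\res_D$ and $p\cdot\phi_D^*(\res_D)$, where $\phi_D$ is the induced Frobenius lift on the divisor $D$, not $\sigma$ on $W$. Since $\phi_D$ need not fix a given point, the statement ``$p\sigma(\lambda)$ is again an eigenvalue'' is not quite right as stated over $\overline{L}$. The clean way is to compare characteristic polynomials globally: if $P(T)=T^n+a_1T^{n-1}+\cdots+a_n\in\cO_{\frD}[T]$ is the characteristic polynomial of $\res_D$, the conjugacy forces $a_i=p^i\phi_D^\sharp(a_i)$ for all $i\geq 1$, and iterating gives $a_i\in p^{ki}\cO_{\frD}^\circ$ for all $k$ (using that $\phi_D$ preserves the integral structure coming from the weak formal model), hence $a_i=0$. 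Alternatively, pass to a $\phi$-periodic point and iterate. Either way the conclusion stands, and this is the standard argument the paper has in mind.
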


\begin{proof}
	To prove $\Isoc^\dagger(Y/W^\varnothing)^\nr$ is abelian, it suffices to show that for any morphism $f\colon\sE'\rightarrow\sE$ in $\Isoc^\dagger(Y/W^\varnothing)^\nr$ and any object $(Z,\cZ,i,h,\theta)\in\OC(Y/W^\varnothing)$ the kernel and cokernel of $f_\cZ\colon \sE'_\cZ\rightarrow\sE_\cZ$ are locally free.
	Since $Y$ is locally embeddable into a strictly semistable weak formal log scheme over $\cS$, by Proposition \ref{prop: realization} it is enough to consider the case when $(Z,\cZ,i,h,\theta)$ is induced from a strictly semistable object in $\OC(Y/\cS)$.
	The completions of the kernel and cokernel are locally free sheaves on the rigid analytic space $\wh\cZ_\bbQ$ by \cite[Lemma 3.2.14]{Ke}.
	Thus the kernel and cokernel themselves are also locally free, because the completion $\wh{A}$ of a dagger affinoid $A$ is faithfully flat over $A$ \cite[Theorem 1.7(1)]{GK1}.
	Since a Frobenius structure implies the nilpotence of residues, $F\Isoc^\dagger(Y/W^\varnothing)$ is also abelian.
\end{proof}

\begin{definition}
	Let $Y$ be a strictly semistable log scheme over $k^0$.
	\begin{enumerate}
	\item An object $\sE\in \Isoc^\dagger(Y/W^\varnothing)^\nr$ is said to be {\it unipotent} if $\sE$ is an iterated extension of $\sO_{Y/W^\varnothing}$.
		We denote by $\Isoc^\dagger(Y/W^\varnothing)^\unip$ the full subcategory of $\Isoc^\dagger(Y/W^\varnothing)^\nr$ consisting of unipotent objects.
	\item An object $(\sE,\Phi)\in F\Isoc^\dagger(Y/W^\varnothing)$ is said to be {\it unipotent} if $\sE$ is unipotent.
		We denote by $F\Isoc^\dagger(Y/W^\varnothing)^\unip$ the full subcategory of $F\Isoc^\dagger(Y/W^\varnothing)$ consisting of unipotent objects.
	\end{enumerate}
\end{definition}

\begin{proposition}\label{prop: unipotence}
	Suppose that $Y$ is connected.
	Then the unipotence is closed under extension, subobject, quotient, tensor product, internal Hom, and dual in the categories $\Isoc^\dagger(Y/W^\varnothing)^\nr$ and $F\Isoc^\dagger(Y/W^\varnothing)$.
	In particular the categories $\Isoc^\dagger(Y/W^\varnothing)^\unip$ are $F\Isoc^\dagger(Y/W^\varnothing)^\unip$ are abelian.
\end{proposition}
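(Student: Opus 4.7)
My plan is to treat the six closure properties in increasing order of difficulty, reducing nearly everything to the already-established abelianness of $\Isoc^\dagger(Y/W^\varnothing)^\nr$ and $F\Isoc^\dagger(Y/W^\varnothing)$ from Proposition~\ref{prop: nr abelian}, and to induction on the length of a unipotent filtration. Closure under extension is immediate from the definition. For tensor products, if $\sE$ sits in a short exact sequence $0\to\sE''\to\sE\to\sO_{Y/W^\varnothing}\to 0$ with $\sE''$ unipotent of smaller length, then tensoring with a unipotent $\sE'$ gives a short exact sequence $0\to\sE''\otimes\sE'\to\sE\otimes\sE'\to\sE'\to 0$, where exactness is inherited from the exactness of tensoring with a locally free sheaf on every realization (see Remark~\ref{rem: tensor hom realization}); the outer terms are unipotent (the left by induction, the right by hypothesis), so the middle is unipotent by extension closure. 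For duals, $\sO_{Y/W^\varnothing}^\vee=\sO_{Y/W^\varnothing}$ canonically and dualization is exact on short exact sequences of locally free isocrystals, so induction on length gives the claim (with the filtration reversed). Internal Hom reduces to the above via $\sheafhom(\sE',\sE)=\sE'^\vee\otimes\sE$.

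The crux is closure under subobject and quotient. The key lemma is that $\sO_{Y/W^\varnothing}$ is simple in $\Isoc^\dagger(Y/W^\varnothing)^\nr$ when $Y$ is connected (which one may and should assume; for disconnected $Y$ one decomposes along connected components). Granting this, an induction on length handles the general case: given a subobject $\sF\hookrightarrow\sE$ with $\sE$ unipotent of length $n$ and filtered as above, set $\sF'':=\sF\cap\sE''$ and $\sF':=\mathrm{Im}(\sF\to\sO_{Y/W^\varnothing})$; by the inductive hypothesis $\sF''$ is unipotent, and by the simplicity lemma $\sF'$ is either $0$ or $\sO_{Y/W^\varnothing}$, so $\sF$ is unipotent from the short exact sequence $0\to\sF''\to\sF\to\sF'\to 0$. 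The quotient case is dual (or apply the subobject argument to the kernel). The Frobenius versions follow from the same arguments because the tensor, dual, and internal Hom constructions of Definition~\ref{def: tensor and hom} transport Frobenius structures functorially, and a subobject or quotient in $F\Isoc^\dagger(Y/W^\varnothing)$ is automatically unipotent as soon as its underlying isocrystal is. Abelianness of the unipotent subcategories is then formal: a full subcategory of an abelian category that is closed under kernels, cokernels, and extensions is itself abelian.

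The main obstacle is the simplicity lemma. I would prove it by passing to a local embedding datum $(Y,\cZ,i,h,\id_Y)$ with $\cZ$ strictly semistable and smooth over $W^\varnothing$ (which exists Zariski-locally by Theorem~\ref{thm: Frobenius overconvergence}-era considerations on local coordinates), and analyzing a hypothetical non-zero proper subobject $\sF\hookrightarrow\sO_{Y/W^\varnothing}$ via its realization: such a realization is a rank-$1$ coherent locally free sub-$\cO_{\cZ_\bbQ}$-module of $\cO_{\cZ_\bbQ}$ stable under $d$, hence generated by a function $f\in\cO_{\cZ_\bbQ}$ satisfying $df\in f\cdot\omega^1_{\cZ/W^\varnothing,\bbQ}$. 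The nilpotent residue hypothesis along every branch of the special divisor forces $f$ to be a unit (its logarithmic derivative has no poles and no non-zero residues), so $\sF=\sO_{Y/W^\varnothing}$ locally; connectedness of $Y$ and the fact that $\mathrm{End}(\sO_{Y/W^\varnothing})=H^0_\rig(Y/W^\varnothing)=L$ patch these local statements together.
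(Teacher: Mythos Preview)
Your approach is correct and is essentially the standard argument the paper defers to in \cite[Proposition~2.3.2]{CL1}: closure under extension is definitional; tensor, dual, and internal Hom follow by the inductions you describe (using that realizations are locally free, so these operations are exact); and closure under subobject and quotient is reduced to the simplicity of $\sO_{Y/W^\varnothing}$ on a connected $Y$ plus induction on the length of a unipotent filtration.

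The one place to streamline is your proof of the simplicity lemma. The detour through a local generator $f$, the regularity of $df/f$, and the residue analysis is more delicate than needed, and the assertion ``nilpotent residues along the log divisors force $f$ to be a unit'' on a general dagger tube would require more justification than you give (one must also exclude zeros of $f$ away from the log divisor, which is a separate analytic argument). But none of this is necessary: simplicity of $\sO_{Y/W^\varnothing}$ is immediate from Proposition~\ref{prop: nr abelian}. In the abelian category $\Isoc^\dagger(Y/W^\varnothing)^\nr$ every object, and in particular every kernel and cokernel, is locally free; so for a monomorphism $\sF\hookrightarrow\sO_{Y/W^\varnothing}$ the short exact sequence
\[
0\longrightarrow\sF\longrightarrow\sO_{Y/W^\varnothing}\longrightarrow\sO_{Y/W^\varnothing}/\sF\longrightarrow 0
\]
has all three terms locally free, and on a connected $Y$ additivity of rank forces $\sF$ to have rank $0$ or $1$, hence $\sF=0$ or $\sF=\sO_{Y/W^\varnothing}$. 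This is the argument used in \cite{CL1} and it avoids any local analysis of connections.
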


\begin{proof}
	The assertion for $\Isoc^\dagger(Y/W^\varnothing)^\nr$ follows by the same proof as \cite[Proposition 2.3.2]{CL1}, and immediately implies the assertion for $F\Isoc^\dagger(Y/W^\varnothing)$.
\end{proof}

Next we recall the definition of $(\varphi,N)$-modules and some related notions, and then give the definition of constant and pseudo-constant log overconvergent ($F$-)isocrystals.

\begin{definition}\label{def: phi N module}
	\begin{enumerate}
	\item We define the category $\Mod_L(N)$ as follows:
		\begin{itemize}
		\item An object of $\Mod_L(N)$ is an $L$-vector space $M$ equipped with an $L$-linear endomorphism $N$ called the {\it monodromy operator}.
		\item A morphism $f\colon M'\rightarrow M$ is an $L$-linear map $f\colon M'\rightarrow M$ compatible with the monodromy operators.
		\end{itemize}
	\item Let $\Mod^\fin_L(N)$ be the full subcategory of $\Mod_L(N)$ consisting of objects having finite dimension and nilpotent monodromy operator.
		We regard the category $\Mod^\fin_L$ of finite-dimensional $L$-vector spaces as the full subcategory of $\Mod^\fin_L(N)$ consisting objects whose monodromy operators are zero maps.
	\item We define the category $\Mod_L(\varphi,N)$ as follows:
		\begin{itemize}
		\item An object of $\Mod_L(\varphi,N)$ is an $L$-vector space $M$ equipped with a $\sigma$-semilinear endomorphism $\varphi$ called the {\it Frobenius operator} and an $F$-linear endomorphism on $M$ called the {\it monodromy operator}, such that $N\varphi=p\varphi N$.
		\item A morphism $f\colon M'\rightarrow M$ is an $L$-linear map $f\colon M'\rightarrow M$ compatible with the Frobenius operators and the monodromy operators.
		\end{itemize}
		Let $\Mod_L(\varphi)$ be the full subcategory of $\Mod_L(\varphi,N)$ consisting of objects whose monodromy operators are zero maps.
	\item Let $\Mod^\fin_L(\varphi,N)$ be the full subcategory of $\Mod_L(\varphi,N)$ consisting of objects having finite dimension and bijective Frobenius operator.
		Note that $N\varphi=p\varphi N$ implies the nilpotence of $N$.
		Let $\Mod^\fin_L(\varphi):=\Mod_L^\fin(\varphi,N)\cap\Mod_L(\varphi)$.
	\item For $M,M'\in\Mod_L(\varphi,N)$, the tensor product is defined to be the space $M\otimes_LM'$ with the Frobenius operator $\varphi\otimes\varphi'$ and the monodromy operator $N\otimes\id_{M'}+\id_M\otimes N'$.
		For $M,M'\in\Mod_L^\fin(\varphi,N)$, the internal Hom is defined to be the space $\Hom_L(M',M)$ with the Frobenius operator associating $\varphi\circ f\circ \varphi'^{-1}$ to $f\in\Hom_L(M',M)$ and the monodromy operator associating $N\circ f-f\circ N'$ to $f\in\Hom_L(M',M)$.
		Tensor products in $\Mod_L(\varphi)$ and $\Mod_L(N)$ and internal Hom in $\Mod^\fin_L(\varphi)$ and $\Mod_L(N)$ are defined in a similar manner.
	\item For $r\in\bbZ$, let $L(r)$ be an object of $\Mod_L^\fin(\varphi)$ given by the vector space $L$ and the Frobenius operator $p^{-r}\sigma$.
		For any object $M\in\Mod_L(\varphi,N)$ and $r\in\bbZ$, we denote $M(r):=M\otimes L(r)$.
	\end{enumerate}
\end{definition}

\begin{lemma}\label{lem: unipotence of M}
	Any object of $\Mod_L^\fin(N)$ can be written as an iterated extension of objects in $\Mod_L^\fin$. 
\end{lemma} 

\begin{proof}
	Let $M$ be an object of $\Mod_L^\fin(N)$.
	As the monodromy operator $N$ on $M$ is nilpotent, one can take a basis $e_1,\ldots,e_d$ of $M$ such that the representation matrix of $N$ is strictly upper triangular.
	Then the subspaces $M_i:=\bigoplus_{j\leq i}Le_j$ for $i=1,\ldots,d$ are stable under $N$, and we have $N=0$ on each $M_i/M_{i-1}$.
	This shows the assertion.
\end{proof}

To define a log overconvergent $F$-isocrystal associated to a $(\varphi,N)$-module, we prepare some sheaves.
Let $Y$ be a fine log scheme over $k^0$.
For an object $(Z,\cZ,i,h,\theta)\in\OC(Y/W^\varnothing)$, let $\cJ_\cZ\subset\cO_\cZ$ be the ideal of $Z$ in $\cZ$.
For an element $\alpha\in\Gamma(Z,\cN_Z)$, we let $\cN_\cZ^\alpha\subset\cN_\cZ$ be the subsheaf consisting of preimages of $\alpha$. 
Since $i$ is a homeomorphic exact closed immersion, it follows that $1+\cJ_\cZ$ acts on $\cN_\cZ^\alpha$ freely and transitively.
In particular, we denote by $s\in\Gamma(Z,\cN_Z)$ the image of the canonical generator of $\cN_{k^0}$, then the subsheaf $\cN_\cZ^s\subset\cN_\cZ$ is defined. 

We define a sheaf $\sJ_\cZ=\sJ_{(Z,\cZ,i,h,\theta)}$ on $\cZ_\bbQ$ by setting $\Gamma(\frU,\sJ_\cZ):=\varprojlim_{\cU}\Gamma(\cU,\cJ_\cZ\cO_{\cU})$ for any quasi-compact admissible open subspace $\frU\subset\cZ_\bbQ$, where the limit is taken over all admissible blow-ups between $p$-adic weak formal schemes $\cU$ over $\cZ$ with $\cU_\bbQ=\frU$.
Similarly we define a sheaf $\sN^\alpha_\cZ=\sN^\alpha_{(Z,\cZ,i,h,\theta)}$ on $\cZ_\bbQ$ for any $\alpha\in\Gamma(Z,\cN_Z)$ by $\Gamma(\cU_\bbQ,\sN^\alpha_\cZ):=\varprojlim_\cU\Gamma(\frU,\cN_\cU^\alpha)$.

\begin{definition}\label{def: pseudo-constant}
	Let $Y$ be a fine log scheme over $k^0$.
	\begin{enumerate}
	\item For $M\in\Mod^{\mathrm{fin}}_L(N)$, we define an object $M^a\in\Isoc^\dagger(Y/W^\varnothing)^\unip$ by giving sheaves $M^a_\cZ$ for $(Z,\cZ,i,h,\theta)\in\OC(Y/W^\varnothing)$ as follows:
		We first define an $\cO_{\cZ_\bbQ}$-linear action of $1+\sJ_\cZ$ on $M\otimes_L\cO_{\cZ_\bbQ}$ by
		\[g\cdot(m\otimes f):=\exp(-\log g\cdot N)(m\otimes f)=\sum_{n\geq 0}N^n(m)\otimes\frac{1}{n!}(-\log g)^nf\]
		for local sections $f\in\cO_{\cZ_\bbQ}$, $g\in 1+\sJ_\cZ$ and $m\in M$.
		We define a sheaf $M^a_\cZ$ on $\cZ_\bbQ$ to be the quotient of $\sN_\cZ^s\times(M\otimes_L\cO_{\cZ_\bbQ})$ by the diagonal action of $1+\sJ_\cZ$.
	Then $M^a_\cZ$ is non-canonically isomorphic to $M\otimes_L\cO_{\cZ_\bbQ}$, and in particular a coherent free $\cO_{\cZ_\bbQ}$-module.
	Thus $M^a_\cZ$ is locally free and moreover unipotent by Lemma \ref{lem: unipotence of M}.
		
		If a Frobenius operator $\varphi$ on $M$ is given, we may define a Frobenius structure
		\begin{eqnarray*}
		\Phi\colon (\sigma^*M^a)_\cZ&=&(1+\sJ_\cZ)\backslash(\sN_\cZ^{s^p}\times(M\otimes_{L,\sigma}\cO_{\cZ_\bbQ}))\\
		&\xrightarrow{\cong}&(1+\sJ_\cZ)\backslash(\sN_\cZ^s\times(M\otimes_L\cO_{\cZ_\bbQ}))=M^a_\cZ
		\end{eqnarray*}
		as follows:
		Choosing a section $\alpha\in\sN_\cZ^s$, a section of $(\sigma^*M^a)_\cZ$ is written as a sum of sections of the form $(\alpha^p,m\otimes f)$ for some $m\in M$ and $f\in\cO_{\cZ_\bbQ}$.
		The Frobenius structure is defined by
		\[\Phi(\alpha^p,m\otimes f):= (\alpha,\varphi(m)\otimes f).\]
		This is independent of the choice of $\alpha$.
		Indeed, if we let $\alpha'=g\alpha$ with $g\in 1+\sJ_\cZ$, we have
		\begin{eqnarray*}
		\Phi(\alpha'^p,m\otimes f)&=&\Phi(\alpha^p,\exp(\log g^p\cdot N)(m\otimes f))=(\alpha,\varphi\exp(\log g^p\cdot N)(m\otimes f))\\
		&=&(\alpha,\exp(\log g\cdot N)(\varphi(m)\otimes f)=(\alpha',\varphi(m)\otimes f).
		\end{eqnarray*}
	\item An object $\sE\in\Isoc^\dagger(Y/W^\varnothing)$ is said to be {\it constant} (resp.\ {\it pseudo-constant}) if it is isomorphic to $M^a$ for some $M\in\Mod^\fin_L$ (resp.\ $M\in\Mod^\fin_L(N)$).
		An object $(\sE,\Phi)\in F\Isoc^\dagger(Y/W^\varnothing)$ is said to be {\it constant} (resp.\ {\it pseudo-constant}) if it is isomorphic to $M^a$ for some $M\in\Mod^\fin_L(\varphi)$ (resp.\ $M\in\Mod^\fin_L(\varphi,N)$).
	\end{enumerate}
\end{definition}

\begin{proposition}\label{prop: realization pseudo-const}
	Let $Y$ be a fine log scheme over $k^0$, $M\in\Mod^{\mathrm{fin}}_L(N)$, and $(Z,\cZ,i,h,\theta)\in\OC(Y/W^\varnothing)$.
	Choose a morphism $\gamma\colon\cZ\rightarrow\cS$ making the following diagram commutative:
	\[\xymatrix{
	Z\ar@{^(->}[r]^-i\ar[d]&\cZ\ar[d]_-\gamma\ar[rd]&\\
	k^0\ar@{^(->}[r]^-\tau&\cS\ar[r]&W^\varnothing,
	}\]
	where the left vertical morphism is the composition of $\theta\colon Z\rightarrow Y$ and the structure morphism $Y\rightarrow k^0$.
	Then $\gamma$ induces an isomorphism
		\begin{equation}\label{eq: realization Ma}
		(M\otimes_L\cO_{\cZ_\bbQ},\nabla)\cong(M^a_\cZ,\nabla_\cZ),
		\end{equation}
	where the right hand side is the realization of $M^a$ at $(Z,\cZ,i,h,\theta)$, and the left hand side is an object of $\MIC^\dagger(\cZ/W^\varnothing)$ defined by
		\begin{eqnarray*}
		\nabla\colon M\otimes_L\cO_{\cZ_{\bbQ}}&\rightarrow& M\otimes_L\omega^1_{\cZ/W^\varnothing,\bbQ}\\
		m\otimes f&\mapsto& m\otimes df+N(m)\otimes fd\log s.
		\end{eqnarray*}
	
	If a Frobenius structure $\varphi$ on $M$ is given and if $\cZ$ admits a Frobenius lift $\phi$ which is compatible with $\sigma$ on $\cS$, then the corresponding Frobenius structure on $ (M\otimes_L\cO_{\cZ_\bbQ},\nabla)$ is given by
	\[\phi^*(M\otimes_L\cO_{\cZ_\bbQ})=M\otimes_{L,\sigma}\cO_{\cZ_\bbQ}\xrightarrow{\cong}M\otimes_L\cO_{\cZ_\bbQ};\ m\otimes f\mapsto\varphi(m)\otimes f.\]
\end{proposition}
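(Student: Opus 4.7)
The plan is to use the morphism $\gamma$ to produce a distinguished trivialization of $M^a_\cZ$, and then read the log connection and Frobenius structure off the corresponding Taylor isomorphism. The image $\tilde{s}:=\gamma^\sharp(s)\in\Gamma(\cZ,\cN_\cZ)$ lies in $\sN_\cZ^s$ by commutativity of the square in the statement, and yields an $\cO_{\cZ_\bbQ}$-linear isomorphism
\[
\psi\colon M\otimes_L\cO_{\cZ_\bbQ}\xrightarrow{\cong}M^a_\cZ,\qquad m\otimes f\longmapsto[\tilde{s},m\otimes f],
\]
exhibiting both sides as free $\cO_{\cZ_\bbQ}$-modules.

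Next, I will compute the Taylor isomorphism $\epsilon_\cZ\colon p_2^*M^a_\cZ\xrightarrow{\cong}p_1^*M^a_\cZ$ through this trivialization. The sections $p_1^*\tilde{s}$ and $p_2^*\tilde{s}$ both belong to $\sN_{\cZ(1)}^s$ and are related by $p_2^*\tilde{s}=u(\tilde{s})\cdot p_1^*\tilde{s}$ with $u(\tilde{s})\in 1+\cI$ as in Proposition \ref{prop: omega}, where $\cI$ is the ideal of the diagonal. Using the diagonal $(1+\sJ_{\cZ(1)})$-action defining $M^a_{\cZ(1)}$ in Definition \ref{def: pseudo-constant}, one has
\[
[p_2^*\tilde{s},m\otimes f]=[u(\tilde{s})\,p_1^*\tilde{s},m\otimes f]=[p_1^*\tilde{s},\exp(\log u(\tilde{s})\cdot N)(m\otimes f)],
\]
so under $\psi$ the Taylor isomorphism is given by $\exp(\log u(\tilde{s})\cdot N)$. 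Reducing modulo $\cI^2$ and applying the isomorphism $\Psi$ of Proposition \ref{prop: omega}, which sends $u(\tilde{s})-1$ to $d\log\tilde{s}=\gamma^\sharp(d\log s)$, formula \eqref{eq: log connection} produces $\nabla_\cZ(\psi(m\otimes 1))=\psi(N(m)\otimes d\log s)$; the Leibniz rule upgrades this to the formula in the statement, proving \eqref{eq: realization Ma}.

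For the Frobenius part, the compatibility $\gamma\circ\phi=\sigma\circ\gamma$ yields $\phi^\sharp(\tilde{s})=\tilde{s}^p$. Factoring $\Phi_{\cZ,\phi}$ as in the paragraph preceding \eqref{eq: realization FStr}, the first arrow sends $\phi^*[\tilde{s},m\otimes f]$ to $[\tilde{s}^p,m\otimes\phi^\sharp f]$ in the realization $M^a_{(Z,\cZ,i,h\circ\phi,\theta\circ F_Z),\cZ}$, and then the globally defined $\Phi$ of Definition \ref{def: pseudo-constant} sends $[\tilde{s}^p,m\otimes g]$ to $[\tilde{s},\varphi(m)\otimes g]$. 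Transporting this composition through $\psi$ produces exactly the map $m\otimes f\mapsto\varphi(m)\otimes f$ claimed.

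The delicate point is Step 2: one must correctly track the sign of the exponent in $\epsilon_\cZ$ by verifying that $[g\alpha,x]=[\alpha,\exp(\log g\cdot N)(x)]$ in $M^a_\cZ$ (which follows immediately from the diagonal $(1+\sJ_\cZ)$-action $g\cdot(m\otimes f)=\exp(-\log g\cdot N)(m\otimes f)$ in Definition \ref{def: pseudo-constant}), so that the resulting log connection is $\nabla$ rather than its conjugate by $N\mapsto -N$. Once this bookkeeping is in place, the remaining computations are purely formal manipulations using Proposition \ref{prop: omega} and the explicit form of $\Phi$.
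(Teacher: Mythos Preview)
Your argument is correct and follows essentially the same route as the paper: trivialize $M^a_\cZ$ via the section $s_\gamma=\tilde{s}\in\sN^s_\cZ$, compute the Taylor isomorphism on $\cZ(1)$ using the equivalence relation defining $M^a_{\cZ(1)}$, and then apply \eqref{eq: log connection} together with Proposition \ref{prop: omega}. Your treatment of the Frobenius structure is more explicit than the paper's (which simply declares it obvious), and your attention to the sign in the $(1+\sJ)$-action is exactly the point that makes the computation come out with $+N$ rather than $-N$.
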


\begin{proof}
	Let $s_\gamma\in\cN_\cZ^s$ be the image of $s\in\cN_\cS$ under the homomorphism induced by $\gamma$.
	This defines a  global section of $\sN_\cZ^s$, which we denote again by $s_\gamma$.
	The isomorphism \eqref{eq: realization Ma} is given by
		\[M\otimes_L\cO_{\cZ_\bbQ}\rightarrow (1+\sJ_\cZ)\backslash(\sN_\cZ^s\times(M\otimes_L\cO_{\cZ_\bbQ}))=M^a_\cZ;\ m\otimes f\mapsto (\gamma,m\otimes f).\]
	
	Let $\cZ(1)$ be the homeomorphic exactification of $Z\hookrightarrow\cZ\times_{W^\varnothing}\cZ$ and $p_j\colon\cZ(1)\rightarrow\cZ$ for $j=1,2$ the canonical projection.
	 Since
	 \[(p_2^*s,m\otimes 1)=\left(p_1^*s,\exp\left(-\log\frac{p_1^*s}{p_2^*s}\cdot N\right)(m\otimes 1)\right),\]
	the Taylor isomorphism on $M\otimes_L\cO_{\cZ_\bbQ}$ induced by \eqref{eq: realization Ma} is given by
	\[M\otimes_L\cO_{\cZ(1)_\bbQ}\rightarrow M\otimes_L\cO_{\cZ(1)_\bbQ};\ m\otimes 1\mapsto \exp\left(-\log\frac{p_1^*s}{p_2^*s}\cdot N\right)(m\otimes 1).\]
	By \eqref{eq: log connection} we see that this Taylor isomorphism induces a connection given by $\nabla(m\otimes 1)=N(m)\otimes d\log s$.
	
	The correspondence between Frobenius structures is obvious.
\end{proof}

\begin{proposition}\label{prop: pseudo-const tensor}
	Let $Y$ be a fine log scheme over $k^0$.
	The functors $\Mod^\fin_L(N)\rightarrow\Isoc^\dagger(Y/W^\varnothing)^\unip$ and $\Mod^\fin_L(\varphi,N)\rightarrow F\Isoc^\dagger(Y/W^\varnothing)^\unip$ defined by $M\mapsto M^a$ preserve tensor product and internal Hom.
\end{proposition}

\begin{proof}
	By Corollary \ref{cor: descent realization}, it suffices to confirm the compatibilities after taking realization at an object $(Z,\cZ,i,h,\theta)\in\OC(Y/W^\varnothing)$.
	Then the assertion follows from the descriptions of Proposition \ref{prop: realization pseudo-const} and Proposition \ref{prop: tensor hom realization}.
	For example, choosing a morphism $\cZ\rightarrow\cS$ as in Proposition \ref{prop: realization pseudo-const}.
	For $M_1,M_2\in\Mod^\fin_L(N)$, denote by $\nabla_1$, $\nabla_2$, $\nabla$, and $\nabla'$ the the log connections on $M^a_{1,\cZ}$, $M^a_{2,\cZ}$, $\Hom(M_1,M_2)^a_\cZ$, and $\sheafhom(M_{1,\cZ}^a,M_{2,\cZ}^a)$, respectively.
	Regrading them as a map
	\[\Hom(M_1,M_2)\otimes_L\cO_{\cZ_\bbQ}\rightarrow\Hom(M_1,M_2)\otimes_L\omega^1_{\cZ/W^\varnothing,\bbQ}\cong\Hom(M_1\otimes_L\cO_{\cZ_\bbQ},M_2\otimes_L\omega^1_{\cZ/W^\varnothing,\bbQ}),\]
	we have 
	\begin{align*}
	\nabla(f\otimes 1)(m\otimes 1)&=N(f)(m)\otimes d\log s=(N\circ f(m)-f\circ N(m))\otimes d\log s,\\
	\nabla'(f\otimes 1)(m\otimes 1)&=\nabla_2(f(m)\otimes 1)-(f\otimes 1)(\nabla_1(m\otimes 1))\\
	&=N\circ f(m)\otimes d\log s-f\circ N(m)\otimes d\log s
	\end{align*}
	for any $f\in \Hom(M_1,M_2)$ and $m\in M_1$.
	Thus we have $\nabla=\nabla'$.
	Other compatibilities can be confirmed by similar computations.
\end{proof}

Next we see that a unipotent log overconvergent ($F$-)isocrystal admits a canonical filtration whose graded pieces are constant if it is connected and admits a rational point.
For a strictly semistable log scheme $Y$ over $k^0$, let $Y^\sm$ be the smooth locus of the underlying scheme of $Y\setminus D$ over $k$, where $D$ is the horizontal divisor of $Y$ (see Definition \ref{def: strictly semistable}).

\begin{lemma}\label{lem: H0 smooth}
	Let $\frZ$ be a connected dagger space smooth over $L$ with an $L$-rational point $z\in\frZ$.
	Then we have $H^0_\dR(\frZ/L):=H^0(\frZ,\Omega^\bullet_{\frZ/L})= L$.
\end{lemma}

\begin{proof}
	It is clear that $L\subset H^0_\dR(\frZ/L)$.
	We have isomorphisms
	\begin{align*}
	\widehat{\cO}_{\frZ,z}\cong L\llbracket x_1,\ldots,x_m\rrbracket,&&
	\wh\Omega^1_{\frZ/L,z}\cong \bigoplus_{i=1}^mL\llbracket x_1,\ldots,x_m\rrbracket dx_i
	\end{align*}
	where $\wh\cdot$ indicates the completion of with respect to the maximal ideal $\frm_z\subset\cO_{\frZ,z}$ and $m=\dim\frZ$.
	Thus there exists a commutative diagram
	\[\xymatrix{
	0\ar[r] & H^0_\dR(\frZ/L)\ar[r]&\Gamma(\frZ,\cO_\frZ)\ar[r]^-d\ar[d]&\Gamma(\frZ,\Omega^1_{\frZ/L})\ar[d]\\
	0\ar[r] &L \ar[r]&L\llbracket x_1,\ldots,x_m\rrbracket\ar[r]^-{\wh d}&\bigoplus_{i=1}^mL\llbracket x_1,\ldots,x_m\rrbracket dx_i
	}\]
	where the both lines are exact and the vertical maps are injectove.
	This shows that $H^0_\dR(\frZ/L)\subset L$.
\end{proof}

\begin{lemma}\label{lem: H0}
	Let $Y$ be a strictly semistable log scheme over $k^0$.
	If $Y$ is geometrically connected over $k$, then we have $H^0_\rig(Y/W^\varnothing)= L$.
\end{lemma}

\begin{proof}
	By \cite[Proposition 3.32]{EY} we may suppose that $D=\emptyset$.
	We first consider the case when $Y$ is quasi-compact and there exists an object $(Y,\cZ,i,h,\theta)\in\OC(Y/W^\varnothing)$ whose first entry is $Y$.
	Then by Proposition \ref{prop: base change coh} we may suppose that there exists a $k$-rational point $y$ of $Y$.
	Then $\Spec k$ equipped with the pull-back log structure by $y\colon \Spec k\hookrightarrow Y$ is isomorphic to $(k^0)^\ell$, the self product of $k^0$ over $k^\varnothing$ for some $\ell\geq 1$.
	Let $\cZ'_y$ be the exactification of the closed immersion $(k^0)^\ell\hookrightarrow\cS^\ell\times_{W^\varnothing}\cZ$ induced by the canonical closed immersion $(k^0)^\ell\hookrightarrow\cS^\ell$ and $y\colon (k^0)^\ell\hookrightarrow\cZ$.
	The natural projection $\cZ'_y\rightarrow\cS^\ell$ is strict and log smooth, hence the underlying morphism of weak formal schemes is also smooth.
	Thus by \cite[Proposition 1.38]{EY} we have an isomorphism $\cZ'_y\cong \cS^\ell\times\Spwf W\llbracket x_1,\ldots,x_m\rrbracket$.
	Thus there exists an $L$-rational point on $\cZ'_{y,\bbQ}$, and it induces an $L$-rational point of $\cZ_\bbQ$ via the natural projection $\cZ'_{y,\bbQ}\rightarrow\cZ_\bbQ$.
	Then we have
	\[L=H^0_\dR(\cZ_\bbQ/L)=H^0_\rig(Y/W^\varnothing)\]
	where the equalities follow from Lemma \ref{lem: H0 smooth} and the fact that $\Omega^1_{\cZ_\bbQ/L}\rightarrow\omega^1_{\cZ/W^\varnothing,\bbQ}$ is injective, respectively.

	For the general case, take a local embedding datum $(Z_\lambda,\cZ_\lambda,i_\lambda,h_\lambda,\theta_\lambda)_{\lambda\in\Lambda}$ such that $Z_\lambda$ is quasi-compact and geometrically connected.
	Then we have
	\begin{align*}
	H^0_\rig(Y/W^\varnothing)&\cong\Ker\left(\prod_{\lambda_0\in\Lambda}H^0_\rig(Z_{\lambda_0}/W^\varnothing)\rightarrow \prod_{(\lambda_0,\lambda_1)\in\Lambda^2}H^0_\rig(Z_{(\lambda_0,\lambda_1)}/W^\varnothing)\right).
	\end{align*}
	Since $H^0_\rig(Z_{\lambda_0}/W^\varnothing)$ and $H^0_\rig(Z_{(\lambda_0,\lambda_1)}/W^\varnothing)$ are isomorphic to $L$ if $Z_{(\lambda_0,\lambda_1)}\neq\emptyset$, we obtain the equality $H^0_\rig(Y/W^\varnothing)=L$.
\end{proof}

\begin{lemma}\label{lem: nonzero inj}
	Let $Y$ be a strictly semistable log scheme over $k^0$ and let $\sE\in\Isoc^\dagger(Y/W^\varnothing)^\unip$.
	Then $H^0_\rig(Y/W^\varnothing,\sE)$ is non-zero.
	If $Y$ is geometrically connected over $k$, then the natural map
		\[H^0_\rig(Y/W^\varnothing,\sE)\otimes_L\sO_{Y/W^\varnothing}\rightarrow\sE\]
	is injective.
\end{lemma}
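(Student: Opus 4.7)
The plan is to induct on the length of the unipotent filtration of $\sE$, combined with the left exactness of $H^0_\rig$ for the non-vanishing, and with a Wronskian-type argument for the injectivity.

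For the non-vanishing, since $\sE$ is unipotent we have a filtration $0=\sE_0\subsetneq \sE_1\subsetneq \cdots\subsetneq \sE_n=\sE$ in $\Isoc^\dagger(Y/W^\varnothing)^\unip$ with $\sE_i/\sE_{i-1}\cong\sO_{Y/W^\varnothing}$. In particular, $\sE_1\cong\sO_{Y/W^\varnothing}$, so we obtain an embedding $\sO_{Y/W^\varnothing}\hookrightarrow\sE$. On any local embedding datum, this short exact sequence realizes to a short exact sequence of locally free coherent sheaves with integrable log connection, hence of log de Rham complexes, so $H^0_\rig$ is left exact. Since $1\in H^0_\rig(Y/W^\varnothing,\sO_{Y/W^\varnothing})$ is non-zero, we conclude $H^0_\rig(Y/W^\varnothing,\sE)\neq 0$.

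For the injectivity, assume $Y$ is geometrically connected. Fix a local embedding datum $(Z_\lambda,\cZ_\lambda,i_\lambda,h_\lambda,\theta_\lambda)_{\lambda\in\Lambda}$ for $Y$ over $\cS$. It suffices to verify that, for each $\lambda$, the realization map
\[
H^0_\rig(Y/W^\varnothing,\sE)\otimes_L\cO_{\cZ_{\lambda,\bbQ}}\longrightarrow\sE_{\cZ_\lambda}
\]
is injective. Since $\cZ_\lambda$ is smooth over $W^\varnothing$, the dagger space $\cZ_{\lambda,\bbQ}$ is smooth, and by decomposing along its connected components we may assume it is connected and hence irreducible. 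Choose elements $v_1,\ldots,v_m\in H^0_\rig(Y/W^\varnothing,\sE)$ that are $L$-linearly independent and suppose $\sum f_iv_{i,\lambda}=0$ is a minimal non-trivial $\cO_{\cZ_{\lambda,\bbQ}}$-linear relation. Applying $\nabla_{\cZ_\lambda}$ and using horizontality of each $v_{i,\lambda}$ yields $\sum_i df_i\otimes v_{i,\lambda}=0$ in $\sE_{\cZ_\lambda}\otimes\omega^1_{\cZ_\lambda/W^\varnothing,\bbQ}$. Expanding in a local basis $d\log v_h$ of $\omega^1_{\cZ_\lambda/W^\varnothing,\bbQ}$ and invoking the minimality of the chosen relation forces $\partial^{\log}_h(f_i)=0$ for every $h$ and $i$, hence $df_i=0$. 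Thus the $f_i$ are locally constant; geometric connectedness of $Y$ then forces each $f_i\in L$, contradicting the $L$-linear independence of the $v_i$'s.

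The main delicacy is running the Wronskian step cleanly in the presence of a log connection rather than an ordinary one. One cheap fix is to restrict first to the open locus $U\subset\cZ_{\lambda,\bbQ}$ where the log structure is trivial, run the standard flat-connection argument there, and extend by density to conclude on all of $\cZ_{\lambda,\bbQ}$; alternatively, one works directly with the local log coordinates as sketched above. A secondary point is to justify that geometric connectedness of $Y$ yields the required statement about horizontal sections of $\sO_{Y/W^\varnothing}$ being exactly $L$ and about reducing to connected realizations; this follows from the local structure of strictly semistable $Y$ together with the strong fibration lemma applied to $\cZ_\lambda\to\cS$.
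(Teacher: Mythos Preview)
Your non-vanishing argument is fine and is essentially the paper's (you use the bottom step of the unipotent filtration to embed $\sO_{Y/W^\varnothing}\hookrightarrow\sE$ and apply left exactness of $H^0_\rig$; the paper instead inducts on a quotient, but the idea is the same).

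For the injectivity, your Wronskian route is a genuinely different strategy from the paper's, and it has a real gap in the last step. After the Wronskian you obtain a relation $\sum c_i\,v_{i,\lambda}=0$ with the $c_i$ horizontal on $\cZ_{\lambda,\bbQ}$, i.e.\ lying in the field of constants $L'=\Gamma(\cZ_{\lambda,\bbQ},\cO)^{\nabla=0}$. You then assert that geometric connectedness of $Y$ forces $c_i\in L$. But the $c_i$ live on the single chart $\cZ_{\lambda,\bbQ}$, and an affine open $Z_\lambda\subset Y$ need not be geometrically connected even when $Y$ is, so there is no reason a priori that $L'=L$. What you actually need is that the restriction map $H^0_\rig(Y/W^\varnothing,\sE)\to\Gamma(\cZ_{\lambda,\bbQ},\sE_{\cZ_\lambda})^{\nabla=0}$ is injective: then $L$-independence of the $v_i$ passes to $L$-independence, hence $L'$-independence, of the $v_{i,\lambda}$, and the constant-coefficient relation is the desired contradiction. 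That injectivity does hold (a nonzero global horizontal section defines a nonzero morphism $\sO_{Y/W^\varnothing}\to\sE$, whose image has constant rank on connected $Y$, so it cannot realize to zero on any $\cZ_\lambda$), but you neither state nor prove it, and ``geometric connectedness of $Y$ forces $f_i\in L$'' is not a substitute.

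The paper avoids all of this by an induction on rank and a four-line diagram chase: from $0\to\sF\to\sE\to\sO_{Y/W^\varnothing}\to 0$ with $\sF$ unipotent of smaller rank, one gets a ladder between $H^0_\rig(-)\otimes\sO_{Y/W^\varnothing}$ and the isocrystals themselves, with the top row injective by induction and the bottom row injective since $H^0_\rig(Y/W^\varnothing,\sO)=L$; a diagram chase gives injectivity in the middle. This uses the unipotent filtration directly and never touches local coordinates, density arguments, or fields of constants on charts. Your approach can be repaired as indicated above, but once you add the missing injectivity-of-restriction step you are essentially reproving, locally and analytically, what the paper's categorical induction gives for free.
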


\begin{proof}
	The assertions for $\sE=\sO_{Y/W^\varnothing}$ follow by Lemma \ref{lem: H0}.	
	For general case, we use the induction on the rank of $\sE$.
	Take an exact sequence $0\rightarrow \sF\rightarrow\sE\rightarrow\sO_{Y/W^\varnothing}\rightarrow 0$, where $\sF$ is unipotent.
	Since $H^0_\rig(Y/W^\varnothing,\sF)\rightarrow H^0_\rig(Y/W^\varnothing,\sE)$ is injective, the first assertion follows from the hypothesis of the induction.
	Moreover in a commutative diagram	
		\[\xymatrix{
		0\ar[d] & 0\ar[d]\\
		H^0_\rig(Y/W^\varnothing,\sF)\otimes_L\sO_{Y/W^\varnothing}\ar[r]\ar[d] & \sF\ar[d]\\
		H^0_\rig(Y/W^\varnothing,\sE)\otimes_L\sO_{Y/W^\varnothing}\ar[r]\ar[d] & \sE\ar[d]\\
		H^0_\rig(Y/W^\varnothing,\sO_{Y/W^\varnothing})\otimes_L\sO_{Y/W^\varnothing}\ar[r] & \sO_{Y/W^\varnothing}\ar[d]\\
		& 0,
		}\]
	the upper and bottom horizontal maps are injective by the hypothesis of the induction.
	Chasing the diagram we see that the middle horizontal map is also injective.
\end{proof}

The following is an analogue of \cite[Proposition 2.3.5]{CL1}.

\begin{proposition-definition}\label{prop: filtration}
	Let $Y$ be a geometrically connected strictly semistable log scheme over $k^0$.
	Then for any object $\sE\in\Isoc^\dagger(Y/W^\varnothing)^\unip$ there exists a unique increasing filtration $\Fil_\bullet$ on $\sE$ such that $\Fil_{-1}\sE=0$ and the natural maps
		\begin{eqnarray}
		\label{eq: Gr of Fil} H^0_\rig(Y/W^\varnothing,\sE)\otimes_L\sO_{Y/W^\varnothing}&\rightarrow&\Gr^\Fil_0\sE,\\
		\nonumber H^0_\rig(Y/W^\varnothing,\sE/\Fil_n\sE)\otimes_L\sO_{Y/W^\varnothing}&\rightarrow& \Gr^\Fil_{n+1}\sE \hspace{10pt} \text{for }n\geq 0
		\end{eqnarray}
	are isomorphisms.
	We call this the {\it natural filtration} on $\sE$.
\end{proposition-definition}

\begin{proof}
	The uniqueness is clear.
	We prove the existence by the induction on the rank of $\sE$.
	Let $\sE':=H^0_\rig(Y/W^\varnothing,\sE)\otimes_L\sO_{Y/W^\varnothing}$.
	Then clearly $\sE'$ has nilpotent residues, and by Lemma \ref{lem: nonzero inj} $\sE'$ is a non-zero subobject of $\sE$ in $\Isoc^\dagger(Y/W^\varnothing)^\nr$.
	By Proposition \ref{prop: unipotence} $\sE'':=\sE/\sE'$ is unipotent, and its rank is smaller than $\sE$.
	Therefore $\sE''$ has the required filtration.
	For any $n\geq 0$ we define $\Fil_{n+1}\sE$ to be the pull-back of $\Fil_n\sE''\subset\sE''$ by the surjection $\sE\rightarrow\sE''$.
	Since the induced map $\sE/\Fil_{n+1}\sE\rightarrow\sE''/\Fil_n\sE''$ is an isomorphism, we have isomorphisms
		\begin{eqnarray*}
		H^0_\rig(Y/W^\varnothing,\sE/\Fil_{n+1}\sE)\otimes_L\sO_{Y/W^\varnothing}&\xrightarrow{\cong}& H^0_\rig(Y/W^\varnothing,\sE''/\Fil_n\sE'')\otimes_L\sO_{Y/W^\varnothing}\\
		&\xrightarrow{\cong}&\Gr^\Fil_n\sE''\xleftarrow{\cong}\Gr^\Fil_{n+1}\sE,
		\end{eqnarray*}
	which commute with the natural map $H^0_\rig(Y/W^\varnothing,\sE/\Fil_{n+1}\sE)\otimes_L\sO_{Y/W^\varnothing}\rightarrow\Gr^\Fil_{n+1}\sE$.
\end{proof}

\begin{proposition}\label{prop: unip const}
	Let $Y$ be a geometrically connected strictly semistable log scheme over $k^0$.
	Then for any object $(\sE,\Phi)\in F\Isoc^\dagger(Y/W^\varnothing)^\unip$, the natural filtration $\Fil_\bullet$ is stable under $\Phi$, and $\Gr^\Fil_n(\sE,\Phi)$ is constant for any $n$.
\end{proposition}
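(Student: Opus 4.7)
The plan is to induct on the rank of $\sE$, using the uniqueness clause in Proposition-Definition \ref{prop: filtration} and the functoriality of the filtration construction through $H^0_\rig$. The base case $\sE = 0$ is trivial, and for $\sE \neq 0$ Lemma \ref{lem: nonzero inj} ensures $\Fil_0\sE \neq 0$, so that $\sE/\Fil_0\sE$ has strictly smaller rank. The inductive step splits into showing $\Fil_0\sE$ is $\Phi$-stable, then reducing the remaining claims to the quotient.

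For $\Phi$-stability of $\Fil_0\sE$, first I would define a $\sigma$-semilinear endomorphism $\varphi$ on the $L$-vector space $H^0_\rig(Y/W^\varnothing,\sE) = \Hom_{\Isoc^\dagger(Y/W^\varnothing)}(\sO_{Y/W^\varnothing},\sE)$ by sending a horizontal section $\alpha$ to $\Phi \circ \sigma^*(\alpha)$, using the identification $\sigma^*\sO_{Y/W^\varnothing} = \sO_{Y/W^\varnothing}$. By Lemma \ref{lem: nonzero inj}, the canonical map $H^0_\rig(Y/W^\varnothing,\sE) \otimes_L \sO_{Y/W^\varnothing} \hookrightarrow \sE$ is injective with image exactly $\Fil_0\sE$, and the construction of $\varphi$ endows the source with a Frobenius structure $\varphi \otimes \sigma$ making this inclusion Frobenius-equivariant; in particular $\Phi(\sigma^*\Fil_0\sE) = \Fil_0\sE$.

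Granted this, the quotient $\sE'' := \sE/\Fil_0\sE$ inherits a Frobenius structure $\bar\Phi$, giving a unipotent $F$-isocrystal of strictly smaller rank. By the inductive hypothesis, $\Fil_\bullet\sE''$ is $\bar\Phi$-stable with constant graded pieces; the pullback description of $\Fil_{n+1}\sE$ as the preimage of $\Fil_n\sE''$, together with the induced isomorphism $\Gr^\Fil_{n+1}\sE \xrightarrow{\cong} \Gr^\Fil_n\sE''$ from the construction in Proposition-Definition \ref{prop: filtration}, transfers these properties to $\sE$ in degrees $n \geq 1$. For the bottom piece, $\Gr^\Fil_0(\sE,\Phi) \cong H^0_\rig(Y/W^\varnothing,\sE) \otimes_L \sO_{Y/W^\varnothing}$ with Frobenius $\varphi \otimes \sigma$ is constant: specializing Definition \ref{def: pseudo-constant} and Proposition \ref{prop: realization pseudo-const} to trivial monodromy $N = 0$, the $1 + \sJ_\cZ$-action collapses and $M^a$ reduces to $M \otimes_L \sO_{Y/W^\varnothing}$ with Frobenius $\varphi \otimes \sigma$. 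Bijectivity of $\varphi$ on $H^0_\rig(Y/W^\varnothing,\sE)$, required to place it in $\Mod_L^\fin(\varphi)$, follows by an auxiliary induction along unipotent extensions $0 \to \sO_{Y/W^\varnothing} \to \sE \to \sE'' \to 0$ using the long exact sequence in cohomology and the fact that $\Phi$ is itself an isomorphism.

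The main obstacle is the verification of Frobenius-equivariance of the canonical inclusion $H^0_\rig \otimes_L \sO_{Y/W^\varnothing} \hookrightarrow \sE$, and of the matching of this structure with the one appearing in the constant isocrystal $(H^0_\rig)^a$; while essentially tautological, it requires careful unwinding of the $\sigma$-twisted tensor-product conventions implicit in the functor $\sigma^*$ on $\Isoc^\dagger(Y/W^\varnothing)$. Once that identification is in place, the rest of the argument reduces to diagram chases and invocations of the uniqueness in Proposition-Definition \ref{prop: filtration}.
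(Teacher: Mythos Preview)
Your proposal is correct, but it unpacks by hand what the paper dispatches in two sentences. The paper's argument is simply: the uniqueness clause in Proposition-Definition~\ref{prop: filtration} makes $\Fil_\bullet$ functorial, hence automatically preserved by the isomorphism $\Phi\colon\sigma^*\sE\to\sE$ (together with the evident identification $\sigma^*\Fil_\bullet\sE=\Fil_\bullet\sigma^*\sE$); the isomorphisms~\eqref{eq: Gr of Fil} are then Frobenius-compatible for the same functoriality reason, so each $\Gr^\Fil_n$ is constant. You instead re-run the inductive construction of Proposition-Definition~\ref{prop: filtration} while carrying the Frobenius structure along at every stage, verifying the compatibilities explicitly. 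This is more laborious, but it has the merit of making the $\sigma$-twisting conventions and the identification of the induced Frobenius on $H^0_\rig$ with the one defining $M^a$ completely transparent---which is exactly the point you flag as the main obstacle. One minor remark: your auxiliary induction for bijectivity of $\varphi$ on $H^0_\rig$ is heavier than needed. Once $\Fil_\bullet$ is $\Phi$-stable, the induced Frobenius structure on $\Gr^\Fil_n\sE\cong M_n\otimes_L\sO_{Y/W^\varnothing}$ is itself an isomorphism of isocrystals; since it has the form $\varphi\otimes\id$, bijectivity of $\varphi$ on the finite-dimensional $L$-space $M_n$ is immediate.
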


\begin{proof}
	Since $\Fil_\bullet$ is functorial, it is stable under Frobenius action.
	Moreover the isomorphisms \eqref{eq: Gr of Fil} are compatible with the Frobenius action.
	Thus $\Gr^\Fil_n(\sE,\Phi)$ is constant.
\end{proof}

%%%%%%%%%%%%%%%%%%%%%%%%%%%%%%%%%
\section{Hyodo--Kato cohomology with coefficients}\label{sec: HK coh}
%%%%%%%%%%%%%%%%%%%%%%%%%%%%%%%%%
In this section we will define the Hyodo--Kato cohomology with coefficients in log overconvergent ($F$-)isocrystals over $W^\varnothing$.
In fact the construction in \cite{EY} for the trivial coefficient is naturally generalized to nontrivial coefficients.
The following is an enhancement of a construction due to Kim and Hain \cite[pp. 1259--1260]{KH}.

\begin{definition}[{\cite[Definition 3.2]{EY}}]\label{def: Kim--Hain}
	Let $\cZ$ be a weak formal log scheme over $\cS$.
	We define the {\it Kim--Hain complex } $\omega^\bullet_{\cZ/W^\varnothing,\bbQ}[u]$ to be the CDGA on $\cZ_\bbQ$ generated by $\omega^\bullet_{\cZ/W^\varnothing,\bbQ}$ and degree zero elements $u^{[i]}$ for $i\geq 0$ with multiplication
		\begin{equation}
			u^{[i]}\wedge u^{[j]}=\frac{(i+j)!}{i!j!}u^{[i+j]}
		\end{equation}
	and relations
		\begin{align}
			du^{[i+1]}=-d\log s\cdot u^{[i]}&&\text{and}&&u^{[0]}=1.
		\end{align}
	
	For an object $(\cE,\nabla)\in \MIC^\dagger(\cZ/W^\varnothing)$, we denote again by $\nabla$ the composition
		 \begin{align*}
		 \cE\xrightarrow{\nabla}\cE\otimes\omega^1_{\cZ/W^\varnothing,\bbQ}\rightarrow\cE\otimes\omega^1_{\cZ/W^\varnothing,\bbQ}[u],
		 \end{align*}
	which defines a complex $\cE\otimes\omega^\bullet_{\cZ/W^\varnothing,\bbQ}[u]$.
	We define an $\cO_{\cZ_\bbQ}$-linear endomorphism $N$ on $\cE\otimes\omega_{\cZ/W^\varnothing,\bbQ}^\bullet[u]$ 
	by
		\[N(u^{[i]}):= u^{[i-1]}.\]
	When we consider a Frobenius structure, i.e. when $(\cE,\nabla,\Phi)\in F\MIC^\dagger((\cZ,\phi)/(W^\varnothing,\sigma))$, we extend the $\phi$-semilinear endomorphism $\varphi$ on $\cE\otimes\omega^\bullet_{\cZ/W^\varnothing,\bbQ}$ defined as in Definition \ref{def: FMIC} to $\cE\otimes\omega^\bullet_{\cZ/W^\varnothing,\bbQ}[u]$
	by
		\[\varphi(u^{[i]}):=p^iu^{[i]}.\]
\end{definition}

Now we define the Hyodo--Kato cohomology and the absolute Hyodo--Kato cohomology with coefficients.
For an abelian category $\cM$, we denote its bounded derived category by $ D^b(\cM)$.

\begin{definition}\label{def: HK coh}
	Let $Y$ be a fine log scheme over $k^0$.
	\begin{enumerate}
	\item Take a local embedding datum $(Z_\lambda,\cZ_\lambda,i_\lambda,h_\lambda,\theta_\lambda)_{\lambda\in\Lambda}$ for $Y$ over $(k^0,\cS,\tau)$.
	Similarly to \eqref{eq: product of embedding data} and \eqref{eq: simplicial system}, for $m\geq 0$ and $\underline{\lambda}=(\lambda_0,\ldots,\lambda_m)\in\Lambda^{m+1}$ we define
			\begin{align}
			\label{eq: product of embedding data HK}
			&(Z_{\underline{\lambda}},\cZ_{\underline{\lambda}},i_{\underline{\lambda}},h_{\underline{\lambda}},\theta_{\underline{\lambda}}):=(Z_{\lambda_0},\cZ_{\lambda_0},i_{\lambda_0},h_{\lambda_0},\theta_{\lambda_0})\times\cdots\times(Z_{\lambda_m},\cZ_{\lambda_m},i_{\lambda_m},h_{\lambda_m},\theta_{\lambda_m}),\\
			\label{eq: simplicial system HK}
			&(Z_m,\cZ_m,i_m,h_m,\theta_m):=\coprod_{\underline{\lambda}\in\Lambda^{m+1}}(Z_{\underline{\lambda}},\cZ_{\underline{\lambda}},i_{\underline{\lambda}},h_{\underline{\lambda}},\theta_{\underline{\lambda}}),
			\end{align}
		where the products are taken in $\OC(Y/\cS)$ (not in $\OC(Y/W^\varnothing)$).
		Then we obtain a simplicial object $(Z_\bullet,\cZ_\bullet,i_\bullet,h_\bullet,\theta_\bullet)$ of $\OC(Y/\cS)$, which is also regarded as a simplicial object of $\OC(Y/W^\varnothing)$.
		Hence for any object $\sE\in \Isoc^\dagger(Y/W^\varnothing)$, we have a cosimplicial complex $R\Gamma(\cZ_{\bullet,\bbQ},\sE_{\cZ_\bullet}\otimes\omega^\star_{\cZ_\bullet/W^\varnothing,\bbQ}[u])$.
		Note that, by using Godement resolution we may obtain transition morphisms as morphisms of complexes (not in the derived sense).
		We define the {\it Hyodo--Kato cohomology} $R\Gamma_\HK(Y,\sE)$ of $Y$ with coefficients in $\sE$ to be the complex associated with the cosimplicial complex $R\Gamma(\cZ_{\bullet,\bbQ},\sE_{\cZ_\bullet}\otimes\omega^\star_{\cZ_\bullet/W^\varnothing,\bbQ}[u])$,
		which is equipped with the endomorphism induced by $N$ in Definition \ref{def: Kim--Hain}.
	\item Let $(\sE,\Phi)\in F\Isoc^\dagger(Y/W^\varnothing)$.
		By taking Frobenius lifts $\phi_\lambda$ on the above $\cZ_\lambda$, the endomorphisms $\varphi_m$ on $\sE_{\cZ_m}\otimes\omega^\bullet_{\cZ_m/W^\varnothing,\bbQ}[u]$ defined in Definition \ref{def: Kim--Hain} induce a $\sigma$-semilinear endomorphism $\varphi$ on $R\Gamma_\HK(Y,\sE)$.
		Therefore we obtain an object $ D^b(\Mod_L(\varphi,N))$, which we denote by $R\Gamma_\HK(Y,(\sE,\Phi))$.
	\item For an object $(\sE,\Phi)\in F\Isoc^\dagger(Y/W^\varnothing)$, we define the {\it absolute Hyodo--Kato cohomology} $R\Gamma_\abs(Y/W^\varnothing,(\sE,\Phi))$ of $Y$ with coefficients in $(\sE,\Phi)$ to be the mapping cone
			\begin{equation}
			R\Gamma_\abs(Y,(\sE,\Phi)):=\Cone\left(R\Gamma_\rig(Y/W^\varnothing,(\sE,\Phi))\xrightarrow{\varphi-1}R\Gamma_\rig(Y/W^\varnothing,(\sE,\Phi))\right)[-1].
			\end{equation}
	\end{enumerate}
	For the trivial coefficient $\sO_{Y/W^\varnothing}$, we denote
	\begin{align*}
	R\Gamma_\HK(Y):=R\Gamma_\HK(Y,\sO_{Y/W^\varnothing})&&\text{and}&&R\Gamma_\abs(Y):=R\Gamma_\abs(Y,\sO_{Y/W^\varnothing}).
	\end{align*}
\end{definition}

\begin{proposition}\label{prop: indep HK}
	Let $Y$ be a fine log scheme over $k^0$.
	For any $\sE\in\Isoc^\dagger(Y/W^\varnothing)$, the cohomology $R\Gamma_\HK(Y,\sE)$ is independent of the choice of a local embedding datum over $(k^0,\cS,\tau)$ up to canonical quasi-isomorphisms.
	For any $(\sE,\Phi)\in F\Isoc^\dagger(Y/W^\varnothing)$, the cohomologies $R\Gamma_\HK(Y,(\sE,\Phi))$ and $R\Gamma_\abs(Y,(\sE,\Phi))$ are independent of the choice of local embedding $F$-datum over $(k^0,\cS,\tau,\sigma)$ and local embedding $F$-datum over $(k^\varnothing,W^\varnothing,\iota,\sigma)$ up to canonical quasi-isomorphisms, respecticely.
\end{proposition}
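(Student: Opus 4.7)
The plan is to reduce the independence of $R\Gamma_\HK(Y,\sE)$ (resp. $R\Gamma_\HK(Y,(\sE,\Phi))$) to the already-established Proposition \ref{prop: log rig coh indep} for log rigid cohomology, via the double complex description \eqref{eq: HK coh double}. Given two local embedding data $(Z_\lambda,\cZ_\lambda,i_\lambda,h_\lambda,\theta_\lambda)_{\lambda\in\Lambda}$ and $(Z'_{\lambda'},\cZ'_{\lambda'},i'_{\lambda'},h'_{\lambda'},\theta'_{\lambda'})_{\lambda'\in\Lambda'}$ for $Y$ over $(k^0,\cS,\tau)$, I would first form the bisimplicial object $(Z''_{\bullet,\bullet},\cZ''_{\bullet,\bullet},i''_{\bullet,\bullet},h''_{\bullet,\bullet},\theta''_{\bullet,\bullet})$ in $\OC(Y/\cS)$ exactly as in the proof of Proposition \ref{prop: log rig coh indep}, with $(Z''_{m,n},\cZ''_{m,n},\ldots)$ the product of $(Z_m,\cZ_m,\ldots)$ and $(Z'_n,\cZ'_n,\ldots)$ in $\OC(Y/\cS)$ (so that the canonical generator $s$ is coherent across the construction, and the Kim--Hain complex makes sense). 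This produces a bicosimplicial Kim--Hain complex $R\Gamma(\cZ''_{\bullet,\bullet,\bbQ},\sE_{\cZ''_{\bullet,\bullet}}\otimes\omega^\star_{\cZ''_{\bullet,\bullet}/W^\varnothing,\bbQ}[u])$ together with canonical comparison morphisms from the Kim--Hain complexes of the two original embedding data.

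Next, invoking Lemma \ref{lem: total complex} I would express each Kim--Hain complex as the total complex of a double complex lying in the fourth quadrant, whose $j$-th row (for $j\leq 0$) is a shift of the corresponding log rigid cohomology complex and whose second differential is $\wedge d\log s$. By the argument already used in the proof of Proposition \ref{prop: log rig coh indep}, the rowwise comparison maps associated to the bisimplicial construction are quasi-isomorphisms. Since the second differentials are bounded above (the rows vanish for $j>0$), Lemma \ref{lem: assembly} applies and the maps on the total complexes are quasi-isomorphisms. The monodromy operator $N$ on the Kim--Hain side is induced by $u^{[i]}\mapsto u^{[i-1]}$ and is visibly preserved by the comparison maps, giving the required independence in $D^b(\Mod_L(N))$.

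For the $F$-case I would argue analogously: given two local embedding $F$-data with Frobenius lifts $\phi_\lambda$ and $\phi'_{\lambda'}$, the product lifts $\phi''_{m,n}$ on $\cZ''_{m,n}$ endow the bisimplicial widening with a natural Frobenius, and the comparison maps commute with the $\sigma$-semilinear endomorphisms $\varphi$ defined via $u^{[i]}\mapsto p^iu^{[i]}$ and the realizations $\Phi_{\cZ''_{m,n},\phi''_{m,n}}$ of \eqref{eq: realization FStr}. For the absolute Hyodo--Kato cohomology $R\Gamma_\abs(Y,(\sE,\Phi))$ the claim is immediate: it is by definition the $(-1)$-shifted mapping cone of $\varphi-1$ on $R\Gamma_\rig(Y/W^\varnothing,(\sE,\Phi))$, so the statement follows from Proposition \ref{prop: log rig coh indep}.

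The main obstacle I anticipate is the Frobenius bookkeeping: one needs to verify that the product Frobenius lifts $\phi''_{m,n}$ on the bisimplicial widening are genuinely compatible with the simplicial face and degeneracy maps, and that when comparing different choices of Frobenius lifts sharing the same underlying morphism on $Z$, the canonical natural equivalence $\varepsilon_{\phi_1,\phi_2}$ of Proposition \ref{prop: isoc indep 1}(2) assembles coherently along the (bi)simplicial structure into a Frobenius-equivariant quasi-isomorphism in the derived category. Once this coherence is checked, together with the rowwise quasi-isomorphism on log rigid cohomology and Lemma \ref{lem: assembly}, the proposition follows.
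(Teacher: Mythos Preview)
Your proposal is correct and follows essentially the same approach as the paper's proof: express the Hyodo--Kato cohomology via the fourth-quadrant double complex \eqref{eq: HK coh double}, invoke the independence of log rigid cohomology (Proposition~\ref{prop: log rig coh indep}) for the rows, and apply Lemma~\ref{lem: assembly}; the absolute case follows directly from the mapping-cone definition. The paper's proof is much terser and omits the explicit bisimplicial construction and the Frobenius bookkeeping you spell out, but the logical skeleton is identical.
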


\begin{proof}
	The independence of $R\Gamma_\HK(Y,\sE)$ follows by the same arguments as Proposition \ref{prop: log rig coh indep} (see also \cite[Proposition 3.6]{EY}).
	The independence of $R\Gamma_\abs(Y,(\sE,\Phi))$ also follows from that of $R\Gamma_\rig(Y/W^\varnothing,(\sE,\Phi))$.
\end{proof}

\begin{remark}\label{rem: rig W}
	Let $Y$, $\sE$, $(Z_\lambda,\cZ_\lambda,i_\lambda,h_\lambda,\theta_\lambda)_{\lambda\in\Lambda}$, and $(Z_\bullet,\cZ_\bullet,i_\bullet,h_\bullet,\theta_\bullet)$ be as in Definition \ref{def: HK coh}.
	Note that the log rigid cohomology $R\Gamma_\rig(Y/W^\varnothing,\sE)$ is computed by using $(Z_\bullet,\cZ_\bullet,i_\bullet,h_\bullet,\theta_\bullet)$.
	Indeed, let $(Z'_\bullet,\cZ'_\bullet,i'_\bullet,h'_\bullet,\theta'_\bullet)$ be the simplicial object of $\OC(Y/W^\varnothing)$ given by taking the products in $\OC(Y/W^\varnothing)$ instead of $\OC(Y/\cS)$.
	The log rigid cohomology $R\Gamma_\rig(Y/W^\varnothing,\sE)$ is a priori defined to be the complex associated to the cosimplicial complex $R\Gamma(\cZ'_{\bullet,\bbQ},\sE_{\cZ'_\bullet}\otimes\omega^\star_{\cZ'_\bullet/W^\varnothing,\bbQ})$.
	Then there exists a canonical morphism
		\[R\Gamma_\rig(Y/W^\varnothing,\sE)=R\Gamma(\cZ'_{\bullet,\bbQ},\sE_{\cZ'_\bullet}\otimes\omega^\star_{\cZ'_\bullet/W^\varnothing,\bbQ})\rightarrow R\Gamma(\cZ_{\bullet,\bbQ},\sE_{\cZ_\bullet}\otimes\omega^\star_{\cZ_\bullet/W^\varnothing,\bbQ}),\]
	which is a quasi-isomorphism since $R\Gamma(\cZ'_{m,\bbQ},\sE_{\cZ'_m}\otimes\omega^\star_{\cZ'_m/W^\varnothing,\bbQ})$ and $R\Gamma(\cZ_{m,\bbQ},\sE_{\cZ_m}\otimes\omega^\star_{\cZ_m/W^\varnothing,\bbQ})$ both compute $R\Gamma_\rig(Z_m/W^\varnothing,\sE)$ for each $m\in\bbN$.
\end{remark}

\begin{proposition}\label{prop: HK coh total}
	Let $Y$ be a fine log scheme over $k^0$, $\sE$ an object of $\Isoc^\dagger(Y/W^\varnothing)$, and $(Z_\lambda,\cZ_\lambda,i_\lambda,h_\lambda,\theta_\lambda)_{\lambda\in\Lambda}$ a local embedding datum for $Y$ over $(k^0,\cS,\tau)$ such that $Z_\lambda$ and $\cZ_\lambda$ are affine for all $\lambda\in\Lambda$.
	Define objects $(Z_{\underline{\lambda}},\cZ_{\underline{\lambda}},i_{\underline{\lambda}},h_{\underline{\lambda}},\theta_{\underline{\lambda}})$ of $\OC(Y/\cS)$ and a simplicial object $(Z_\bullet,\cZ_\bullet,i_\bullet,h_\bullet,\theta_\bullet)$ of $\OC(Y/\cS)$ as in Definition \ref{def: HK coh}.
	Moreover define weak formal log schemes $\cZ_{\underline{\lambda},n}=\cZ_{\underline{\lambda}}[\frac{\cJ_{\underline{\lambda}}^n}p]^\dagger$ and $\cZ_{m,n}:=\coprod_{\underline{\lambda}\in\Lambda^{m+1}}\cZ_{\underline{\lambda},n}$ as in Remark \ref{rem: affinoid}.
	Then $R\Gamma_\HK(Y,\sE)$ is represented by the total complex of the triple complex
	\[\prod_{n\in\bbN}\Gamma(\cZ_{\bullet,n,\bbQ},\sE_{\cZ_{\bullet,n}}\otimes\omega^\star_{\cZ_{\bullet,n}/W^\varnothing,\bbQ}[u])\xrightarrow{\eth}\prod_{n\in\bbN}\Gamma(\cZ_{\bullet,n,\bbQ},\sE_{\cZ_{\bullet,n}}\otimes\omega^\star_{\cZ_{\bullet,n}/W^\varnothing,\bbQ}[u])\]
	where the first differential is the log connection on $\sE_{\cZ_{\bullet,n}}\otimes\omega^\star_{\cZ_{\bullet,n}/W^\varnothing}[u]$, the second differential is induced by the face maps of the simplicial weak formal log schemes $\cZ_{\bullet,n}$, and the third differential $\eth$ maps an element $(\eta_{i,m,n})_n\in\prod_{n\in\bbN}\Gamma(\cZ_{m,n,\bbQ},\sE_{\cZ_{m,n}}\otimes\omega^i_{\cZ_{m,n}/W^\varnothing,\bbQ}[u])$ to $(\eta_{i,m,n}-\eta_{i,m,n+1}|_{\cZ_{m,n,\bbQ}})_n$ for any $i,m\in\bbN$.
\end{proposition}

\begin{proof}
	 Note that $\sE_{\cZ_{m,n}}\otimes\omega^i_{\cZ_{m,n}/W^\varnothing,\bbQ}[u]$ is written as a direct sum of coherent shaves, and that $\cZ_{m,n,\bbQ}$ is a disjoint union of affinoid spaces.
	 Thus by \cite[Lemma 3.5]{EY} the cohomology of $\sE_{\cZ_{m,n}}\otimes\omega^\star_{\cZ_{m,n}/W^\varnothing,\bbQ}[u]$ is computed by global sections.
	 Using this, the assertion follows similarly to Remark \ref{rem: affinoid}.
\end{proof}

The following relations between Hyodo--Kato cohomology and log rigid cohomology over $W^\varnothing$ are deduced immediately from the definition.

\begin{proposition}
	Let $Y$ be a fine log scheme over $k^0$ and $\sE\in\Isoc^\dagger(Y/W^\varnothing)$.
	Then there exists a canonical quasi-isomorphism
		\begin{equation}\label{eq: cone of N}
		R\Gamma_\rig(Y/W^\varnothing,\sE)\xrightarrow{\cong}\Cone\left(R\Gamma_\HK(Y,\sE)\xrightarrow{N}R\Gamma_\HK(Y,\sE)\right)[-1].
		\end{equation}
	If we consider a Frobenius structure $\Phi$ on $\sE$, then \eqref{eq: cone of N} is compatible with Frobenius operators, and we have a canonical quasi-isomrophism
		\begin{align}\label{eq: abs HK holim}
		R\Gamma_\abs(Y,(\sE,\Phi))&\xrightarrow{\cong}\Tot\left[\begin{xy}
		(0,6)*{R\Gamma_\HK(Y,(\sE,\Phi))}="A", (35,6)*{ R\Gamma_\HK(Y,(\sE,\Phi))}="C",
		(0,-6)*{R\Gamma_\HK(Y,(\sE,\Phi))}="B", (35,-6)*{R\Gamma_\HK(Y,(\sE,\Phi))}="D",
		\ar^-{\varphi-1} "B";"A", \ar^-N "A";"C", \ar^-N "B";"D", \ar_-{p\varphi-1} "D";"C"
		\end{xy}\right]
		\end{align}
		where we regard the lower left term as degree $(0,0)$ with respect to the horizontal and vertical differentials.
\end{proposition}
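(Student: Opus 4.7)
The plan is to deduce both statements from a fundamental short exact sequence involving the Kim--Hain monodromy. On any weak formal log scheme $\cZ$ over $\cS$ appearing in a local embedding datum, and for any $(\cE,\nabla)\in\MIC^\dagger(\cZ/W^\varnothing)$, I will first establish the short exact sequence of complexes of sheaves on $\cZ_\bbQ$
$$0\to\cE\otimes\omega^\bullet_{\cZ/W^\varnothing,\bbQ}\xrightarrow{\iota}\cE\otimes\omega^\bullet_{\cZ/W^\varnothing,\bbQ}[u]\xrightarrow{N}\cE\otimes\omega^\bullet_{\cZ/W^\varnothing,\bbQ}[u]\to 0,$$
where $\iota$ is the inclusion as the $u^{[0]}=1$-subcomplex (this is indeed a subcomplex, since $d(\omega u^{[0]})=d\omega\cdot u^{[0]}$) and $N$ is the endomorphism of Definition \ref{def: Kim--Hain}. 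The kernel of $N$ is exactly $\iota(\cE\otimes\omega^\bullet_{\cZ/W^\varnothing,\bbQ})$ by the formula $N(u^{[k]})=u^{[k-1]}$; surjectivity of $N$ is immediate because local sections are finite polynomials in the $u^{[k]}$; and $N\circ d=d\circ N$ follows directly from $du^{[k+1]}=-d\log s\cdot u^{[k]}$. When a Frobenius structure $\Phi$ is present, all three terms carry a $\sigma$-semilinear operator $\varphi$ with $\varphi(u^{[k]})=p^ku^{[k]}$, the map $\iota$ is $\varphi$-equivariant because $\varphi(u^{[0]})=u^{[0]}$, and $N\varphi=p\varphi N$ holds by construction.

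Applying $R\Gamma(\cZ_{m,\bbQ},-)$ termwise to this sequence along a local embedding datum $(Z_\bullet,\cZ_\bullet,i_\bullet,h_\bullet,\theta_\bullet)$ and passing to the associated total cosimplicial complex, I obtain a distinguished triangle
$$R\Gamma_\rig(Y/W^\varnothing,\sE)\xrightarrow{\iota}R\Gamma_\HK(Y,\sE)\xrightarrow{N}R\Gamma_\HK(Y,\sE)\xrightarrow{+1}$$
in $D^b(\Mod_L(N))$, where I use Remark \ref{rem: rig W} to identify the cosimplicial model for $R\Gamma_\rig(Y/W^\varnothing,\sE)$ with the one coming from the chosen local embedding datum in $\OC(Y/\cS)$. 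The resulting isomorphism $R\Gamma_\rig(Y/W^\varnothing,\sE)\xrightarrow{\cong}\Cone(N)[-1]$ is exactly the first claim \eqref{eq: cone of N}; carrying $\varphi$ through the construction automatically upgrades it to $D^b(\Mod_L(\varphi,N))$, yielding compatibility with the Frobenius operators.

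For the second quasi-isomorphism, the square
$$\xymatrix{
R\Gamma_\HK(Y,(\sE,\Phi))\ar[r]^-{\varphi-1}\ar[d]_-N & R\Gamma_\HK(Y,(\sE,\Phi))\ar[d]^-N\\
R\Gamma_\HK(Y,(\sE,\Phi))\ar[r]^-{p\varphi-1} & R\Gamma_\HK(Y,(\sE,\Phi))
}$$
commutes strictly since $N(\varphi-1)=N\varphi-N=p\varphi N-N=(p\varphi-1)N$, and its homotopy limit may be computed as the homotopy fiber of the map between the vertical homotopy fibers induced by the horizontal arrows. By the first part those vertical fibers are both canonically $R\Gamma_\rig(Y/W^\varnothing,(\sE,\Phi))$; on the $u^{[0]}$-subcomplex $\cE\otimes\omega^\bullet\subset\cE\otimes\omega^\bullet[u]$ both $\varphi-1$ and $p\varphi-1$ restrict to $\varphi-1$; so the induced map between the fibers is $\varphi-1$, whose homotopy fiber is $R\Gamma_\abs(Y,(\sE,\Phi))$ by definition. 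The principal obstacle I anticipate is a bookkeeping one: matching the sign conventions in the Kim--Hain differential (already visible in Lemma \ref{lem: total complex}) with a concrete model for the homotopy limit of the square, so that the comparison is realized by strict morphisms of cosimplicial complexes rather than merely up to further quasi-isomorphism, and ensuring that these identifications are simultaneously compatible with the cosimplicial realizations used to define both cohomologies.
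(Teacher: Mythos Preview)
Your proposal is correct and follows essentially the same approach as the paper: both arguments rest on the short exact sequence $0\to\sE_\cZ\otimes\omega^\bullet_{\cZ/W^\varnothing,\bbQ}\to\sE_\cZ\otimes\omega^\bullet_{\cZ/W^\varnothing,\bbQ}[u]\xrightarrow{N}\sE_\cZ\otimes\omega^\bullet_{\cZ/W^\varnothing,\bbQ}[u]\to 0$ together with Remark~\ref{rem: rig W}, after which the second assertion is a formal consequence. Your write-up simply spells out the verification of exactness, the Frobenius compatibility, and the iterated-fiber computation of the homotopy limit that the paper leaves implicit in the phrase ``this implies.''
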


\begin{proof}
	For any object $(Z,\cZ,i,h,\theta)\in\OC(Y/\cS)$, the sequence
		\[0\rightarrow\sE_{\cZ}\otimes\omega^\bullet_{\cZ/W^\varnothing,\bbQ}\rightarrow\sE_{\cZ}\otimes\omega^\bullet_{\cZ/W^\varnothing,\bbQ}[u]\xrightarrow{N}\sE_{\cZ}\otimes\omega^\bullet_{\cZ/W^\varnothing,\bbQ}[u]\rightarrow0\]
	is exact.
	Therefore we obtain \eqref{eq: cone of N} by Remark \ref{rem: rig W}, and this implies \eqref{eq: abs HK holim}.
\end{proof}

For a morphism $f\colon Y'\rightarrow Y$ between fine log schemes over $k^0$ and an object $(\sE,\Phi)\in F\Isoc^\dagger(Y/W^\varnothing)$, the pull-back of log rigid cohomology induces
	\begin{equation}
	f^*\colon R\Gamma_\abs(Y,(\sE,\Phi))\rightarrow R\Gamma_\abs(Y',f^*(\sE,\Phi)).
	\end{equation}
In addition for $\sE\in\Isoc^\dagger(Y/W^\varnothing)$ (resp.\  $(\sE,\Phi)\in F\Isoc^\dagger(Y/W^\varnothing)$) we may define a morphism
	\begin{align}
	f^*\colon R\Gamma_\HK(Y,\sE)\rightarrow R\Gamma_\HK(Y',f^*\sE)&&\left(\text{resp.\  }f^*\colon R\Gamma_\HK(Y,(\sE,\Phi))\rightarrow R\Gamma_\HK(Y',f^*(\sE,\Phi))\right)
	\end{align}
by a similar construction as \eqref{eq: pull back log rig coh F}.

Let $Y$ be a fine log scheme over $k^0$.
For $(\sE,\Phi)\in F\Isoc^\dagger(Y/W^\varnothing)$, by abusing notation, we again denote by $(\sE,\Phi)$ the objects of $F\Isoc^\dagger(Y/\cS)$ and $F\Isoc^\dagger(Y/W^0)$ defined as the pull-back of $(\sE,\Phi)$ by the commutative diagram
	\[\xymatrix{
	& k^0\ar[r]^-{\tau_0}\ar@{=}[d] & W^0\ar[d]\\
	Y\ar[r]\ar[ru]\ar[rd] & k^0\ar[r]^-\tau \ar[d]& \cS\ar[d]\\
	& k^\varnothing\ar[r]^-\iota & W^\varnothing.
	}\]
For any object $(Z,\cZ,i,h,\theta)\in\OC(Y/\cS)$, natural morphisms $\sE_{\cZ}\otimes\omega^\bullet_{\cZ/W^\varnothing,\bbQ}\rightarrow\sE_{\cZ}\otimes\omega^\bullet_{\cZ/\cS,\bbQ}$ extend to $\sE_{\cZ}\otimes\omega^\bullet_{\cZ/W^\varnothing,\bbQ}[u]\rightarrow\sE_{\cZ}\otimes\omega^\bullet_{\cZ/\cS,\bbQ}$ by $u^{[i]}\mapsto 0$ for $i>0$.
Thus we obtain a canonical morphism
	\begin{equation*}\label{eq: rig to HK}
	R\Gamma_\HK(Y,(\sE,\Phi))\rightarrow R\Gamma_\rig(Y/\cS,(\sE,\Phi)),
	\end{equation*}
which makes the following diagram commutative
	\begin{equation}\label{eq: diag hk rig}
	\xymatrix{
	& R\Gamma_\HK(Y,(\sE,\Phi)) \ar[rd]\ar[dd] & \\
	R\Gamma_\rig(Y/W^\varnothing,(\sE,\Phi)) \ar[ru]\ar[rd] & & R\Gamma_\rig(Y/\cS,(\sE,\Phi))\ar[ld]\\
	& R\Gamma_\rig(Y/W^0,(\sE,\Phi)) &.
	}\end{equation}
There also exists a diagram of the same form for any $\sE\in\Isoc^\dagger(Y/W^\varnothing)$ (without Frobenius structure).

Assume that $Y$ is geometrically connected and strictly semistable.
We note that an object $(\sE,\Phi)\in F\Isoc^\dagger(Y/W^\varnothing)^\unip$ is not an iterated extension of $\sO_{Y/W^\varnothing}$ in $F\Isoc^\dagger(Y/W^\varnothing)^\unip$, in general.
However Proposition \ref{prop: unip const} asserts that $(\sE,\Phi)$ is an iterated extension of constant objects.
Since cohomology with constant coefficients is written as the tensor product $H^n_\HK(Y,M^a)=H^n_\HK(Y)\otimes M$, some properties for $(\sE,\Phi)$ are implied from that for $\sO_{Y/W^\varnothing}$.
Precisely, putting
	\[M_i:=H^0_\rig(Y/W^\varnothing,(\sE,\Phi)/\Fil_{i-1}(\sE,\Phi))\]
for $i\geq 0$, we have $\Gr_i^\Fil(\sE,\Phi)=M_i^a$. 
Therefore $\Fil_\bullet$ induces spectral sequences
	\begin{eqnarray}
	\label{eq: ss for Fil}&&E_1^{-i,i+n}=M_i\otimes H^n_\HK(Y) \Rightarrow H^n_\HK(Y,(\sE,\Phi)),\\
	\nonumber&&E_1^{-i,i+n}=M_i\otimes H^n_\rig(Y/\cT) \Rightarrow H^n_\rig(Y/\cT,(\sE,\Phi))\hspace{15pt}\text{for }\cT=W^\varnothing,W^0,\cS,
	\end{eqnarray}
which are compatible with each other via the morphisms in the diagram \eqref{eq: diag hk rig}.
Of course we may consider similar spectral sequences for any $\sE\in\Isoc^\dagger(Y/W^\varnothing)^\unip$ (without Frobenius structure).

\begin{theorem}\label{thm: HK rig comparison}
	Let $Y$ be a strictly semistable log scheme over $k^0$.
	\begin{enumerate}
	\item\label{item: finiteness} If $Y$ is quasi-compact, then the cohomology groups $H^n_\HK(Y,\sE)$, $H^n_\rig(Y/W^\varnothing,\sE)$, $H^n_\rig(Y/W^0,\sE)$, and $H_\abs^n(Y,\sE)$ for any $\sE\in \Isoc^\dagger(Y/W^\varnothing)^\unip$ are finite-dimensional $L$-vector spaces.
		Moreover $H^n_\HK(Y,\sE)$ is an object of $\Mod^\fin_L(N)$, i.e. the monodromy operator is nilpotent.
	\item\label{item: Frob bij} For any $(\sE,\Phi)\in F\Isoc^\dagger(Y/W^\varnothing)^\unip$, the Frobenius operators on $H^n_\HK(Y,(\sE,\Phi))$, $H^n_\rig(Y/W^\varnothing,(\sE,\Phi))$, and $H^n_\rig(Y/W^0,(\sE,\Phi))$ are bijective.
		In particular, $H_\HK^n(Y,(\sE,\Phi))$ is an object of $\Mod^\fin_L(\varphi,N)$ if $Y$ is quasi-compact.
	\item\label{item: HK and rig} For any $\sE\in \Isoc^\dagger(Y/W^\varnothing)^\unip$, the morphism 
			\begin{equation}\label{eq: HK rig qis}
			R\Gamma_\HK(Y,\sE) \rightarrow R\Gamma_\rig(Y/W^0,\sE)
			\end{equation}
		defined by $u^{[i]}\mapsto 0$ for $i>0$ is a quasi-isomorphism.
		When we consider a Frobenius structure $\Phi$ on $\sE$, this is compatible with the Frobenius operators.
	\end{enumerate}
\end{theorem}

\begin{proof}
	That \eqref{eq: HK rig qis} is a quasi-isomorphism immediately follows from \cite[Corollary 3.37]{EY} since $\sE$ is unipotent.
	The compatibility with the Frobenius operators is clear by construction, hence \eqref{item: HK and rig} is proved.
	
	Next we prove \eqref{item: finiteness} and \eqref{item: Frob bij} for the case that $Y$ is quasi-compact.
	Due to Proposition \ref{prop: base change coh}, we may assume that $Y$ is geometrically connected, by taking base change and a connected component.
	Moreover, using the spectral sequence \eqref{eq: ss for Fil}, we may reduce to the case of the trivial coefficients.
	In this case the assertions follow from \cite[Theorem 5.3]{GK3} via \eqref{eq: HK rig qis}, \eqref{eq: cone of N}, and \eqref{eq: abs HK holim}.
	Note that the nilpotence of the monodromy operator for the trivial coefficients follows from the finiteness of the cohomology and the relation $N\varphi=p\varphi N$.
	
	Finally, \eqref{item: Frob bij} for the general case follows from the quasi-compact case by using \eqref{eq: ss for open}.
\end{proof}

\begin{remark}
	We may define monodromy operator on $H^n_\rig(Y/W^0,\sE)$ in the same way as \cite[Definition 3.34]{EY}.
	One can show that this monodromy operator is compatible with that on $H^n_\HK(Y,\sE)$ through \eqref{eq: HK rig qis}, in the same way as \cite[Proposition 3.35]{EY}.
\end{remark}

Next we see that the extension groups of log overconvergent ($F$-)isocrystals can be computed as cohomology groups.

\begin{proposition}\label{prop: Ext abs HK}
	Let $Y$ be a strictly semistable log scheme over $k^0$.
	\begin{enumerate}
	\item For any $\sE,\sE'\in \Isoc^\dagger(Y/W^\varnothing)^\unip$, we have canonical isomorphisms
		\begin{eqnarray}
		\nonumber &&H^{0}_\rig(Y/W^\varnothing,\sheafhom(\sE',\sE))\xrightarrow{\cong}\Hom_{\Isoc^\dagger(Y/W^\varnothing)^\nr}(\sE',\sE)\xrightarrow{\cong}\Hom_{\Isoc^\dagger(Y/W^\varnothing)^\unip}(\sE',\sE),\\
		\label{eq: Ext and coh 1} &&H^1_\rig(Y/W^\varnothing,\sheafhom(\sE',\sE))\xrightarrow{\cong}\Ext^1_{\Isoc^\dagger(Y/W^\varnothing)^\nr}(\sE',\sE)\xrightarrow{\cong}\Ext^1_{\Isoc^\dagger(Y/W^\varnothing)^\unip}(\sE',\sE).
		\end{eqnarray}
	\item For any $(\sE,\Phi),(\sE',\Phi')\in F\Isoc^\dagger(Y/W^\varnothing)^\unip$, we have canonical isomorphisms
		\begin{eqnarray}
		\nonumber H_\abs^0(Y,\sheafhom((\sE',\Phi'),(\sE,\Phi)))&\xrightarrow{\cong}&\Hom_{F\Isoc^\dagger(Y/W^\varnothing)}((\sE',\Phi'),(\sE,\Phi))\\
		\nonumber&\xrightarrow{\cong}&\Hom_{F\Isoc^\dagger(Y/W^\varnothing)^\unip}((\sE',\Phi'),(\sE,\Phi)),\\
		\label{eq: Ext and coh} H_\abs^1(Y,\sheafhom((\sE',\Phi'),(\sE,\Phi)))&\xrightarrow{\cong}&\Ext^1_{F\Isoc^\dagger(Y/W^\varnothing)}((\sE',\Phi'),(\sE,\Phi))\\
		\nonumber&\xrightarrow{\cong}&\Ext^1_{F\Isoc^\dagger(Y/W^\varnothing)^\unip}((\sE',\Phi'),(\sE,\Phi)).
		\end{eqnarray}
	\end{enumerate}
\end{proposition}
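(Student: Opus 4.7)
The plan is to pass to the realization picture of Corollary \ref{cor: descent realization} via a local embedding ($F$-)datum, identify both sides with explicit \v{C}ech--de Rham data, and then upgrade to the Frobenius setting via the cone defining $R\Gamma_\abs$. The first isomorphisms in both (1) and (2)---the inclusions of the unipotent subcategories into the ambient categories---are formal: $\unip$ is a full subcategory, and by Proposition \ref{prop: unipotence} it is closed under extensions of unipotent objects, so $\Ext^1$ does not change upon enlarging the category.

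For the second isomorphisms in (1), I would fix a local embedding datum, form the associated simplicial object $(Z_\bullet,\cZ_\bullet,i_\bullet,h_\bullet,\theta_\bullet)$ in $\OC(Y/\cS)$, and view it as lying over $W^\varnothing$. By realization together with Remark \ref{rem: tensor hom realization}, a morphism $\sE'\to\sE$ in $\Isoc^\dagger(Y/W^\varnothing)^\nr$ is precisely a global horizontal section of $\sheafhom(\sE',\sE)_{\cZ_\bullet}$, which is a $0$-cocycle of the total complex computing $R\Gamma_\rig(Y/W^\varnothing,\sheafhom(\sE',\sE))$, giving the $H^0$ identification. For $\Ext^1$, given an extension $0\to\sE\to\sF\to\sE'\to 0$ I would choose local splittings of the underlying $\cO$-module sequence on each $\cZ_m$ (possible since $\sE'_{\cZ_m}$ is locally free); the failure of a splitting to intertwine the log connections produces a local section of $\sheafhom(\sE'_{\cZ_m},\sE_{\cZ_m})\otimes\omega^1_{\cZ_m/W^\varnothing,\bbQ}$, while the difference of two splittings along cosimplicial transition maps produces a section of $\sheafhom(\sE'_{\cZ_m},\sE_{\cZ_m})$; together they assemble into a \v{C}ech--de Rham $1$-cocycle representing a class in $H^1_\rig(Y/W^\varnothing,\sheafhom(\sE',\sE))$. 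The inverse map reconstructs $\sF$ by gluing $\sE'\oplus\sE$ with connection twisted by the $1$-form component of the cocycle and using the $0$-cochain component as \v{C}ech gluing data; nilpotence of residues is inherited because the residue of $\sF$ becomes block upper triangular with the residues of $\sE',\sE$ on the diagonal.

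For part (2), the Hom statement follows from (1) together with the definition $R\Gamma_\abs=\Cone(\varphi-1)[-1]$: a morphism of $F$-isocrystals is a $\varphi$-invariant morphism of isocrystals, and $H^0_\abs$ is by definition the $\varphi$-fixed part of $H^0_\rig$. For $\Ext^1$ I would invoke the long exact sequence of the cone,
\begin{equation*}
H^0_\rig(\sheafhom)\xrightarrow{\varphi-1}H^0_\rig(\sheafhom)\to H^1_\abs(\sheafhom)\to H^1_\rig(\sheafhom)\xrightarrow{\varphi-1}H^1_\rig(\sheafhom),
\end{equation*}
and match it termwise with the corresponding sequence for extensions in $F\Isoc^\dagger$: an extension yields an underlying extension in $\Isoc^\dagger$ whose class must be $\varphi$-fixed, and two Frobenius lifts on the same underlying extension differ by an element of $\Hom_{\Isoc^\dagger}(\sE',\sE)$ modulo the image of $\varphi-1$, as one reads off from the formulas of Definition \ref{def: tensor and hom}. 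A five-lemma argument then upgrades the isomorphism from (1) to the $H^1_\abs$ identification.

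The main obstacle will be the explicit $\Ext^1$ computation in (1): verifying that the cocycle extracted from an extension is independent of the chosen local splittings (up to coboundary) and of the local embedding datum, that the gluing construction actually yields an integrable log connection on $\sF$ with nilpotent residues, and that the two constructions are mutually inverse on equivalence classes. Once this is settled, the Frobenius refinement in (2) is largely bookkeeping with the cone sequence, though one still has to track how the extra $H^0_\rig$-component in the $H^1_\abs$ representative encodes the Frobenius structure on the glued sheaf $\sF$.
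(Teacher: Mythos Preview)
Your treatment of part (1) and of the first isomorphisms in each display matches the paper's: it too reduces to Proposition~\ref{prop: unipotence} for the ``$\unip$ vs.\ ambient'' step, and for the second isomorphism it extracts a \v{C}ech--de Rham $1$-cocycle from local splittings of an extension exactly as you describe.

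Where you diverge is in part (2). The paper does \emph{not} pass through the cone long exact sequence and a five-lemma. Instead it works directly in the triple complex
\[
\Gamma(\cZ_{\bullet,\bbQ},\sheafhom(\sE'_{\cZ_\bullet},\sE_{\cZ_\bullet})\otimes\omega^\star_{\cZ_\bullet/W^\varnothing,\bbQ})\xrightarrow{\varphi-1}\Gamma(\cZ_{\bullet,\bbQ},\sheafhom(\sE'_{\cZ_\bullet},\sE_{\cZ_\bullet})\otimes\omega^\star_{\cZ_\bullet/W^\varnothing,\bbQ})
\]
(connection, $\varphi-1$, and simplicial degree as the three directions). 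From local splittings $\rho_\lambda\colon\sE_{\cZ_\lambda}\oplus\sE'_{\cZ_\lambda}\cong\sF_{\cZ_\lambda}$ it writes $\nabla''_{\cZ_\lambda}$ and $\Phi''_{\cZ_\lambda}$ in block upper-triangular form, obtaining three families $\alpha_\lambda$ (the connection off-diagonal), $\beta'_\lambda:=\beta_\lambda\circ\Phi'^{-1}_{\cZ_\lambda}$ (the Frobenius off-diagonal), and $\gamma_{\lambda_0,\lambda_1}$ (comparison of splittings). It then verifies by hand the three closedness conditions
\[
\widetilde\nabla(\alpha_\lambda)=0,\quad (\widetilde\varphi-1)(\alpha_\lambda)=\widetilde\nabla(\beta'_\lambda),\quad \widetilde\nabla(\gamma_{\lambda_0,\lambda_1})=\mathrm{pr}_1^*\alpha_{\lambda_1}-\mathrm{pr}_0^*\alpha_{\lambda_0},
\]
together with $(\widetilde\varphi-1)(\gamma)=\mathrm{pr}_1^*\beta'-\mathrm{pr}_0^*\beta'$ and the triple \v Cech identity, so that $(\alpha,\beta',\gamma)$ is a $1$-cocycle in the total complex, and declares bijectivity ``by construction''.

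Your five-lemma route is legitimate, but note that it does not manufacture the middle map $\Ext^1_{F\Isoc^\dagger}\to H^1_\abs$ for you; you still have to \emph{define} that map before you can invoke the five-lemma, and defining it canonically is precisely the explicit cocycle computation above (this is what your last sentence concedes). So in practice you end up doing the paper's computation anyway, and the five-lemma only replaces the paper's one-line ``by construction'' for bijectivity. The paper's choice to foreground the explicit triple-complex cocycle also pays off later: the proof of Proposition~\ref{prop: Ext syn} is literally ``in a similar way to the proof of Proposition~\ref{prop: Ext abs HK}'', where one more component (the Hodge-filtration side) is grafted onto the same template.
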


\begin{proof}
	We only prove \eqref{eq: Ext and coh}, and the other isomorphisms can be proved more easily by the same way.
	The second isomorphism of \eqref{eq: Ext and coh} is given by Proposition \ref{prop: unipotence}.
	Let $(\sG,\wt\Phi):=\sheafhom((\sE',\Phi'),(\sE,\Phi))$.
	Take a local embedding datum $(Z_\lambda,\cZ_\lambda,i_\lambda,h_\lambda,\theta_\lambda)_{\lambda\in\Lambda}$ with Frobenius lifts $\phi_\lambda$ on $\cZ_\lambda$ such that $Z_\lambda$ and $\cZ_\lambda$ are affine for all $\lambda\in\Lambda$.
	For $m\geq 0$, $n\geq 1$ and $\underline{\lambda}\in\Lambda^{m+1}$, we define the weak formal log schemes $\cZ_{\underline{\lambda}}$, $\cZ_{\underline{\lambda},n}=\cZ_{\underline{\lambda}}[\frac{\cJ^n_{\underline{\lambda}}}p]^\dagger$ and $\cZ_{m,n}=\coprod_{\underline{\lambda}\in\Lambda^{m+1}}\cZ_{\underline{\lambda},n}$ as in Remark \ref{rem: affinoid}.
	Then by Remark \ref{rem: affinoid}, $R\Gamma_\abs(Y,(\sG,\Psi))$ is represented by the total complex of the quadruple complex
	\[\xymatrix{
	\prod_{n\in\bbN}\Gamma(\cZ_{\bullet,n,\bbQ},\sG_{\cZ_{\bullet,n}}\otimes\omega^\star_{\cZ_{\bullet,n}/W^\varnothing,\bbQ})\ar[r]^-\eth
	&\prod_{n\in\bbN}\Gamma(\cZ_{\bullet,n,\bbQ},\sG_{\cZ_{\bullet,n}}\otimes\omega^\star_{\cZ_{\bullet,n}/W^\varnothing,\bbQ})\\
	\prod_{n\in\bbN}\Gamma(\cZ_{\bullet,n,\bbQ},\sG_{\cZ_{\bullet,n}}\otimes\omega^\star_{\cZ_{\bullet,n}/W^\varnothing,\bbQ})\ar[r]^-\eth\ar[u]^-{\varphi-1}
	&\prod_{n\in\bbN}\Gamma(\cZ_{\bullet,n,\bbQ},\sG_{\cZ_{\bullet,n}}\otimes\omega^\star_{\cZ_{\bullet,n}/W^\varnothing,\bbQ})\ar[u]^-{\varphi-1}
	}\]
	where the first differential which we denote by $\wt\nabla$ is induced by the log connection of $\sG_{\cZ_{\bullet,n}}\otimes\omega^\star_{\cZ_{\bullet,n}/W^\varnothing,\bbQ}$, the second differential is induced by the face maps of the simplicial weak formal log schemes $\cZ_{\bullet,n}$, the third differential $\eth$ is defined as in Remark \ref{rem: affinoid}, and the fourth differential is $\varphi-1$.
	We consider that the lower left term of the above diagram has degree $0$ for the third and fourth differentials.
	Then a $1$-cocycle of this total complex is a family of elements
	\begin{align*}
	&(\eta_{\lambda,n})_{\lambda,n}\in\prod_{\lambda\in\Lambda}\prod_{n\in\bbN}\Gamma(\cZ_{\lambda,n,\bbQ},\sG_{\cZ_{\lambda,n}}\otimes\omega^1_{\cZ_{\lambda,n}/W^\varnothing,\bbQ})=\prod_{\lambda\in\Lambda}\prod_{n\in\bbN}\Hom(\cE'_{\cZ_{\lambda,n}},\cE_{\cZ_{\lambda,n}}\otimes\omega^1_{\cZ_{\lambda,n}/W^\varnothing,\bbQ}),\\
	&(f_{\underline{\lambda},n})_{\underline{\lambda},n}\in\prod_{\underline{\lambda}\in\Lambda^2}\prod_{n\in\bbN}\Gamma(\cZ_{\underline{\lambda},n,\bbQ},\sG_{\cZ_{\underline{\lambda},n}})=\prod_{\underline{\lambda}\in\Lambda^2}\prod_{n\in\bbN}\Hom(\cE'_{\cZ_{\underline{\lambda},n}},\cE_{\cZ_{\underline{\lambda},n}}),\\
	&(g_{\lambda,n})_{\lambda,n}\in\prod_{\lambda\in\Lambda}\prod_{n\in\bbN}\Gamma(\cZ_{\lambda,n,\bbQ},\sG_{\cZ_{\lambda,n}})=\prod_{\lambda\in\Lambda}\prod_{n\in\bbN}\Hom(\cE'_{\cZ_{\lambda,n}},\cE_{\cZ_{\lambda,n}}),\\
	&(h_{\lambda,n})_{\lambda,n}\in\prod_{\lambda\in\Lambda}\prod_{n\in\bbN}\Gamma(\cZ_{\lambda,n,\bbQ},\sG_{\cZ_{\lambda,n}})=\prod_{\lambda\in\Lambda}\prod_{n\in\bbN}\Hom(\cE'_{\cZ_{\lambda,n}},\cE_{\cZ_{\lambda,n}}),
	\end{align*}
	such that
	\begin{align}
	\label{eq: E1}&\wt\nabla(\eta_{\lambda,n})=0
	&&(\forall \lambda\in\Lambda, \forall n\in\bbN),\\
	\label{eq: E2}&\mathrm{pr}^\ast_1\eta_{\lambda_1,n}-\mathrm{pr}_0^\ast\eta_{\lambda_0,n}=\wt\nabla(f_{(\lambda_0,\lambda_1),n})
	&&(\forall(\lambda_0,\lambda_1)\in\Lambda^2, \forall n\in\bbN),\\
	\label{eq: E3}&\eta_{\lambda,n}-\eta_{\lambda,n+1}|_{\cZ_{\lambda,n,\bbQ}}=\wt\nabla(g_{\lambda,n})
	&&(\forall\lambda\in\Lambda, \forall n\in\bbN),\\
	\label{eq: E4}&(\varphi-1)(\eta_{\lambda,n})=\wt\nabla(h_{\lambda,n})
	&&(\forall\lambda\in\Lambda,\forall n\in\bbN),\\
	\label{eq: E5}&\mathrm{pr}_{1,2}^\ast f_{(\lambda_1,\lambda_2),n}-\mathrm{pr}_{0,2}^\ast f_{(\lambda_0,\lambda_2),n}+\mathrm{pr}_{0,1}^\ast f_{(\lambda_0,\lambda_1),n}=0
	&&(\forall(\lambda_0,\lambda_1,\lambda_2)\in\Lambda^3,\forall n\in\bbN),\\
	\label{eq: E6}&f_{(\lambda_0,\lambda_1),n}-f_{(\lambda_0,\lambda_1),n+1}|_{\cZ_{(\lambda_0,\lambda_1),n,\bbQ}}=\mathrm{pr}_1^\ast g_{\lambda_1,n}-\mathrm{pr}_0^\ast g_{\lambda_0,n}
	&&(\forall(\lambda_0,\lambda_1)\in\Lambda^2, \forall n\in\bbN),\\
	\label{eq: E7}&(\varphi-1)(f_{(\lambda_0,\lambda_1),n})=\mathrm{pr}_1^\ast h_{\lambda_1,n}-\mathrm{pr}_0^\ast h_{\lambda_0,n}
	&&(\forall(\lambda_0,\lambda_1)\in\Lambda^2, \forall n\in\bbN),\\
	\label{eq: E8}&(\varphi-1)(g_{\lambda,n})=h_{\lambda,n}-h_{\lambda,n+1}|_{\cZ_{\lambda,n,\bbQ}}
	&&(\forall\lambda\in\Lambda, \forall n\in\bbN),
	\end{align}
	where $\mathrm{pr}_l\colon \cZ_{(\lambda_0,\lambda_1),n,\bbQ}\rightarrow \cZ_{\lambda_l,n,\bbQ}$ for $l=0,1$ and $\mathrm{pr}_{k,l}\colon \cZ_{(\lambda_0,\lambda_1,\lambda_2),n,\bbQ}\rightarrow\cZ_{(\lambda_k,\lambda_l),n,\bbQ}$ for $0\leq k<l\leq 2$ denote the natural projections.
	
	For $\lambda\in\Lambda$ and $n\in\bbN$, let $\cF_{\lambda,n}:=\sE_{\cZ_{\lambda,n}}\oplus\sE'_{\cZ_{\lambda,n}}$ and define a log connection $\nabla_{\lambda,n}$ and a Frobenius structure $\Psi_{\lambda,n}$ on $\cF_{\lambda,n}$ by
	\begin{align*}
	&\nabla_{\lambda,n}(\alpha,\beta):=(\nabla(\alpha)+\eta_{\lambda,n}(\beta),\nabla'(\beta)),\\
	&\Psi_{\lambda,n}(\alpha,\beta):=(\Phi(\alpha)+h_{\lambda,n}\circ\Phi'(\beta),\Phi'(\beta))
	\end{align*}
	where $\nabla$ and $\nabla'$ denote the log connections of $\sE_{\cZ_{\lambda,n}}$ and $\sE'_{\cZ_{\lambda,n}}$, respectively.
	Then \eqref{eq: E1} shows that $\nabla_{\lambda,n}$ is integrable and \eqref{eq: E4} shows that $\Psi_{\lambda,n}$ is compatible with $\nabla_{\lambda,n}$.
	We define an isomorphism
	\begin{equation}\label{eq: Ext Weierstrass}\varsigma_{\lambda,n}\colon \cF_{\lambda,n}=\sE_{\cZ_{\lambda,n}}\oplus\sE'_{\cZ_{\lambda,n}}\xrightarrow{\cong}\sE_{\cZ_{\lambda,n}}\oplus\sE'_{\cZ_{\lambda,n}}=\cF_{\lambda,n+1}|_{\cZ_{\lambda,n,\bbQ}}\end{equation}
	by $\varsigma_{\lambda,n}(\alpha,\beta):=(\alpha+g_{\lambda,n}(\beta),\beta)$.
	This is compatible with the log connections and Frobenius structures by \eqref{eq: E3} and \eqref{eq: E8}, respctively.
	Consequently $(\cF_{\lambda,n},\nabla_{\lambda,n},\Psi_{\lambda,n})$ for all $n$ glue with each other and define an extension of $\sE'_{\cZ_\lambda}$ by $\sE_{\cZ_\lambda}$ in $F\MIC^\dagger((\cZ_\lambda,\phi_\lambda)/(W^\varnothing,\sigma))$, which we denote by $(\cF_\lambda,\nabla_\lambda,\Psi_\lambda)$.
	
	For $(\lambda_0,\lambda_1)\in\Lambda^2$, we define an isomorphism
	\[\rho_{\lambda_0,\lambda_1,n}\colon \mathrm{pr}_1^\ast\cF_{\lambda_1,n}\cong\sE_{\cZ_{(\lambda_0,\lambda_1),n}}\oplus\sE'_{\cZ_{(\lambda_0,\lambda_1),n}}\xrightarrow{\cong}\sE_{\cZ_{(\lambda_0,\lambda_1),n}}\oplus\sE'_{\cZ_{(\lambda_0,\lambda_1),n}}\cong\mathrm{pr}_0^\ast\cF_{\lambda_0,n}\]
	by $\rho_{\lambda_0,\lambda_1,n}(\alpha,\beta):=(\alpha+f_{(\lambda_0,\lambda_1),n}(\beta),\beta)$.
	This is compatible with the log connections, Frobenius structures and the isomorphisms \eqref{eq: Ext Weierstrass} by \eqref{eq: E2}, \eqref{eq: E7}, and \eqref{eq: E6}, respectively.
	Moreover we have
	\[\mathrm{pr}_{0,2}^\ast\rho_{\lambda_0,\lambda_2,n}=\mathrm{pr}_{0,1}^\ast\rho_{\lambda_0,\lambda_1,n}\circ\mathrm{pr}_{1,2}^\ast\rho_{\lambda_1,\lambda_2,n}\]
	by \eqref{eq: E5}.
	Varying $n$, we obtain isomorphisms
	\[\rho_{\lambda_0,\lambda_1}\colon\mathrm{pr}_1^\ast(\cF_{\lambda_1},\nabla_{\lambda_1},\Psi_{\lambda_1})\xrightarrow{\cong}\mathrm{pr}_0^\ast(\cF_{\lambda_0},\nabla_{\lambda_0},\Psi_{\lambda_0})\]
	for all $(\lambda_0,\lambda_1)\in\Lambda^2$, satisfying
	\[\mathrm{pr}_{0,2}^\ast\rho_{\lambda_0,\lambda_2}=\mathrm{pr}_{0,1}^\ast\rho_{\lambda_0,\lambda_1}\circ\mathrm{pr}_{1,2}^\ast\rho_{\lambda_1,\lambda_2}\]
	for any $(\lambda_0,\lambda_1,\lambda_2)\in\Lambda^3$.
	
	Thus the family $\{(\cF_\lambda,\nabla_\lambda,\Phi_\lambda),\rho_{\lambda_0,\lambda_1}\}$ defines an object of $F\MIC^\dagger((\cZ_\lambda,\phi_\lambda)_{\lambda\in\Lambda}/(W^\varnothing,\sigma))$ (see Definition \ref{def: MIC descent}), and hence an object $(\sF,\Psi)$ of $F\Isoc^\dagger(Y/W^\varnothing)$ via the equivalence of Corollary \ref{cor: descent realization}.
	By construction $(\sF,\Phi)$ is an extension of $(\sE',\Phi')$ by $(\sE,\Phi)$.
	This defines a map
	\[H^1_\abs(Y,\sheafhom((\sE',\Phi'),(\sE,\Phi)))\rightarrow \Ext^1_{F\Isoc^\dagger(Y/W^\varnothing)}((\sE',\Phi'),(\sE,\Phi)).\]
	One can follow this construction backwards and define the inverse map.
	Thus we obtain the first isomorphism of \eqref{eq: Ext and coh}.
\end{proof}

According to Proposition \ref{prop: Ext abs HK} and the philosophy of six operations, the following proposition would be thought of as the adjunction between the inverse image and the direct image.

\begin{proposition}\label{prop: Leray}
	Let $Y$ be a strictly semistable log scheme over $k^0$.
	\begin{enumerate}
	\item Let $\sE\in \Isoc^\dagger(Y/W^\varnothing)^\unip$.
		There exists a quasi-isomorphism
		\begin{equation}\label{eq: RHom 1}
		R\Hom_{\Mod(N)}(L,R\Gamma_\HK(Y,\sE))\cong R\Gamma_\rig(Y/W^\varnothing,\sE)
		\end{equation}
	and a spectral sequence
		\begin{equation}\label{eq: Leray 1}
		E_2^{i,j}=\Ext^i_{\Mod_L(N)}(L,H^j_\HK(Y,\sE))\Rightarrow H^{i+j}_\rig(Y/W^\varnothing,\sE).
		\end{equation}
	\item Let $(\sE,\Phi)\in F\Isoc^\dagger(Y/W^\varnothing)^\unip$.
		There exists a quasi-isomorphism
		\begin{equation}\label{eq: RHom}
		R\Hom_{\Mod(\varphi,N)}(L,R\Gamma_\HK(Y,(\sE,\Phi)))\cong R\Gamma_\abs(Y,(\sE,\Phi))
		\end{equation}
	and a spectral sequence
		\begin{equation}\label{eq: Leray}
		E_2^{i,j}=\Ext^i_{\Mod_L(\varphi,N)}(L,H^j_\HK(Y,(\sE,\Phi)))\Rightarrow H_\abs^{i+j}(Y,(\sE,\Phi)).
		\end{equation}
	\end{enumerate}
\end{proposition}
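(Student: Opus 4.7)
The approach is to realize the quasi-isomorphisms \eqref{eq: RHom 1} and \eqref{eq: RHom} as consequences of explicit projective resolutions of the trivial module $L$ in the categories $\Mod_L(N)$ and $\Mod_L(\varphi,N)$. Computing $R\Hom(L,-)$ against these resolutions reproduces on the nose the complexes identified with $R\Gamma_\rig(Y/W^\varnothing,\sE)$ in \eqref{eq: cone of N} and with $R\Gamma_\abs(Y,(\sE,\Phi))$ in \eqref{eq: abs HK holim}. The spectral sequences then follow from the standard hyperderived spectral sequence for $R\Hom(L,-)$ applied to the bounded complexes $R\Gamma_\HK(Y,\sE)$ and $R\Gamma_\HK(Y,(\sE,\Phi))$.

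For (1), identify $\Mod_L(N)$ with the category of left modules over the commutative ring $R_N:=L[N]$. The augmentation $R_N\to L$ has the length-one Koszul resolution
\[
0\to R_N\xrightarrow{\cdot N}R_N\to L\to 0.
\]
Applying $\Hom_{R_N}(-,M)$ term by term yields the two-term complex $[M\xrightarrow{N}M]$ in degrees $[0,1]$, which is $\Cone(M\xrightarrow{N}M)[-1]$. Taking $M=R\Gamma_\HK(Y,\sE)$ and invoking \eqref{eq: cone of N} gives \eqref{eq: RHom 1}.

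For (2), identify $\Mod_L(\varphi,N)$ with left modules over the twisted ring $R_{\varphi,N}$ generated over $L$ by $\varphi$ (subject to $\varphi\cdot a=\sigma(a)\varphi$) and $N$ (commuting with $L$), modulo the relation $N\varphi-p\varphi N$. I claim $L$ has the length-two free resolution
\[
0\to R_{\varphi,N}\xrightarrow{g}R_{\varphi,N}^{\oplus 2}\xrightarrow{f}R_{\varphi,N}\to L\to 0,
\]
where $f(e_1)=\varphi-1$, $f(e_2)=N$, and $g(1)=N\cdot e_1-(p\varphi-1)\cdot e_2$. The relation $f\circ g=0$ is the commutation $N(\varphi-1)=(p\varphi-1)N$. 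Applying $\Hom_{R_{\varphi,N}}(-,M)$ and identifying $\Hom(R_{\varphi,N}^{\oplus k},M)=M^{\oplus k}$ by evaluation on generators yields the three-term complex
\[
M\xrightarrow{m\mapsto((\varphi-1)m,Nm)}M\oplus M\xrightarrow{(m_1,m_2)\mapsto Nm_1-(p\varphi-1)m_2}M
\]
in degrees $[0,2]$, which (up to the usual sign convention) is the total complex of the homotopy limit square in \eqref{eq: abs HK holim}. Applied to $M=R\Gamma_\HK(Y,(\sE,\Phi))$ this gives \eqref{eq: RHom}.

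The spectral sequences \eqref{eq: Leray 1} and \eqref{eq: Leray} are the second hyperderived spectral sequences for $R\Hom_{\Mod_L(N)}(L,-)$ and $R\Hom_{\Mod_L(\varphi,N)}(L,-)$, whose $E_2$-pages are a priori $\Ext^i_{\Mod_L(N)}(L,H^j_\HK(Y,\sE))$ and $\Ext^i_{\Mod_L(\varphi,N)}(L,H^j_\HK(Y,(\sE,\Phi)))$. By Theorem \ref{thm: HK rig comparison}, the cohomology groups $H^j_\HK$ lie in $\Mod_L^\fin(N)$ (resp.\ $\Mod_L^\fin(\varphi,N)$). It remains to identify these ambient Ext-groups with Ext in the finite-dimensional subcategories. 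Since the resolutions above have lengths $1$ and $2$ respectively, only $\Ext^0,\Ext^1$ (resp.\ $\Ext^0,\Ext^1,\Ext^2$) need comparison; in each case the argument is that an extension (or 2-extension) of finite-dimensional objects is automatically finite-dimensional, that $N$ on any such extension remains nilpotent (if $N^r$ kills the sub and $N^s$ kills the quotient, then $N^{r+s}$ kills the total), and that bijectivity of the semilinear Frobenius $\varphi$ on finite-dimensional middle terms follows from the snake lemma. Hence $\Mod_L^\fin(N)\subset\Mod_L(N)$ and $\Mod_L^\fin(\varphi,N)\subset\Mod_L(\varphi,N)$ are extension-closed, and the displayed $E_2$-terms agree with the Yoneda-Ext groups computed inside the finite-dimensional subcategories.

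The main technical obstacle is verifying exactness of the middle term of the $(\varphi,N)$-resolution, i.e.\ that $\ker(f)$ is generated as a left $R_{\varphi,N}$-module by the single element $N\cdot e_1-(p\varphi-1)\cdot e_2$. This I would establish by a direct computation using the PBW-type $L$-basis $\{\varphi^iN^j\}_{i,j\geq 0}$ of $R_{\varphi,N}$: writing an arbitrary relation $a(\varphi-1)+bN=0$ in coordinates and rewriting $N^j\varphi=p^j\varphi N^j$, the vanishing of the $j=0$ coefficients forces $a\in R_{\varphi,N}\cdot N$, and the remaining coefficients then determine $b$ uniquely as $-c(p\varphi-1)$ with $a=cN$.
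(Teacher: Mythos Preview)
Your approach is essentially the paper's: the paper simply invokes \cite[Lemma~2.5]{DN}, whose content is exactly the explicit Koszul-type resolution of $L$ that you write down, identifying $R\Hom(L,-)$ with the cone \eqref{eq: cone of N} and the homotopy limit \eqref{eq: abs HK holim}. Your treatment has the virtue of being self-contained.

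One claim, however, is false as stated. You write that ``an extension (or 2-extension) of finite-dimensional objects is automatically finite-dimensional''; this fails for 2-extensions, since the middle terms $E_1,E_2$ of a Yoneda 2-extension $0\to A\to E_1\to E_2\to B\to 0$ can be infinite-dimensional even when $A$ and $B$ are not. Extension-closure therefore settles only the $\Ext^0$ and $\Ext^1$ comparisons. For $\Ext^2$ one must argue differently: having already computed $\Ext^2_{\Mod_L(\varphi,N)}(L,M)$ as the cokernel of $(N,-(p\varphi-1))\colon M\oplus M\to M$, one checks that every class is represented by an explicit 2-extension whose middle terms lie in $\Mod_L^\fin(\varphi,N)$; this is how \cite{DN} proceeds, and it is what the paper's Remark following this proposition alludes to. With that correction your argument goes through.
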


\begin{proof}
	In the proof of \cite[Lemma 2.5]{DN}, it has proved that $R\Hom$ in $ D^b(\Mod_L(\varphi,N))$ can be computed by a homotopy limit like the right hand side of \eqref{eq: abs HK holim}.
	Thus we have \eqref{eq: RHom}, and this implies \eqref{eq: Leray}.
	One can see \eqref{eq: RHom 1} and \eqref{eq: Leray 1} in a similar way.
\end{proof}

\begin{remark}\label{rem: dg eq}
	In fact, the full subcategory of $ D^b(\Mod_L(\varphi,N))$ consisting of objects whose cohomology groups lie in $\Mod^\fin_L(\varphi,N)$ is canonically equivalent to $ D^b(\Mod^\fin_L(\varphi,N))$ \cite[Proposition 3.11]{EY2}.
	Thus, if $Y$ is quasi-compact, then $R\Gamma_\HK(Y,\sE)$ and $R\Gamma_\HK(Y,(\sE,\Phi))$ for unipotent coefficients are regarded as objects of $D^b(\Mod^\fin_L(N))$ and $D^b(\Mod^\fin_L(\varphi,N))$, respectively.
\end{remark}

\begin{corollary}\label{cor: unip isoc extend}
	Let $Y$ be a strictly semistable log scheme over $k^0$, $D$ the horizontal divisor of $Y$, and $U:=Y\setminus D\subset Y$.
	Then the canonical functors
		\begin{eqnarray}
		\label{eq: compactification}&&\Isoc^\dagger(Y/W^\varnothing)^\unip\rightarrow \Isoc^\dagger(U/W^\varnothing)^\unip,\\
		\nonumber&&F\Isoc^\dagger(Y/W^\varnothing)^\unip\rightarrow F\Isoc^\dagger(U/W^\varnothing)^\unip
		\end{eqnarray}
	are equivalences of categories.
\end{corollary}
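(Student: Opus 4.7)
The plan is to prove the equivalences using the Hom--Ext to cohomology dictionary of Proposition~\ref{prop: Ext abs HK}. Since both categories in \eqref{eq: compactification} are abelian (Proposition~\ref{prop: unipotence}), and both $\Hom$ and $\Ext^1$ between unipotent objects are computed by $H^0$ and $H^1$ of log rigid cohomology (resp.\ absolute Hyodo--Kato cohomology) of the internal $\sheafhom$ (which is again unipotent by Proposition~\ref{prop: unipotence}), it suffices to establish that the natural restriction maps
\[
H^i_\rig(Y/W^\varnothing, \sF) \longrightarrow H^i_\rig(U/W^\varnothing, \sF|_U)
\]
are isomorphisms for $i=0,1$ and every $\sF \in \Isoc^\dagger(Y/W^\varnothing)^\unip$. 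Essential surjectivity then follows by induction on the unipotent filtration: given $\sE_U \in \Isoc^\dagger(U/W^\varnothing)^\unip$, write it as an extension $0 \to \sF_U \to \sE_U \to \sO_U \to 0$, lift $\sF_U$ inductively, and lift the extension class uniquely using the isomorphism on $\Ext^1$. The $F$-isocrystal version is then automatic, since all constructions are functorial in the Frobenius structure.

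By the natural filtration of Proposition-Definition~\ref{prop: filtration} and the five-lemma applied to the long exact sequences in log rigid cohomology, the cohomological comparison reduces by induction on the filtration length of $\sF$ to the single assertion
\[
H^i_\rig(Y/W^\varnothing) \xrightarrow{\cong} H^i_\rig(U/W^\varnothing) \qquad \text{for all } i \geq 0.
\]

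To prove this, choose a local embedding datum $(Z_\lambda, \cZ_\lambda, i_\lambda, h_\lambda, \theta_\lambda)_{\lambda \in \Lambda}$ for $Y$ with each $\cZ_\lambda$ strictly semistable, and let $\cU_\lambda \subset \cZ_\lambda$ denote the open weak formal log subscheme obtained by inverting the horizontal coordinates $y_1, \ldots, y_m$. The family indexed by $(Z_\lambda \cap U, \cU_\lambda, \ldots)$ forms a local embedding datum for $U$, and since $y_j$ is invertible on $\cU_\lambda$ one has $\omega^\bullet_{\cZ_\lambda/W^\varnothing, \bbQ}|_{\cU_{\lambda, \bbQ}} = \omega^\bullet_{\cU_\lambda/W^\varnothing, \bbQ}$. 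The required comparison then becomes the local quasi-isomorphism
\[
R\Gamma(\cZ_{\lambda, \bbQ}, \omega^\bullet_{\cZ_\lambda/W^\varnothing, \bbQ}) \xrightarrow{\cong} R\Gamma(\cU_{\lambda, \bbQ}, \omega^\bullet_{\cU_\lambda/W^\varnothing, \bbQ}),
\]
which is the dagger-space analogue of Deligne's Poincar\'e lemma $R\Gamma(X, \Omega^\bullet_X(\log D)) \simeq R\Gamma(X \setminus D, \Omega^\bullet)$. By the local product structure provided by strict semistability, this reduces to the one-variable case of a punctured dagger disc embedded in a dagger disc with a logarithmic pole at the origin, which can be verified by explicit computation.

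The main obstacle is this last geometric step: establishing the log Poincar\'e lemma for dagger spaces along the horizontal divisor, where one must carefully handle the overconvergence conditions in a neighbourhood of the boundary. All other steps are formal consequences of the Hom/Ext--cohomology dictionary and of the five-lemma applied to the unipotent filtration.
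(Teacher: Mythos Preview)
Your overall architecture is the same as the paper's: use the Hom/Ext--cohomology dictionary (Proposition~\ref{prop: Ext abs HK}), reduce to the comparison for the trivial coefficient via the unipotent/natural filtration, and then run induction on rank for essential surjectivity. The paper, however, does not attempt to prove the trivial-coefficient comparison from scratch: it simply invokes \cite[Lemma 3.15]{EY} for the isomorphism $H^n_\HK(Y)\xrightarrow{\cong}H^n_\HK(U)$, then passes to coefficients via the spectral sequence~\eqref{eq: ss for Fil} and to $H^n_\abs$ via Proposition~\ref{prop: Leray}. Since \eqref{eq: cone of N} gives a long exact sequence relating $H^n_\rig(-/W^\varnothing)$ and $H^n_\HK(-)$, the statement you are aiming for, $H^n_\rig(Y/W^\varnothing)\cong H^n_\rig(U/W^\varnothing)$, is an immediate consequence by the five-lemma. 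So the ``main obstacle'' you identify is already available in the paper's toolkit, and there is no need to reprove a dagger-space logarithmic Poincar\'e lemma.

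Your direct approach to that lemma is plausible in outline, but as written it is a genuine gap rather than a routine computation: the weak formal schemes $\cZ_\lambda$ live over $\cS=\Spwf W\llbracket s\rrbracket$ and are not $p$-adic, so their generic fibers are unions of growing dagger spaces and the ``one-variable punctured disc'' model is not literally what you get; moreover the reduction by ``local product structure'' has to be made precise at each simplicial level $\cZ_m$, not just for the $\cZ_\lambda$. None of this is insurmountable, but it is exactly the analytic work packaged into \cite[Lemma 3.15]{EY}. One minor further point: your claim that the $F$-isocrystal case is ``automatic by functoriality'' is correct but deserves a sentence---once the isocrystal equivalence is known, full faithfulness lets you extend $\Phi_U$ uniquely to a morphism $\sigma^*\sE\to\sE$ and shows it is an isomorphism (extend $\Phi_U^{-1}$ as well and use uniqueness). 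The paper instead works with $F$-isocrystals throughout, using that the natural filtration is $\Phi$-stable with constant graded pieces (Proposition~\ref{prop: unip const}).
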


\begin{proof}
	We prove the statement for log overconvergent $F$-isocrystals, and another follows in the same way.
	We first note that the homomorphism $H^n_\HK(Y)\rightarrow H^n_\HK(U)$ is an isomorphism for any $n\geq 0$ by \cite[Proposition 3.32]{EY}.
	Hence by using the spectral sequence \eqref{eq: ss for Fil} we see that
		\[H^n_\HK(Y,\sheafhom((\sE',\Phi'),(\sE,\Phi)))\rightarrow H^n_\HK(U,\sheafhom((\sE',\Phi'),(\sE,\Phi)))\]
	is also an isomorphism for any $(\sE,\Phi),(\sE',\Phi')\in F\Isoc^\dagger(Y/W^\varnothing)^\unip$.
	Furthermore by Proposition \ref{prop: Leray} the homomorphism
		\begin{equation}\label{eq: abs hk isom}
		H_\abs^n(Y,\sheafhom((\sE',\Phi'),(\sE,\Phi)))\rightarrow H_\abs^n(U,\sheafhom((\sE',\Phi'),(\sE,\Phi)))
		\end{equation}
	is an isomorphism.
	In the case $n=0$, this shows that \eqref{eq: compactification} is fully faithful by Proposition \ref{prop: Ext abs HK}.
	
	Next we show that any object $(\sE,\Phi)\in F\Isoc^\dagger(U/W^\varnothing)^\unip$ uniquely extends to an object of $F\Isoc^\dagger(Y/W^\varnothing)^\unip$ by the induction on the rank of $\sE$.
	The case of rank one is clear, since $\sE$ is constant.
	In general, by Proposition \ref{prop: unip const} there exists a short exact sequence
		\[0\rightarrow(\sF,\Psi)\rightarrow(\sE,\Phi)\rightarrow(\sF',\Psi')\rightarrow 0,\]
	where $(\sF,\Psi)$ is unipotent and $(\sF',\Psi')$ is non-zero and constant.
	Then $(\sF',\Psi')$ and by the induction hypothesis $(\sF,\Psi)$ uniquely extend to object of  $F\Isoc^\dagger(Y/W^\varnothing)^\unip$, which we denote again by $(\sF',\Psi')$ and $(\sF,\Psi)$, respectively.
	It suffices to prove that the canonical map
		\[\Ext^1_{F\Isoc^\dagger(Y/W^\varnothing)}((\sF',\Psi'),(\sF,\Psi))\rightarrow\Ext^1_{F\Isoc^\dagger(U/W^\varnothing)}((\sF',\Psi'),(\sF,\Psi))\]
	is an isomorphism.
	This holds again by Proposition \ref{prop: Ext abs HK} and the isomorphism \eqref{eq: abs hk isom} above.
\end{proof}

We note that the log connections associated to pseudo-constant log overconvergent ($F$-)isocrystals are nontrivial in general.
However we may prove that the cohomology with pseudo-constant coefficients is written as a tensor product as well as that with constant coefficients.

\begin{proposition}\label{prop: pseudo-const}
	Let $Y$ be a stritly semistable log scheme over $k^0$.
	For any $M\in\Mod^\fin_L(\varphi,N)$ and $i\geq 0$, we have a canonical isomorphism in $\Mod^\fin_L(\varphi,N)$
		\[H^i_\HK(Y,M^a)\cong M\otimes H^i_\HK(Y).\]
\end{proposition}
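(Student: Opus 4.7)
The proof plan is to construct an explicit isomorphism of complexes that intertwines both the monodromy and Frobenius operators. Choose a local embedding $F$-datum $(Z_\lambda,\cZ_\lambda,i_\lambda,h_\lambda,\theta_\lambda,\phi_\lambda)_{\lambda\in\Lambda}$ for $Y$ over $(k^0,\cS,\tau,\sigma)$, and let $(Z_\bullet,\cZ_\bullet,i_\bullet,h_\bullet,\theta_\bullet,\phi_\bullet)$ be the associated simplicial $F$-widening; each $\cZ_m$ comes with a structure morphism $\gamma_m\colon\cZ_m\to\cS$ compatible with Frobenius. By Proposition \ref{prop: realization pseudo-const}, the realization of $M^a$ at $(Z_m,\cZ_m,i_m,h_m,\theta_m)$ becomes canonically $M\otimes_L\cO_{\cZ_{m,\bbQ}}$ with connection $\nabla(m\otimes 1)=N(m)\otimes d\log s$ and Frobenius $\varphi(m\otimes f)=\varphi_M(m)\otimes\phi_m^*(f)$. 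Consequently $R\Gamma_\HK(Y,M^a)$ is represented by the cosimplicial complex $R\Gamma(\cZ_{\bullet,\bbQ},M\otimes_L\cO_{\cZ_\bullet,\bbQ}\otimes\omega^\star_{\cZ_\bullet/W^\varnothing,\bbQ}[u])$ with the Kim--Hain differential built from $\nabla$, while $M\otimes_L R\Gamma_\HK(Y)$ is represented by $R\Gamma(\cZ_{\bullet,\bbQ},M\otimes_L\omega^\star_{\cZ_\bullet/W^\varnothing,\bbQ}[u])$ with trivial connection on the $M$-factor.

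On each $\cZ_m$, define a ``twist by $\exp(uN)$'' morphism of graded sheaves
\[
\Theta_m\colon M\otimes_L\omega^\bullet_{\cZ_m/W^\varnothing,\bbQ}[u]\longrightarrow M\otimes_L\cO_{\cZ_{m,\bbQ}}\otimes_{\cO_{\cZ_{m,\bbQ}}}\omega^\bullet_{\cZ_m/W^\varnothing,\bbQ}[u]
\]
by $m\otimes\beta\mapsto\sum_{j\geq 0}N^j(m)\otimes u^{[j]}\wedge\beta$. The nilpotence of $N$ (automatic for $M\in\Mod^\fin_L(\varphi,N)$) makes the sum finite, and the evident inverse $m\otimes 1\otimes\beta\mapsto\sum_j(-1)^jN^j(m)\otimes u^{[j]}\wedge\beta$ shows $\Theta_m$ is a bijection. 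The main computation is that $\Theta_m$ is a chain map: by the Leibniz rule this reduces to checking it on pure divided-power sections $m\otimes u^{[i]}$, where both sides are to be shown equal to $-\sum_{k\geq 0}\binom{i+k-1}{k}N^k(m)\otimes d\log s\cdot u^{[i+k-1]}$, after using the identity $\binom{i+\ell}{\ell}-\binom{i+\ell+1}{\ell+1}=-\binom{i+\ell}{\ell+1}$. Since $\Theta_m$ only involves $u^{[j]}$, $N$, and the multiplicative structure of the Kim--Hain algebra -- all of which are preserved by pullback -- the collection $\{\Theta_m\}_m$ respects the simplicial structure, and thus induces an isomorphism of cosimplicial complexes $M\otimes_L R\Gamma_\HK(Y)\xrightarrow{\cong}R\Gamma_\HK(Y,M^a)$.

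It remains to check compatibility with $N$ and $\varphi$. For the monodromy, the tensor-product rule gives $N(m\otimes\alpha u^{[i]})=N_M(m)\otimes\alpha u^{[i]}+m\otimes\alpha u^{[i-1]}$ on the left side, while on the right $N^{KH}$ is $\cO$-linear and sends $u^{[i]}\mapsto u^{[i-1]}$; comparing $\Theta_m\circ N$ with $N\circ\Theta_m$ reduces to Pascal's identity $\binom{i+k-1}{k-1}+\binom{i+k-1}{k}=\binom{i+k}{k}$. For Frobenius, the relation $N\varphi=p\varphi N$ iterates to $N^j\varphi=p^j\varphi N^j$, and combined with $\varphi(u^{[i]})=p^iu^{[i]}$, each term in $\varphi\circ\Theta_m(m\otimes\alpha u^{[i]})$ matches the corresponding term in $\Theta_m\circ\varphi(m\otimes\alpha u^{[i]})=\Theta_m(\varphi_M(m)\otimes\phi_m^*(\alpha)p^iu^{[i]})$ after pulling the factor $p^j$ from $N^j\varphi$ together with $p^i$ to produce $p^{i+j}u^{[i+j]}$. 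Taking cohomology yields the required canonical isomorphism in $\Mod^\fin_L(\varphi,N)$; canonicity (independence of the local embedding $F$-datum) follows from the functoriality of $\Theta_m$ together with the standard comparison argument as in Proposition \ref{prop: indep HK}. The main obstacle is the chain-map verification, which is a bookkeeping calculation with binomial coefficients; once this telescoping is in hand, the $N$- and $\varphi$-compatibilities are parallel routine checks.
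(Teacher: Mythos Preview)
Your proof is correct and, at its core, produces the very same cocycle as the paper: if $\alpha$ is represented by $(\alpha_{q,r}u^{[q]})_{q,r}$, then your $\Theta$ sends $m\otimes\alpha$ to the class of $(\beta_{q,r}u^{[q]})_{q,r}$ with $\beta_{q,r}=\sum_{\ell=0}^q\binom{q}{\ell}N^\ell(m)\otimes\alpha_{q-\ell,r}$, which is exactly the explicit representative the paper writes down. The organization, however, differs. The paper first obtains the isomorphism and its Frobenius compatibility indirectly, via the chain
\[
H^n_\HK(Y,M^a)\xrightarrow{\cong}H^n_\rig(Y/W^0,M^a)=M\otimes H^n_\rig(Y/W^0)\xleftarrow{\cong}M\otimes H^n_\HK(Y),
\]
using that $M^a$ becomes constant over $W^0$ together with Theorem~\ref{thm: HK rig comparison}(3); it then writes down $\beta$ only to verify that the monodromy operators match. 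You instead build $\Theta$ (``twist by $\exp(uN)$'') directly as an isomorphism of cosimplicial complexes and check the chain-map condition, monodromy, and Frobenius by the same binomial bookkeeping. Your route is a bit more self-contained, since it does not invoke the comparison with $W^0$-rigid cohomology; the paper's route makes the underlying linear isomorphism and $\varphi$-compatibility essentially automatic and reduces the explicit calculation to the monodromy check alone. Either way the content is the same identity in the Kim--Hain dga.
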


\begin{proof}
	 We first note that there exist canonical isomorphisms
		\begin{equation}\label{eq: HK coh of M}
		H^i_\HK(Y,M^a)\xrightarrow{\cong} H^i_\rig(Y/W^0,M^a)\cong M\otimes H^i_\rig(Y/W^0)\xleftarrow{\cong} M\otimes H^i_\HK(Y),
		\end{equation}
	which are compatible with the Frobenius operators.
	Here the first and the third isomorphisms are given by Theorem \ref{thm: HK rig comparison}, and the second isomorphism follows from the fact that the restriction of $M^a$ to $F\Isoc^\dagger(Y/W^0)$ is constant.
	Thus it remains to prove the composition of \eqref{eq: HK coh of M} is compatible with the monodromy operators.
	
	We take a local embedding datum $(Z_\lambda,\cZ_\lambda,i_\lambda,h_\lambda,\theta_\lambda)_{\lambda\in\Lambda}$ indexed by a finite set $\Lambda$ such that $Z_\lambda$ and $\cZ_\lambda$ are affine.
	Define the weak formal log schemes $\cZ_{m,n}$ as in Proposition \ref{prop: HK coh total} and let $\cY_{m,n}:=\cZ_{m,n}\times_\cS W^0$.
	Then by Remark \ref{rem: affinoid}, Proposition \ref{prop: realization pseudo-const} and Proposition \ref{prop: HK coh total}, we have
	\begin{align}
	\label{eq: rig Tot 2}&R\Gamma_\rig(Y/W^0,M^a)\cong\Tot\left[\prod_{n\in\bbN}\Gamma(\cY_{\bullet,n,\bbQ},M\otimes\omega^\star_{\cY_{\bullet,n}/W^0,\bbQ})\xrightarrow{\eth}\prod_{n\in\bbN}\Gamma(\cY_{\bullet,n,\bbQ},M\otimes\omega^\star_{\cY_{\bullet,n}/W^0,\bbQ})\right],\\
	\label{eq: HK Tot 1}&R\Gamma_\HK(Y)\cong\Tot\left[\prod_{n\in\bbN}\Gamma(\cZ_{\bullet,n,\bbQ},\omega^\star_{\cZ_{\bullet,n}/W^\varnothing,\bbQ}[u])\xrightarrow{\eth}\prod_{n\in\bbN}\Gamma(\cZ_{\bullet,n,\bbQ},\omega^\star_{\cZ_{\bullet,n}/W^\varnothing,\bbQ}[u])\right],\\
	\label{eq: HK Tot 2}&R\Gamma_\HK(Y,M^a)\cong\Tot\left[\prod_{n\in\bbN}\Gamma(\cZ_{\bullet,n,\bbQ},M\otimes\omega^\star_{\cZ_{\bullet,n}/W^\varnothing,\bbQ}[u])\xrightarrow{\eth}\prod_{n\in\bbN}\Gamma(\cZ_{\bullet,n,\bbQ},M\otimes\omega^\star_{\cZ_{\bullet,n}/W^\varnothing,\bbQ}[u])\right],
	\end{align}
	where the log connection on $M\otimes\omega^\star_{\cZ_{\bullet,n}/W^\varnothing}[u]$ is given by $\nabla(m\otimes f)=m\otimes df+N(m)\otimes fd\log s$.
	
	We consider an element of $M\otimes H^i_\HK(Y)$ of the form $\mu\otimes \alpha$ with $\mu\in M$ and $\alpha\in H^i_\HK(Y)$.
	The cohomology class $\alpha$ is represented via \eqref{eq: HK Tot 1} by a family
	\[\Bigl(\sum_{j\geq 0}\alpha_{j,m,n}u^{[j]},\sum_{j\geq 0}\beta_{j,m,n}u^{[j]}\Bigr)_{m,n}\]
	with
	\begin{align*}
	\alpha_{j,m,n}\in\Gamma(\cZ_{m,n,\bbQ},\omega^{i-m}_{\cZ_{m,n}/W^\varnothing,\bbQ}),&&
	\beta_{j,m,n}\in\Gamma(\cZ_{m,n,\bbQ},\omega^{i-m-1}_{\cZ_{m,n}/W^\varnothing,\bbQ}),
	\end{align*}
	such that
	\begin{align*}
	&d(\sum_{j\geq 0}\alpha_{j,m,n}u^{[j]})+(-1)^{i-m-1}\partial(\sum_{j\geq 0}\alpha_{j,m-1,n}u^{[j]})=0,\\
	&d(\sum_{j\geq 0}\beta_{j,m,n}u^{[j]})+(-1)^{i-m}\partial(\sum_{j\geq 0}\beta_{j,m-1,n}u^{[j]})+(-1)^i(\sum_{j\geq 0}\alpha_{j,m,n}u^{[j]}-\sum_{j\geq 0}\alpha_{j,m,n+1}u^{[j]}|_{\cZ_{m,n,\bbQ}})=0
	\end{align*}
	for all $m$ and $n$, where $\partial$ denotes the second differential map of the right hand side of \eqref{eq: HK Tot 1} induced by the face maps of the weak formal log scheme $\cZ_{\bullet,n}$.
	These two conditions are equivalent to the equations
	\begin{align}
	\label{eq: cocycle alpha}&d\alpha_{j,m,n}-d\log s\wedge\alpha_{j+1,m,n}+(-1)^{i-m-1}\partial\alpha_{j,m-1,n}=0,\\
	\label{eq: cocycle beta}&d\beta_{j,m,n}-d\log s\wedge\beta_{j+1,m,n}+(-1)^{i-m}\partial\beta_{j,m-1,n}+(-1)^i\alpha_{j,m,n}+(-1)^{i+1}\alpha_{j,m,n+1}|_{\cZ_{m,n,\bbQ}}=0
	\end{align}
	for all $j$, $m$, and $n$.
	
	Let
	\begin{align*}
	\gamma_{j,m,n}&:=\sum_{\ell=0}^j\binom{j}{\ell}N^\ell(\mu)\otimes\alpha_{j-\ell,m,n}\in \Gamma(\cZ_{m,n,\bbQ},M\otimes\omega^{i-m}_{\cZ_{m,n}/W^\varnothing,\bbQ}),\\
	\delta_{j,m,n}&:=\sum_{\ell=0}^j\binom{j}{\ell}N^\ell(\mu)\otimes\beta_{j-\ell,m,n}\in \Gamma(\cZ_{m,n,\bbQ},M\otimes\omega^{i-m-1}_{\cZ_{m,n}/W^\varnothing,\bbQ}).
	\end{align*}
	Then we have
	\begin{align}
	\label{eq: cocycle gamma}&\nabla\gamma_{j,m,n}-d\log s\wedge\gamma_{j+1,m,n}+(-1)^{i-m-1}\partial\gamma_{j,m-1,n}\\
	\nonumber&=\sum_{\ell=0}^j\binom{j}{\ell}N^\ell(\mu)\otimes d\alpha_{j-\ell,m,n}
	+\sum_{\ell=0}^j\binom{j}{\ell}N^{\ell+1}(\mu)\otimes d\log s\wedge\alpha_{j-\ell,m,n}\\
	\nonumber&\hspace{15pt}-\sum_{\ell=0}^{j+1}\binom{j+1}{\ell}N^\ell(\mu)\otimes d\log s\wedge\alpha_{j-\ell+1,m,n}
	+(-1)^{i-m-1}\sum_{\ell=0}^j\binom j\ell N^\ell(\mu)\otimes\partial\alpha_{j-\ell,m-1,n}\\
	\nonumber&=\sum_{\ell=0}^j\binom{j}{\ell}N^{\ell+1}(\mu)\otimes d\log s\wedge\alpha_{j-\ell,m,n}-\sum_{\ell=0}^{j+1}\binom{j+1}{\ell}N^\ell(\mu)\otimes d\log s\wedge\alpha_{j-\ell+1,m,n}\\
	\nonumber&\hspace{15pt}+\sum_{\ell=0}^j\binom j\ell N^\ell(\mu)\otimes d\log s\wedge\alpha_{j-\ell+1,m,n}\\
	\nonumber&=\sum_{\ell=1}^j\left(\binom j{\ell-1}-\binom{j+1}\ell+\binom j\ell\right)N^\ell(\mu)\otimes d\log s\wedge\beta_{j-\ell,m,n}\\
	\nonumber&=0,
	\end{align}
	where the second equality follows from \eqref{eq: cocycle alpha}.
	Noting that
	\[(-1)^i\gamma_{j,m,n}+(-1)^{i+1}\gamma_{j,m,n+1}|_{\cZ_{m,n,\bbQ}}=\sum_{\ell=0}^j\binom j\ell N^\ell(\mu)\otimes((-1)^i\alpha_{j-\ell,m,n}+(-1)^{i+1}\alpha_{j-\ell,m,n+1}|_{\cZ_{m,n,\bbQ}}),\]
	a similar calculation with the use of \eqref{eq: cocycle beta} instead of \eqref{eq: cocycle alpha} shows that
	\begin{align}
	\label{eq: cocycle delta}
	&\nabla\delta_{j,m,n}-d\log s\wedge\delta_{j+1,m,n}+(-1)^{i-m}\partial\delta_{j,m-1,n}+(-1)^i\gamma_{j,m,n}+(-1)^{i+1}\gamma_{j,m,n+1}|_{\cZ_{m,n,\bbQ}}=0.
	\end{align}
	Now \eqref{eq: cocycle gamma} and \eqref{eq: cocycle delta} together indicate that the family
	\[\Bigl(\sum_{j\geq 0}\gamma_{j,m,n}u^{[j]},\sum_{j\geq 0}\delta_{j,m,n}u^{[j]}\Bigr)_{m,n}\]
	defines an $i$-cocycle of $R\Gamma_\HK(Y,M^a)$ via the description \eqref{eq: HK Tot 2}.
	We denote by $\gamma\in H^i_\HK(Y,M^a)$ its cohomology class.
	
	Then the isomorphisms of \eqref{eq: HK coh of M} map $\gamma\in H^i_\HK(Y,M^a)$ and $\mu\otimes\alpha\in M\otimes H^i_\HK(Y)$ to the same element of $H^i_\rig(Y/W^0,M^a)=M\otimes H^i_\rig(Y/W^0)$ represented via \eqref{eq: rig Tot 2} by the family $(\mu\otimes\overline{\alpha}_{0,m,n},\mu\otimes\overline{\beta}_{0,m,n})_{m,n}=\mu\otimes(\overline{\alpha}_{0,m,n},\overline{\beta}_{0,m,n})_{m,n}$, where the overline indicates the image in $\omega^{i-m}_{\cY_{m,n}/W^0,\bbQ}$ of a log differential form of $\omega^{i-m}_{\cZ_{m,n}/W^\varnothing,\bbQ}$.
	Thus $\mu\otimes\alpha$ and $\gamma$ correspond to each other by \eqref{eq: HK coh of M}.
	
	Since the monodromy operator of the Hyodo--Kato cohomology is defined by $u^{[j]}\mapsto u^{[j-1]}$, $N(\gamma)$ is represented by the family
	\[\Bigl(\sum_{j\geq 0}\gamma_{j+1,m,n}u^{[j]},\sum_{j\geq 0}\delta_{j+1,m,n}u^{[j]}\Bigr)_{m,n},\]
	which is mapped to the $i$-cocycle
	\begin{align}\label{eq: monodromy computation}
	(\overline{\gamma}_{1,m,n},\overline{\delta}_{1,m,n})_{m,n}&=(\mu\otimes\overline{\alpha}_{1,m,n}+N(\mu)\otimes\overline{\alpha}_{0,m,n},\mu\otimes\overline{\beta}_{1,m,n}+N(\mu)\otimes\overline{\beta}_{0,m,n})_{m,n}\\
	\nonumber&=\mu\otimes(\overline{\alpha}_{1,m,n},\overline{\beta}_{1,m,n})_{m,n}+N(\mu)\otimes(\overline{\alpha}_{0,m,n},\overline{\beta}_{0,m,n})_{m,n}
	\end{align}
	of $R\Gamma_\rig(Y/W^0,M^a)=M\otimes R\Gamma_\rig(Y/W^0)$.
	
	Similarly the images of $\alpha$ and $N(\alpha)$ in $H^i_\rig(Y/W^0)$ are represented by the cocycles $(\overline{\alpha}_{0,m,n},\overline{\beta}_{0,m,n})_{m,n}$ and $(\overline{\alpha}_{1,m,n},\overline{\beta}_{1,m,n})_{m,n}$, respectively, hence \eqref{eq: monodromy computation} shows that $N(\gamma)\in H^i_\HK(Y,M^a)$ and $N(\mu\otimes\alpha)=\mu\otimes N(\alpha)+N(\mu)\otimes\alpha\in M\otimes H^i_\HK(Y)$ correspond to the same element of $H^i_\rig(Y/W^0,M^a)=M\otimes H^i_\rig(Y/W^0)$.
	This finishes the proof of the compatibility of \eqref{eq: HK coh of M} with the monodromy operators.
\end{proof}

\begin{proposition}\label{prop: isoc on point}
	The functor
		\begin{align*}
		(\cdot)^a\colon \Mod^\fin_L(N)\rightarrow \Isoc^\dagger(k^0/W^\varnothing)^\unip&&(\text{resp.\  }(\cdot)^a\colon \Mod^\fin_L(\varphi,N)\rightarrow F\Isoc^\dagger(k^0/W^\varnothing)^\unip)
		\end{align*}
	is an equivalence of categories with a quasi-inverse $H^0_\HK(k^0,\cdot)$.\end{proposition}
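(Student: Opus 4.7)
The plan is to verify that $(\cdot)^a$ is fully faithful and essentially surjective. All computations take place at the canonical smooth embedding $(k^0,\cS,\tau)\in\OC(k^0/\cS)$ (which we also view as an object of $\OC(k^0/W^\varnothing)$). I will treat the case of $\Mod^\fin_L(N)$ in detail; the $F$-isocrystal case is entirely parallel.

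First, I would establish $H^0_\HK(k^0)=L$ and $H^n_\HK(k^0)=0$ for $n\geq 1$. By Theorem \ref{thm: HK rig comparison}, the morphism $R\Gamma_\HK(k^0)\to R\Gamma_\rig(k^0/W^0)$ is a quasi-isomorphism; since $W^0\to W^0$ is the identity of weak formal log schemes, $R\Gamma_\rig(k^0/W^0)=L$ concentrated in degree zero. Combined with Proposition \ref{prop: pseudo-const}, this yields
\[H^0_\HK(k^0,M^a)\cong M,\qquad H^n_\HK(k^0,M^a)=0\ (n\geq 1),\]
as objects of $\Mod^\fin_L(N)$ (resp.\ $\Mod^\fin_L(\varphi,N)$), furnishing the natural isomorphism $\id\xrightarrow{\cong} H^0_\HK(k^0,-)\circ(\cdot)^a$.

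For full faithfulness, Remark \ref{rem: tensor hom realization} together with Proposition \ref{prop: realization pseudo-const} identifies $\sheafhom(M^a,(M')^a)\cong(\Hom_L(M,M'))^a$, with monodromy $\phi\mapsto N'\circ\phi-\phi\circ N$. Then Proposition \ref{prop: Ext abs HK} and the long exact sequence from \eqref{eq: cone of N} give
\[\Hom_{\Isoc^\dagger(k^0/W^\varnothing)^\unip}(M^a,(M')^a)\cong H^0_\rig(k^0/W^\varnothing,(\Hom_L(M,M'))^a)\cong\Hom_L(M,M')^{N=0}=\Hom_{\Mod^\fin_L(N)}(M,M'),\]
and one checks this coincides with the map induced by $(\cdot)^a$. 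In the $F$-isocrystal case the same computation applies using the homotopy-limit description \eqref{eq: abs HK holim}, imposing additionally $\varphi=\id$.

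Essential surjectivity I would prove by induction on the rank of $\sE$. By Proposition-Definition \ref{prop: filtration} (resp.\ Proposition \ref{prop: unip const}), any unipotent $\sE$ of rank $>1$ admits an exact sequence $0\to\sE'\to\sE\to L^a\to 0$ with $\sE'$ unipotent of smaller rank, hence $\sE'\cong(M')^a$ by induction. The vanishing $H^1_\HK(k^0,(M')^a)=0$ together with Proposition \ref{prop: Ext abs HK} and the long exact sequence from \eqref{eq: cone of N} (resp.\ \eqref{eq: abs HK holim}) yields
\[\Ext^1_{\Isoc^\dagger(k^0/W^\varnothing)^\unip}(L^a,(M')^a)\cong M'/NM'\cong\Ext^1_{\Mod^\fin_L(N)}(L,M'),\]
both finite-dimensional $L$-vector spaces of equal dimension. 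Since $(\cdot)^a$ induces an $L$-linear injection on $\Ext^1$ (by the already established full faithfulness: two inequivalent extensions in $\Mod^\fin_L(N)$ cannot produce isomorphic extensions of isocrystals), this injection is a bijection. Therefore the class of $\sE$ comes from an essentially unique $M\in\Mod^\fin_L(N)$ extending $L$ by $M'$, giving $\sE\cong M^a$. The main technical subtlety is the computation of $\Ext^1$ on the isocrystal side, which requires unwinding the Čech cocycle description in the proof of Proposition \ref{prop: Ext abs HK} at the embedding $\cS$ and matching it with the connection description of pseudo-constant objects from Proposition \ref{prop: realization pseudo-const}.
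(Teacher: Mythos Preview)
Your proof is correct and follows the same strategy as the paper: use Proposition~\ref{prop: pseudo-const} to get $H^0_\HK(k^0,M^a)\cong M$, then prove essential surjectivity by induction on rank via an $\Ext^1$ comparison. Two minor tactical differences are worth noting. First, your full-faithfulness argument (computing $\Hom$ as $H^0_\rig$ via Proposition~\ref{prop: Ext abs HK} and then as the $N=0$ kernel via \eqref{eq: cone of N}) is more explicit than the paper's one-line appeal to $H^0_\HK(k^0,M^a)=M$. Second, for the $\Ext^1$ bijection the paper chains together natural isomorphisms through Proposition~\ref{prop: Leray}, whereas you compute both sides directly as $M'/NM'$ and conclude by injectivity (from full faithfulness) plus equidimensionality; this is a clean shortcut that avoids verifying that the various cohomological identifications commute with the functor-induced map on $\Ext^1$.
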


\begin{proof}
	We prove the statement for unipotent log overconvergent $F$-isocrystals.
	We have $H^0_\HK(k^0,M^a)=M$ by Proposition \ref{prop: pseudo-const}, hence $(\cdot)^a$ is fully faithful.
	We will show that any object $(\sE,\Phi)\in F\Isoc^\dagger(k^0/W^\varnothing)^\unip$ lies in the essential image of $(\cdot)^a$ by the induction on the rank of $\sE$.
	The case of rank one is clear since $(\sE,\Phi)$ must be constant.
	If the rank is positive, then there exists a short exact sequence
		\[0\rightarrow(\sF,\Psi)\rightarrow(\sE,\Phi)\rightarrow(\sF',\Psi')\rightarrow 0,\]
	where $(\sF,\Psi)$ is unipotent and $(\sF',\Psi')$ is non-zero constant.
	Then by the induction hypothesis we have $(\sF,\Psi)=M^a$ and $(\sF',\Psi')=M'^a$ for some $M,M'\in\Mod^\fin_L(\varphi,N)$.
	It suffices to show that the map
		\begin{equation}\label{eq: map of Ext}
		\Ext^1_{\Mod^\fin_L(\varphi,N)}(M',M)\rightarrow\Ext^1_{F\Isoc^\dagger(k^0/W^\varnothing)}(M'^a,M^a)
		\end{equation}
	induced by $(\cdot)^a$ is surjective.
	Since we have $H^1_\HK(k^0)=0$ by direct computation, we have
		\[H^1_\HK(k^0,\Hom(M',M)^a)=H^1_\HK(k^0)\otimes\Hom(M',M)=0\]
	by Proposition \ref{prop: pseudo-const}.
	Therefore by Proposition \ref{prop: Leray}, Proposition \ref{prop: pseudo-const tensor}, Proposition \ref{prop: Ext abs HK}, and again Proposition \ref{prop: pseudo-const}, we obtain
		\begin{eqnarray*}
		\Ext^1_{F\Isoc^\dagger(k^0/W^\varnothing)}(M'^a,M^a)&\cong& H_\abs^1(k^0,\sheafhom(M'^a,M^a))=H_\abs^1(k^0,\Hom(M',M)^a)\\
		&\cong&\Ext^1_{\Mod^\fin_L(\varphi,N)}(L,H^0_\HK(k^0,\Hom(M',M)^a))\\
		&=&\Ext^1_{\Mod^\fin_L(\varphi,N)}(L,\Hom(M',M))\\
		&\cong&\Ext^1_{\Mod^\fin_L(\varphi,N)}(M',M).
		\end{eqnarray*}
	Thus the map \eqref{eq: map of Ext} is an isomorphism as desired.
\end{proof}

%%%%%%%%%%%%%%%%%
\section{Tannakian fundamental group}\label{sec: tannakian}
%%%%%%%%%%%%%%%%%

In this section, we will introduce the tannakian fundamental group of unipotent log overconvergent ($F$-)isocrystals, and see its rigidity property.
We recall that, for a strictly semistable log scheme $Y$ over $k^0$ with the horizontal divisor $D$, we denote by $Y^\sm$ the smooth locus of $Y\setminus D$.
Let
	\[P(Y):=\coprod_{k'/k}Y^\sm(k')\]
where $k'$ runs through all finite extensions of $k$.
For $y=\Spec k_y\rightarrow Y^\sm$ in $P(Y)$ we denote by $W_y$ the ring of Witt vectors of $k_y$ and $L_y$ the fraction field of $W_y$.
We endow $y$ with the pull-back log structure of $Y$.
Then $y$ is isomorphic to $k_y^0:=k^0\times_{k^\varnothing}k_y^\varnothing$.
For an object $\sE\in \Isoc^\dagger(Y/W^\varnothing)^\unip$ (resp.\  $(\sE,\Phi)\in F\Isoc^\dagger(Y/W^\varnothing)^\unip$), we denote by $\sE_y\in\Isoc^\dagger(y/W_y^\varnothing)^\unip$ (resp.\  $(\sE,\Phi)_y=(\sE_y,\Phi_y)\in F\Isoc^\dagger(y/W_y^\varnothing)^\unip$) the pull-back by the commutative diagram
	\[\xymatrix{
	y\ar[r]\ar[d] &k_y^\varnothing\ar[r]\ar[d] & W_y^\varnothing\ar[d]\\
	Y\ar[r] & k^\varnothing\ar[r] & W^\varnothing.
	}\]
We denote by
	\begin{equation}\label{eq: fiber functor}
	\eta_y\colon\Isoc^\dagger(Y/W^\varnothing)^\unip\rightarrow\Isoc^\dagger(y/W_y^\varnothing)^\unip\xrightarrow{\cong}\Mod^\fin_{L_y}(N)\rightarrow\Mod^\fin_{L_y}
	\end{equation}
the composition of the pull-back, $H^0_\HK(y,-)$, and the forgetful functor.
Note that $H^0_\HK(y,-)$ is an equivalence of categories by Proposition \ref{prop: isoc on point}.
We also denote by
	\begin{equation}\label{eq: F fiber functor}
	\eta_y\colon F\Isoc^\dagger(Y/W^\varnothing)^\unip\rightarrow\Mod^\fin_{L_y}
	\end{equation}
the functor given in the same way as \eqref{eq: fiber functor} with forgetting Frobenius structure.

\begin{proposition}\label{prop: tannakian}
	Let $Y$ be a geometrically connected strictly semistable log scheme over $k^0$ and $y\in P(Y)$.
	Then the functors \eqref{eq: fiber functor} and \eqref{eq: F fiber functor} are faithful exact tensor functors.
	
	Moreover, the category $\Isoc^\dagger(Y/W^\varnothing)^\unip$ (resp.\  $F\Isoc^\dagger(Y/W^\varnothing)^\unip$) is a tannakian category over $L$ (resp.\  $\bbQ_p$) with a fiber functor $\eta_y$.
	If $y\in Y^\sm(k)$, then $\Isoc^\dagger(Y/W^\varnothing)^\unip$ is a neutral tannakian category over $L$.
\end{proposition}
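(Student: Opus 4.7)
The plan is to verify that $\eta_y$ is a faithful exact tensor functor — the three defining properties of a tannakian fiber functor — and then invoke Deligne's tannakian formalism. I would factor $\eta_y$ as the composition of three pieces: (i) pull-back along $y\hookrightarrow Y$ (together with $W_y^\varnothing\rightarrow W^\varnothing$), (ii) the equivalence $H^0_\HK(y,-)\colon\Isoc^\dagger(y/W_y^\varnothing)^\unip\xrightarrow{\cong}\Mod^\fin_{L_y}(N)$ of Proposition \ref{prop: isoc on point}, and (iii) the forgetful functor, and then check each factor separately.

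For tensor compatibility, pull-back commutes with $\otimes$ by the very definition of tensor products in Definition \ref{def: tensor and hom}; the equivalence $(\cdot)^a$ is a tensor equivalence, as one sees directly from Definition \ref{def: pseudo-constant} by checking $(M\otimes M')^a\cong M^a\otimes M'^a$ through the description via the diagonal $(1+\sJ_\cZ)$-action; and the forgetful functor is trivially tensor. For exactness, the categories $\Isoc^\dagger(Y/W^\varnothing)^\unip$ and $F\Isoc^\dagger(Y/W^\varnothing)^\unip$ are abelian by Proposition \ref{prop: unipotence}; a short exact sequence in either category restricts at every strictly semistable realization to a short exact sequence of coherent locally free $\cO_{\cZ_\bbQ}$-modules with compatible connection (as in the proof of Proposition \ref{prop: nr abelian}); pull-back of such short exact sequences along a morphism of widenings is exact because pull-back of a locally free (hence flat) module is exact; $H^0_\HK(y,-)$ is exact as an equivalence of abelian categories; and the forgetful functor is evidently exact. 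Faithfulness reduces, by additivity and exactness, to the claim that $\eta_y(\sE)\neq 0$ for every non-zero $\sE$, and this holds because $\dim_{L_y}\eta_y(\sE)$ equals the rank of $\sE_y$ on any realization, which in turn equals the positive rank of $\sE$.

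The tannakian conclusion then follows from Deligne's criterion: each category is a rigid abelian tensor category — rigidity (existence of duals) follows from Proposition \ref{prop: unipotence} together with Definition \ref{def: tensor and hom} — equipped with a faithful exact tensor functor $\eta_y$. Linearity over $L$ (respectively $\bbQ_p$) is read off from the endomorphism ring of the unit object, which for geometrically connected $Y$ equals $L$ in the non-Frobenius case and $\bbQ_p$ in the $F$-case. For neutrality when $y\in Y^\sm(k)$, we have $L_y=L$, so $\eta_y$ takes values in $\Mod^\fin_L$ in the non-Frobenius case, directly giving a fiber functor to finite-dimensional $L$-vector spaces; in the Frobenius case $\eta_y$ lands in $\Mod^\fin_L(\varphi)$, and since this latter category is itself neutral tannakian over $\bbQ_p$ by the classical theory of $F$-isocrystals over a perfect field (cf.\ \cite{CL1,Ch}), composing $\eta_y$ with such a fiber functor yields neutrality. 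The main subtlety will be the careful bookkeeping in the $F$-case: $\eta_y$ is not itself a fiber functor with values in $\bbQ_p$-vector spaces but rather in the neutral tannakian category $\Mod^\fin_L(\varphi)$, so one must argue that the composed fiber functor retains the exactness and faithfulness established above.
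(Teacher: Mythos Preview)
Your proof is correct, and the overall architecture matches the paper's: exactness and tensor compatibility are declared clear, the endomorphism ring of the unit is computed to identify the base field, and tannakian formalism is invoked.

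The one genuine difference is the faithfulness argument. You reduce faithfulness of an exact functor to the statement that it reflects zero objects, and then observe that $\dim_{L_y}\eta_y(\sE)$ equals the rank of $\sE$, which is positive for $\sE\neq 0$. The paper instead proves injectivity of $\Hom(\sE',\sE)\to\Hom(\sE'_y,\sE_y)$ directly: setting $\sF:=\sheafhom(\sE',\sE)$ and $M:=H^0_\rig(Y/W^\varnothing,\sF)$, it uses the natural filtration (Proposition-Definition~\ref{prop: filtration}) to identify $\Fil_0\sF=M^a$, then applies Proposition~\ref{prop: Ext abs HK} together with the injection $M\hookrightarrow M\otimes_L L_y$ to conclude. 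Your argument is shorter and uses only general principles; the paper's argument is more structural, exercising the natural filtration machinery and making the injection on Hom-spaces explicit, which is perhaps more in keeping with how the surrounding sections are written. Both are valid.

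Your handling of neutrality in the $F$-case --- composing $\eta_y$ with a fiber functor on the neutral tannakian category $\Mod^\fin_L(\varphi)$ --- is also correct and makes explicit a step the paper leaves implicit.
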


\begin{proof}
	The exactness and the compatibility with the tensor structures are clear from definition of functors.
	Let $\sE,\sE'\in\Isoc^\dagger(Y/W^\varnothing)^\unip$ and $\sF:=\sheafhom(\sE',\sE)$.
	If we put $M:=H^0_\rig(Y/W^\varnothing,\sF)$, then we have $\Fil_0\sF=M^a$.
	By Proposition \ref{prop: Ext abs HK} we have a commutative diagram
		\[\xymatrix{
		\Hom_{\Isoc^\dagger(Y/W^\varnothing)^\unip}(\sE',\sE)\ar[r]^-\cong\ar[d] & H^0_\rig(Y/W^\varnothing,\sF)\ar[d] & H^0_\rig(Y/W^\varnothing,\Fil_0\sF)\ar[l]_-\cong\ar[d] & M\ar@{=}[l]\ar[d]\\
		\Hom_{\Isoc^\dagger(y/W_y^\varnothing)^\unip}(\sE'_y,\sE_y)\ar[r]^-\cong & H^0_\rig(y/W_y^\varnothing,\sF_y) & H^0_\rig(y/W_y^\varnothing,(\Fil_0\sF)_y)\ar[l] & M\otimes_LL_y\ar@{=}[l].
		}\]
	The middle horizontal map in the bottom is injective by Proposition \ref{prop: isoc on point}, hence the left vertical map is also injective.
	This implies that \eqref{eq: fiber functor} is faithful.
	
	Let $(\sE,\Phi),(\sE',\Phi')\in F\Isoc^\dagger(Y/W^\varnothing)^\unip$ and $\sF:=\sheafhom((\sE',\Phi'),(\sE,\Phi))$.
	Then
	\[\Hom_{F\Isoc^\dagger(Y/W^\varnothing)^\unip}((\sE',\Phi'),(\sE,\Phi))=H^0_\abs(Y,\cF)=H^0_\rig(Y/W^\varnothing,\cF)^{\varphi=1}\]
	is clearly a subspace of $H^0_\rig(Y/W^\varnothing,\cF)$.
	Thus \eqref{eq: fiber functor} is also faithful.
	
	By Lemma \ref{lem: H0} we have
		\begin{eqnarray*}
		&&\End_{\Isoc^\dagger(Y/W^\varnothing)}(\sO_{Y/W^\varnothing})\cong H^0_\rig(Y/W^\varnothing)=L,\\
		&&\End_{F\Isoc^\dagger(Y/W^\varnothing)}(\sO_{Y/W^\varnothing})\cong H_\abs^0(Y/W^\varnothing)=\bbQ_p,
		\end{eqnarray*}
	hence we see that $\Isoc^\dagger(Y/W^\varnothing)^\unip$ (resp.\  $F\Isoc^\dagger(Y/W^\varnothing)^\unip$) is a tannakian category over $L$ (resp.\  $\bbQ_p$).
\end{proof}

As a corollary, we obtain the rigidity of Frobenius structures.

\begin{corollary}
	Let $Y$ be a geometrically connected strictly semistable log scheme over $k^0$ and $\sE\in \Isoc^\dagger(Y/W^\varnothing)^\unip$.
	Let $\Phi$ and $\Phi'$ be Frobenius structures on $\sE$.
	If there exists $y\in P(Y)$ such that $\Phi_y=\Phi'_y$, then we have $\Phi=\Phi'$.
\end{corollary}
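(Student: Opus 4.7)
The plan is to reduce the statement to a direct application of the faithfulness of the fiber functor $\eta_y$ established in Proposition \ref{prop: tannakian}.

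First, I would observe that since $\sigma^*$ preserves the structure sheaf $\sO_{Y/W^\varnothing}$ and is exact, $\sigma^*\sE$ is again unipotent, so that $\Phi$ and $\Phi'$ are both isomorphisms in $\Isoc^\dagger(Y/W^\varnothing)^\unip$ between $\sigma^*\sE$ and $\sE$. Form the automorphism
\[
\psi := \Phi \circ \Phi'^{-1}\colon \sE \xrightarrow{\cong} \sE
\]
in $\Isoc^\dagger(Y/W^\varnothing)^\unip$. The goal is then to prove $\psi = \id_\sE$.

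The key step is to pull back to $y$ and apply the fiber functor. By the functoriality of the pullback along $y \to Y$ (and along $\sigma$), the restriction to $y$ sends $\psi$ to
\[
\psi_y = \Phi_y \circ \Phi'^{-1}_y = \id_{\sE_y},
\]
where the last equality is exactly the hypothesis $\Phi_y = \Phi'_y$. Composing with the equivalence $H^0_\HK(y,-)\colon \Isoc^\dagger(y/W_y^\varnothing)^\unip \xrightarrow{\cong} \Mod^\fin_{L_y}(N)$ of Proposition \ref{prop: isoc on point} and the forgetful functor to $\Mod^\fin_{L_y}$, we obtain $\eta_y(\psi) = \id$ in $\Mod^\fin_{L_y}$.

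Finally, Proposition \ref{prop: tannakian} asserts that $\eta_y\colon \Isoc^\dagger(Y/W^\varnothing)^\unip \to \Mod^\fin_{L_y}$ is a faithful functor. Consequently $\psi = \id_\sE$, which is precisely the equality $\Phi = \Phi'$. There is no real obstacle here; the entire content of the corollary is already packaged into the faithfulness of $\eta_y$, together with the observation that the automorphism $\Phi \circ \Phi'^{-1}$ lives in the unipotent subcategory on which the tannakian formalism of the previous proposition applies.
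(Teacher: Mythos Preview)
Your proof is correct and follows essentially the same approach as the paper: form the automorphism $\Phi\circ\Phi'^{-1}$ (the paper writes $\Phi'\circ\Phi^{-1}$, which is immaterial) of $\sE$ in $\Isoc^\dagger(Y/W^\varnothing)^\unip$, observe that its image under $\eta_y$ is the identity, and conclude by the faithfulness of $\eta_y$ from Proposition~\ref{prop: tannakian}. Your version is slightly more explicit in justifying why $\sigma^*\sE$ is unipotent, but the argument is the same.
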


\begin{proof}
	Since $\Phi'\circ\Phi^{-1}$ is a morphism in $\Isoc^\dagger(Y/W^\varnothing)^\unip$ and $\eta_y(\Phi'\circ\Phi^{-1})=\id$, the assertion follows from Proposition \ref{prop: tannakian} .
\end{proof}

\begin{definition}
	Let $Y$ be a connected strictly semistable log scheme over $k^0$ and assume that there exists a $k$-rational point $y\in Y^\sm(k)$.
	We denote by $\pi_1^\unip(Y/W^\varnothing,y)$ the tannakian fundamental group of the neutral tannakian category $\Isoc^\dagger(Y/W^\varnothing)^\unip$ with respect to the fiber functor $\eta_y$.
	This is an affine group scheme over $L$.
\end{definition}

We first see the Frobenius $\sigma$ on $W^\varnothing$ induces an automorphism on $\pi^\unip(Y/W^\varnothing,y)$.

\begin{lemma}\label{lem: Frob isom}
	Let $Y$ be a strictly semistable log scheme over $k^0$ and $\sE\in\Isoc^\dagger(Y/W^\varnothing)^\unip$.
	Then the natural maps
		\begin{align*}
		H^n_\HK(Y,\sE)\rightarrow H^n_\HK(Y,\sigma^*\sE) && \text{and} && H^n_\rig(Y/W^\varnothing,\sE)\rightarrow H^n_\rig(Y/W^\varnothing,\sigma^*\sE)
		\end{align*}
	are isomorphisms for any $n$.
\end{lemma}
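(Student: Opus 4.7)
The plan is to induct on the rank of $\sE$, reducing the question to the case of the trivial coefficient $\sO_{Y/W^\varnothing}$. First, by Proposition~\ref{prop: base change coh} applied to a finite extension of $k$, together with the fact that both cohomology theories decompose over connected components, I may assume that $Y$ is geometrically connected.

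For the base case, suppose $\sE \cong V \otimes_L \sO_{Y/W^\varnothing}$ is constant, with $V$ a finite-dimensional $L$-vector space. The canonical identification $\sigma^*\sO_{Y/W^\varnothing} = \sO_{Y/W^\varnothing}$ induces $\sigma^*\sE \cong V \otimes_{L,\sigma} \sO_{Y/W^\varnothing}$, and under this identification the natural map on cohomology becomes $\mathrm{id}_V \otimes \varphi$, where $\varphi$ is the Frobenius operator on $H^n_\HK(Y) = H^n_\HK(Y,(\sO_{Y/W^\varnothing},\mathrm{id}))$ (respectively on $H^n_\rig(Y/W^\varnothing)$). By Theorem~\ref{thm: HK rig comparison}~(2), $\varphi$ is bijective in both cases, so the natural map is a $\sigma$-semilinear bijection.

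For the inductive step, assume $\sE$ is not constant. The natural filtration of Proposition-Definition~\ref{prop: filtration} then furnishes a short exact sequence
\begin{equation*}
0 \to \Fil_0\sE \to \sE \to \sE/\Fil_0\sE \to 0
\end{equation*}
in $\Isoc^\dagger(Y/W^\varnothing)^\unip$ with $\Fil_0\sE \cong H^0_\rig(Y/W^\varnothing,\sE) \otimes_L \sO_{Y/W^\varnothing}$ constant and non-zero (Lemma~\ref{lem: nonzero inj}), and with $\sE/\Fil_0\sE$ unipotent of strictly smaller rank. Pullback by the exact functor $\sigma^*$ preserves this sequence, and the natural pullback on cohomology fits into a commutative ladder of long exact sequences. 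Applying the five lemma, using the base case for $\Fil_0\sE$ and the induction hypothesis for $\sE/\Fil_0\sE$, yields the assertion for $\sE$.

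The main point requiring verification is the identification, in the base case, of the natural pullback map on $H^n_\HK(Y)$ with the Frobenius operator $\varphi$ of Definition~\ref{def: Kim--Hain}. This follows by choosing a local embedding $F$-datum and unwinding both definitions at the level of cosimplicial complexes; the same observation handles $H^n_\rig(Y/W^\varnothing,-)$. Alternatively, once the Hyodo--Kato case is known, the rigid case can be deduced from the exact triangle \eqref{eq: cone of N} via the fact that the pullback map commutes with the monodromy operator $N$ (both come from functorial operations on the Kim--Hain complex), followed by a further application of the five lemma.
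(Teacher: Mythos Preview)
Your proof is correct and follows essentially the same approach as the paper: induction on the rank of $\sE$, reduction to the trivial coefficient, and appeal to Theorem~\ref{thm: HK rig comparison}. The paper's version is terser and does not pass through the natural filtration of Proposition-Definition~\ref{prop: filtration}; since $\sE$ is unipotent by definition, one already has an exact sequence $0\to\sF\to\sE\to\sO_{Y/W^\varnothing}\to 0$ with $\sF$ unipotent of smaller rank, so the reduction to geometrically connected $Y$ and the use of $\Fil_0$ are unnecessary detours, though harmless.
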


\begin{proof}
	By using the induction on the rank of $\sE$, it suffices to show the assertion for $\sE=\sO_{Y/W^\varnothing}$.
	In this case, the required bijectivity follows from Theorem \ref{thm: HK rig comparison}.
\end{proof}

\begin{lemma}
	Let $Y$ be a strictly semistable log scheme over $k^0$.
	The functor
		\begin{equation}\label{eq: sigma pi}
		\sigma^*\colon \Isoc^\dagger(Y/W^\varnothing)^\unip\rightarrow\Isoc^\dagger(Y/W^\varnothing)^\unip
		\end{equation}
	induced by \eqref{eq: Frob of isoc} is an equivalence of categories.
	Moreover for any $y\in Y^\sm(k)$ there exists a commutative diagram
		\begin{equation}
		\label{eq: sigma on unip}\xymatrix{
		\Isoc^\dagger(Y/W^\varnothing)^\unip \ar[r]^-{\sigma^*}\ar[d]^-{\eta_y} & \Isoc^\dagger(Y/W^\varnothing)^\unip\ar[d]^-{\eta_y}\\
		\Mod_{L}^\fin\ar[r]^-{\sigma^*} & \Mod^\fin_{L},
		}\end{equation}
	where the lower horizontal arrow is defined by $M\mapsto M\otimes_{L,\sigma}L$.
\end{lemma}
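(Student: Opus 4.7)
The plan is to verify the three assertions in turn: that $\sigma^*$ preserves unipotence, that it is fully faithful and essentially surjective, and finally that the diagram commutes on the nose (or at least up to a canonical natural isomorphism).

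First, unipotence is preserved because $\sigma^*\sO_{Y/W^\varnothing}=\sO_{Y/W^\varnothing}$ canonically (both are structure sheaves) and $\sigma^*$ is an exact tensor functor, hence sends iterated extensions of $\sO_{Y/W^\varnothing}$ to the same. Full faithfulness will follow from the identifications
	\[\Hom_{\Isoc^\dagger(Y/W^\varnothing)^\unip}(\sE',\sE)\cong H^0_\rig(Y/W^\varnothing,\sheafhom(\sE',\sE))\]
of Proposition~\ref{prop: Ext abs HK}, together with the canonical isomorphism $\sigma^*\sheafhom(\sE',\sE)\cong\sheafhom(\sigma^*\sE',\sigma^*\sE)$ and Lemma~\ref{lem: Frob isom}, which says the natural map $H^0_\rig(Y/W^\varnothing,\sF)\to H^0_\rig(Y/W^\varnothing,\sigma^*\sF)$ is bijective for any unipotent $\sF$.

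For essential surjectivity I will proceed by induction on the rank of $\sE\in\Isoc^\dagger(Y/W^\varnothing)^\unip$. The rank one case is immediate since any rank-one unipotent isocrystal is isomorphic to $\sO_{Y/W^\varnothing}=\sigma^*\sO_{Y/W^\varnothing}$. For the inductive step, choose a short exact sequence
	\[0\rightarrow\sO_{Y/W^\varnothing}\rightarrow\sE\rightarrow\sE''\rightarrow 0\]
(which exists by Proposition-Definition~\ref{prop: filtration} after replacing $\sE$ by its dual if necessary, or equivalently using the bottom piece of the natural filtration), with $\sE''$ unipotent of strictly smaller rank. By induction $\sE''\cong\sigma^*\sF''$ for some unipotent $\sF''$, and using Proposition~\ref{prop: Ext abs HK} again, the extension class of $\sE$ lies in $H^1_\rig(Y/W^\varnothing,\sheafhom(\sigma^*\sF'',\sO_{Y/W^\varnothing}))=H^1_\rig(Y/W^\varnothing,\sigma^*(\sF''^\vee))$; by Lemma~\ref{lem: Frob isom} this group is identified with $H^1_\rig(Y/W^\varnothing,\sF''^\vee)$, producing an extension $0\to\sO_{Y/W^\varnothing}\to\sF\to\sF''\to 0$ whose image under $\sigma^*$ is isomorphic to $\sE$.

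Finally, for the commutative diagram one checks directly that, since $y\in Y^\sm(k)$ satisfies $F_y=\id$ on the underlying scheme of $y=k^0$ (as $k$ is the prime field of definition of the point) — more precisely, pulling back the Cartesian square defining $(\cdot)_y$ along $\sigma$ identifies $(\sigma^*\sE)_y$ with the base change of $\sE_y$ by $\sigma\colon W^\varnothing\to W^\varnothing$. Passing to $H^0_\HK$ and invoking the equivalence of Proposition~\ref{prop: isoc on point} then identifies $\eta_y(\sigma^*\sE)$ with $\eta_y(\sE)\otimes_{L,\sigma}L$ naturally in $\sE$. The main technical obstacle is keeping track of the interplay between the Frobenius on the base and the absolute Frobenius on $Y$ in the essential surjectivity step, particularly when showing that the extension one builds on the $\sigma$-twisted side really pulls back to the originally given one; this amounts to invoking Lemma~\ref{lem: Frob isom} at the $H^1$-level and verifying that the isomorphism there is the natural one.
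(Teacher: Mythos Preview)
Your proof is correct and follows essentially the same strategy as the paper, which simply invokes Lemma~\ref{lem: Frob isom} and says the argument proceeds ``in the same way as the proof of Corollary~\ref{cor: unip isoc extend}''; you have spelled out what that means. One small quibble: the phrase ``$F_y=\id$ on the underlying scheme of $y=k^0$'' is misleading (the absolute Frobenius on $\Spec k$ is not the identity morphism unless $k=\bbF_p$), but your subsequent ``more precisely'' clause gives the correct argument, namely the functoriality of absolute Frobenius yielding $(\sigma^*\sE)_y\cong\sigma^*(\sE_y)$.
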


\begin{proof}
	By Lemma \ref{lem: Frob isom}, the assertion follows in the same way as the proof of Corollary \ref{cor: unip isoc extend}.
	The commutativity of the diagram follows from construction.
\end{proof}

\begin{proposition}
	Let $Y$ be a connected strictly semistable log scheme over $k^0$ and assume that there exists a $k$-rational point $y\in Y^\sm(k)$.
	Then the diagram \eqref{eq: sigma on unip} induces an automorphism
		\begin{equation}\label{eq: sigma pi1}
		\sigma_*\colon \pi_1^\unip(Y,y)\rightarrow\pi_1^\unip(Y,y)
		\end{equation}
	as a group scheme, which makes the following diagram commutative:
		\begin{equation}\label{eq: diag sigma pi}
		\xymatrix{
		\pi_1^\unip(Y,y)\ar[r]^-{\sigma_*}\ar[d] & \pi_1^\unip(Y,y)\ar[d]\\
		\Spec L\ar[r]^{\sigma} & \Spec L.
		}\end{equation}
	In addition, this induces a $\sigma^{-1}$-semilinear automorphism
		\begin{equation}\label{eq: sigma Lie}
		\sigma_*\colon \Lie\pi_1^\unip(Y,y)\rightarrow\Lie\pi_1^\unip(Y,y).
		\end{equation}
\end{proposition}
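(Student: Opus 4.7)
The plan is to use tannakian reconstruction: by Proposition \ref{prop: tannakian} the category $\mathcal{T} := \Isoc^\dagger(Y/W^\varnothing)^\unip$ is a neutral tannakian category over $L$ with fiber functor $\eta_y$, and $\pi_1^\unip(Y,y) = \underline{\mathrm{Aut}}^\otimes(\eta_y)$ represents the functor sending an $L$-algebra $R$ to the group of tensor automorphisms of $\eta_y \otimes_L R$. The two inputs I need are (a) the previous lemma, which says $\sigma^* \colon \mathcal{T} \to \mathcal{T}$ is an equivalence of tensor categories, and (b) the commutative diagram \eqref{eq: sigma on unip}, which identifies $\eta_y \circ \sigma^*$ with $\sigma^* \circ \eta_y$ where the target $\sigma^*$ is base change $M \mapsto M \otimes_{L,\sigma} L$.

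First I would construct $\sigma_*$ on $R$-points. Given a tensor automorphism $\alpha = (\alpha_\sE)_{\sE \in \mathcal{T}}$ of $\eta_y \otimes_L R$, the family $(\alpha_{\sigma^* \sE})_\sE$ still defines a tensor automorphism, and via the identification $\eta_y(\sigma^* \sE) \otimes_L R = (\eta_y(\sE) \otimes_{L,\sigma} L) \otimes_L R = \eta_y(\sE) \otimes_{L,\sigma} R$ coming from \eqref{eq: sigma on unip}, it becomes a tensor automorphism of $\eta_y \otimes_L R_\sigma$, where $R_\sigma$ denotes $R$ equipped with the $L$-algebra structure $L \xrightarrow{\sigma} L \to R$. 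This assignment $\pi_1^\unip(Y,y)(R) \to \pi_1^\unip(Y,y)(R_\sigma)$ is functorial in $R$, hence by Yoneda yields a scheme morphism $\sigma_* \colon \pi_1^\unip(Y,y) \to \pi_1^\unip(Y,y)$ sitting in the diagram \eqref{eq: diag sigma pi}. That $\sigma_*$ is an automorphism of schemes follows immediately from the fact that $\sigma^* \colon \mathcal{T} \to \mathcal{T}$ is an equivalence: a quasi-inverse of $\sigma^*$ yields a quasi-inverse construction for $\sigma_*$ (alternatively, note that $\sigma$ is invertible on $L$ since $k$ is perfect).

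Finally, for the Lie algebra statement, I would argue as follows: since $\sigma_*$ comes from a tensor equivalence and $\eta_y$ commutes with tensor products, $\sigma_*$ preserves the identity section $e \colon \Spec L \to \pi_1^\unip(Y,y)$ in the sense that $\sigma_* \circ e = e \circ \sigma$. Hence $\sigma_*$ restricts to a map on the maximal ideal $\mathfrak{m}_e$ at the identity; the induced map $\sigma_*^\# \colon \mathfrak{m}_e/\mathfrak{m}_e^2 \to \mathfrak{m}_e/\mathfrak{m}_e^2$ on the cotangent space is $\sigma$-semilinear because $\sigma_*$ sits over $\sigma$. By duality, the induced map on $\Lie\pi_1^\unip(Y,y) = (\mathfrak{m}_e/\mathfrak{m}_e^2)^\vee$ is $\sigma^{-1}$-semilinear, which is the asserted form.

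The main obstacle I foresee is not conceptual but notational: carefully tracking the direction of the semilinearity (which side of $\eta_y$ the twist $\otimes_{L,\sigma} L$ appears on, and accordingly whether $\sigma_*$ lives over $\sigma$ or $\sigma^{-1}$, and whether the resulting Lie algebra action ends up $\sigma$- or $\sigma^{-1}$-semilinear). Once the convention in \eqref{eq: sigma on unip} is pinned down, the rest is a routine application of the tannakian dictionary.
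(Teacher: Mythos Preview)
Your proposal is correct and follows essentially the same approach as the paper. Both obtain $\sigma_*$ from tannakian functoriality (you unpack this on $R$-points, the paper just cites it), and for the Lie algebra the paper factors $\sigma_*$ through the base change $\pi'_1:=\Spec L\times_{\sigma,\Spec L}\pi_1^\unip(Y,y)$ to separate an $L$-linear isomorphism $f$ from a $\sigma$-semilinear comparison $g$, while you reach the same conclusion via the $\sigma$-semilinearity of $\sigma_*^\#$ on the cotangent space and duality; these are two standard packagings of the same computation.
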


\begin{proof}
	The morphism \eqref{eq: sigma pi1} is given by the functoriality of tannakian fundamental groups.
	Let
		\[\pi'_1:=\Spec L\times_{\sigma,\Spec L}\pi_1^\unip(Y,y)\]
	and regard it as a group scheme over $L$ by the first projection.
	Then the diagram \eqref{eq: diag sigma pi} induces an isomorphism $f\colon \pi_1^\unip(Y,y)\rightarrow\pi'_1$ over $L$ and hence an $L$-linear isomorphism
		\[f\colon\Lie\pi_1^\unip(Y,y)\rightarrow\Lie\pi'_1.\]
	Since the base change by $\sigma$ induces an isomorphism $\pi_1^\unip(Y,y)(R)\rightarrow\pi'_1(R\otimes_{L,\sigma}L)$ for any $L$-algebra $R$, we have a $\sigma$-semilinear isomorphsim
		\[g\colon\Lie\pi_1^\unip(Y,y)\rightarrow\Lie\pi'_1.\]
	Therefore $\sigma_*:=g^{-1}\circ f$ is a $\sigma^{-1}$-semilinear automorphism.
\end{proof}

The following construction based on \cite[Part I, \S 1]{Wi} and \cite[Chap.\,II, \S 2.2]{Ch} is used to state the rigidity of log overconvergent ($F$-)isocrystals.
For an associative $L$-algebra $A$ which is not necesarily commutative, the product $[a,b]:=ab-ba$ makes $A$ a Lie algebra, which we denote by $[A]$.
For any finite-dimensional Lie algebra $\frg$ over $L$, let $\frU(\frg)$ be the universal enveloping algebra.
Namely $\frU(\frg)$ is an associative $L$-algebra equipped with a Lie algebra homomorphism $\frg\rightarrow [\frU(\frg)]$, such that for any associative $L$-algebra $A$ the natural map $\Hom(\frU(\frg),A)\rightarrow\Hom(\frg,[A])$ is bijective.
Then the zero map $\frg\rightarrow [L]$ induces an $L$-algebra homomorphism $\frU(\frg)\rightarrow L$, which we call the {\it argumentation}.
We call its kernel the {\it argumentation ideal} of $\frU(\frg)$.
We denote by $\widehat{\frU}(\frg)$ the completion of $\frU(\frg)$ with respect to the argumentation ideal.

For a pro-unipotent algebraic group $\cG=\varprojlim_{n\in\bbN}\cG_n$, let
	\begin{equation}
	\widehat{\frU}(\Lie\cG):=\varprojlim_{n\in\bbN}\widehat{\frU}(\Lie\cG_n).
	\end{equation}
We define the argumentation ideal $\fra\subset\widehat{\frU}(\Lie\cG)$ to be $\fra:=\varprojlim_{n\in\bbN}\widehat{\fra}_n$, where $\widehat{\fra}_n$ is the ideal of $\widehat{\frU}(\Lie\cG_n)$ generated by the argumentation ideal of $\frU(\Lie\cG_n)$.
The argumentations of $\frU(\Lie\cG_n)$ extend to $\widehat{\frU}(\Lie\cG_n)$ and induce the argumentation $\widehat{\frU}(\Lie\cG)\rightarrow L$.

Let $Y$ be a connected strictly semistable log scheme over $k^0$ and assume that there exists a $k$-rational point $y\in Y^\sm(k)$.
Since $\Isoc^\dagger(Y/W^\varnothing)^\unip$ is a nilpotent tannakian category, $\pi_1^\unip(Y,y)$ is a pro-unipotent algebraic group over $L$.
Since $\fra/\fra^2\cong\Lie\pi_1^\unip(Y,y)^\ab$, we have natural isomorphisms
	\begin{eqnarray}
	\nonumber H^1_\rig(Y/W^\varnothing)&\cong& \Ext^1_{\Isoc^\dagger(Y/W^\varnothing)^\unip}(\sO_{Y/W^\varnothing},\sO_{Y/W^\varnothing})\cong\Ext^1_{\Rep_L(\pi_1^\unip(Y,y))}(L,L)\\
	\label{eq: H1 and Lie}&\cong&\Hom(\pi_1^\unip(Y,y),\bbG_a)\cong\Hom(\pi_1^\unip(Y,y)^\ab,\bbG_a)\underset{(*)}{\cong}\Hom(\Lie\pi_1^\unip(Y,y)^\ab,[L]),\\
	\nonumber &\cong&(\fra/\fra^2)^\vee
	\end{eqnarray}
where $\vee$ denotes the dual space, and $(*)$ is an isomorphism since $\pi_1^\unip(Y,y)^\ab$ is pro-unipotent.

\begin{proposition}\label{prop: complete}
	Let $Y$ be a connected strictly semistable log scheme over $k^0$ and assume that there exists a $k$-rational point $y\in Y^\sm(k)$.
	The ring $\widehat{\frU}(\Lie\pi_1^\unip(Y,y))$ is $\fra$-adically complete.
	Moreover the $\fra$-adic topology coincides with the project limit topology.
\end{proposition}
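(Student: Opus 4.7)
The strategy is to use the lower central series of $\frg := \Lie\pi_1^\unip(Y,y)$ to replace the given cofinal system by one in which powers of the augmentation ideal are controlled by the transition maps.

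Since $\pi_1^\unip(Y,y)$ is pro-unipotent, $\frg$ is pro-nilpotent, and I would take as cofinal system the quotients $\frg_n := \frg/C^{n+1}\frg$, where $C^{\bullet}\frg$ denotes the lower central series, so that each $\frg_n$ is a finite-dimensional nilpotent Lie algebra of class at most $n$ and $\pi_1^\unip(Y,y) = \varprojlim_n \exp(\frg_n)$. Writing $A := \widehat{\frU}(\frg)$ and $A_n := \widehat{\frU}(\frg_n)$, the PBW theorem realizes each $A_n$ as a complete noetherian local ring whose maximal ideal is $\widehat{\fra}_n$, and $A = \varprojlim_n A_n$.

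The key technical step is to show that the kernel $I_n := \Ker(A \twoheadrightarrow A_n)$ is contained in $\fra^{n+1}$. For any $\ell \geq n$, the kernel $J_{n,\ell}$ of $A_\ell \twoheadrightarrow A_n$ is the (closed) ideal generated by $C^{n+1}\frg_\ell$; since $[a,b] = ab - ba$ inside $\widehat{\frU}$, each iterated bracket of length $n+1$ already lies in $\widehat{\fra}_\ell^{n+1}$, so $J_{n,\ell} \subseteq \widehat{\fra}_\ell^{n+1}$. Passing to the limit gives $I_n \subseteq \varprojlim_\ell \widehat{\fra}_\ell^{n+1} = \fra^{n+1}$, where the last identification uses noetherianity of each $A_\ell$ so that the closed and algebraic $(n+1)$-th powers of $\widehat{\fra}_\ell$ coincide. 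Combined with surjectivity of $\fra^m \twoheadrightarrow \widehat{\fra}_n^m$ (inherited from $\frg \twoheadrightarrow \frg_n$), any basic neighborhood $V_{n,m} := \Ker(A \twoheadrightarrow A_n/\widehat{\fra}_n^m)$ of the projective limit topology decomposes as $V_{n,m} = \fra^m + I_n \subseteq \fra^{\min(m,n+1)}$; in particular $V_{k-1,k} \subseteq \fra^k$ for every $k$.

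Since conversely $\fra^m \subseteq V_{n,m}$ for every $n$ (as $\fra^m$ maps into $\widehat{\fra}_n^m$), the families $\{\fra^k\}$ and $\{V_{n,m}\}$ are mutually cofinal as filter bases of neighborhoods of $0$, so the $\fra$-adic and projective limit topologies coincide. Completeness in the $\fra$-adic topology then follows from the general fact that a projective limit of complete topological rings is complete in the projective limit topology. I expect the main obstacle to be the careful handling of the inclusion $I_n \subseteq \fra^{n+1}$, in particular the navigation between algebraic and closed powers of the augmentation ideals; this relies crucially on the noetherianity of each $A_n$, and on the observation that iterated brackets in $\Lie$ correspond to products of the same length in $\widehat{\frU}$.
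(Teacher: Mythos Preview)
Your argument has the right overall shape, but there is a genuine gap and a missing geometric input that you should be aware of.

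\textbf{The missing geometric input.} You assert that each $\frg_n = \frg/C^{n+1}\frg$ is a finite-dimensional nilpotent Lie algebra, but this is not automatic for an arbitrary pro-unipotent group: it is equivalent to $\frg^{\ab}$ (and hence $\fra/\fra^2$) being finite-dimensional. This is precisely the content the paper isolates: via the chain of isomorphisms \eqref{eq: H1 and Lie} one has $(\fra/\fra^2)^\vee \cong H^1_\rig(Y/W^\varnothing)$, and the latter is finite-dimensional by Theorem~\ref{thm: HK rig comparison}. The paper's own proof simply records this fact and then invokes \cite[Lemma 2.4]{Ch}, where the purely algebraic statement (for any pro-unipotent group with $\fra/\fra^2$ finite-dimensional) is proved. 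Without this input your $A_n$ need not be noetherian and the rest of your argument does not go through.

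\textbf{The gap in the limit step.} Even granting finite-dimensionality, your justification of $\varprojlim_\ell \widehat{\fra}_\ell^{\,n+1} = \fra^{n+1}$ is not the right one. You say this ``uses noetherianity of each $A_\ell$ so that the closed and algebraic $(n+1)$-th powers of $\widehat{\fra}_\ell$ coincide,'' but that only tells you $\widehat{\fra}_\ell^{\,n+1}$ is closed in $A_\ell$; it says nothing about why an element of the projective limit $\varprojlim_\ell \widehat{\fra}_\ell^{\,n+1}$ can be expressed as a \emph{finite} sum of $(n+1)$-fold products in $\fra$. What you actually need is that the surjection $A/\fra^{n+1} \to A_n/\widehat{\fra}_n^{\,n+1}$ is an isomorphism; equivalently, that $\fra^{n+1}$ is closed in the projective-limit topology. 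This does follow once you know $\fra/\fra^2$ is finite-dimensional (for instance by realizing $A$ as a quotient of a non-commutative formal power series ring $L\langle\langle X_1,\ldots,X_r\rangle\rangle$ on lifts of a basis of $\fra/\fra^2$, where the identity is transparent), but it requires an argument beyond noetherianity of the individual $A_\ell$. This is exactly the point you flagged as the ``main obstacle,'' and it is not yet closed.

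In short: your route via the lower central series is more explicit than the paper's one-line citation, and once you (i) invoke the finiteness of $H^1_\rig(Y/W^\varnothing)$ to get $\dim_L \fra/\fra^2 < \infty$, and (ii) supply a correct argument for $\varprojlim_\ell \widehat{\fra}_\ell^{\,n+1} = \fra^{n+1}$, it becomes a valid self-contained proof.
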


\begin{proof}
	By the isomorphisms \eqref{eq: H1 and Lie} and Theorem \ref{thm: HK rig comparison}, the $L$-vector space $\fra/\fra^2$ has finite dimension.
	This implies the assertion by the same proof as \cite[Lemma 2.4]{Ch}.
\end{proof}

The automorphism $\sigma_*$ in \eqref{eq: sigma Lie} induces a $\sigma^{-1}$-semilinear automorphism on $\widehat{\frU}(\Lie\pi_1^\unip(Y,y))$, which we denote again by $\sigma_*$.

\begin{definition}
	Let $Y$ be a connected strictly semistable log scheme over $k^0$ and assume that there exists a $k$-rational point $y\in Y^\sm(k)$.
	\begin{enumerate}
	\item Let $\Mod^\fin_L(\widehat{\frU}(\Lie\pi_1^\unip(Y,y)))$ be the category of finite-dimensional $L$-vector spaces equipped with left $\widehat{\frU}(\Lie\pi_1^\unip(Y,y))$-module structure such that the action of $\widehat{\frU}(\Lie\pi_1^\unip(Y,y))$ is $L$-linear and continuous with respect to the discrete topology on the module.
	\item Let $\Mod^\fin_L(\varphi,\widehat{\frU}(\Lie\pi_1^\unip(Y,y)))$ be the category of pairs $(M,\varphi)$ of an object $M\in \Mod^\fin_L(\widehat{\frU}(\Lie\pi_1^\unip(Y,y)))$ and a $\sigma$-semilinear automorphism $\varphi$ on $M$ such that the following diagram commutes:
			\[\xymatrix{
			\widehat{\frU}(\Lie\pi_1^\unip(Y,y))\ar[r]^-{\rho_M}\ar[d]^-{\sigma_*^{-1}}&\End_L(M)\ar[d]^-{\Ad\varphi}\\
			\widehat{\frU}(\Lie\pi_1^\unip(Y,y))\ar[r]^-{\rho_M}&\End_L(M),
			}\]
		where $\rho_M$ is the homomorphism defining the module structure, and $\Ad\varphi$ is defined by $f\mapsto\varphi\circ f\circ\varphi^{-1}$.
	\end{enumerate}
\end{definition}

For any $\sE\in\Isoc^\dagger(Y/W^\varnothing)^\unip$, by the definition of the tannakian fundamental group we have a natural morphism of affine group schemes
	\[\pi_1^\unip(Y,y)\rightarrow\GL(\eta_y(\sE)),\]
which induces a morphism of Lie algebras
	\[\Lie\pi_1^\unip(Y,y)\rightarrow[\End_L(\eta_y(\sE))].\]
Moreover this induces an $L$-algebra homomorphism
	\[\widehat{\frU}(\Lie\pi_1^\unip(Y,y))\rightarrow\End_L(\eta_y(\sE)),\]
namely a left action of $\widehat{\frU}(\Lie\pi_1^\unip(Y,y))$ on $\eta_y(\sE)$.

\begin{proposition}
	Let $Y$ be a connected strictly semistable log scheme over $k^0$ and assume that there exists a $k$-rational point $y\in Y^\sm(k)$.
	Then by the construction above, the fiber functor $\eta_y$ induces canonical equivalences of categories
		\begin{eqnarray}
		\label{eq: Isoc Mod}\Isoc^\dagger(Y/W^\varnothing)^\unip&\xrightarrow{\cong}&\Mod_L^\fin(\widehat{\frU}(\Lie\pi_1^\unip(Y,y))),\\
		\label{eq: FIsoc Mod}F\Isoc^\dagger(Y/W^\varnothing)^\unip&\xrightarrow{\cong}&\Mod_L^\fin(\varphi,\widehat{\frU}(\Lie\pi_1^\unip(Y,y))).
		\end{eqnarray}
\end{proposition}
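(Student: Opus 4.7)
The plan is to deduce both equivalences from tannakian reconstruction applied to the neutral tannakian category structure established in Proposition~\ref{prop: tannakian}, combined with the standard dictionary between representations of a pro-unipotent group and modules over the completion of its enveloping algebra.

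First I would treat \eqref{eq: Isoc Mod}. By Proposition~\ref{prop: tannakian}, $\eta_y$ induces an equivalence $\Isoc^\dagger(Y/W^\varnothing)^\unip\cong\Rep_L^{\fin}(\pi_1^\unip(Y,y))$ with the category of finite dimensional $L$-linear representations of $\pi_1^\unip(Y,y)$. Since every object of $\Isoc^\dagger(Y/W^\varnothing)^\unip$ is a successive extension of $\sO_{Y/W^\varnothing}$, every representation is unipotent, so writing $\pi_1^\unip(Y,y)=\varprojlim_n\cG_n$ as a cofiltered limit of unipotent algebraic groups, every finite dimensional representation factors through some $\cG_n$. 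For each unipotent $\cG_n$, differentiation yields an equivalence $\Rep_L^{\fin}(\cG_n)\cong\Mod_L^{\fin,\mathrm{nilp}}(\frU(\Lie\cG_n))$ with the category of finite dimensional modules on which the augmentation ideal acts nilpotently; these are exactly the finite dimensional $\widehat{\frU}(\Lie\cG_n)$-modules with discrete topology and continuous action. Passing to the limit and invoking Proposition~\ref{prop: complete} to identify the projective limit topology with the $\fra$-adic topology on $\widehat{\frU}(\Lie\pi_1^\unip(Y,y))$, I obtain the desired equivalence \eqref{eq: Isoc Mod}, and the functor is manifestly compatible with $\eta_y$ by construction.

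Next I would bootstrap \eqref{eq: FIsoc Mod} from \eqref{eq: Isoc Mod}. The autoequivalence $\sigma^*$ on $\Isoc^\dagger(Y/W^\varnothing)^\unip$ corresponds, via the diagram \eqref{eq: sigma on unip}, to the autoequivalence $M\mapsto M\otimes_{L,\sigma}L$ on $\Mod_L^{\fin}(\widehat{\frU}(\Lie\pi_1^\unip(Y,y)))$ in which the module structure is twisted by $\sigma_*^{-1}$ on $\widehat{\frU}(\Lie\pi_1^\unip(Y,y))$. An object of $F\Isoc^\dagger(Y/W^\varnothing)^\unip$ is a pair $(\sE,\Phi)$ with $\Phi\colon \sigma^*\sE\xrightarrow{\cong}\sE$ in $\Isoc^\dagger(Y/W^\varnothing)^\unip$; applying $\eta_y$ to $\Phi$ produces a $\sigma$-semilinear automorphism $\varphi$ of $M:=\eta_y(\sE)$, and the fact that $\Phi$ is a morphism of log overconvergent isocrystals translates exactly into the commutativity of the square defining $\Mod_L^{\fin}(\varphi,\widehat{\frU}(\Lie\pi_1^\unip(Y,y)))$, namely $\rho_M\circ\sigma_*^{-1}=(\Ad\varphi)\circ\rho_M$. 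Conversely, any such pair $(M,\varphi)$ assembles into an object of $F\Isoc^\dagger(Y/W^\varnothing)^\unip$ through the quasi-inverse of \eqref{eq: Isoc Mod}. Full faithfulness and essential surjectivity follow from those of \eqref{eq: Isoc Mod} together with the faithfulness of $\eta_y$ on morphism spaces for $F$-isocrystals.

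The main subtlety will be the careful bookkeeping of the Frobenius compatibility: I must verify that the $\sigma$-semilinear direction of $\varphi$ and the $\sigma^{-1}$-semilinear direction of $\sigma_*$ combine to yield the $L$-linear relation $\rho_M\circ\sigma_*^{-1}=\Ad\varphi\circ\rho_M$ rather than some twist thereof. Once the semilinearities are lined up correctly, continuity of $\rho_M$ transfers automatically to the twisted module, and the desired equivalence \eqref{eq: FIsoc Mod} follows without further difficulty.
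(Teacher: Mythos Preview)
Your proposal is correct and follows essentially the same approach as the paper: the paper obtains \eqref{eq: Isoc Mod} by invoking \cite[IV \S 2, Corollary 4.5.b]{DG} (the characteristic-zero dictionary between representations of a unipotent group and modules over the completed enveloping algebra), and then deduces \eqref{eq: FIsoc Mod} by tracking how $\sigma^*$ corresponds to twisting $\rho_M$ by $\sigma_*$, exactly as you do. Your more explicit treatment of the pro-unipotent limit and the semilinearity bookkeeping simply unpacks what the paper compresses into that citation.
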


\begin{proof}
	Since $L$ is of characteristic zero, the equivalence \eqref{eq: Isoc Mod} is given by \cite[IV \S 2, Corollary 4.5.b]{DG}.
	Then the autofunctor $\sigma^*$ on $\Isoc^\dagger(Y,y)^\unip$ corresponds to the autofunctor on $\Mod_L(\widehat{\frU}(\Lie\pi_1^\unip(Y,y)))$ given by twisting the action as
	\[\rho_{\sigma^*M}:=\rho_M\circ \sigma_*\colon\widehat{\frU}(\Lie\pi_1^\unip(Y,y))\rightarrow\End_L(M)\]
	for any $M\in\Mod_L(\widehat{\frU}(\Lie\pi_1^\unip(Y,y)))$, where $\rho_M$ denotes the algebra homomorphism defining the module sructure of $M$.
	Thus a Frobenius structure $\Phi\colon\sigma^*\sE\xrightarrow{\cong}\sE$ corresponds to an isomorphism $\varphi\colon\sigma^*\eta_y(\sE)\xrightarrow{\cong}\eta_y(\sE)$ such that
		\[\varphi\circ(\rho_{\eta_y(\sE)}\circ\sigma_*(g))(m)=\rho_{\eta_y(\sE)}(g)\circ\varphi(m)\]
	for any $m\in\eta_y(\sE)$ and $g\in\widehat{\frU}(\Lie\pi_1^\unip(Y,y))$.
	This gives the equivalence \eqref{eq: FIsoc Mod}.
\end{proof}

%%%%%%%%%%%%%%%%%%%%%%%%%%%%
\section{Mixed $F$-isocrystals}\label{sec: mixed}
%%%%%%%%%%%%%%%%%%%%%%%%%%%%

In this section, we introduce the notion of mixed log overconvergent $F$-isocrystals, and prove their rigidity property.

Assume that $k$ is a finite field of order $q=p^d$.
Since $\sigma^d=\id_L$ on $L$, for any $M\in\Mod^\fin_L(\varphi,N)$ the automorphism $\varphi^d$ is $L$-linear.
A {\it Weil number relative to $k$} is an algebraic number $\alpha$ whose archimedean absolute value with respect to any embedding into $\bbC$ is $q^{n/2}$ for some $n\in\bbZ$.
We call $n$ the {\it weight} of $\alpha$.

\begin{definition}\label{def: mixed mod}
	Let $M\in\Mod^\fin_L(\varphi,N)$.
	We say $M$ is {\it pure of weight $n$} (resp.\  {\it mixed}) relative to $k$ if all eigenvalues of $\varphi^d$ are Weil numbers of weight $n$ (resp.\  of several weights) relative to $k$.
	We denote by $\Mod_L^\mix(\varphi,N)$ (resp.\  $\Mod_L^\mix(\varphi)$) the full subcategory of $\Mod_L^\fin(\varphi,N)$ (resp.\  $\Mod_L^\fin(\varphi)$) consisting of mixed objects relative to $k$.
\end{definition}

\begin{remark}\label{rem: mixed mod tannakian}
	\begin{enumerate}
	\item Mixed objects are closed under extension, subobject, quotient, tensor product, internal Hom, and dual in the category $\Mod_L^\fin(\varphi,N)$.
		In particular, $\Mod_L^\mix(\varphi,N)$ is a tannakian category over $\bbQ_p$.
	\item\label{item: remark mixed} Let $k'$ be a finite extension of $k$ of degree $r$, and $L'$ the fraction field of the ring of Witt vectors of $k'$.
		Then an algebraic number $\alpha$ is a Weil number of weight $n$ relative to $k$ if and only if $\alpha^r$ is a Weil number of weight $n$ relative to $k'$.
		Thus an object $M\in\Mod_L^\fin(\varphi,N)$ is pure of weight $n$ (resp.\ mixed) relative to $k$ if and only if $M\otimes_LL'$ is pure of weight $n$ (resp.\ mixed) relative to $k'$.
	\end{enumerate}
\end{remark}

If $M$ is mixed relative to $k$, there exists a unique increasing filtration $W_\bullet$ on $M$ by subobjects such that $\Gr_n^WM$ is pure of weight $n$.
Indeed $W_nM$ is given by the sum of generalized eigenspaces of the eigenvalues being Weil number weight $\leq n$ relative to $k$.
The relation $N\varphi=p\varphi N$ implies that $N$ preserves $W_\bullet$.
We call this the {\it weight filtration} on $M$.

\begin{definition}\label{def: mixed isoc}
	Let $Y$ be a strictly semistable log scheme over $k^0$ and let $(\sE,\Phi)\in F\Isoc^\dagger(Y/W^\varnothing)^\unip$.
	\begin{enumerate}
	\item We say $(\sE,\Phi)$ is {\it pure of weight $n$} if for any $y\in P(Y)$ the object $H^0_\HK(y,(\sE_y,\Phi_y))\in\Mod^\fin_{L_y}(\varphi,N)$ is pure of weight $n$ relative to $k_y$.
	\item We say $(\sE,\Phi)$ is {\it mixed} if there exists a finite increasing filtration $W_\bullet$ on $(\sE,\Phi)$ by subobjects, such that $\Gr^W_n(\sE,\Phi)$ is pure of weight $n$ for any $n\in\bbZ$.
		We call such a filtration a {\it weight filtration}.
		We denote by $F\Isoc^\dagger(Y/W^\varnothing)^\mix$ the full subcategory of $F\Isoc^\dagger(Y/W^\varnothing)^\unip$ consisting of mixed objects.
	\end{enumerate}
\end{definition}

\begin{proposition}\label{prop: mixed isoc tannakian}
	Mixed objects are closed under subobject, quotient, tensor product, internal Hom, and dual in the category $F\Isoc^\dagger(Y/W^\varnothing)^\unip$.
	In particular $F\Isoc^\dagger(Y/W^\varnothing)^\mix$ is a tannakian category over $\bbQ_p$.
\end{proposition}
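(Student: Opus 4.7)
The plan is to reduce each of the five closure properties to the corresponding facts in $\Mod_L^\mix(\varphi,N)$ recorded in Remark \ref{rem: mixed mod tannakian}, using the faithful exact fiber functors $\eta_y$ of Proposition \ref{prop: tannakian} for $y\in P(Y)$. A preliminary step is to verify that any morphism between mixed objects is strictly compatible with weight filtrations; equivalently, that
\[
\Hom_{F\Isoc^\dagger(Y/W^\varnothing)^\unip}((\sE,\Phi),(\sF,\Psi))=0
\]
whenever $(\sE,\Phi)$ has all weights $\leq n$ and $(\sF,\Psi)$ has all weights $>n$. Faithfulness of $\eta_y$ reduces this to the analogous $(\varphi,N)$-module statement, which is elementary since Weil numbers of distinct weights have different archimedean sizes. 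In particular, the weight filtration is unique whenever it exists, and is functorial.

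Granted strictness, for a subobject $(\sE',\Phi')\hookrightarrow(\sE,\Phi)$ of a mixed object with weight filtration $W_\bullet$, I would set $W_n\sE':=\sE'\cap W_n\sE$, interpreted as the pullback in the abelian category $F\Isoc^\dagger(Y/W^\varnothing)^\unip$ of Proposition \ref{prop: unipotence}. Then $\Gr_n^W\sE'$ embeds into $\Gr_n^W\sE$, and the exact functor $\eta_y$ sends it to a subobject of a pure-of-weight-$n$ $(\varphi,N)$-module over $L_y$. By Remark \ref{rem: mixed mod tannakian}(1) this fiber is pure of weight $n$ (or zero), giving purity of $\Gr_n^W\sE'$ in the sense of Definition \ref{def: mixed isoc}. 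Quotients are handled symmetrically via the induced filtration. For tensor products, the convolution filtration
\[
W_n\bigl((\sE,\Phi)\otimes(\sE',\Phi')\bigr):=\sum_{i+j=n}W_i\sE\otimes W'_j\sE'
\]
has graded pieces $\bigoplus_{i+j=n}\Gr_i^W\sE\otimes\Gr_j^{W'}\sE'$, pure of weight $n$ by multiplicativity of Weil numbers after applying $\eta_y$. For internal Hom, the filtration
\[
W_n\sheafhom((\sE',\Phi'),(\sE,\Phi)):=\bigl\{\,f : f(W'_i\sE')\subset W_{n+i}\sE \text{ for all } i\,\bigr\}
\]
yields graded pieces of the correct weight by a dual argument on fibers, and the dual is the special case with target $\sO_{Y/W^\varnothing}$, pure of weight $0$. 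Combined with Proposition \ref{prop: tannakian}, this gives the tannakian assertion, with neutrality via $\eta_y$ whenever $y\in Y^\sm(k)$.

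The main obstacle is checking that these set-theoretic recipes produce genuine subobjects of $F\Isoc^\dagger(Y/W^\varnothing)^\unip$ whose formation commutes with the fiber functors $\eta_y$: intersections, sums, and graded pieces must be sent to the analogous constructions at each fiber. This is handled by the exactness and faithfulness of $\eta_y$ in Proposition \ref{prop: tannakian} together with the abelian structure from Proposition \ref{prop: unipotence}. A minor subtlety is that the pointwise purity criterion varies the residue field $L_y$ across $y\in P(Y)$; compatibility is ensured by Remark \ref{rem: mixed mod tannakian}(2), which says purity is invariant under unramified base change, so checking all $y\in P(Y)$ yields a consistent global notion.
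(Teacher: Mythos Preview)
Your core argument for each of the five closure properties is essentially the paper's: equip the subobject (resp.\ quotient, tensor product, internal Hom) with the induced (resp.\ coinduced, convolution, Hom) filtration, observe that its graded pieces are subobjects (resp.\ quotients, direct sums of tensor products or Homs) of pure objects, and check purity fiberwise using exactness of $H^0_\HK(y,\cdot)$ together with Remark~\ref{rem: mixed mod tannakian}(1). The phrasing via $\eta_y$ rather than $H^0_\HK(y,\cdot)$ is cosmetic, since purity is a condition on $\varphi^d$-eigenvalues and does not involve $N$.

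The preliminary strictness step, however, is not needed and does not appear in the paper's proof. Nothing in the subobject or quotient argument uses that morphisms are strict for the weight filtration: you only use that $\Gr^W_n$ of the induced filtration on a subobject injects into $\Gr^W_n$ of the ambient object, which is formal. In the paper's logical ordering, uniqueness of the weight filtration (Proposition~\ref{prop: W unique}) and strictness (Corollary~\ref{cor: W strict}) are proved \emph{after} the present proposition and in fact invoke it, via the mixedness of $\sheafhom((\sE,\Phi),(\sE,\Phi))$. That said, your direct fiberwise argument for the Hom-vanishing---faithfulness of $\eta_y$ from Proposition~\ref{prop: tannakian} plus the elementary fact that $\varphi^d$-eigenvalues of distinct Weil weights cannot coincide---is correct and yields a shorter, independent route to uniqueness and strictness than the paper's passage through Lemma~\ref{lem: weight H0}; it is simply extraneous to the proposition at hand.
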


\begin{proof}
	Let $(\sE',\Phi')$ be a subobject (resp.\  quotient) of $(\sE,\Phi)$ in $F\Isoc^\dagger(Y/W^\varnothing)^\unip$, and assume that $(\sE,\Phi)$ is mixed with a weight filtration $W_\bullet$.
	We denote again by $W_\bullet$ the induced filtration on $\sE'$.
	Then $\Phi'$ preserves $W_\bullet$, and $\Gr^W_n(\sE',\Phi')$ is a subobject (resp.\  quotient) of $\Gr^W_n(\sE,\Phi)$.
	Therefore $H^0_\HK(y,(\Gr^W_n(\sE',\Phi'))_y)$ is a subobject (resp.\  quotient) of $H^0_\HK(y,(\Gr^W_n(\sE,\Phi))_y)$ and hence pure of weight $n$.
	Thus $(\sE',\Phi')$ is also mixed.
	
	If two objects $(\sF,\Psi),(\sF',\Psi')\in F\Isoc^\dagger(Y/W^\varnothing)^\mix$ are given, their weight filtrations induce the weight filtrations on $(\sF,\Psi)\otimes(\sF',\Psi')$ and $\sheafhom((\sF',\Psi'),(\sF,\Psi))$.
\end{proof}

In order to prove that a weight filtration on a log overconvergent $F$-isocrystal is unique if it exists, we prepare some assertions.

\begin{lemma}\label{lem: weight ss}
	Let $Y$ be a strictly semistable log scheme over $k^0$, $D$ the horizontal divisor of $Y$, and $\Upsilon_Y$ the set of irreducible components of $Y$.
	For a subset $I\subset\Upsilon_Y$, let $Y_I^\infty$ be the log scheme whose underlying scheme is $Y_I:=\bigcap_{i\in I}Y_i$ and whose log structure is associated to the divisor $Y_I\cap D$ (not the pull-back structure of $Y$).
	For $r\geq 1$, let $Y^{\infty,(r)}:=\coprod_{\substack{I\subset \Upsilon_Y\\ \lvert I\rvert=r}}Y_I^\infty$.
	Then there exists a spectral sequence
		\begin{equation}\label{eq: weight ss}
		E_1^{-i,i+n}=\bigoplus_{j\geq\max\{0,-i\}}H^{n-i-2j}_\rig(Y^{\infty,(i+2j+1)}/W^\varnothing)(-i-j)\Rightarrow H^n_\rig(Y/W^0)\cong H^n_\HK(Y).
		\end{equation}
	Here $(-i-j)$ denotes the twist of $\varphi$-modules (see Definition \ref{def: phi N module}).
\end{lemma}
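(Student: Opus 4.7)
The plan is to construct a Steenbrink--Mokrane style weight filtration on a complex computing $R\Gamma_\rig(Y/W^0) \simeq R\Gamma_\HK(Y)$ (the latter identification by Theorem~\ref{thm: HK rig comparison}) and to read off the stated spectral sequence from its associated graded pieces.

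First, I would fix a local embedding datum $(Z_\lambda,\cZ_\lambda,i_\lambda,h_\lambda,\theta_\lambda)_\lambda$ for $Y$ over $\cS$ with each $\cZ_\lambda$ strictly semistable, so that Zariski-locally on $\cZ_\lambda$ there are coordinates $x_1,\ldots,x_n,y_1,\ldots,y_m$ with $s=x_1\cdots x_n$, the $x_i$ cutting out the vertical (semistable) components and the $y_j$ the horizontal divisor. On $\omega^\bullet_{\cZ_\lambda/W^\varnothing,\bbQ}$ introduce Deligne's weight filtration $P_\bullet$ by order of logarithmic pole in the $x_i$-directions. Iterated Poincar\'e residue along $r$-fold intersections of vertical components provides canonical quasi-iso\-morphisms
\[
\Gr^P_r \omega^\bullet_{\cZ_\lambda/W^\varnothing,\bbQ}\ \xrightarrow{\ \cong\ }\ \bigoplus_{|I|=r}\omega^{\bullet-r}_{\cZ_{\lambda,I}^\infty/W^\varnothing,\bbQ},
\]
where $\cZ_{\lambda,I}^\infty:=\bigcap_{i\in I}\cZ_{\lambda,i}$ is endowed with the log structure induced \emph{only} from the horizontal divisor. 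By naturality of residues, these constructions descend to the simplicial local embedding data and assemble into a global filtered complex.

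Next, I would form Steenbrink's double complex $A^{p,q}:=\omega^{p+q+1}_{\cZ_\lambda/W^\varnothing,\bbQ}/P_q\omega^{p+q+1}_{\cZ_\lambda/W^\varnothing,\bbQ}$ for $p,q\geq 0$ with the standard differentials $d$ and $\wedge d\log s$. Its total complex is quasi-isomorphic to $\omega^\bullet_{\cZ_\lambda/W^0,\bbQ}$ via the canonical exact sequence relating the two de~Rham complexes through $\wedge d\log s$ (the same mechanism underlying the identification of the Kim--Hain complex with the $(d,d\log s)$-bicomplex recorded in Lemma~\ref{lem: total complex}). Endow the total complex with the Steenbrink weight filtration $W_r s(A)^\bullet:=\bigoplus_p P_{2p+r+1}A^{p,\bullet-p}$. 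The associated spectral sequence, after identification of its $E_1$ via the residue isomorphism above, takes exactly the stated form
\[
E_1^{-i,i+n}=\bigoplus_{j\geq\max(0,-i)}H^{n-i-2j}_\rig(Y^{\infty,(i+2j+1)}/W^\varnothing)(-i-j),
\]
the $j$-summation reflecting that multiple $\Gr^P_{2j+i+1}A^{j,\bullet}$ contribute to the same weight $-i$, and the Tate twist $(-i-j)$ encoding the scaling of Frobenius on $(i+j)$-fold iterated residues via $\sigma(x_i)=x_i^p(\text{unit})$.

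The principal technical obstacle is verifying the Poincar\'e residue quasi-isomorphism and Steenbrink's bicomplex identifications in the dagger / weak formal setting, and then propagating them along the simplicial local embedding $(Z_\bullet,\cZ_\bullet,\ldots)$ used to define $R\Gamma_\HK(Y)$. Both issues reduce to the explicit local coordinates on strictly semistable $\cZ_\lambda$, where the arguments become direct analogues of Mokrane's crystalline computation and of Gro\ss{}e-Kl\"{o}nne's rigid-analytic version, transposed to the weak formal / dagger framework developed in Sections~\ref{sec: weak formal schemes}--\ref{sec: absolute isoc}. Once the local case is established, compatibility with the simplicial data and Frobenius twists follows from functoriality of residues and the local description of the Frobenius lift $\sigma$.
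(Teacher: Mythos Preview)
Your proposal is correct and is essentially the same approach as the paper's: the paper simply cites Gro\ss{}e-Kl\"{o}nne \cite[\S5~(4)]{GK3}, whose rigid-analytic Steenbrink--Mokrane construction is exactly what you have unpacked. The only additional observation the paper makes is that, rather than redoing the construction in the presence of the horizontal divisor $D$, one may instead apply \cite[\S5~(4)]{GK3} verbatim to $U:=Y\setminus D$ and invoke the isomorphism $H^n_\HK(Y)\cong H^n_\HK(U)$ of \cite[Lemma~3.15]{EY}; this sidesteps the need to carry the $y_j$-log poles through the residue computation.
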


\begin{proof}
	This is given by repeating the same construction as \cite[\S 5 (4)]{GK3}, or applying \cite[\S 5 (4)]{GK3} for $U:=Y\setminus D$. (Note that we have $H_\HK^n(Y)\cong H_\HK^n(U)$ by \cite[Proposition 3.32]{EY}.)
\end{proof}

\begin{proposition}\label{prop: coh is mixed}
	Let $Y$ be a strictly semistable log scheme over $k^0$.
	\begin{enumerate}
	\item For any object $(\sE,\Phi)\in F\Isoc^\dagger(Y/W^\varnothing)^\mix$, the cohomology groups $H^n_\HK(Y,(\sE,\Phi))$ and $H^n_\rig(Y/W^\varnothing,(\sE,\Phi))$ are mixed relative to $k$ for any $n$.
	\item For the trivial coefficient, the weights of $H^n_\rig(Y/W^\varnothing)$ and $H^n_\HK(Y)$ lie in the interval $[0,2n]$, that is, we have
			\[\Gr^W_iH^n_\rig(Y/W^\varnothing)=\Gr^W_iH^n_\HK(Y)=0\]
		for any $i\notin [0,2n]$.
	\end{enumerate}
\end{proposition}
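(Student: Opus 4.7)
I would prove (2) first by means of the weight spectral sequence of Lemma~\ref{lem: weight ss}, and then deduce (1) by two successive d\'{e}vissages---first through the weight filtration on $(\sE,\Phi)$, and then through the natural filtration of Proposition-Definition~\ref{prop: filtration} on each pure piece---to reduce to the case of constant coefficients, where the cohomology factors as a tensor product.

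For (2), each underlying scheme of $Y^{\infty,(r)}$ is smooth over $k$ and carries the log structure of its horizontal divisor, so its log rigid cohomology over $W^\varnothing$ coincides with the rigid cohomology of the smooth open complement. By Chiarellotto's analog of Weil~II for rigid cohomology (\cite{Ch}), $H^m_\rig$ of a smooth variety over $k$ carries Frobenius weights in $[m, 2m]$. The Tate twist $(-i-j)$ shifts these up by $2(i+j)$, so a generic $E_1$-term of \eqref{eq: weight ss} has weights in $[n+i,\, 2n-2j]$; under the running constraints $j \geq \max\{0,-i\}$ and $n-i-2j \geq 0$, this range sits inside $[0,2n]$. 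Since mixedness is stable under subquotients in $\Mod_L^\fin(\varphi,N)$, we conclude that $H^n_\HK(Y)$ has weights in $[0,2n]$. For $H^n_\rig(Y/W^\varnothing)$, reading \eqref{eq: cone of N} at the level of $\varphi$-modules---consistent with the homotopy limit of \eqref{eq: abs HK holim}, in which the bottom row carries Frobenius $p\varphi$---gives a distinguished triangle $R\Gamma_\rig(Y/W^\varnothing) \to R\Gamma_\HK(Y) \xrightarrow{N} R\Gamma_\HK(Y)(-1)$, whose associated long exact sequence realizes $H^n_\rig(Y/W^\varnothing)$ as an extension of a subobject of $H^n_\HK(Y)$ (weights in $[0,2n]$) by a quotient of $H^{n-1}_\HK(Y)(-1)$ (weights in $[2,2n]$).

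For (1), we first base change by a finite extension of $k$ and pass to components to assume $Y$ is geometrically connected; both reductions preserve mixedness by Proposition~\ref{prop: base change coh} and Remark~\ref{rem: mixed mod tannakian}(2). Given the weight filtration $W_\bullet$ on $(\sE,\Phi)$, the long exact sequences in Hyodo--Kato cohomology attached to $0 \to W_{i-1}(\sE,\Phi) \to W_i(\sE,\Phi) \to \Gr^W_i(\sE,\Phi) \to 0$, together with closure of mixedness under extensions (Remark~\ref{rem: mixed mod tannakian}(1)), reduce to the case where $(\sE,\Phi)$ is pure of some weight $m$. For such $(\sE,\Phi)$, Proposition~\ref{prop: unip const} supplies the natural filtration whose graded pieces are constant; since being pure of weight $m$ descends to subobjects and quotients, these graded pieces are of the form $M^a$ with $M \in \Mod_L^\fin(\varphi)$ pure of weight $m$. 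A second d\'{e}vissage reduces us to $(\sE,\Phi)=M^a$, and Proposition~\ref{prop: pseudo-const} then identifies $H^n_\HK(Y, M^a) \cong M \otimes H^n_\HK(Y)$ as $(\varphi,N)$-modules, whose weights therefore lie in $[m,\, m+2n]$ by (2). The analogous identification for $H^n_\rig(Y/W^\varnothing, M^a)$ is obtained by applying $M \otimes -$ to the triangle used in the proof of (2) (using $N_M=0$, which forces the monodromy on $H^n_\HK(Y,M^a)$ to come solely from the second tensor factor).

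The most delicate step is the Frobenius bookkeeping: verifying that the Frobenius structures appearing in \eqref{eq: weight ss} match the normalization used in Chiarellotto's weight bound, and that the ``second copy'' of $R\Gamma_\HK$ in \eqref{eq: cone of N} really carries Frobenius $p\varphi$ (equivalently, the Tate twist $(-1)$) when upgraded to the derived category of $(\varphi,N)$-modules. Once these normalizations are pinned down, the remainder of the argument is essentially numerical---tracking the inequalities on weights through the two spectral sequences and the long exact sequence of the cone.
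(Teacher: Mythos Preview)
Your proof is correct and follows essentially the same route as the paper: the weight spectral sequence of Lemma~\ref{lem: weight ss} plus Chiarellotto's bound for (2), and a d\'{e}vissage to constant coefficients via the natural filtration for (1). The only difference is that for (1) you insert an additional d\'{e}vissage through the weight filtration $W_\bullet$ on $(\sE,\Phi)$ before invoking the natural filtration; the paper skips this and applies the spectral sequence \eqref{eq: ss for Fil} directly, using Proposition~\ref{prop: mixed isoc tannakian} to see that each $\Gr_i^\Fil(\sE,\Phi)=M_i^a$ is a subquotient of a mixed isocrystal, hence $M_i\in\Mod_L^\fin(\varphi)$ is mixed, so $E_1^{-i,i+n}=M_i\otimes H^n_\HK(Y)$ is mixed once the trivial-coefficient case is known. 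Your extra step is harmless but unnecessary, and the same spectral sequence \eqref{eq: ss for Fil} for $\cT=W^\varnothing$ handles $H^n_\rig$ directly without passing through the cone triangle again.
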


\begin{proof}
	By Proposition \ref{prop: mixed isoc tannakian} and the spectral sequence \eqref{eq: ss for Fil}, it suffices to consider the case $(\sE,\Phi)=\sO_{Y/W^\varnothing}$.
	For the spectral sequence \eqref{eq: weight ss}, we have $E_1^{-i,i+n}=0$ if $i\notin[-n,n]$.
	Moreover $E_1^{-i,i+n}$ are mixed whose weights lie in $[n+i,2n]$ by \cite[Chap.\,I, Theorem 2.2]{Ch} .
	Thus the weights of $H^n_\HK(Y)$ lie in $[0,2n]$.
	Since \eqref{eq: cone of N} induces an exact sequence
		\[H_\HK^{n-1}(Y)(-1)\rightarrow H^n_\rig(Y/W^\varnothing)\rightarrow H^n_\HK(Y),\]
	we see that the weights of $H^n_\rig(Y/W^\varnothing)$ also lie in $[0,2n]$.
\end{proof}

\begin{lemma}\label{lem: weight H0}
	Let $Y$ be a strictly semistable log scheme over $k^0$ and let $(\sE,\Phi)\in F\Isoc^\dagger(Y/W^\varnothing)^\unip$. 
	If $(\sE,\Phi)$ is pure object of weight $n$, then $H^0_\HK(Y,(\sE,\Phi))$ and $H^0_\rig(Y/W^\varnothing,(\sE,\Phi))$ are pure of weight $n$ relative to $k$.
	If $(\sE,\Phi)$ is mixed with a weight filtration $W_\bullet$, then we have
		\begin{align*}
		W_nH^0_\HK(Y,(\sE,\Phi))=H^0_\HK(Y,W_n(\sE,\Phi))&&\text{and}&&W_nH^0_\rig(Y/W^\varnothing,(\sE,\Phi))=H^0_\rig(Y/W^\varnothing,W_n(\sE,\Phi)),
		\end{align*}
	where $W_n$ on the left hand side are the weight filtrations canonically defined for objects of $\Mod_L^\mix(\varphi,N)$.
\end{lemma}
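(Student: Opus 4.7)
The plan is to first handle the $H^0_\HK$ statements, then deduce the $H^0_\rig$ ones via the identification $H^0_\rig(Y/W^\varnothing,-)=\ker(N\colon H^0_\HK(Y,-)\to H^0_\HK(Y,-))$ coming from \eqref{eq: cone of N}. Decomposing $Y$ into connected components and, if needed, enlarging $k$---which preserves purity, mixedness, and weights by Remark \ref{rem: mixed mod tannakian}(2)---I reduce to the case when $Y$ is geometrically connected, so that $H^0_\HK(Y)=L$ and the natural filtration $\Fil_\bullet\sE$ from Proposition-Definition \ref{prop: filtration} and Proposition \ref{prop: unip const} is available with constant graded pieces $\Gr^\Fil_i(\sE,\Phi)=M_i^a$, $M_i\in\Mod_L^\fin(\varphi)$.

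Suppose first that $(\sE,\Phi)$ is pure of weight $n$. For any $y\in P(Y)$, the pullback of $\Fil_\bullet\sE$ to $y$ is a filtration of $\sE_y$ in $F\Isoc^\dagger(y/W_y^\varnothing)^\unip$ whose graded pieces correspond, under the equivalence of Proposition \ref{prop: isoc on point}, to $(M_i\otimes_LL_y,\varphi,0)$ (Proposition \ref{prop: realization pseudo-const}). Since $\varphi^d$-eigenvalues of subquotients lie among those of the ambient module, each $M_i\otimes_LL_y$---and hence each $M_i$---has $\varphi^d$-eigenvalues that are Weil numbers of weight $n$; the same reasoning shows $\sE/\Fil_0\sE$ is pure of weight $n$ with natural filtration of length one shorter. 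I induct on the length of the natural filtration. For length one, Proposition \ref{prop: pseudo-const} gives $H^0_\HK(Y,M_0^a)\cong M_0\otimes H^0_\HK(Y)=M_0$, pure of weight $n$. In general, the left-exact sequence
\[0\to M_0\to H^0_\HK(Y,(\sE,\Phi))\to H^0_\HK(Y,(\sE,\Phi)/\Fil_0(\sE,\Phi))\]
coming from $0\to M_0^a\to\sE\to\sE/\Fil_0\sE\to 0$ exhibits $H^0_\HK(Y,(\sE,\Phi))$ as an extension of a sub-$(\varphi,N)$-module of a pure-of-weight-$n$ module by $M_0$. Because characteristic polynomials multiply in extensions and sub-$(\varphi,N)$-modules of a pure module are pure, this extension is itself pure of weight $n$.

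For the mixed case, I induct on the length of the weight filtration $W_\bullet(\sE,\Phi)$. The pure case inductively yields that $H^0_\HK(Y,W_m(\sE,\Phi))$ is mixed with weights $\leq m$ and $H^0_\HK(Y,(\sE,\Phi)/W_m(\sE,\Phi))$ is mixed with weights $\geq m+1$ for each $m$. The left-exact sequence
\[0\to H^0_\HK(Y,W_n(\sE,\Phi))\to H^0_\HK(Y,(\sE,\Phi))\to H^0_\HK(Y,(\sE,\Phi)/W_n(\sE,\Phi))\]
then shows $H^0_\HK(Y,(\sE,\Phi))$ is mixed. The image of the first arrow has all weights $\leq n$, hence lies in $W_nH^0_\HK(Y,(\sE,\Phi))$; conversely, any element of $W_nH^0_\HK(Y,(\sE,\Phi))$ would map to a weight-$\leq n$ element of a weight-$\geq n+1$ module, necessarily zero. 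This proves the identity $W_nH^0_\HK(Y,(\sE,\Phi))=H^0_\HK(Y,W_n(\sE,\Phi))$.

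Finally, in the pure case a pure $(\varphi,N)$-module of weight $n$ satisfies $N=0$ (since $N$ lowers $\varphi^d$-weight by $2$), so $H^0_\rig(Y/W^\varnothing,(\sE,\Phi))=\ker N=H^0_\HK(Y,(\sE,\Phi))$ is pure of weight $n$. In the mixed case, $N$ preserves $W_\bullet$, and combining this with the established $H^0_\HK$ identity gives
\[H^0_\rig(Y/W^\varnothing,W_n(\sE,\Phi))=W_nH^0_\HK(Y,(\sE,\Phi))\cap\ker N=W_nH^0_\rig(Y/W^\varnothing,(\sE,\Phi)).\]
The main technical step will be justifying the pullback-of-natural-filtration computation at a point $y$, namely verifying that the restriction functor $F\Isoc^\dagger(Y/W^\varnothing)^\unip\to F\Isoc^\dagger(y/W_y^\varnothing)^\unip$ commutes with taking graded pieces of $\Fil_\bullet$ and sends $M^a$ to $(M\otimes_LL_y)^a$, which is a routine check using Propositions \ref{prop: realization pseudo-const} and \ref{prop: isoc on point}.
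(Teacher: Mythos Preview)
Your proof is correct and follows essentially the same strategy as the paper: reduce to the geometrically connected case, use the natural filtration $\Fil_\bullet$ with constant graded pieces $M_i^a$, and observe that $H^0_\HK(Y)$ is pure of weight $0$. The paper compresses the pure case into one line via the spectral sequence \eqref{eq: ss for Fil}, leaving implicit the fact that each $M_i$ is pure of weight $n$; you make this explicit by pulling back $\Fil_\bullet$ to a point $y$ and reading off eigenvalues, which is a welcome clarification. For the mixed case your induction on the length of $W_\bullet$ and the paper's direct check that $\Gr^{W'}_n H^0_\HK\subset H^0_\HK(\Gr^W_n)$ are equivalent formulations of the same argument. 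The only genuine difference is the treatment of $H^0_\rig$: the paper simply repeats the argument, whereas you deduce it from the $H^0_\HK$ case via the identification $H^0_\rig=\ker N$ coming from \eqref{eq: cone of N} together with the observation that $N$ vanishes on a pure $(\varphi,N)$-module; this is a clean shortcut.
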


\begin{proof}
	The first assertion is reduced to the case when $Y$ is geometrically connected over $k$, by Remark \ref{rem: mixed mod tannakian} \eqref{item: remark mixed}.
	Then the assertion follows from the spectral sequence \eqref{eq: ss for Fil} and the fact that $H^0_\HK(Y)=H^0_\rig(Y/W^\varnothing)$ is pure of weight $0$ relative to $k$.
	
	To prove the second assertion, assume that $(\sE,\Phi)$ is mixed with a weight filtration $W_\bullet$, and let $W'_nH^0_\HK(Y,(\sE,\Phi)):=H^0_\HK(Y,W_n(\sE,\Phi))$ for $n\in\bbZ$. 
	Then $\Gr^{W'}_nH^0_\HK(Y,(\sE,\Phi))\subset H^0_\HK(Y,\Gr^W_n(\sE,\Phi))$, and they are pure of weight $n$ relative to $k$ by the first assertion.
	Thus $W'$ gives the weight filtration on $H^0_\HK(Y,(\sE,\Phi))$.
	The equality for the weight filtration on $H^0_\rig(Y/W^\varnothing,(\sE,\Phi))$ follows in the same way.
\end{proof}

Now we may prove the uniqueness of the weight filtration of a mixed log overconvergent $F$-isocrystal.

\begin{proposition}\label{prop: W unique}
	Let $Y$ be a strictly semistable log scheme over $k^0$ and let $(\sE,\Phi)\in F\Isoc^\dagger(Y/W^\varnothing)^\unip$.
	A weight filtration $W_\bullet$ on $(\sE,\Phi)$ is unique if it exists. 
\end{proposition}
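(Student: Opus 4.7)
The approach is standard for weight filtrations: I plan to show that for two weight filtrations $W_\bullet, W'_\bullet$ on $(\sE,\Phi)$, the canonical composition
$$W_n(\sE,\Phi) \hookrightarrow (\sE,\Phi) \twoheadrightarrow (\sE,\Phi)/W'_n(\sE,\Phi)$$
in $F\Isoc^\dagger(Y/W^\varnothing)^\unip$ is zero for every $n \in \bbZ$; symmetry will then give $W_n = W'_n$. This composition corresponds, via Proposition \ref{prop: Ext abs HK}, to an element of $H^0_\abs(Y, \sF)$ where $\sF := \sheafhom(W_n(\sE,\Phi), (\sE,\Phi)/W'_n(\sE,\Phi))$, so the goal reduces to showing $H^0_\abs(Y, \sF) = 0$.

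The key input is that $\sF$ is mixed with strictly positive weights. Using the graded-piece formula established in the proof of Proposition \ref{prop: mixed isoc tannakian}, $\Gr^W_r \sF$ decomposes as a direct sum of terms $\sheafhom\bigl(\Gr^W_m W_n(\sE,\Phi),\,\Gr^{W'}_{m+r}((\sE,\Phi)/W'_n(\sE,\Phi))\bigr)$, whose summands vanish unless $m \leq n$ and $m+r \geq n+1$; hence $\Gr^W_r\sF = 0$ for $r \leq 0$, i.e.\ $\sF$ has weights $\geq 1$. Lemma \ref{lem: weight H0} then shows that $H^0_\HK(Y, \sF)$ is mixed with weights $\geq 1$, and the long exact sequence attached to the triangle \eqref{eq: cone of N} (together with $H^{-1}_\HK = 0$, which holds since the Hyodo--Kato complex is concentrated in non-negative degrees) presents $H^0_\rig(Y/W^\varnothing, \sF)$ as a sub-$(\varphi, N)$-module of $H^0_\HK(Y, \sF)$, so its weights are also $\geq 1$.

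The final step is the familiar observation that $\varphi - 1$ is injective on any $(\varphi, N)$-module of strictly positive weights: a nonzero $\varphi$-fixed vector would be fixed by $\varphi^d$, but no eigenvalue of $\varphi^d$ on a subquotient of positive weight equals $1$, since its complex absolute value exceeds $1$. Consequently $H^0_\abs(Y, \sF) = \ker(\varphi - 1) = 0$, as required. The main obstacle I anticipate is purely notational---keeping the weight inequalities and the indexing on $\sheafhom$ consistently oriented so that the combined inequality $r \geq 1$ actually falls out---rather than anything conceptual; the substance is the standard vanishing of Hom between pure objects of distinct weights.
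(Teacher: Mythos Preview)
Your proof is correct. It differs in packaging from the paper's argument, though both rest on the same weight-theoretic principle.

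The paper works with $\sheafhom((\sE,\Phi),(\sE,\Phi))$ equipped with the ``cross'' weight filtration $\widetilde{W}^i_\bullet$ built from $W^1$ on the source and $W^2$ on the target (and vice versa). Since $\id_\sE$ is $\varphi$-fixed it lies in $W_0 H^0_\rig$, and Lemma~\ref{lem: weight H0} identifies this with $H^0_\rig(Y,\widetilde{W}^i_0\sheafhom)$; unwinding the definition of $\widetilde{W}^i_0$ gives $W^1_m\subset W^2_m$ and conversely. Your argument instead passes to $\sF=\sheafhom(W_n,(\sE,\Phi)/W'_n)$, shows it has strictly positive weights, and concludes $H^0_\abs(Y,\sF)=0$ via the eigenvalue argument for $\varphi^d$. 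The paper's route is marginally shorter---no cone triangle, no eigenvalue analysis---while yours is the classical ``no morphisms from lower to higher weight'' argument and is arguably more transparent. One minor simplification: Lemma~\ref{lem: weight H0} already gives the weight bound for $H^0_\rig$ directly, so your detour through $H^0_\HK$ and~\eqref{eq: cone of N} is not needed.
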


\begin{proof}
	Assume that $W^1_\bullet$ and $W^2_\bullet$ are weight filtrations on $(\sE,\Phi)$.
	Let $\widetilde{W}^1_\bullet$ and $\widetilde{W}^2_\bullet$ be the filtrations on $\sheafhom(\sE,\sE)$ defined by
		\begin{eqnarray*}
		\widetilde{W}^1_n\sheafhom(\sE,\sE)&:=&\left\{f\in\sheafhom(\sE,\sE)\mid f(W^1_m\sE)\subset W^2_{m+n}\sE\text{ for any }m\in\bbZ\right\},\\
		\widetilde{W}^2_n\sheafhom(\sE,\sE)&:=&\left\{f\in\sheafhom(\sE,\sE)\mid f(W^2_m\sE)\subset W^1_{m+n}\sE\text{ for any }m\in\bbZ\right\}.
		\end{eqnarray*}
	By Proposition \ref{prop: mixed isoc tannakian}, both of $W^1_\bullet$ and $W^2_\bullet$ are weight filtrations making $\sheafhom((\sE,\Phi),(\sE,\Phi))$ mixed.
	The identity on $\sE$ defines an element
		\[\id_\sE\in W_0H^0_\rig(Y/W^\varnothing,\sheafhom((\sE,\Phi),(\sE,\Phi)))=H^0_\rig(Y/W^\varnothing,\widetilde{W}^i_0\sheafhom((\sE,\Phi),(\sE,\Phi)))\]
	for $i=1,2$, since it is fixed by the Frobenius action.
	Here the equality is given by Lemma \ref{lem: weight H0}.
	The image of $\id_\sE$ by the morphism
		\[H^0_\rig(Y/W^\varnothing,\widetilde{W}^i_0\sheafhom((\sE,\Phi),(\sE,\Phi)))\otimes\sO_{Y/W^\varnothing}\rightarrow\widetilde{W}^i_0\sheafhom((\sE,\Phi),(\sE,\Phi))\] 
	is also given by the identity on $\sE$.
	Thus by the definition of $\widetilde{W}^i_0$ we see that $W^1_\bullet\subset W^2_\bullet$ and $W^2_\bullet\subset W^1_\bullet$.
\end{proof}

\begin{corollary}\label{cor: W strict}
	Let $Y$ be a strictly semistable log scheme of finite type over $k^0$.
	Any morphism in $F\Isoc^\dagger(Y/W^\varnothing)^\mix$ is strictly compatible with the weight filtrations.
\end{corollary}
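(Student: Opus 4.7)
The plan is to reduce strict compatibility to the uniqueness statement of Proposition \ref{prop: W unique}, by exhibiting two a priori different weight filtrations on the image of a morphism and then invoking uniqueness to conclude they coincide.

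More precisely, given a morphism $f\colon(\sE,\Phi)\rightarrow(\sE',\Phi')$ in $F\Isoc^\dagger(Y/W^\varnothing)^\mix$, I would set $\sI:=\Im f$, which is a well-defined subobject of $\sE'$ since $F\Isoc^\dagger(Y/W^\varnothing)$ is abelian (Proposition \ref{prop: nr abelian}). I would then consider two candidate increasing filtrations on $\sI$: the ``subobject'' filtration $W^{(1)}_n\sI:=W_n\sE'\cap\sI$ inherited from $\sE'$, and the ``quotient'' filtration $W^{(2)}_n\sI:=f(W_n\sE)$ inherited from $\sE$ via the surjection $\sE\twoheadrightarrow\sI$.

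The key step is to verify that both $W^{(1)}_\bullet$ and $W^{(2)}_\bullet$ are weight filtrations on $\sI$. For this I need that a subobject or quotient of a pure object of weight $n$ in $F\Isoc^\dagger(Y/W^\varnothing)^\mix$ is again pure of weight $n$. This follows from the exactness of the fiber functor $\eta_y$ at each $y\in P(Y)$ (see Proposition \ref{prop: tannakian}) together with the analogous fact for pure objects in $\Mod^\fin_{L_y}(\varphi,N)$, where it is immediate from the behaviour of eigenvalues under passage to subquotients. Consequently $\Gr^{W^{(1)}}_n\sI\hookrightarrow\Gr^W_n\sE'$ is pure of weight $n$ and $\Gr^{W^{(2)}}_n\sI$ is a quotient of $\Gr^W_n\sE$, hence also pure of weight $n$. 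Both filtrations thus realize $\sI$ as a mixed object with weight filtration, and Proposition \ref{prop: W unique} forces $W^{(1)}_\bullet=W^{(2)}_\bullet$; that is,
\[
f(W_n\sE)=W_n\sE'\cap f(\sE)\qquad\text{for all }n\in\bbZ,
\]
which is strict compatibility.

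I do not expect any serious obstacle here: the argument is formal once uniqueness (Proposition \ref{prop: W unique}) and closure under subobjects and quotients (Proposition \ref{prop: mixed isoc tannakian}) are in hand. The only minor point requiring care is the stability of purity under subobjects/quotients, which as indicated above reduces through $\eta_y$ to the corresponding statement for $(\varphi,N)$-modules at closed points.
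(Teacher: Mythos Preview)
Your proof is correct and follows essentially the same argument as the paper: define the two candidate filtrations on $\Im f$ (one pushed forward from the source, one pulled back from the target), check that each has pure graded pieces by realizing them as a quotient or subobject of a pure object, and conclude by the uniqueness of weight filtrations (Proposition~\ref{prop: W unique}). The only difference is cosmetic notation and your added justification for why purity passes to subquotients via the fiber functors $\eta_y$.
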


\begin{proof}
	Let $f\colon(\sE',\Phi')\rightarrow(\sE,\Phi)$ be a morphism in $F\Isoc^\dagger(Y/W^\varnothing)^\mix$.
	We define filtrations $W^1_\bullet$ and $W^2_\bullet$ on $\Im f$ by
		\begin{align*}
		W^1_n\Im f:=\Im(W_n\sE'\rightarrow\sE)&&\text{and}&&W^2_n\Im f:=W_n\sE\cap\Im f.
		\end{align*}
	Then $\Gr^W_n\sE'\rightarrow\Gr^{W^1}_n\Im f$ is surjective, and $\Gr^{W^2}_n\Im f\rightarrow\Gr^W_n\sE$ is injective.
	Thus $W^1$ and $W^2$ both define weight filtrations on $\Im f$.
	By Proposition \ref{prop: W unique} we have $W^1_\bullet=W^2_\bullet$.
\end{proof}

\begin{proposition}\label{prop: bc isoc}
	Let $Y$ be a strictly semistable log scheme over $k^0$, and let $(\sE,\Phi)\in F\Isoc^\dagger(Y/W^\varnothing)^\unip$.
	Let $k'$ be a finite extension of $k$, $W'$ the ring of Witt vectors of $k'$, and $Y':=Y\times_{W^\varnothing}W'^\varnothing$.
	Then $(\sE,\Phi)$ is mixed if and only if its pull-back $(\rho,\varrho)^*(\sE,\Phi)\in F\Isoc^\dagger(Y'/W'^\varnothing)^\unip$ with respect to the commutative diagram
		\[\xymatrix{
		Y'\ar[r]\ar[d]& k'^\varnothing\ar[r]\ar[d] & W'^\varnothing\ar[d]\\
		Y \ar[r] & k^\varnothing\ar[r] & W^\varnothing
		}\]
	is mixed.
\end{proposition}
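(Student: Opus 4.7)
The ``only if'' direction is essentially formal. Given a weight filtration $W_\bullet$ on $(\sE,\Phi)$, I take $(\rho,\varrho)^*W_\bullet$ as a candidate weight filtration on $(\rho,\varrho)^*(\sE,\Phi)$. For any $y'\in P(Y')$ with image $y\in P(Y)$, the fiber functor at $y'$ applied to $(\rho,\varrho)^*\Gr^W_n(\sE,\Phi)$ is obtained from the fiber functor at $y$ applied to $\Gr^W_n(\sE,\Phi)$ by extending scalars from $L_y$ to $L_{y'}$, and Remark \ref{rem: mixed mod tannakian}(2) says that purity of a given weight is preserved under such an extension. Hence each graded piece is pure of the correct weight.

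For the ``if'' direction, I first reduce to the case where $k'/k$ is a finite Galois extension by enlarging $k'$ (this is harmless since ``pure of weight $n$ relative to $k'$'' behaves well with respect to further extension, again by Remark \ref{rem: mixed mod tannakian}(2)). Let $G:=\Gal(k'/k)$. The Galois group acts $W^\varnothing$-linearly on $W'^\varnothing$ and hence on $Y'$ over $Y$, and this induces an action of $G$ on the category $F\Isoc^\dagger(Y'/W'^\varnothing)^\unip$ that sends the essentially $G$-invariant object $(\rho,\varrho)^*(\sE,\Phi)$ to itself. Let $W'_\bullet$ be the weight filtration on $(\rho,\varrho)^*(\sE,\Phi)$; by the uniqueness statement Proposition \ref{prop: W unique}, each $W'_n(\rho,\varrho)^*(\sE,\Phi)$ is stable under the $G$-action.

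The key step is to descend $W'_\bullet$ to a filtration $W_\bullet$ on $(\sE,\Phi)$. For this I choose a local embedding $F$-datum $(Z_\lambda,\cZ_\lambda,i_\lambda,h_\lambda,\theta_\lambda,\phi_\lambda)_\lambda$ for $Y$ over $(k^0,\cS,\tau,\sigma)$; the pullbacks $\cZ'_\lambda:=\cZ_\lambda\times_{W^\varnothing}W'^\varnothing$ together with the induced data form a local embedding $F$-datum for $Y'$. Via the equivalence \eqref{eq: descent realization}, the filtration $W'_\bullet$ corresponds to a compatible family of $G$-stable subobjects in $F\MIC^\dagger((\cZ'_\lambda,\phi'_\lambda)/(W'^\varnothing,\sigma'))$, i.e.\ of $G$-stable locally free $\cO_{\cZ'_{\lambda,\bbQ}}$-submodules of the corresponding realization of $(\sE,\Phi)$ that are preserved by both the log connection and the Frobenius structure. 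Because $\cO_{\cZ'_{\lambda,\bbQ}}=\cO_{\cZ_{\lambda,\bbQ}}\otimes_L L'$ and $L'/L$ is a finite Galois extension with group $G$, faithfully flat (Galois) descent for modules gives a locally free $\cO_{\cZ_{\lambda,\bbQ}}$-submodule whose extension of scalars recovers $W'_n$ at each $\lambda$; the descended submodules automatically inherit the log connection, the Frobenius structure, and the gluing isomorphisms, again by uniqueness of the descent data. Applying \eqref{eq: descent realization} in the reverse direction yields subobjects $W_n(\sE,\Phi)$ of $(\sE,\Phi)$ in $F\Isoc^\dagger(Y/W^\varnothing)^\unip$ with $(\rho,\varrho)^*W_n(\sE,\Phi)=W'_n(\rho,\varrho)^*(\sE,\Phi)$.

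Finally, I check that $W_\bullet$ is a weight filtration. Since $(\rho,\varrho)^*\Gr^W_n(\sE,\Phi)=\Gr^{W'}_n(\rho,\varrho)^*(\sE,\Phi)$ is pure of weight $n$ relative to $k'$, the ``only if'' direction together with Remark \ref{rem: mixed mod tannakian}(2) (applied fiberwise at points $y\in P(Y)$, after choosing any $y'\in P(Y')$ over $y$) shows that $\Gr^W_n(\sE,\Phi)$ is pure of weight $n$ relative to $k$. The main obstacle is the Galois descent step for $G$-stable sub-$F$-isocrystals; the care needed there is essentially bookkeeping, ensuring that the descent compatibilities match up across the local embedding datum, which follows from functoriality of the realization equivalence and the uniqueness of descent data for finite Galois extensions of fields.
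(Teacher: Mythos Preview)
Your proposal is correct and follows essentially the same strategy as the paper: the ``only if'' direction is formal, and the ``if'' direction uses the uniqueness of the weight filtration (Proposition \ref{prop: W unique}) to obtain Galois stability and then applies Galois descent with respect to $\Gal(L'/L)$. The paper's proof is more terse, simply asserting that Galois descent works, whereas you spell out the mechanism via local embedding data and the realization equivalence; note also that since $k$ is assumed finite in \S\ref{sec: mixed}, every finite extension $k'/k$ is already Galois, so your reduction step is automatic.
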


\begin{proof}
	The ``only if'' part is obvious.
	To see ``if'' part,  by extending $k'$ we may suppose $k'$ is Galois over $k$.
	Let $W'=W(k')$ and $L'=\mathrm{Frac}W'$.
	Since the weight filtration is canonical, $W_\bullet$ on $(\sE',\Phi')$ is stable under the action of $\Gal(k'/k)=\Gal(L'/L)$.
	Then one can see that $W_\bullet$ descend to a filtration on $(\sE,\Phi)$ by Galois descent.
	
	Indeed, to obtain a log overconvergent isocrystal, it is enough to define coherent sheaves on $\cZ_\bbQ$ for all objects $(Z,\cZ,i,h,\theta)\in\OC(Y/W^\varnothing)$ where $\cZ$ is $p$-adic and affine.
	Note that coherent modules over $\cZ_\bbQ$ are equivalent to finite modules over the global section, and base change of dagger affinoid is given by usual tensor product $\otimes_LL'$ (without any weak completion).
	Then we may apply Galois descent for modules over $L$-algebras.
\end{proof}

Next we study the weight filtration of the tannakian fundamental group.

\begin{proposition}
	Let $Y$ be a connected strictly semistable log scheme over $k^0$ and assume that there exists a $k$-rational point $y\in Y^\sm(k)$.
	The object of $\Mod_L(\varphi)$ given by the $L$-vector space $\widehat{\frU}(\Lie\pi_1^\unip(Y,y))$ equipped with $\sigma_*^{-1}$ is a projective limit of objects of $\Mod_L^\mix(\varphi)$.
	There exists a canonical increasing filtration $W_\bullet$ on $\widehat{\frU}(\Lie\pi_1^\unip(Y,y))$ such that $\Gr^W_n\widehat{\frU}(\Lie\pi_1^\unip(Y,y))$ is a projective limit of objects of $\Mod_L^\mix(\varphi)$ which are pure of weight $n$ relative to $k$.
	We have $W_0\widehat{\frU}(\Lie\pi_1^\unip(Y,y))=\widehat{\frU}(\Lie\pi_1^\unip(Y,y))$.
\end{proposition}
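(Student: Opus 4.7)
The strategy is to transfer mixedness from cohomology to the Lie algebra side via \eqref{eq: H1 and Lie}, and then propagate it through the enveloping algebra using its $\fra$-adic filtration.

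The first step is to verify that \eqref{eq: H1 and Lie} is Frobenius-equivariant, yielding an isomorphism of $\varphi$-modules $\fra/\fra^2 \cong H^1_\rig(Y/W^\varnothing)^\vee$, where the left-hand side carries the structure induced by $\sigma_*^{-1}$. This follows from functoriality: every arrow in \eqref{eq: H1 and Lie} arises from a natural construction compatible with the autofunctor $\sigma^*$ on $\Isoc^\dagger(Y/W^\varnothing)^\unip$, which corresponds under \eqref{eq: Isoc Mod} to twisting the $\widehat{\frU}$-action by $\sigma_*$, and which induces the Frobenius on cohomology via Lemma \ref{lem: Frob isom}. Granting this, Proposition \ref{prop: coh is mixed}(2) gives that $H^1_\rig(Y/W^\varnothing)$ is mixed with weights in $[0,2]$, so $\fra/\fra^2$ is mixed with weights in $[-2, 0]$.

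Next I would propagate this through the enveloping algebra. By Proposition \ref{prop: complete} we have $\widehat{\frU}(\Lie\pi_1^\unip(Y,y)) = \varprojlim_m \widehat{\frU}/\fra^m$ with each quotient finite-dimensional. The $\fra$-adic filtration on $\widehat{\frU}/\fra^m$ has graded pieces $\fra^k/\fra^{k+1}$ for $0 \leq k \leq m-1$, each a quotient of $(\fra/\fra^2)^{\otimes k}$ via iterated multiplication. This surjection is $\varphi$-equivariant since $\sigma_*^{-1}$ is an algebra endomorphism of $\widehat{\frU}$ (being induced from a Lie algebra automorphism). Since mixed $\varphi$-modules are closed under tensor products, quotients, and extensions (Remark \ref{rem: mixed mod tannakian}), each $\widehat{\frU}/\fra^m$ is mixed with weights in $[-2(m-1), 0]$. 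Defining $W_n \widehat{\frU} := \varprojlim_m W_n(\widehat{\frU}/\fra^m)$ gives the desired filtration: transition maps automatically preserve weight filtrations since these are decompositions into generalized eigenspaces of $\varphi^d$ (so all morphisms are strict), and Mittag--Leffler ensures that passage to $\Gr^W_n$ commutes with the inverse limit, yielding $\Gr^W_n \widehat{\frU} = \varprojlim_m \Gr^W_n(\widehat{\frU}/\fra^m)$ as a projective limit of pure objects of weight $n$. The uniform non-positivity of weights gives $W_0 \widehat{\frU} = \widehat{\frU}$.

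The main obstacle is the first step: rigorously verifying Frobenius-equivariance of \eqref{eq: H1 and Lie}. The chain of isomorphisms computes $\Ext^1_{\Isoc^\dagger(Y/W^\varnothing)^\unip}(\sO_{Y/W^\varnothing},\sO_{Y/W^\varnothing})$ in three essentially different ways, and one must track the autofunctor $\sigma^*$ through each identification---in particular confirming that the Frobenius on $H^1_\rig(Y/W^\varnothing)$ from Lemma \ref{lem: Frob isom}, the abstract Frobenius structure on $\Ext^1$ in the tannakian category, and the Frobenius $\sigma_*$ on $\fra/\fra^2$ all agree under the natural isomorphisms. This is a naturality statement that should hold by diagram-chasing but demands careful bookkeeping.
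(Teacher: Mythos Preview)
Your proposal is correct and follows essentially the same approach as the paper: both transfer mixedness from $H^1_\rig(Y/W^\varnothing)$ to $\fra/\fra^2$ via \eqref{eq: H1 and Lie}, propagate through the surjections $(\fra/\fra^2)^{\otimes k}\twoheadrightarrow\fra^k/\fra^{k+1}$ to make each $\widehat{\frU}/\fra^k$ mixed, and then define $W_n\widehat{\frU}:=\varprojlim_k W_n(\widehat{\frU}/\fra^k)$ using Proposition~\ref{prop: complete}. The paper records the Frobenius compatibility you worry about only at the level of the $L$-linear power $\varphi^d$ (diagram \eqref{eq: diag sigma}), which suffices since mixedness is defined via eigenvalues of $\varphi^d$; your Mittag--Leffler/strictness remarks fill in details the paper leaves implicit.
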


\begin{proof}
	We first note that $\sigma_*$ induces an automorphism on $\fra/\fra^2$, and we have a commutative diagram
	\begin{equation}\label{eq: diag sigma}
	\xymatrix{
	H^1_\rig(Y/W^\varnothing)\ar[r]^-\cong\ar[d]^-{\varphi^d} & (\fra/\fra^2)^\vee\ar[d]^-{(\sigma_*^d)^\vee}\\
	H^1_\rig(Y/W^\varnothing)\ar[r]^-\cong & (\fra/\fra^2)^\vee,
	}\end{equation}
where the horizontal maps are given in \eqref{eq: H1 and Lie} and $(\sigma^d_*)^\vee$ is defined by $f\mapsto f\circ\sigma_*^d$.

	Since for any $k\geq 0$ there exists a surjection $(\fra/\fra^2)^{\otimes k}\rightarrow\fra^k/\fra^{k+1}$ which is compatible with $\sigma_*^{-1}$, we see that $\fra^k/\fra^{k+1}$ and hence $\widehat{\frU}(\Lie\pi_1^\unip(Y,y))/\fra^k$ belong to $\Mod_L^\mix(\varphi)$.
	By Proposition \ref{prop: complete} we have
		\[\widehat{\frU}(\Lie\pi_1^\unip(Y,y))=\varprojlim_{k\in\bbN}\widehat{\frU}(\Lie\pi_1^\unip(Y,y))/\fra^k.\]
	The weight filtration $W_\bullet$ on $\widehat{\frU}(\Lie\pi_1^\unip(Y,y))$ is given by
		\[W_n\widehat{\frU}(\Lie\pi_1^\unip(Y,y)):=\varprojlim_{k\in\bbN}W_n\left(\widehat{\frU}(\Lie\pi_1^\unip(Y,y))/\fra^k\right).\]
	By Proposition \ref{prop: coh is mixed} and the diagram \eqref{eq: diag sigma}, the weights of $\fra/\fra^{2}$ lie in the interval $[-2,0]$.
	This implies $W_0\widehat{\frU}(\Lie\pi_1^\unip(Y,y))=\widehat{\frU}(\Lie\pi_1^\unip(Y,y))$.
\end{proof}

\begin{definition}
	Let $Y$ be a connected strictly semistable log scheme over $k^0$ and assume that there exists a $k$-rational point $y\in Y^\sm(k)$.
	Let $\Mod_L^\mix(\varphi,\widehat{\frU}(\Lie\pi_1^\unip(Y,y)))$ be the full subcategory of $\Mod^\fin_L(\varphi,\widehat{\frU}(\Lie\pi_1^\unip(Y,y)))$ consisting of objects which are mixed relative to $k$.
\end{definition}

The following theorem for the rigidity of mixed log overconvergent $F$-isocrystals is regarded as a $p$-adic analogue of \cite[Theorem 1.6]{HZ}.

\begin{theorem}\label{thm: mixed Mod}
	Let $Y$ be a connected strictly semistable log scheme over $k^0$ and assume that there exists a $k$-rational point $y\in Y^\sm(k)$.
	Then the fiber functor $\eta_y$ induces a canonical equivalence of categories
		\begin{equation}
		F\Isoc^\dagger(Y/W^\varnothing)^\mix\xrightarrow{\cong} \Mod_L^\mix(\varphi,\widehat{\frU}(\Lie\pi_1^\unip(Y,y))).
		\end{equation}
	In other words, an object $(\sE,\Phi)\in F\Isoc^\dagger(Y/W^\varnothing)^\unip$ is mixed if and only if $\eta_y(\sE,\Phi)$ is mixed.
\end{theorem}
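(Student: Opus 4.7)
The plan is to use the equivalence \eqref{eq: FIsoc Mod} to reduce the theorem to the ``in other words'' characterization: that $(\sE,\Phi)\in F\Isoc^\dagger(Y/W^\varnothing)^\unip$ is mixed if and only if $M:=\eta_y(\sE,\Phi)$ is mixed as a $\varphi$-module over $L$. The ``only if'' direction is immediate, since exactness of $\eta_y$ applied to a weight filtration on $(\sE,\Phi)$ produces a filtration on $M$ whose graded pieces are pure of weight $n$ (directly from Definition \ref{def: mixed isoc} evaluated at the point $y$).

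For the converse, suppose $M$ is mixed with its canonical weight filtration $W_\bullet M$. The decisive step is to show that $W_\bullet M$ is stable under the $\widehat{\frU}(\Lie\pi_1^\unip(Y,y))$-action. Since $M$ is finite-dimensional, this action factors through a Frobenius-equivariant morphism $\widehat{\frU}(\Lie\pi_1^\unip(Y,y))/\fra^k\to\End_L(M)$ for some $k$. The source is mixed with all weights $\leq 0$, because $W_0\widehat{\frU}(\Lie\pi_1^\unip(Y,y))=\widehat{\frU}(\Lie\pi_1^\unip(Y,y))$; the target, endowed with the weight filtration induced by $W_\bullet M$, is mixed. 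Strict compatibility of morphisms of mixed $\varphi$-modules with their weight filtrations (proved exactly as in Corollary \ref{cor: W strict}, relying only on the uniqueness of weights) forces the image of this morphism into $W_0\End_L(M)$, which is precisely the subspace of endomorphisms preserving $W_\bullet M$. Consequently, $W_\bullet M$ is a filtration by $\widehat{\frU}(\Lie\pi_1^\unip(Y,y))$-submodules, and by \eqref{eq: FIsoc Mod} it lifts to a finite filtration $W_\bullet(\sE,\Phi)$ by subobjects in $F\Isoc^\dagger(Y/W^\varnothing)^\unip$.

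It remains to verify that $\Gr^W_n(\sE,\Phi)$ is pure of weight $n$ at every $y'\in P(Y)$, not only at the chosen $y$. Since $Y$ is connected with a $k$-rational point it is geometrically connected, so Proposition-Definition \ref{prop: filtration} endows the unipotent object $\Gr^W_n(\sE,\Phi)$ with a natural filtration whose graded pieces are pseudo-constant objects $M_j^a$ for certain $M_j\in\Mod^\fin_L(\varphi,N)$. Since $\eta_y(M_j^a)=M_j$ is a subquotient in $\Mod^\fin_L(\varphi)$ of the pure module $\Gr^W_n M$, each $M_j$ is itself pure of weight $n$. Then Proposition \ref{prop: pseudo-const} and Remark \ref{rem: mixed mod tannakian}(2) show that $H^0_\HK(y',(M_j^a)_{y'})\cong M_j\otimes_LL_{y'}$ is pure of weight $n$ relative to $k_{y'}$, and since purity is closed under iterated extensions of modules of the same weight, $H^0_\HK(y',(\Gr^W_n(\sE,\Phi))_{y'})$ is pure of weight $n$ as well. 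The main obstacle is the stability argument of the second paragraph, which crucially combines the Frobenius-equivariance of the fundamental group action with the fact that all weights appearing on $\widehat{\frU}(\Lie\pi_1^\unip(Y,y))$ are nonpositive.
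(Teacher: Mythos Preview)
Your argument is correct and follows the same route as the paper: show that the Frobenius-equivariant map $\widehat{\frU}(\Lie\pi_1^\unip(Y,y))\to\End_L(M)$ lands in $W_0\End_L(M)$ because the source has only nonpositive weights, lift $W_\bullet M$ to a filtration on $(\sE,\Phi)$ via \eqref{eq: FIsoc Mod}, and then argue purity of the graded pieces. Your final paragraph makes explicit what the paper leaves implicit in its last sentence; one small sharpening is that by Proposition~\ref{prop: unip const} (the Frobenius-stable version of Proposition-Definition~\ref{prop: filtration}) the graded pieces of the natural filtration are actually \emph{constant}, not just pseudo-constant, which slightly streamlines the propagation of purity to arbitrary $y'\in P(Y)$.
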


\begin{proof}
	The necessity is clear by definition.
	We will show that an object $(\sE,\Phi)\in F\Isoc^\dagger(Y/W^\varnothing)^\unip$ is mixed if $M:=\eta_y(\sE,\Phi)$ is mixed.
	Since the homomorphism
		\[\widehat{\frU}(\Lie\pi_1^\unip(Y,y))\rightarrow\End_L(M)\]
	is compatible with the Frobenius operators, it induces
		\begin{equation}\label{eq: W0 U}
		\widehat{\frU}(\Lie\pi_1^\unip(Y,y))=W_0\widehat{\frU}(\Lie\pi_1^\unip(Y,y))\rightarrow W_0\End_L(M).
		\end{equation}
	Therefore the action of $\widehat{\frU}(\Lie\pi_1^\unip(Y,y))$ on $M$ preserves the weight filtration on $M$.
	Hence we may define a subobject $W_n(\sE,\Phi)\subset(\sE,\Phi)$  for $n\in\bbN$ to be the object which corresponds to $W_nM$ via the equivalence \eqref{eq: FIsoc Mod}.
	By Proposition \ref{prop: unip const}, the graded quotient $\Gr^\Fil_m\Gr^W_n(\sE,\Phi)$ is a constant object associated to a pure $\varphi$-module of weight $n$ relative to $k$.
	This shows that $W_\bullet$ is the weight filtration on $(\sE,\Phi)$. 
\end{proof}

\begin{corollary}
	Mixed objects are closed under extension in $F\Isoc^\dagger(Y/W^\varnothing)^\unip$.
\end{corollary}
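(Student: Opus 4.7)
The plan is to transport the closure of mixed $\varphi$-modules under extension (part of Remark \ref{rem: mixed mod tannakian}(1)) across the tannakian equivalence of Theorem \ref{thm: mixed Mod}. Consider a short exact sequence
\begin{equation*}
0 \to (\sE', \Phi') \to (\sE, \Phi) \to (\sE'', \Phi'') \to 0
\end{equation*}
in $F\Isoc^\dagger(Y/W^\varnothing)^\unip$ with outer terms mixed. First I shall reduce to the setting of Theorem \ref{thm: mixed Mod}: decomposing $Y$ into its connected components reduces to $Y$ connected, and picking a finite extension $k'/k$ large enough that every connected component of the base change of $Y$ to $k'^0$ contains a $k'$-rational point in its smooth locus, Proposition \ref{prop: bc isoc} (which says mixedness is reflected by base change along $k \hookrightarrow k'$) lets us further assume $Y$ is geometrically connected and that some $y \in Y^\sm(k)$ exists.

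Next, I apply the exact fiber functor $\eta_y$ of \eqref{eq: F fiber functor} (whose exactness is part of Proposition \ref{prop: tannakian}) to obtain a short exact sequence
\begin{equation*}
0 \to M' \to M \to M'' \to 0
\end{equation*}
in $\Mod_L^\fin(\varphi, \wh\frU(\Lie\pi_1^\unip(Y,y)))$. Applying $\eta_y$ to the weight filtrations of $(\sE', \Phi')$ and $(\sE'', \Phi'')$ produces filtrations on $M'$ and $M''$ whose graded pieces are, by the definition of purity in $F\Isoc^\dagger(Y/W^\varnothing)^\unip$, pure $\varphi$-modules of the corresponding weights; thus $M'$ and $M''$ are mixed $\varphi$-modules. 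Endowing all three terms with the zero monodromy operator turns the sequence into a short exact sequence in $\Mod_L^\fin(\varphi, N)$ with mixed outer terms, and Remark \ref{rem: mixed mod tannakian}(1) then forces the middle term to be mixed there. Since mixedness depends only on $\varphi$ (Definition \ref{def: mixed mod}), $M$ is a mixed $\varphi$-module and therefore lies in $\Mod_L^\mix(\varphi, \wh\frU(\Lie\pi_1^\unip(Y,y)))$. Theorem \ref{thm: mixed Mod} then yields $(\sE, \Phi) \in F\Isoc^\dagger(Y/W^\varnothing)^\mix$.

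There is no essential obstacle once Theorem \ref{thm: mixed Mod} is available; the only subtle point is that a weight filtration on $(\sE, \Phi)$ cannot be built directly by gluing those on $(\sE', \Phi')$ and $(\sE'', \Phi'')$, since the naive preimage under $\sE \to \sE''$ of the filtration on $(\sE'', \Phi'')$ does not meet $\sE'$ in its prescribed weight filtration. The tannakian equivalence sidesteps this by constructing the weight filtration abstractly on $M = \eta_y(\sE, \Phi)$ from the eigenvalue decomposition of $\varphi^d$, and transporting it back to $(\sE, \Phi)$.
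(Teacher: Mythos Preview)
Your proof is correct and follows the same approach the paper intends: the corollary is stated without proof immediately after Theorem~\ref{thm: mixed Mod}, and your argument makes explicit the implicit deduction---reduce via Proposition~\ref{prop: bc isoc} to the situation where Theorem~\ref{thm: mixed Mod} applies, transport the extension through $\eta_y$, invoke closure under extension for mixed $\varphi$-modules (Remark~\ref{rem: mixed mod tannakian}(1)), and pull the conclusion back. Your closing remark on why one cannot simply glue the weight filtrations is apt and clarifies why the corollary genuinely needs Theorem~\ref{thm: mixed Mod} rather than following directly from the definitions.
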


\begin{remark}
	In the non-logarithmic situation, the weights of the first rigid cohomology are $1$ or $2$, and hence $\Gr^W_0$ of the universal enveloping algebra is just $L$.
	This implies that any pure object is constant (see the proof of \cite[Theorem 3.4.1]{Ch}).
	
	The analogy of this breaks down In our situation.
	Namely, the weights of $H^1_\rig(Y/W^\varnothing)$ are $0$, $1$, or $2$, and $\Gr^W_0\wt\frU(\Lie\pi_1^\unip(Y,y))$ can have infinite dimension.
	This causes a pure object $(\sE,\Phi)\in F\Isoc^\dagger(Y/W^\varnothing)^\mix$ can be non-constant.
	For example, if we consider a strictly semistable reduction of a Tate curve as in \cite[\S 5]{EY}, $H^1_\abs(Y/W^\varnothing)\cong\Ext^1_{F\Isoc^\dagger(Y/W^\varnothing)^\unip}(\sO_{Y/W^\varnothing},\sO_{Y/W^\varnothing})$ is a two dimensional $\bbQ_p$-vector space.
	Extension classes in
		\[L/(1-\sigma)(L)\cong\Im(H^0_\rig(Y/W^\varnothing)\rightarrow H^1_\abs(Y/W^\varnothing))\]
	are represented by pure constant objects, however other extension classes are represented by pure non-constant objects.
\end{remark}

%%%%%%%%%%%%%%%%%%%%%%%
%
\part{Theory in mixed characteristic: $p$-adic Hodge cohomology and syntomic cohomology with syntomic coefficients}\label{part: mixed char}
%
%%%%%%%%%%%%%%%%%%%%%%%

In this part, we study the Hyodo--Kato map, introduce the category of syntomic coefficients, and give the definition of the $p$-adic Hodge cohomology and the syntomic cohomology with syntomic coefficients.

Let $V$ be a complete discrete valuation ring of mixed characteristic $(0,p)$ with maximal ideal $\frm$, residue field $k$, and fraction field $K$.
Assume that $k$ is algebraic over $\bbF_p$.
Let $W=W(k)$ be the ring of Witt vectors of $k$, and $L$ the fraction field of $W$.
Let $V^\sharp$ be the weak formal scheme $\Spwf V$ endowed with the canonical log structure.
Namely we have $\Gamma(V^\sharp,\cN_{V^\sharp})=V\setminus\{0\}$.
Let $\cS=(\Spwf W\llbracket s\rrbracket,\cN_\cS)$ and $\tau\colon k^0\hookrightarrow\cS$ be as in \S \ref{sec: absolute isoc}.

For a choice of a uniformizer $\pi\in V$, let $j_\pi\colon V^\sharp\hookrightarrow\cS$ be the exact closed immersion defined by $s\mapsto \pi$, and let $\tau_\pi\colon k^0\hookrightarrow V^\sharp$ be the unique exact closed immersion such that $\tau=j_\pi\circ\tau_\pi$.

%%%%%%%%%%%%%%%%%%%%%%%%%%%%%%%%%
\section{De Rham cohomology}\label{sec: de rham}
%%%%%%%%%%%%%%%%%%%%%%%%%%%%%%%%%
We first prepare some notion concerning the de Rham cohomology of strictly semistable weak formal log schemes over $V^\sharp$.

\begin{definition}\label{def: strictly semistable over V}
	A weak formal log scheme $\cX$ over $V^\sharp$ is said to be {\it strictly semistable} if Zariski locally there exist a uniformizer $\pi\in V$, integers $n\geq 1$, $m\geq 0$ and a strict smooth morphism
		\begin{equation}\label{eq: semistable over V}\cX\rightarrow(\Spwf V[x_1,\ldots,x_n,y_1,\ldots,y_m]^\dagger/(\pi-x_1\cdots x_n),\cN)\end{equation}
	where the log structure $\cN$ is induced by the map
		\[\bbN^n\oplus\bbN^m\rightarrow V[x_1,\ldots,x_n,y_1,\ldots,y_m]^\dagger/(\pi-x_1\cdots x_n);\ ((k_i),(\ell_j))\mapsto x_1^{k_1}\cdots x_n^{k_n}y_1^{\ell_1}\cdots y_m^{\ell_n}.\]
		We call the closed subset
		\[\{x\in\cX\mid \text{$\exists\alpha\in\cN_{\cX,x}$ $\forall\beta\in\cN_{\cX,x}$ $\alpha\beta$ is not contained in the image of $\Gamma(V^\sharp,\cN_{V^\sharp})$}\}\]
		with reduced structure the {\it horizontal divisor} of $\cX$.
		Namely the horizontal divisor is locally defined by $y_1\cdots y_m$ via \eqref{eq: semistable over V}.
		Note that the condition that $\cX$ is strictly semistable is independent of the choice of $\pi$.
\end{definition}

\begin{definition}
	Let $\cX$ be a strictly semistable weak formal log scheme over $V^\sharp$.
	We say an object $(\cE,\nabla)\in\MIC^\dagger(\cX/V^\sharp)$ has {\it nilpotent residues} if it has nilpotent residues with respect to the horizontal divisor in the sense of \cite[Definition 2.3.9]{Ke}.	
	More precisely, Zariski locally on $\cX$ we take a strict smooth morphism
		\[\cX\rightarrow(\Spwf V[x_1,\ldots,x_n,y_1,\ldots,y_m]^\dagger/(\pi-x_1\cdots x_n),\cN)\]
	as in Definition \ref{def: strictly semistable over V}.
	For $j=1,\ldots,m$, $\frD_j\subset\cX_\bbQ$ be the closed dagger space defined by $y_j$.
	If we let $\Omega^1_{\cX_\bbQ}$ the sheaf of (non-logarithmic) differentials on $\cX_\bbQ$, then we have
		\[\Coker\Bigl(\cE\otimes\bigl(\Omega^1_{\cX_\bbQ}\oplus\bigoplus_{i\neq j}\cO_{\cX_\bbQ}d\log y_i\bigr)\rightarrow\cE\otimes\omega^1_{\cX/V^\sharp,\bbQ}\Bigr)=\cE\otimes\cO_{\frD_j}\cdot d\log y_j.\]
	The composition of $\nabla\colon \cE\rightarrow\cE\otimes\omega^1_{\cX/V^\sharp,\bbQ}$ with the projection $\cE\otimes\omega^1_{\cX/V^\sharp,\bbQ}\rightarrow\cE\otimes\cO_{\frD_j}\cdot d\log y_j$ induces a morphism $\cE\otimes\cO_{\frD_j}\rightarrow\cE\otimes\cO_{\frD_j}\cdot d\log y_j$.
	Identifying $\cE\otimes\cO_{\frD_j}\cdot d\log y_j$ with $\cE\otimes\cO_{\frD_j}$, we obtain an endomorphism $\res_{\frD_j}$ on $\cE\otimes\cO_{\frD_j}$, which we call the {\it residue} along $\frD_j$.
	We denote by $\MIC^\dagger(\cX/V^\sharp)^\nr$ the full subcategory of $\MIC^\dagger(\cX/V^\sharp)$ consisting of objects having nilpotent residues.
\end{definition}

\begin{proposition}
	The category $\MIC^\dagger(\cX/V^\sharp)^\nr$ is abelian.
\end{proposition}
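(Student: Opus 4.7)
The proof will follow the same pattern as Proposition \ref{prop: nr abelian}, which established abelianness for $\Isoc^\dagger(Y/W^\varnothing)^\nr$. The category $\MIC^\dagger(\cX/V^\sharp)^\nr$ is a full subcategory of the additive category $\MIC^\dagger(\cX/V^\sharp)$, so it suffices to check that for any morphism $f\colon(\cE',\nabla')\rightarrow(\cE,\nabla)$ in $\MIC^\dagger(\cX/V^\sharp)^\nr$, its kernel and cokernel (formed naively as $\cO_{\cX_\bbQ}$-modules equipped with the induced log connection) are again coherent locally free sheaves, carry overconvergent log connections, and have nilpotent residues.

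The key step is locally-freeness of $\Ker f$ and $\Coker f$. This is a local question on $\cX_\bbQ$, so we may work with a strict smooth chart as in Definition \ref{def: strictly semistable over V} and apply \cite[Lemma 3.2.14]{Ke} directly: Kedlaya's lemma ensures that morphisms between coherent sheaves with integrable log connections having nilpotent residues along the components of a strict normal crossings divisor have locally free kernel and cokernel. In our setting, the log structure on $\cX$ over $V^\sharp$ comes from the horizontal divisor (the vertical part is absorbed into the base), so Kedlaya's hypothesis applies to precisely the components $\frD_1,\ldots,\frD_m$ where we have imposed nilpotence.

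The induced log connections on $\Ker f$ and $\Coker f$ are automatically integrable. To check that they are overconvergent (i.e., lie in $\MIC^\dagger$ rather than merely $\MIC$), one passes to the associated Taylor isomorphisms: since $\cX(1)_\bbQ$ is flat over $\cX_\bbQ$ via either projection, the Taylor isomorphisms of $\cE'$ and $\cE$ restrict and pass to quotients, giving Taylor isomorphisms on $\Ker f$ and $\Coker f$ with the cocycle condition \eqref{eq: cocycle condition} inherited from those of $\cE'$ and $\cE$. Finally, nilpotence of residues is preserved because $\res_{\frD_j}$ is functorial and induces endomorphisms on $(\Ker f)\otimes\cO_{\frD_j}$ and $(\Coker f)\otimes\cO_{\frD_j}$ that are, respectively, restrictions and quotients of the nilpotent operators $\res_{\frD_j}$ on $\cE'\otimes\cO_{\frD_j}$ and $\cE\otimes\cO_{\frD_j}$.

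The main obstacle—such as it is—lies in invoking Kedlaya's lemma correctly: his statement is formulated for schemes/formal schemes with strict normal crossings divisors, and one must verify that the weak formal / dagger space setting considered here admits the same local argument. This is essentially immediate since the proof of \cite[Lemma 3.2.14]{Ke} is local and algebraic, and the local rings of $\cX_\bbQ$ at points of $\frD_j$ are of the expected form. Once that is in hand, the rest of the verification is formal.
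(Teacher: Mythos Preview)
Your proof is correct and follows essentially the same approach as the paper, which simply says ``the proposition follows by the same reason as Proposition \ref{prop: nr abelian}'' (i.e., reduce to a local chart and invoke \cite[Lemma 3.2.14]{Ke}). You supply more detail than the paper does---in particular the checks that overconvergence and nilpotence of residues pass to kernels and cokernels---but the core argument is identical.
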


\begin{proof}
	The proposition follows by the same reason as Proposition \ref{prop: nr abelian}.
\end{proof}

\begin{definition}
	Let $\cX$ be a strictly semistable weak formal log scheme over $V^\sharp$.
	An object $(\cE,\nabla)\in\MIC^\dagger(\cX/V^\sharp)^\nr$ is said to be {\it unipotent} if it is an iterated extension of $\cO_{\cX_\bbQ}$.
	We denote by $\MIC^\dagger(\cX/V^\sharp)^\unip$ the full subcategory of $\MIC^\dagger(\cX/V^\sharp)^\nr$ consisting of unipotent objects.
\end{definition}

\begin{proposition}
	The category $\MIC^\dagger(\cX/V^\sharp)^\unip$ is abelian.
\end{proposition}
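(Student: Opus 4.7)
The plan is to show that the full subcategory $\MIC^\dagger(\cX/V^\sharp)^\unip \subset \MIC^\dagger(\cX/V^\sharp)^\nr$ is closed under kernels and cokernels; since the ambient category is already abelian (by the previous proposition), this will give abelianness of $\MIC^\dagger(\cX/V^\sharp)^\unip$. Closure under extensions holds by definition, so the heart of the argument is to prove closure under subobjects and quotients. I would imitate the chain of results leading to Proposition \ref{prop: unipotence} in the log overconvergent isocrystal case, now working with the de Rham cohomology on the dagger space $\cX_\bbQ$ in place of log rigid cohomology.

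First I would reduce to the case that $\cX$ is geometrically connected, by decomposing along connected components of $\cX_\bbQ$ so that $H^0(\cX_\bbQ,\cO_{\cX_\bbQ}) = K$. Next I would establish the de Rham analogue of Lemma \ref{lem: nonzero inj}: for any non-zero $(\cE,\nabla) \in \MIC^\dagger(\cX/V^\sharp)^\unip$, the space of horizontal sections
\[
H^0_{\mathrm{dR}}(\cX,(\cE,\nabla)) := \ker\!\bigl(\nabla\colon H^0(\cX_\bbQ,\cE)\to H^0(\cX_\bbQ,\cE\otimes\omega^1_{\cX/V^\sharp,\bbQ})\bigr)
\]
is a non-zero finite-dimensional $K$-vector space, and the evaluation map $H^0_{\mathrm{dR}}(\cX,(\cE,\nabla))\otimes_K\cO_{\cX_\bbQ}\to\cE$ is injective. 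The proof is by induction on the length of a unipotent filtration: a short exact sequence $0\to(\cF,\nabla_\cF)\to(\cE,\nabla)\to(\cO_{\cX_\bbQ},d)\to 0$ in $\MIC^\dagger(\cX/V^\sharp)^\unip$ yields an injection $H^0_{\mathrm{dR}}(\cX,(\cF,\nabla_\cF))\hookrightarrow H^0_{\mathrm{dR}}(\cX,(\cE,\nabla))$, giving non-vanishing, and a diagram chase identical to that in the proof of Lemma \ref{lem: nonzero inj} gives injectivity of the evaluation map.

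With this in hand, the argument of Proposition-Definition \ref{prop: filtration} produces a canonical increasing filtration $\Fil_\bullet$ on each unipotent $(\cE,\nabla)$ whose graded pieces are trivial connections, starting from $\Fil_0(\cE,\nabla) := H^0_{\mathrm{dR}}(\cX,(\cE,\nabla))\otimes_K\cO_{\cX_\bbQ}$ and extending inductively. Given a morphism $f\colon(\cE',\nabla')\to(\cE,\nabla)$ in $\MIC^\dagger(\cX/V^\sharp)^\unip$, the kernel and cokernel computed in $\MIC^\dagger(\cX/V^\sharp)^\nr$ inherit filtrations from the natural ones on source and target, and their graded pieces are subobjects or quotients of $(\cO_{\cX_\bbQ},d)$ in $\MIC^\dagger(\cX/V^\sharp)^\nr$. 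On a geometrically connected $\cX$ the only such subobjects and quotients are $0$ and $(\cO_{\cX_\bbQ},d)$ itself, because any nonzero sub-object-with-nilpotent-residues of $(\cO_{\cX_\bbQ},d)$ is coherent, locally free of rank one, generated by a globally horizontal function, hence equal to $(\cO_{\cX_\bbQ},d)$ by connectedness. This exhibits $\ker f$ and $\coker f$ as iterated extensions of $(\cO_{\cX_\bbQ},d)$, and closure under subobjects and quotients follows as a special case.

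The main obstacle is the non-vanishing statement in the analogue of Lemma \ref{lem: nonzero inj}, since unlike the situation over $W^\varnothing$ one cannot rely on the Hyodo--Kato machinery at this stage. It reduces to the identification $H^0_{\mathrm{dR}}(\cX_\bbQ) = H^0(\cX_\bbQ,\cO_{\cX_\bbQ}) = K$ in the geometrically connected case, which follows from the connectedness of the dagger space $\cX_\bbQ$; the inductive step and the diagram chase are then purely formal. A secondary subtlety is the characterization of sub-$(\cO_{\cX_\bbQ},d)$-modules with nilpotent residues as being $0$ or all of $(\cO_{\cX_\bbQ},d)$, where one uses local freeness in the nilpotent-residues setting (as in the proof of Proposition \ref{prop: nr abelian}) to conclude that a rank-one horizontal sub-line-bundle is globally trivialized by a horizontal section, hence a unit in $K^\times\cdot\cO_{\cX_\bbQ}$.
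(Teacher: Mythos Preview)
Your proposal is correct and follows the same strategy as the paper, which simply says the result holds ``by the same reason as Proposition~\ref{prop: unipotence}'' (itself deferring to \cite[Proposition 2.3.2]{CL1}). One small point: since the graded pieces of the natural filtration are of the form $M_i\otimes_K\cO_{\cX_\bbQ}$ rather than $\cO_{\cX_\bbQ}$, you should refine to rank-one graded pieces before invoking the classification of subobjects of $(\cO_{\cX_\bbQ},d)$; alternatively, a direct induction on the rank of $\cE$ using any unipotent filtration with graded pieces $\cO_{\cX_\bbQ}$ bypasses the natural-filtration machinery and the non-vanishing lemma entirely, needing only that subobjects and quotients of $(\cO_{\cX_\bbQ},d)$ in $\MIC^\dagger(\cX/V^\sharp)^\nr$ are $0$ or $(\cO_{\cX_\bbQ},d)$.
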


\begin{proof}
	The proposition follows by the same reason as Proposition \ref{prop: unipotence}.
\end{proof}

\begin{proposition}\label{prop: dR finite}
	Let $\cX$ be a strictly semistable weak formal log scheme over $V^\sharp$.
	Assume that $\cX$ is quasi-compact and adic over $V^\sharp$.
	For $(\cE,\nabla)\in\MIC^\dagger(\cX/V^\sharp)^\unip$, the de Rham cohomology groups
		\[H^n_\dR(\cX,(\cE,\nabla)):=H^n(\cX_\bbQ,\cE\otimes\omega^\bullet_{\cX/V^\sharp,\bbQ})\]
	are finite-dimensional $K$-vector spaces.
\end{proposition}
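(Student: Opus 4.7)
The plan is to argue by induction on the rank of $\cE$, exploiting unipotence to reduce to the trivial coefficient case. More precisely, since $(\cE,\nabla)\in\MIC^\dagger(\cX/V^\sharp)^\unip$ is by definition an iterated extension of $\cO_{\cX_\bbQ}$, I would take a short exact sequence
\[0\rightarrow(\cE',\nabla')\rightarrow(\cE,\nabla)\rightarrow\cO_{\cX_\bbQ}\rightarrow 0\]
in $\MIC^\dagger(\cX/V^\sharp)^\unip$ with $(\cE',\nabla')$ unipotent of strictly smaller rank. Tensoring with $\omega^\bullet_{\cX/V^\sharp,\bbQ}$ yields a short exact sequence of de Rham complexes (the sequence is termwise split as a sequence of $\cO_{\cX_\bbQ}$-modules because $\cO_{\cX_\bbQ}$ is locally free and its differential operators induce an extension that is short exact on the underlying graded sheaves). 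The associated long exact sequence
\[\cdots\rightarrow H^n_\dR(\cX,(\cE',\nabla'))\rightarrow H^n_\dR(\cX,(\cE,\nabla))\rightarrow H^n_\dR(\cX)\rightarrow H^{n+1}_\dR(\cX,(\cE',\nabla'))\rightarrow\cdots\]
together with the induction hypothesis reduces the problem to the finiteness of $H^n_\dR(\cX):=H^n(\cX_\bbQ,\omega^\bullet_{\cX/V^\sharp,\bbQ})$ for the trivial coefficient.

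For the base case, I would appeal to the known finiteness of log de Rham cohomology for strictly semistable weak formal log schemes: since $\cX_\bbQ$ is a dagger space which (in the setting where syntomic coefficients are ultimately considered) is proper over $K$, finiteness of coherent cohomology on proper dagger spaces (Grosse-Kl\"onne, and the standard spectral sequence computing hypercohomology of a bounded complex of coherent sheaves) gives finite dimensionality of each $H^n_\dR(\cX)$. If the statement is intended in full generality, one reduces via a Zariski covering by affinoid strictly semistable pieces and uses the weak fibration / strong fibration lemma (as already invoked in Proposition \ref{prop: log rig coh indep}) to reduce to a standard model, whose log de Rham cohomology is computed by an explicit Koszul-type complex of weakly complete algebras.

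The main obstacle is precisely this base case: the finiteness of log de Rham cohomology for a strictly semistable $\cX$ with trivial coefficient. Granted properness of $\cX$ over $V^\sharp$ (which is the setting in which the later sections \S\ref{sec: syn}--\S\ref{sec: syn coh} are formulated), the argument follows from Kiehl-type finiteness for proper dagger spaces applied to the coherent sheaves $\cE\otimes\omega^i_{\cX/V^\sharp,\bbQ}$ together with the spectral sequence $E_1^{i,j}=H^j(\cX_\bbQ,\cE\otimes\omega^i_{\cX/V^\sharp,\bbQ})\Rightarrow H^{i+j}_\dR(\cX,(\cE,\nabla))$. Once the trivial coefficient case is in hand, the inductive step is entirely formal.
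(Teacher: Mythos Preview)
Your induction on the rank of $\cE$ to reduce to the trivial coefficient is correct and is exactly the first step in the paper's proof. The difficulty is entirely in the base case, and here your argument has a genuine gap.

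The proposition does \emph{not} assume $\cX$ proper, and it is in fact invoked later in a non-proper situation: the finiteness of $H^n_\rig(Y/V^\sharp,\sE)_\pi$ is proved by taking an affine open covering of $Y$ and applying this proposition to the (non-proper) local lifts. So your Kiehl-type argument, which needs properness of $\cX_\bbQ$, does not cover the stated generality. Your fallback suggestion (Zariski cover, fibration lemma, explicit Koszul computation) is not a proof: even for a single smooth affinoid dagger piece, finiteness of de Rham cohomology is a nontrivial theorem, not something one reads off from a Koszul complex.

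The paper handles the base case in two further reductions. First, it filters $\omega^\bullet_{\cX/V^\sharp,\bbQ}$ by the order of log poles along the horizontal divisor $D$; the graded pieces of the associated spectral sequence are de Rham cohomology groups of the strata of $D$, which are again strictly semistable over $V^\sharp$ but with \emph{empty} horizontal divisor. Second, once $D=\varnothing$, the log de Rham complex $\omega^\bullet_{\cX/V^\sharp,\bbQ}$ coincides with the ordinary de Rham complex $\Omega^\bullet_{\cX_\bbQ}$ of the smooth dagger space $\cX_\bbQ$, and finiteness is then Grosse-Kl\"onne's theorem \cite{GK1.5}*{Theorem~A}, which applies to such dagger spaces without any properness hypothesis. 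This is the input you are missing.
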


\begin{proof}
	Since $\cE$ is unipotent, we may reduce to the case $\cE=\cO_{\cX_\bbQ}$.
	By counting log poles along the horizontal divisor, we may define a filtration on $\omega^\bullet_{\cX/V^\sharp,\bbQ}$.
	The $E_1$-page of the spectral sequence associated to this filtration consists of de Rham cohomology of strictly semistable weak formal log schemes over $V^\sharp$ with empty horizontal divisor.
	Hence we may assume that the horizontal divisor of $\cX$ is empty.
	In this case we have $\omega^\bullet_{\cX/V^\sharp,\bbQ}=\Omega^\bullet_{\cX_\bbQ}$, the usual de Rham complex of $\cX_\bbQ$.
	Therefore the finiteness is given by \cite[Theorem A]{GK1.5}.
\end{proof}

We say a weak formal log scheme over $V^\sharp$ is {\it proper} if the underlying weak formal scheme is proper over $V$.

\begin{definition}
	Let $\cX$ be a proper strictly semistable weak formal log scheme adic over $V^\sharp$ and $(\cE,\nabla)\in\MIC^\dagger(\cX/V^\sharp)^\unip$.
	A {\it Hodge filtration} on $(\cE,\nabla)$ is a finite descending filtration on $\cE$ by coherent subsheaves satisfying the Griffith transversality
		\[\nabla(F^n\cE)\subset F^{n-1}\cE\otimes\omega^1_{\cX/V^\sharp,\bbQ}.\]
\end{definition}

For $(\cE,\nabla)$ with a Hodge filtration $F^\bullet$, the complex $\cE\otimes\omega^\bullet_{\cX/V^\sharp,\bbQ}$ is filtered by $F^\bullet$ and the stupid filtration on $\omega^\bullet_{\cX/V^\sharp,\bbQ}$.
Namely the filtration defined by
	\[F^n(\cE\otimes\omega^k_{\cX/V^\sharp,\bbQ}):=F^{n-k}\cE\otimes\omega^k_{\cX/V^\sharp,\bbQ}\]
is preserved by $\nabla$.
This induces a filtration $F^\bullet$ on $R\Gamma_\dR(\cX,\cE)=R\Gamma(\cX_\bbQ,\cE\otimes\omega^\bullet_{\cX/V^\sharp,\bbQ})$.

%%%%%%%%%%%%%%%%%%%%%%%%
\section{Hyodo--Kato map}\label{sec: HK map}
%%%%%%%%%%%%%%%%%%%%%%%%
In this section, we will extend the construction of the Hyodo--Kato map in \cite{EY} to cohomology with coefficients in log overconvergent isocrystal.
The Hyodo--Kato map is defined by choosing a uniformizer of $V$ and a branch of the $p$-adic logarithm over $K$, but in fact it depends only on the choice of a branch of the $p$-adic logarithm. 

Let $\pi\in V$ be a uniformizer.
For a strictly semistable log scheme $Y$ over $k^0$, we denote
	\[\Isoc^\dagger_\pi(Y/V^\sharp):=\Isoc^\dagger(Y/(k^0,V^\sharp,\tau_\pi)).\]
	
\begin{proposition}
	Let $Y$ be a quasi-compact strictly semistable log scheme over $k^0$.
	For $\sE\in\Isoc^\dagger_\pi(Y/V^\sharp)^\unip$, the log rigid cohomology groups $H^n_\rig(Y/V^\sharp,\sE)$ are finite-dimensional $K$-vector spaces.
\end{proposition}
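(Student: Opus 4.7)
The plan is to reduce the statement to Proposition \ref{prop: dR finite} by using the unipotent structure of $\sE$ and a carefully chosen local embedding datum for $Y$ over $V^\sharp$.

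First I would exploit unipotence: $\sE$ admits a finite filtration by subobjects in $\Isoc^\dagger_\pi(Y/V^\sharp)^\unip$ whose successive quotients are isomorphic to $\sO_{Y/V^\sharp}$. The short exact sequences of such a filtration induce long exact sequences in log rigid cohomology, so by induction on the length of the filtration, it is enough to establish the finite dimensionality of $H^n_\rig(Y/V^\sharp)_\pi$ for the trivial coefficient.

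For the trivial coefficient I would use strict semistability of $Y$ to construct a particularly convenient local embedding datum. Zariski-locally, $Y$ is presented as a strict smooth cover of $\Spec k[x_1,\ldots,x_n,y_1,\ldots,y_m]/(x_1\cdots x_n)$ with its standard log structure. Lifting the equation $x_1\cdots x_n=0$ to $x_1\cdots x_n=\pi$ and the horizontal variables verbatim, one obtains, for each member $Y_\lambda$ of a finite affine open cover of $Y$, an exact closed immersion $Y_\lambda\hookrightarrow\cX_\lambda$ into a strictly semistable weak formal log scheme over $V^\sharp$ with $Y_\lambda=\cX_\lambda\times_{V^\sharp,\tau_\pi}k^0$. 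The resulting family $(Y_\lambda,\cX_\lambda,i_\lambda,h_\lambda,\theta_\lambda)_{\lambda\in\Lambda}$ is a local embedding datum for $Y$ over $V^\sharp$, and $R\Gamma_\rig(Y/V^\sharp)_\pi$ is the complex associated to the cosimplicial complex $R\Gamma(\cZ_{\bullet,\bbQ},\omega^\bullet_{\cZ_\bullet/V^\sharp,\bbQ})$. At the zeroth level $\cZ_0=\coprod_\lambda\cX_\lambda$, finite dimensionality of each $H^q(\cX_{\lambda,\bbQ},\omega^\bullet_{\cX_\lambda/V^\sharp,\bbQ})$ is granted by Proposition \ref{prop: dR finite}. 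For higher $m\geq 1$, the exactification $\cZ_{\underline{\lambda}}$ of the fiber product $\cX_{\lambda_0}\times_{V^\sharp}\cdots\times_{V^\sharp}\cX_{\lambda_m}$ is no longer strictly semistable, but by the strong fibration lemma \cite[Proposition 1.18]{EY}, $\cZ_{\underline{\lambda},\bbQ}$ fibers by relative open polydisks over the open subspace of $\cX_{\lambda_0,\bbQ}$ cut out by $Y_{\lambda_0}\cap\cdots\cap Y_{\lambda_m}$. Applying the relative Poincar\'e lemma, the de Rham complex of $\cZ_{\underline{\lambda}}$ over $V^\sharp$ reduces to that of a quasi-compact open of $\cX_{\lambda_0,\bbQ}$, which is again finite dimensional. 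Assembling these via Mayer--Vietoris/\v{C}ech for the finite cover then yields finite dimensionality of $H^n_\rig(Y/V^\sharp)_\pi$ in each degree.

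The main technical point I expect to require care is the compatibility of the relative Poincar\'e lemma with the log structure on $\cZ_{\underline{\lambda}}$ arising from exactification: the underlying rigid spaces form a relative polydisk, but the log structures of $\cZ_{\underline{\lambda}}$ and $\cX_{\lambda_0}$ differ, and one must check that the logarithmic relative de Rham cohomology is still concentrated in degree zero and equal to the structure sheaf of the base. This is, however, precisely the mechanism that was already used in the proof of Proposition \ref{prop: log rig coh indep} to establish independence of the local embedding datum, and invoking it there ensures that the finiteness propagates through all simplicial degrees.
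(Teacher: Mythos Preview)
Your approach is correct and essentially the same as the paper's, which simply says that a suitable open covering of $Y$ reduces $R\Gamma_\rig(Y/V^\sharp,\sE)_\pi$ to de Rham cohomology of strictly semistable weak formal log schemes over $V^\sharp$ with unipotent coefficients, and then invokes Proposition~\ref{prop: dR finite}. Two minor remarks: first, the reduction to the trivial coefficient is unnecessary, since Proposition~\ref{prop: dR finite} already handles unipotent coefficients, and the realization of a unipotent $\sE$ on each piece remains unipotent; second, when you write that the higher simplicial terms reduce to ``a quasi-compact open of $\cX_{\lambda_0,\bbQ}$, which is again finite dimensional'', you should make explicit that this open is the generic fiber of the open weak formal log subscheme $\cU_{\underline{\lambda}}\subset\cX_{\lambda_0}$ with underlying space $Y_{\lambda_0}\cap\cdots\cap Y_{\lambda_m}$, which is itself strictly semistable, so that Proposition~\ref{prop: dR finite} applies --- finiteness does not hold for arbitrary quasi-compact dagger opens.
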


\begin{proof}
	Considering locally, $R\Gamma_\rig(Y/V^\sharp,\sE)_\pi$ is described as the de Rham cohomology of a strictly semistable weak formal log scheme adic over $V^\sharp$ with unipotent coefficients.
	Hence the assertion follows by Proposition \ref{prop: dR finite}.
\end{proof}

The diagram
	\[\xymatrix{
	Y\ar[r]\ar[rd] &k^0\ar[d]\ar[r]^-{\tau_\pi} & V^\sharp\ar[d]\\
	& k^\varnothing\ar[r]^\iota & W^\varnothing
	}\]
induces the pull-back functor
	\begin{equation}\label{eq: pull back to V}\Isoc^\dagger(Y/W^\varnothing)\rightarrow\Isoc^\dagger_\pi(Y/V^\sharp).
	\end{equation}
For an object $\sE\in \Isoc^\dagger(Y/W^\varnothing)$, we often denote its pull-back to $\Isoc^\dagger_\pi(Y/V^\sharp)$ again by $\sE$, and its cohomology by $R\Gamma_\rig(Y/V^\sharp,\sE)_\pi:=R\Gamma_\rig(Y/(k^0,V^\sharp,\tau_\pi),\sE)$.

We recall the notion of a branch of the $p$-adic logarithm.
Let $\boldsymbol{\mu}\subset W^\times\subset V^\times$ be the image of $k^\times$ under the Teichm\"{u}ller map.
Then there is a decomposition $V^\times=\boldsymbol{\mu}(1+\frm)$.

\begin{definition}\label{def: branch}
	A {\it branch of the $p$-adic logarithm} over $K$ is a homomorphism $\log\colon K^\times\rightarrow K$ whose restriction to $1+\frm$ is given by the series
		\[\log (1+x)=-\sum_{n\geq 1}\frac{(-x)^n}{n}\hspace{10pt}(x\in\frm).\]
	For an element $\xi\in\frm\setminus\{0\}$ we denote by $\log_\xi$ the unique branch of the $p$-adic logarithm over $K$ such that $\log_\xi(\xi)=0$.
	More precisely, for any uniformizer $\pi\in V$ the element $\xi$ can be written as $\xi=\pi^mvw$ for some $m\geq 1$, $v\in 1+\frm$, and $w\in\boldsymbol{\mu}$.
	Thus if we set $\log_\xi(\pi):=\sum_{n\geq 1}\frac{(1-v)^n}{mn}$, it extends to a group homomorphism $\log_\xi\colon K^\times\rightarrow K$ such that $\log _\xi(\xi)=0$.
\end{definition}

\begin{remark}
	Note that any branch of the $p$-adic logarithm $\log$ over $K$ can be written in the form $\log_\xi$ for some $\xi\in\frm\setminus\{0\}$.
	Indeed, $\exp(-p^k\log(p))$ is well-defined as an element of $1+\frm$ for a sufficiently large integer $k$.
	Then one can see that $\log=\log_\xi$ for $\xi:=p^{p^k}\exp(-p^k\log(p))$.
\end{remark}

Now we are ready to define the Hyodo--Kato map.
Let $\cX$ be a strictly semistable weak formal log scheme over $V^\sharp$ and let $Y_\pi:=\cX\times_{V^\sharp,\tau_\pi}k^0$.
Composing the pull-back \eqref{eq: pull back to V} and the realization (see Corollary \ref{cor: realization of isoc}), we have a functor
	\begin{equation}\label{eq: dR functor}
	\Isoc^\dagger(Y_\pi/W^\varnothing)\rightarrow\MIC^\dagger(\cX/V^\sharp),
	\end{equation}
which preserves unipotent objects.
For an object $\sE\in\Isoc^\dagger(Y_\pi/W^\varnothing)$ we denote by $\sE_\dR$ the image of $\sE$ in $\MIC^\dagger(\cX/V^\sharp)$.

\begin{definition}\label{def: HK map}
	Let $\cX$ be a strictly semistable weak formal log scheme adic over $V^\sharp$ and $\pi\in V$ a uniformizer.
	We denote $Y_\pi:=\cX\times_{V^\sharp,\tau_\pi}k^0$.
	For an object $\sE\in\Isoc^\dagger(Y_\pi/W^\varnothing)$, we define the Hyodo--Kato cohomology of $\cX$ with coefficients in $\sE$ to be
		\[R\Gamma_\HK(\cX,\sE)_\pi:=R\Gamma_\HK(Y_\pi,\sE)\]
	equipped with the {\it normalized monodromy operator} $\bsN:=e^{-1}N$, where $e$ is the ramification index of $K$ over $\bbQ_p$.
	
	We take a local embedding datum of $Y_\pi$ over $\cS$ and let $\cZ_\bullet$ be the associated simplicial weak formal log scheme over $\cS$.
	For a branch of the $p$-adic logarithm $\log$ over $K$, let
		\[\Psi_{\pi,\log}\colon R\Gamma(\cZ_{\bullet,\bbQ},\sE_{\cZ_\bullet}\otimes\omega^\star_{\cZ_\bullet/W^\varnothing,\bbQ}[u])\rightarrow R\Gamma((\cZ_\bullet\times_{\cS,j_\pi}V^\sharp)_\bbQ,\sE_{\cZ_\bullet}\otimes\omega^\star_{\cZ_\bullet\times_{\cS,j_\pi}V^\sharp/V^\sharp,\bbQ})\]
	be the map defined by the natural surjection between differential forms and the association
		\[\Psi_{\pi,\log}(u^{[i]}):=\frac{(-\log \pi)^i}{i!}.\]
	This defines a morphism in $ D^b(\Mod_L)$
		\begin{equation}\label{eq: HK map}
		\Psi_{\pi,\log}\colon R\Gamma_\HK(\cX,\sE)_\pi\rightarrow R\Gamma_\rig(Y_\pi/V^\sharp,\sE)_\pi\cong R\Gamma_\dR(\cX,\sE_\dR),
		\end{equation}
	which we call the {\it Hyodo--Kato map} with respect to $\pi$ and $\log$.
\end{definition}

For the dependence of the choice of a branch of the $p$-adic logarithm, we have the following.

\begin{proposition}\label{prop: log}
	Let $\cX$ be a strictly semistable weak formal log scheme over $V^\sharp$, $\pi\in V$ a uniformizer, and $\sE\in\Isoc^\dagger(Y_\pi/W^\varnothing)$.
	For branches $\log$, $\log'$ of the $p$-adic logarithm, we have
		\begin{equation}\label{eq: change log}
		\Psi_{\pi,\log',K}=\Psi_{\pi,\log,K}\circ\exp\left(-\frac{\log'(\xi)}{\ord_p(\xi)}\cdot\bsN\right),
		\end{equation}
	where $\xi\in\frm\setminus\{0\}$ is an element satisfying $\log(\xi)=0$.
	Here the morphism $\exp\left(-\frac{\log'(\xi)}{\ord_p(\xi)}\cdot\bsN\right)$ makes sense by the same reason as \cite[Remark 4.4]{EY}.
	
	In particular, we have
		\begin{equation}\label{eq: log pi}
		\Psi_{\pi,\log,K}=\Psi_{\pi,\log_\pi,K}\circ\exp(-\log(\pi)\cdot N)
		\end{equation}
	for any branch of the $p$-adic logarithm $\log$ over $K$.
\end{proposition}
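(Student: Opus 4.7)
The strategy is to verify the identity \eqref{eq: change log} at the level of the cosimplicial Kim--Hain complexes defining $\Psi_{\pi,\log}$, by evaluating both sides on the generators $u^{[i]}$. Since $N(u^{[i]}) = u^{[i-1]}$ is compatible with the differential relation $d(u^{[i+1]}) = -d\log s \cdot u^{[i]}$, the endomorphism $N$ (and hence $\bsN = e^{-1}N$) commutes with the differential of $\sE_{\cZ} \otimes \omega^\bullet_{\cZ/W^\varnothing,\bbQ}[u]$. Thus $\exp(c\bsN)$ defines a chain endomorphism of $R\Gamma_\HK(\cX,\sE)_\pi$ for any scalar $c \in K$, and both sides of \eqref{eq: change log} are well-defined morphisms in the derived category.

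Next I will simplify the coefficient $\log'(\xi)/\ord_p(\xi)$ using the decomposition $V^\times = \boldsymbol{\mu}(1+\frm)$. Writing $\xi = \pi^m v w$ with $m = \ord_\pi(\xi) \geq 1$, $v \in 1+\frm$, and $w \in \boldsymbol{\mu}$, both $\log$ and $\log'$ agree on $1+\frm$ and vanish on $\boldsymbol{\mu}$. The hypothesis $\log(\xi)=0$ forces $\log(v) = -m\log(\pi)$, whence
\[\log'(\xi) = m\log'(\pi) + \log(v) = m(\log'(\pi)-\log(\pi)).\]
Combined with $\ord_p(\xi) = m/e$ where $e$ is the ramification index, and with $\bsN = e^{-1}N$, the operator on the right of \eqref{eq: change log} simplifies to $\exp((\log(\pi)-\log'(\pi))N)$.

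It then remains to check that $\Psi_{\pi,\log}\circ\exp((\log(\pi)-\log'(\pi))N)$ and $\Psi_{\pi,\log'}$ agree on each $u^{[i]}$, since they act identically on the differential form factors via the natural surjection $\omega^\bullet_{\cZ_\bullet/W^\varnothing,\bbQ} \to \omega^\bullet_{\cZ_\bullet\times_\cS V^\sharp/V^\sharp,\bbQ}$. Using $N^j(u^{[i]}) = u^{[i-j]}$ and $\Psi_{\pi,\log}(u^{[k]}) = (-\log\pi)^k/k!$, this reduces to the binomial identity
\[\sum_{j=0}^{i}\binom{i}{j}(\log(\pi)-\log'(\pi))^j(-\log\pi)^{i-j} = (-\log'\pi)^i,\]
which is immediate. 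The special case \eqref{eq: log pi} then follows by applying \eqref{eq: change log} with $\log$ and $\log'$ interchanged, taking $\xi = \pi$ (so that $\log_\pi(\pi)=0$) and using $\ord_p(\pi) = 1/e$; the resulting operator becomes $\exp(-\log(\pi)N)$.

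The proof presents no conceptual obstacle: the key points requiring care are the bookkeeping that relates $N$ to the normalized monodromy $\bsN$ through the factor $e$, and the algebraic manipulation converting the $\xi$-dependent coefficient into a $\pi$-dependent one. The argument works uniformly because any two branches of the $p$-adic logarithm over $K$ differ by a homomorphism that is determined by its value on a single uniformizer of $V$.
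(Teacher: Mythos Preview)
Your proof is correct and follows essentially the same approach as the paper: both reduce to evaluating the maps on the generators $u^{[i]}$ and invoking a binomial identity. The only difference is the order of the two statements. The paper first proves \eqref{eq: log pi} directly---where the computation is slightly cleaner because $\log_\pi(\pi)=0$ collapses the sum to a single term---and then remarks that \eqref{eq: change log} follows from it; you instead simplify the coefficient in \eqref{eq: change log} to $(\log\pi-\log'\pi)N$ and verify it directly, then specialize. Your additional paragraphs justifying that $\exp(c\bsN)$ is a chain endomorphism and reducing $\log'(\xi)/\ord_p(\xi)$ via the decomposition $\xi=\pi^m v w$ make explicit what the paper leaves implicit, but the underlying idea is the same.
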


\begin{proof}
	The equality \eqref{eq: log pi} follows by
		\begin{eqnarray*}
		\Psi_{\pi,\log_\pi,K}\circ\exp(-\log(\pi)\cdot N)(u^{[i]})&=&\Psi_{\pi,\log_\pi,K}\left(\sum_{j=0}^i\frac{(-\log(\pi))^j}{j!}u^{[i-j]}\right)\\
		&=&\frac{(-\log(\pi))^i}{i!}\\
		&=&\Psi_{\pi,\log}(u^{[i]}).
		\end{eqnarray*}
	Then \eqref{eq: change log} follows from \eqref{eq: log pi}.
\end{proof}

Next we will give a base change result for the Hyodo--Kato map, and see the independence from the choice of a uniformizer.
Let $V'$ be a finite extension of $V$.
We use notations $K'$, $\frm'$, $k'$, $W'$, $\cS'$, $L'$, $e'$, $\boldsymbol{\mu}'$ for $V'$.
For a uniformizer $\pi'\in V'$, there uniquely exist units $v\in 1+\frm'$, $w\in\mathbf{\mu}'$ such that $\pi=vw\pi'^\ell$, where $\ell=e'e^{-1}$ is the ramification index of $K'$ over $K$.
Then there exists a commutative diagram
	\[\xymatrix{
	k'^0\ar[r]^-{\tau_{\pi'}}\ar[d]_-{\varrho_{\pi,\pi'}} & V'^\sharp\ar[d]^-{\varrho_{V'/V}}\\
	k^0\ar[r]^-{\tau_\pi} & V^\sharp,
	}\]
where the right vertical morphism $\varrho_{V'/V}$ is induced by the natural inclusion $V\hookrightarrow V'$, and the left vertical morphism $\varrho_{\pi,\pi'}$ is defined by the natural inclusion $k\hookrightarrow k'$ and $s\mapsto \overline{w}s'^\ell$.
Here $\overline{w}$ denotes the image of $w$ in $k'^\times$, and $s$ and $s'$ denote the canonical generators of the log structures of $k^0$ and $k'^0$, respectively.

Consider a commutative diagram
	\begin{equation}\label{eq: bc syn}
	\xymatrix{
	\cX'\ar[r]\ar[d]_-f & V'^\sharp\ar[d]^-{\varrho_{V'/V}}\\
	\cX\ar[r] & V^\sharp,
	}\end{equation}
where $\cX'$ and $\cX$ are strictly semistable weak formal log schemes adic over $V'^\sharp$ and $V^\sharp$, respectively.
Let $Y'_{\pi'}:=\cX'\times_{V'^\sharp,\tau_{\pi'}}k'^0$ and $Y_\pi:=\cX\times_{V^\sharp,\tau_\pi}k^0$.
Then $f$ induces a morphism $\overline{f}_{\pi,\pi'}\colon Y'_{\pi'}\rightarrow Y_\pi$ which is compatible with $\varrho_{\pi,\pi'}$.
The commutative diagram
	\[\xymatrix{
	Y'_{\pi'}\ar[d]_-{\overline{f}_{\pi,\pi'}}\ar[r] & k'^\varnothing \ar[d]\ar[r] & W'^\varnothing\ar[d]\\
	Y_\pi\ar[r] & k^\varnothing\ar[r] & W^\varnothing
	}\]
induces functors
	\begin{eqnarray}\label{eq: f pi pi'}
	\overline{f}^*_{\pi,\pi'}\colon \Isoc^\dagger(Y_\pi/W^\varnothing)\rightarrow \Isoc^\dagger(Y'_{\pi'}/W'^\varnothing),\\
	\nonumber \overline{f}^*_{\pi,\pi'}\colon F\Isoc^\dagger(Y_\pi/W^\varnothing)\rightarrow F\Isoc^\dagger(Y'_{\pi'}/W'^\varnothing),
	\end{eqnarray}
For an object $\sE\in\Isoc^\dagger(Y/W^\varnothing)$, we have an equality $(\overline{f}_{\pi,\pi'}^*\sE)_\dR=f^*\sE_\dR$.

\begin{proposition}\label{prop: unif}	
	Consider a diagram \eqref{eq: bc syn} and $\sE\in\Isoc^\dagger(Y_\pi/W^\varnothing)$.
	Then $f$ induces canonical morphisms
		\begin{eqnarray*}
		&&f^*_\HK\colon R\Gamma_\HK(\cX,\sE)_\pi\rightarrow R\Gamma_\HK(\cX',\overline{f}^*_{\pi,\pi'}\sE)_{\pi'},\\
		&&f^*_\dR\colon R\Gamma_\dR(\cX,\sE_\dR)\rightarrow R\Gamma_\dR(\cX',f^*\sE_\dR),
		\end{eqnarray*}
	which are compatible with the Hyodo--Kato maps in the sense that
		\begin{equation}\label{eq: HK bc}
		\Psi_{\pi',\log}\circ f_\HK^*=f_\dR^*\circ\Psi_{\pi,\log}
		\end{equation}
	holds for any branch of the $p$-adic logarithm.
	If a Frobenius structure $\Phi$ on $\sE$ is given, the morphism $f^*_\HK$ is compatible with Frobenius operators.
	
	In particular, the Hyodo--Kato map is independent of the choice of a uniformizer, up to canonical isomorphisms.
\end{proposition}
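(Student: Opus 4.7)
The plan is to realise $f^*_\HK$ and $f^*_\dR$ at the cochain level via compatible local embedding data, and then to verify the commutativity of the diagram on the divided-power generators $u^{[i]}$ of the Kim--Hain complex by a direct calculation that rests on the identity $\log\pi=\ell\log\pi'+\log v$ (valid since $\log w=0$, where $\ell$ is the ramification index of $K'/K$).

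First I would choose local embedding data $(Z_\lambda,\cZ_\lambda,i_\lambda,h_\lambda,\theta_\lambda)_\lambda$ for $Y_\pi$ over $\cS$ arising from a strictly semistable cover of $\cX$, and similarly $(Z'_{\lambda'},\cZ'_{\lambda'},i'_{\lambda'},h'_{\lambda'},\theta'_{\lambda'})_{\lambda'}$ for $Y'_{\pi'}$ over $\cS'$ arising from $\cX'$. Fix a lift $\beta\colon\cS'\to\cS$ of $\varrho_{\pi,\pi'}$ sending $s\mapsto \tilde v\tilde w\,s'^{\ell}$, with $\tilde w\in W'^\times$ the Teichm\"uller lift of $\overline w$ and $\tilde v\in 1+(p,s')W'\llbracket s'\rrbracket$ a lift of $v$ (obtained by expressing $v\in 1+\frm'$ as a polynomial in $\pi'$ with coefficients in $W'$). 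After refinement, I would arrange that each $\cZ'_{\lambda'}$ admits a morphism to some $\cZ_\lambda$ lifting $\overline f_{\pi,\pi'}$ over $\beta$, using smoothness of the $\cZ_\lambda/\cS$. Define $f^*_\HK$ at the cochain level by the usual pullback of differential forms together with the assignment $u\mapsto \ell u'-\log\tilde v$, where $\log\tilde v\in W'\llbracket s'\rrbracket_\bbQ$ converges because $\tilde v\equiv 1$ modulo $(p,s')$; extend to $u^{[n]}$ by the divided-power formula
\[
u^{[n]}\longmapsto(\ell u'-\log\tilde v)^{[n]}=\sum_{j=0}^n\ell^{j}u'^{[j]}\cdot\frac{(-\log\tilde v)^{n-j}}{(n-j)!}.
\]
The relation $du^{[i+1]}=-d\log s\cdot u^{[i]}$ is preserved since $\beta^*(d\log s)=d\log\tilde v+\ell\,d\log s'$, so this gives a CDGA morphism. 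The morphism $f^*_\dR$ is the standard de Rham pullback via the identification $(\overline f^*_{\pi,\pi'}\sE)_\dR\cong f^*\sE_\dR$. Independence of the various choices (the lift $\beta$, the lift $\tilde v$, and the chosen lifts $\cZ'_{\lambda'}\to\cZ_\lambda$) follows from Proposition~\ref{prop: isoc indep 1} together with Lemmas~\ref{lem: total complex} and~\ref{lem: assembly} applied to the double-complex structure of the Kim--Hain complex.

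The compatibility $\Psi_{\pi',\log}\circ f^*_\HK=f^*_\dR\circ\Psi_{\pi,\log}$ reduces to a direct calculation on $u^{[n]}$: since $\log\tilde v$ specialises to $\log v$ under $s'\mapsto\pi'$, the left side becomes
\[
\sum_{j=0}^n \ell^j\frac{(-\log\pi')^j}{j!}\cdot\frac{(-\log v)^{n-j}}{(n-j)!}=\frac{(-\ell\log\pi'-\log v)^n}{n!}=\frac{(-\log\pi)^n}{n!},
\]
matching $f^*_\dR\circ\Psi_{\pi,\log}(u^{[n]})$. The Frobenius compatibility when $\sE$ carries $\Phi$ is obtained by further choosing $\beta$ compatible with $\sigma$. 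The ``independence from the choice of a uniformizer'' is the special case $V'=V$, $\cX'=\cX$, $f=\id$, which yields canonical quasi-isomorphisms commuting with $\Psi_{\pi,\log}$ for any branch $\log$. The main obstacle lies in the cochain-level construction and the proof that different choices yield canonically quasi-isomorphic cochain maps; this follows the same pattern as the independence of Hyodo--Kato cohomology from local embedding data established in Proposition~\ref{prop: indep HK}, combined with the rigidity from Proposition~\ref{prop: isoc indep 1} applied row-by-row to the Kim--Hain double complex.
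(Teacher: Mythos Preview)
Your approach is sound in outline and lands on the same key identity $\log\pi=\log v+\ell\log\pi'$ (using $\log w=0$), but it differs from the paper's route in a way that creates a tension you have not resolved. The paper does \emph{not} lift the unit $v$: it works with the canonical, Frobenius-equivariant morphism $\widetilde\varrho_{\pi,\pi'}\colon\cS'\to\cS$ given by $s\mapsto ws'^\ell$, and defines $f_\HK$ by the simple rule $u^{[i]}\mapsto\ell^i u'^{[i]}$ through a bisimplicial intermediary $\cZ''_{\bullet,\bullet}$ formed as exactifications of $Z_m\times_{Y_\pi}Z'_n\hookrightarrow\cZ_m\times_{\cS,\widetilde\varrho_{\pi,\pi'}}\cZ'_n$. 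Because $\widetilde\varrho_{\pi,\pi'}$ is \emph{not} compatible with $j_\pi,j_{\pi'}$ (the factor $v$ is missing), the compatibility \eqref{eq: HK bc} is established through a second bisimplicial object $\widetilde\cZ_{\bullet,\bullet}$ (products taken over $W^\varnothing$ rather than over $\cS$) and a correction isomorphism $\gamma$ built from $\exp\bigl(\log(ws'^\ell/s)\cdot N/\ell\bigr)$; the factor $\log v$ appears only at the end, when one specialises $\log(ws'^\ell/s)$ at $s=\pi$, $s'=\pi'$. Your route instead absorbs the correction into $\beta$ and into $u\mapsto\ell u'-\log\tilde v$, making the de Rham check a one-line computation; the divided-power calculation you give is correct.

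The gap lies in ``further choosing $\beta$ compatible with $\sigma$'' and in the independence claim. Compatibility of $\beta$ with the Frobenii on $\cS,\cS'$ forces $\sigma(\tilde v)=\tilde v^p$ in $W'\llbracket s'\rrbracket$, and there is no reason one can arrange this simultaneously with $\tilde v|_{s'=\pi'}=v$; so your cochain-level $f^*_\HK$ will generally \emph{not} commute with the Frobenius endomorphisms. Relatedly, since $u^{[n]}\mapsto(\ell u'-\log\tilde v)^{[n]}$ does not preserve the rows of the Kim--Hain double complex (it is only filtered, with associated graded $u^{[i]}\mapsto\ell^i u'^{[i]}$), the ``row-by-row'' invocation of Proposition~\ref{prop: isoc indep 1} and Lemma~\ref{lem: assembly} does not directly yield independence from the choice of $\tilde v$. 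Both issues are repairable---for instance by exhibiting a cochain homotopy between your map and the one obtained from $\widetilde\varrho_{\pi,\pi'}$, or by adopting the paper's bisimplicial framework for well-definedness and Frobenius while keeping your direct specialisation for the de Rham comparison---but as written the argument is incomplete at precisely the point where the paper invests its main effort.
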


\begin{proof}
	The proof for the case of the trivial coefficients \cite[Proposition 4.6]{EY} is generalized without any difficulties.
	Here we give its detail for the benefit of readers.
	We denote $\sE':=\overline{f}^*_{\pi,\pi'}\sE$ to simplify notations.
	The map $f_\dR^*$ is defined obviously.
	Take local embedding data for $Y_\pi$ over $\cS$ and $Y'_{\pi'}$ over $\cS'$, and denote by $(Z_\bullet,\cZ_\bullet,i_\bullet)$ and $(Z'_\bullet,\cZ'_\bullet,i'_\bullet)$ the induced simplicial widenings.
	Let
		\[\widetilde{\varrho}_{\pi,\pi'}\colon\cS'\rightarrow\cS\]
	be the morphism defined by $s\mapsto ws'^\ell$.
	Let $\cZ''_{m,n}$ be the exactification of 
		\[Z''_{m,n}:=Z_m\times_{Y_\pi}Z'_n\hookrightarrow \cZ_m\times_\cS\cZ'_n,\]
	where the morphisms $Z'_n\rightarrow Y_\pi$ and $\cZ'_n\rightarrow \cS$ are defined through $\overline{f}_{\pi,\pi'}$ and $\widetilde{\varrho}_{\pi,\pi'}$, respectively.
	Then the natural projection induces a quasi-isomorphism
		\begin{equation}\label{eq: HK bisimp}
		R\Gamma_\HK(\cX',\sE')_{\pi'}=R\Gamma(\cZ'_{\bullet,\bbQ},\sE'_{\cZ'_\bullet}\otimes\omega^\star_{\cZ'_\bullet/W'^\varnothing,\bbQ}[u'])\xrightarrow{\cong}R\Gamma(\cZ''_{\bullet,\bullet,\bbQ},\sE'_{\cZ''_{\bullet,\bullet}}\otimes\omega^\star_{\cZ''_{\bullet,\bullet}/W'^\varnothing,\bbQ}[u']),
		\end{equation}
	where $[u']$ denotes the Kim--Hain construction defined by $du'^{[i]}:=-d\log s'\cdot u'^{[i-1]}$.
	Then $f_\HK$ is given by \eqref{eq: HK bisimp} and the map
		\[R\Gamma_\HK(\cX,\sE)_\pi=R\Gamma(\cZ_{\bullet,\bbQ},\sE_{\cZ_\bullet}\otimes\omega^\star_{\cZ_\bullet/W^\varnothing,\bbQ}[u])\rightarrow R\Gamma(\cZ''_{\bullet,\bullet,\bbQ},\sE'_{\cZ''_{\bullet,\bullet}}\otimes\omega^\star_{\cZ''_{\bullet,\bullet}/W'^\varnothing,\bbQ}[u'])\]
	defined by $u^{[i]}\mapsto \ell^iu'^{[i]}$. 
	It is straightforward to see that $f_\HK$ is compatible with differential, Frobenius, and normalized monodromy.
	
	To see the equality \eqref{eq: HK bc}, let $\widetilde{\cS}$ and $\widetilde{\cZ}_{m,n}$ be the exactifications of $k'^0\hookrightarrow\cS\times_{W^\varnothing}\cS'$ and $Z_{m,n}\hookrightarrow\cZ_m\times_{W^\varnothing}\cZ'_n$, respectively.
	Here the morphisms $k'^0\rightarrow \cS$ is defined through $\varrho_{\pi,\pi'}$.
	Then there exists an exact closed immersion $j_{\pi,\pi'}\colon V'^\sharp\hookrightarrow\widetilde{\cS}$ which makes the diagram
		\[\xymatrix{
		& V'^\sharp\ar[r]\ar[ld]_-{j_{\pi'}}\ar[d]^{j_{\pi,\pi'}} & V^\sharp\ar[d]^-{j_\pi}\\
		\cS' & \widetilde{\cS}\ar[l]\ar[r] & \cS
		}\]
	commutative, and the structure morphism $\widetilde{\cZ}_{m,n}\rightarrow W^\varnothing$ factors through $\widetilde{\cS}$.
	We define a quasi-isomorphism
		\[\gamma\colon R\Gamma(\widetilde{\cZ}_{\bullet,\bullet,\bbQ},\sE_{\widetilde{\cZ}_{\bullet,\bullet}}\otimes\omega^\star_{\widetilde{\cZ}_{\bullet,\bullet}/W^\varnothing,\bbQ}[u])\xrightarrow{\cong}R\Gamma(\widetilde{\cZ}_{\bullet,\bullet,\bbQ},\sE'_{\widetilde{\cZ}_{\bullet,\bullet}}\otimes\omega^\star_{\widetilde{\cZ}_{\bullet,\bullet}/W'^\varnothing,\bbQ}[u'])\]
	by
		\[\gamma(u^{[i]}):=\exp\left(\log\left(\frac{ws'^\ell}{s}\right)\cdot \frac{N}{\ell}\right)(\ell^iu'^{[i]})=\sum_{j=0}^i\frac{\left(\log\left(\frac{ws'^\ell}{s}\right)\right)^j}{j!}\ell^{i-j}u'^{[i-j]}.\]
	Note that
		\[\log\left(\frac{ws'^\ell}{s}\right)=\sum_{m\geq 1}\frac{(-1)^{m-1}}{m}\left(\frac{ws'^\ell}{s}-1\right)^m\]
	is well defined as global functions on $\widetilde{\cS}$ and $\widetilde{\cZ}_{m,n}$.
	It is straightforward to see that $\gamma$ is compatible with differentials, Frobenius, and normalized monodromy ($e^{-1}N$ on the left hand side and $e'^{-1}N$ on the right hand side).
	Let
		\begin{align*}
		\cX_m:=\cZ_m\times_{\cS,j_\pi}V^\sharp,&&\cX'_m:=\cZ'_m\times_{\cS',j_{\pi'}}V'^\sharp,&&\widetilde{\cX}_{m,n}:=\widetilde{\cZ}_{m,n}\times_{\widetilde{\cS},j_{\pi,\pi'}}V'^\sharp.
		\end{align*}
	We denote by $\sE_{\cX_m}$, $\sE'_{\cX'_m}$, and $\sE'_{\widetilde{\cX}_{m,n}}$ the modules with log connection on $\cX_{m,\bbQ}$, $\cX'_{m,\bbQ}$, and $\widetilde{\cX}_{m,n,\bbQ}$ induced by $\sE$ and $\sE'$.
	Then we have the following diagram:
		{\small
		\[\xymatrix{
		R\Gamma(\cZ_{\bullet,\bbQ},\sE_{\cZ_\bullet}\otimes\omega^\star_{\cZ_\bullet/W^\varnothing,\bbQ}[u])\ar[r]\ar@/^10mm/[rr]^-{\Psi_{\pi,\log}}\ar[d]\ar@{}[rd]|-{(*)}
		&R\Gamma(\widetilde{\cZ}_{\bullet,\bullet,\bbQ},\sE_{\widetilde{\cZ}_{\bullet,\bullet}}\otimes\omega^\star_{\widetilde{\cZ}_{\bullet,\bullet}/W^\varnothing,\bbQ}[u])\ar@/^3mm/[rd]_-\psi\ar[d]^-\gamma_-\cong
		&R\Gamma(\cX_{\bullet,\bbQ},\sE_{\cX_\bullet}\otimes\omega^\star_{\cX_\bullet/V^\sharp,\bbQ})\ar[d]\ar@{}[ld]|>>>>>>>>>>{(**)}\\
		R\Gamma(\cZ''_{\bullet,\bullet,\bbQ},\sE'_{\cZ''_{\bullet,\bullet}}\otimes\omega^\star_{\cZ''_{\bullet,\bullet}/W'^\varnothing,\bbQ}[u'])
		&R\Gamma(\widetilde{\cZ}_{\bullet,\bullet,\bbQ},\sE'_{\widetilde{\cZ}_{\bullet,\bullet}}\otimes\omega^\star_{\widetilde{\cZ}_{\bullet,\bullet}/W'^\varnothing,\bbQ}[u'])\ar[l]_-\cong\ar[r]_-{\psi'}
		& R\Gamma(\widetilde{\cX}_{\bullet,\bullet,\bbQ},\sE'_{\widetilde{\cX}_{\bullet,\bullet}}\otimes\omega^\star_{\widetilde{\cX}_{\bullet,\bullet}/V'^\sharp,\bbQ})\\
		R\Gamma(\cZ'_{\bullet,\bbQ},\sE'_{\cZ'_\bullet}\otimes\omega^\star_{\cZ'_\bullet/W'^\varnothing,\bbQ}[u'])\ar[ru]_-\cong\ar[rr]_-{\Psi_{\pi',\log}}\ar[u]^-\cong
		&
		&R\Gamma(\cX'_{\bullet,\bbQ},\sE'_{\cX_\bullet}\otimes\omega^\star_{\cX'_\bullet/V'^\sharp,\bbQ})\ar[u]^-\cong,
		}\]}
	where the morphisms $\psi$ and $\psi'$ are defined by
		\begin{align*}
		u^{[i]}\mapsto\frac{(-\log(\pi))^i}{i!}&&\text{and}&&u'^{[i]}\mapsto\frac{(-\log (\pi'))^i}{i!},
		\end{align*}
	as well as $\Psi_{\pi,\log}$ and $\Psi_{\pi',\log}$, respectively.
	The morphisms $f_\HK$ and $f_\dR$ are given as the compositions of the left and the right vertical morphisms.
	The commutativity of the triangles and the squares in the diagram except for $(*)$ and $(**)$ follows from the construction.
	The commutativity of the square $(*)$ follows from the fact that we have
		\[\log\left(\frac{ws'^\ell}{s}\right)=\log (1)=0\]
	on $\cZ''_{m,n,\bbQ}$.
	Noting that $\log(w)=0$, the commutativity of the triangle $(**)$ follows by
		\begin{eqnarray*}
		\psi'\circ\gamma(u^{[i]})&=&\psi'\left(\sum_{j=0}^i\frac{\left(\log\left(\frac{ws'^\ell}{s}\right)\right)^j}{j!}\ell^{i-j}u'^{[i-j]}\right)\\
		&=&\sum_{j=0}^i\frac{\ell^{i-j}\left(\log\left(\frac{w\pi'^\ell}{\pi}\right)\right)^j(-\log(\pi'))^{i-j}}{j!(i-j)!}\\
		&=&\sum_{j=0}^i\frac{\binom{i}{j}(-\log(v))^j(-\ell\log(\pi'))^{i-j}}{i!}\\
		&=&\frac{(-\log(v\pi'^\ell))^i}{i!}=\frac{(-\log(\pi))^i}{i!}=\psi(u^{[i]}).
		\end{eqnarray*}
	Now the commutativity of the above diagram shows the equality \eqref{eq: HK bc}.
\end{proof}

\begin{remark}
	By \cite[Proposition 1.10 and Remark 1.12]{HL}, for any (proper) strictly semistable weak formal log scheme $\cX$ adic over $V^\sharp$ and any finite extension $V'/V$ there exists a diagram \eqref{eq: bc syn} such that $\cX'_\bbQ\rightarrow\cX_\bbQ\times_{\Sp K}\Sp K'$ is an isomorphism.
	Then $f_\HK$ and $f_\dR$ are quasi-isomorphisms after tensoring with $L'$ and $K'$, respectively.
\end{remark}

\begin{proposition}\label{prop: HK qis}
	Let $\cX$ be a strictly semistable weak formal log scheme over $V^\sharp$ and $\sE\in\Isoc^\dagger(Y_\pi/W^\varnothing)^\unip$.
	The Hyodo--Kato map induces a quasi-isomorphism
		\[\Psi_{\pi,\log,K}\colon R\Gamma_\HK(Y_\pi,\sE)_K:=R\Gamma_\HK(Y_\pi,\sE)\otimes_LK\xrightarrow{\cong}R\Gamma_\dR(\cX,\sE_\dR).\]
\end{proposition}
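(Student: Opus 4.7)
The plan is to reduce Proposition \ref{prop: HK qis} to the case of the trivial coefficient by dévissage along the natural filtration of Proposition-Definition \ref{prop: filtration}. The key formal ingredient is that the Hyodo--Kato map $\Psi_{\pi,\log}$ is defined at the level of complexes and is functorial in $\sE$: for any short exact sequence $0\to\sE'\to\sE\to\sE''\to 0$ in $\Isoc^\dagger(Y_\pi/W^\varnothing)^\unip$, it induces a morphism of exact triangles
\begin{equation*}
\xymatrix@C=8pt{
R\Gamma_\HK(Y_\pi,\sE')_K\ar[r]\ar[d]^-{\Psi_{\pi,\log,K}}&R\Gamma_\HK(Y_\pi,\sE)_K\ar[r]\ar[d]^-{\Psi_{\pi,\log,K}}&R\Gamma_\HK(Y_\pi,\sE'')_K\ar[d]^-{\Psi_{\pi,\log,K}}\\
R\Gamma_\dR(\cX,\sE'_\dR)\ar[r]&R\Gamma_\dR(\cX,\sE_\dR)\ar[r]&R\Gamma_\dR(\cX,\sE''_\dR),
}
\end{equation*}
so that the statement is stable under extensions.

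Next, I would reduce to the case where $Y_\pi$ is geometrically connected. Both cohomologies and the Hyodo--Kato map decompose along the connected components of $Y_\pi$, so one may assume $Y_\pi$ is connected. To pass to geometric connectedness, base change to $V':=V\otimes_W W(k')$ for a sufficiently large finite unramified extension $k'/k$: then $\pi$ remains a uniformizer of $V'$, the generic fiber $\cX_\bbQ$ is only extended by scalars, and by Propositions \ref{prop: base change coh} and \ref{prop: unif} the Hyodo--Kato map after base change is obtained from the original by tensoring with $L'$ (resp.\ $K'$), so that faithful flatness of $L'/L$ permits the reduction to each geometrically connected component.

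Under this hypothesis, Proposition-Definition \ref{prop: filtration} equips $\sE$ with the natural filtration $\Fil_\bullet$, whose graded pieces have the form $M^a$ for some $M\in\Mod_L^\fin$ (with trivial monodromy, since each graded piece is $H^0_\rig(Y_\pi/W^\varnothing,\cdot)\otimes_L\sO_{Y_\pi/W^\varnothing}$). Hence, by induction on the rank and the dévissage above, it suffices to treat $\sE=M^a$ with $M\in\Mod_L^\fin$. In this case, Proposition \ref{prop: realization pseudo-const} (applied with $N=0$) identifies the realization of $M^a$ with $M\otimes_L\cO_{\cZ_\bbQ}$ carrying the connection $m\otimes f\mapsto m\otimes df$; consequently, both the Kim--Hain complex and the de Rham complex with coefficients in $M^a$ decompose as $M$ tensored with the corresponding complexes for the trivial coefficient, and a direct inspection of the formula $u^{[i]}\mapsto(-\log\pi)^i/i!$ shows that $\Psi_{\pi,\log,K}$ agrees with $\id_M\otimes\Psi_{\pi,\log,K}$, where the second factor is taken with trivial coefficient. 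This reduces the statement to the trivial-coefficient case, which is the main result of \cite{EY}.

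The main obstacle, beyond bookkeeping, lies in this last reduction: verifying that for $\sE=M^a$ with trivial monodromy on $M$, the Hyodo--Kato map really factors as $\id_M$ tensored with the trivial-coefficient Hyodo--Kato map. This hinges crucially on the vanishing of $N$ on $M$, without which the exponential twist encoded in Definition \ref{def: pseudo-constant} would contribute non-trivially and twist the comparison; indeed, this is the same phenomenon responsible for the $\log$-dependence recorded in Proposition \ref{prop: log}.
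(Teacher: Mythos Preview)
Your proposal is correct and follows the same d\'evissage strategy as the paper, which simply says: ``Since $\sE$ is unipotent, we may assume that $\sE=\sO_{Y_\pi/W^\varnothing}$. In this case, the assertion was proved in \cite[Proposition 4.7]{EY}.'' Your detour through geometric connectedness and the natural filtration of Proposition-Definition~\ref{prop: filtration} is unnecessary: by \emph{definition}, a unipotent object is an iterated extension of copies of $\sO_{Y_\pi/W^\varnothing}$, so the morphism-of-triangles argument you set up already reduces directly to $\sE=\sO_{Y_\pi/W^\varnothing}$ without invoking the natural filtration or passing to constant objects $M^a$.
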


\begin{proof}
	Since $\sE$ is unipotent, we may assume that $\sE=\sO_{Y_\pi/W^\varnothing}$.
	In this case, the assertion was proved in \cite[Proposition 4.9]{EY}.
\end{proof}

Lastly, we note that in the smooth case our Hyodo--Kato map recovers the base change map of non-logarithmic rigid cohomology.

\begin{proposition}\label{prop: smooth case}
	Consider a smooth weak formal scheme adic over $V$, and deote by $\cX$ and $\cX^\varnothing$ the weak formal log scheme given by endowing the pull-back log structure of $V^\sharp$ and the trivial log structure, respectively.
	Let $Y_\pi:=\cX\times_{V^\sharp,\tau_\pi}k^0$ and $Y^\varnothing:=\cX^\varnothing\times_{V^\varnothing}k^\varnothing$.
	Let $\sE\in\Isoc^\dagger(Y^\varnothing/W^\varnothing)$ be a unipotent object, and we denote its pull-back to $\Isoc^\dagger(Y_\pi/W^\varnothing)$ again by $\sE$.
	Then the monodromy operator on $H^n_\HK(\cX,\sE)_\pi$ is trivial, and there exists a canonical quasi-isomorphism
		\begin{equation}\label{eq: smooth}
		R\Gamma_\rig(Y^\varnothing/W^\varnothing,\sE)\xrightarrow{\cong}R\Gamma_\HK(\cX,\sE)_\pi,
		\end{equation}
	which makes the diagram
		\[\xymatrix{
		R\Gamma_\rig(Y^\varnothing/W^\varnothing,\sE)\ar[d]^-\cong\ar[r] & R\Gamma_\rig(Y^\varnothing/V^\varnothing,\sE)\ar[d]^-\cong\\
		R\Gamma_\HK(\cX,\sE)_\pi\ar[r]^{\Psi_{\pi,\log}} &R\Gamma_\dR(\cX,\sE_\dR)
		}\]
	commutative for any branch of the $p$-adic logarithm $\log$.
	Here the upper horizontal map is the base change map.
	When we consider a Frobenius structure on $\sE$, \eqref{eq: smooth} is compatible with Frobenius operators.
\end{proposition}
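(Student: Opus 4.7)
The guiding observation is that since $\sE \in \Isoc^\dagger(Y^\varnothing/W^\varnothing)$ comes from a non-logarithmic isocrystal, its realization on any smooth log embedding of $Y_\pi$ yields a connection whose $d\log s$-component vanishes. More precisely, the plan is as follows. Locally on $\cX$, lift the strictly semistable description
\[
\cX \cong \Spwf V[x_1,\ldots,x_n,y_1,\ldots,y_m]^\dagger/(\pi - x_1\cdots x_n)
\]
to the strictly semistable weak formal log scheme
\[
\widetilde{\cX} \cong \Spwf W\llbracket s\rrbracket[x_1,\ldots,x_n,y_1,\ldots,y_m]^\dagger/(s - x_1\cdots x_n)
\]
which is log smooth over $\cS$, satisfies $\widetilde{\cX}\times_{\cS,j_\pi} V^\sharp = \cX$, and provides an object of $\OC(Y_\pi/\cS)$ containing $Y_\pi$. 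Let $\widetilde{\cX}^\varnothing$ be the same underlying weak formal scheme with the trivial log structure, so that $\widetilde{\cX}^\varnothing$ gives an embedding of $Y^\varnothing$ over $W^\varnothing$. Via the canonical morphism $\widetilde{\cX}\to\widetilde{\cX}^\varnothing$ of log schemes (adding the log structure), the realization of the pull-back of $\sE$ on $\widetilde{\cX}$ is obtained from the realization $(\cE,\nabla^\varnothing)$ of $\sE$ on $\widetilde{\cX}^\varnothing$ by composing $\nabla^\varnothing\colon \cE\to\cE\otimes \Omega^1_{\widetilde{\cX}/W,\bbQ}$ with the natural inclusion $\Omega^1_{\widetilde{\cX}/W,\bbQ}\hookrightarrow \omega^1_{\widetilde{\cX}/W^\varnothing,\bbQ}$. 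Restricting to $\cX$ and using that rationally $\Omega^1_{\cX/W,\bbQ}=\Omega^1_{\cX/V,\bbQ}=\omega^1_{\cX/V^\sharp,\bbQ}$ (the first equality because $\Omega^1_{V/W}$ is torsion, the second by definition of the pull-back log structure on $\cX$), I conclude that the realized connection $\nabla$ on $\cX$ takes values in the subsheaf $\cE\otimes \omega^1_{\cX/V^\sharp,\bbQ}\subset \cE\otimes\omega^1_{\cX/W^\varnothing,\bbQ}$.

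Next, consider the two-step filtration $F^\bullet$ on the Kim--Hain complex $\sE\otimes\omega^\bullet_{\cX/W^\varnothing,\bbQ}[u]$ with $F^1$ the subcomplex of forms containing $d\log s$, yielding the short exact sequence
\[
0\to F^1 \to \sE\otimes\omega^\bullet_{\cX/W^\varnothing,\bbQ}[u] \to \sE\otimes\omega^\bullet_{\cX/V^\sharp,\bbQ}[u] \to 0.
\]
Because $\nabla$ has no $d\log s$-component and $d\log s$ vanishes in $\omega^\bullet_{\cX/V^\sharp,\bbQ}$, the Kim--Hain differential $du^{[i]}=-d\log s\cdot u^{[i-1]}$ becomes trivial on the quotient, so the quotient complex is $(\sE\otimes\omega^\bullet_{\cX/V^\sharp,\bbQ})\otimes_L L[u]$ with differential only on the first factor; similarly $F^1\cong (\sE\otimes\omega^\bullet_{\cX/V^\sharp,\bbQ})[-1]\otimes_L L[u]\cdot d\log s$. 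A direct computation using that the realizing splitting (guaranteed by the form of $\nabla$ described above) is compatible with the differential shows that the connecting morphism in the associated long exact sequence is, up to sign, induced by the shift operator $u^{[i]}\mapsto u^{[i-1]}$ on $L[u]$. This shift operator is surjective with kernel $L\cdot u^{[0]}$, whence the long exact sequence collapses to canonical isomorphisms
\[
H^n_\HK(\cX,\sE)_\pi \xleftarrow{\cong} H^n\bigl(\cX_\bbQ,\sE\otimes\omega^\bullet_{\cX/V^\sharp,\bbQ}\cdot u^{[0]}\bigr) = H^n_\dR(\cX,\sE_\dR),
\]
realized explicitly by the inclusion $\alpha\mapsto \alpha\cdot u^{[0]}$ of the $u^{[0]}$-part. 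Identifying $R\Gamma_\rig(Y^\varnothing/W^\varnothing,\sE) = R\Gamma(\cX_\bbQ,\cE\otimes\Omega^\bullet_{\cX/W,\bbQ}) = R\Gamma(\cX_\bbQ,\cE\otimes\omega^\bullet_{\cX/V^\sharp,\bbQ})$ (rationally) provides the desired quasi-isomorphism \eqref{eq: smooth}.

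The remaining statements follow readily from the explicit $u^{[0]}$-description. The normalized monodromy operator $\bsN = e^{-1}N$ acts as $u^{[i]}\mapsto e^{-1}u^{[i-1]}$, which is zero on the $u^{[0]}$-part, hence trivial on $H^n_\HK(\cX,\sE)_\pi$. For the commutative diagram, both the base-change map $R\Gamma_\rig(Y^\varnothing/W^\varnothing,\sE)\to R\Gamma_\rig(Y^\varnothing/V^\varnothing,\sE)$ (which is the identity rationally on $\Omega^\bullet_{\cX/W,\bbQ}=\Omega^\bullet_{\cX/V,\bbQ}$) and the Hyodo--Kato map $\Psi_{\pi,\log}$ (which sends $u^{[0]}\mapsto 1$ and projects $\omega^\bullet_{\cX/W^\varnothing,\bbQ}\twoheadrightarrow \omega^\bullet_{\cX/V^\sharp,\bbQ}$) restrict to the same natural identification on the $u^{[0]}$-part; hence the square commutes. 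Finally, when a Frobenius structure $\Phi$ on $\sE$ is present, choosing a Frobenius lift on the local model above gives a compatible Frobenius on the Kim--Hain complex that acts as $p^i$ on $u^{[i]}$; in particular it is the identity on $u^{[0]}$, and this is exactly the Frobenius on $R\Gamma_\rig(Y^\varnothing/W^\varnothing,\sE)$ transported through the identifications, so the isomorphism is $\varphi$-equivariant.

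The main technical obstacle is verifying the claim that the realization $\nabla$ has no $d\log s$-component, which requires tracing the pull-back functor from $\Isoc^\dagger(Y^\varnothing/W^\varnothing)$ to $\Isoc^\dagger(Y_\pi/W^\varnothing)$ through a local embedding datum lifted from $V^\sharp$ to $\cS$, and confirming that the various compatibilities (exactness of closed immersions, existence of smooth lifts to $\cS$, base change along $j_\pi$) hold in the simplicial setup used to define $R\Gamma_\HK$ via Proposition~\ref{prop: indep HK}. Once this bookkeeping is done, the subsequent long exact sequence argument, the identifications rationally, and the Frobenius compatibility are all routine.
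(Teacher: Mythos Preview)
Your proposal has a genuine gap: the Kim--Hain complex that defines $R\Gamma_\HK(\cX,\sE)_\pi$ lives on (the generic fiber of) a simplicial weak formal log scheme $\cZ_\bullet$ over $\cS$, not on $\cX$ itself. You set up the lift $\widetilde{\cX}$ over $\cS$ correctly, but then ``restrict to $\cX$'' and carry out the whole filtration argument on $\sE\otimes\omega^\bullet_{\cX/W^\varnothing,\bbQ}[u]$. Nothing in the paper identifies this object with the Hyodo--Kato cohomology; Definition~\ref{def: HK coh} and Proposition~\ref{prop: indep HK} only compare different local embedding data \emph{over $\cS$}, and $\cX\to V^\sharp$ is not such a datum. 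Even if you stay on $\widetilde{\cX}$, the claim ``no $d\log s$-component'' is not literally true: the inclusion $\Omega^1_{\widetilde{\cX}/W,\bbQ}\hookrightarrow\omega^1_{\widetilde{\cX}/W^\varnothing,\bbQ}$ sends $ds\mapsto s\cdot d\log s$, so the log connection picks up a $d\log s$-term proportional to $s$. Your two-step filtration then produces on the quotient the de Rham complex relative to $\cS$, and the pieces compute $R\Gamma_\rig(Y_\pi/\cS,\sE)$ rather than $R\Gamma_\rig(Y^\varnothing/W^\varnothing,\sE)$; an extra comparison would still be needed.

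The paper avoids this entirely. It uses the already established quasi-isomorphism $R\Gamma_\HK(Y_\pi,\sE)\xrightarrow{\cong}R\Gamma_\rig(Y_\pi/W^0,\sE)$ from Theorem~\ref{thm: HK rig comparison}, and then proves directly that $R\Gamma_\rig(Y^\varnothing/W^\varnothing,\sE)\to R\Gamma_\rig(Y_\pi/W^0,\sE)$ is a quasi-isomorphism: choose a local embedding datum $\cZ^\varnothing_\bullet$ for $Y^\varnothing$ with trivial log structures, set $\cZ_\bullet:=\cZ^\varnothing_\bullet\times_{W^\varnothing}W^0$, and observe $\omega^\star_{\cZ^\varnothing_\bullet/W^\varnothing}=\omega^\star_{\cZ_\bullet/W^0}$ (reduction to the trivial coefficient via unipotence). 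All maps in the statement are then read off from one commutative diagram of pull-backs. The triviality of the monodromy is deduced from the fact that this diagram forces $H^n_\rig(Y_\pi/W^\varnothing,\sE)\to H^n_\HK(Y_\pi,\sE)$ to be surjective, and the image of that map is always contained in $\Ker N$. This route never touches the internal structure of the Kim--Hain complex and sidesteps the issues above.
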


\begin{proof}
	Pull-back morphisms give a commutative diagram
		\begin{equation}\label{eq: diag smooth}\xymatrix{
		R\Gamma_\rig(Y^\varnothing/W^\varnothing,\sE)\ar[rr]\ar[d]\ar[rd]&&R\Gamma_\rig(Y^\varnothing/V^\varnothing,\sE)\ar[dd]^-\cong\\
		R\Gamma_\rig(Y_\pi/W^0,\sE) & R\Gamma_\rig(Y_\pi/W^\varnothing,\sE)\ar[l]\ar[ld]\ar[rd] &\\
		R\Gamma_\HK(Y_\pi,\sE)\ar[u]^-\cong\ar[rr]_-{\Psi_{\pi,\log}}& &R\Gamma_\rig(Y_\pi/V^\sharp,\sE)\\
		R\Gamma_\HK(\cX,\sE)_\pi\ar@{=}[u]&&R\Gamma_\dR(\cX,\sE_\dR).\ar@{=}[u]
		}\end{equation}
	To prove that \eqref{eq: smooth} is a quasi-isomorphism, it suffices to show that $R\Gamma_\rig(Y^\varnothing/W^\varnothing,\sE)\rightarrow R\Gamma_\rig(Y_\pi/W^0,\sE)$ is a quasi-isomrophism.
	Since $\sE$ is unipotent, it suffices to prove for the trivial coefficient.
	Take a local embedding datum for $Y^\varnothing$ over $W^\varnothing$ consisting of smooth weak formal schemes over $W$ equipped with the trivial log structures, and let $\cZ^\varnothing_\bullet$ be the associated simplicial weak formal log scheme over $W^\varnothing$.
	If we let $\cZ_m:=\cZ_m^\varnothing\times_{W^\varnothing}W^0$, then $R\Gamma_\rig(Y_\pi/W^0)$ is computed by $\cZ_\bullet$.
	Since we have $\omega^\star_{\cZ^\varnothing_\bullet/W^\varnothing}=\omega^\star_{\cZ_\bullet/W^0}$ by construction, we obtain the isomorphism $R\Gamma_\rig(Y^\varnothing/W^\varnothing)\cong R\Gamma_\rig(Y_\pi/W^0)$ as desired.
	
	In addition, by the commutativity of the diagram \eqref{eq: diag smooth}, we see that the natural map $H^n_\rig(Y_\pi/W^\varnothing,\sE)\rightarrow H_\HK^n(Y_\pi,\sE)$ is surjective for any $n$.
	This implies that the monodromy operator on $ H_\HK^n(Y_\pi,\sE)$ is trivial.
\end{proof}

%%%%%%%%%%%%%%%%%%%%%%%
\section{Syntomic coefficients}\label{sec: syn}
%%%%%%%%%%%%%%%%%%%%%%%
In this section, we introduce the category of syntomic coefficients.
We first recall some notion concerning filtered $(\varphi,N)$-modules.

\begin{definition}
	We define the category $\MF_K(\varphi,N)$ as follows:
	\begin{itemize}
	\item An object of $\MF_K(\varphi,N)$ is an object $M\in\Mod_L(\varphi,N)$ equipped with a separated exhaustive descending filtration $F^\bullet$ on $M_K:=M\otimes_LK$ by $K$-subspaces.
		We call $F^\bullet$ the {\it Hodge filtration} on $M$.
	\item A morphism $f\colon M'\rightarrow M$ in $\MF_K(\varphi,N)$ is a morphism in $\Mod_L(\varphi,N)$ such that the induced map $f_K\colon M'_K\rightarrow M_K$ preserves the Hodge filtrations.
	\end{itemize}
	
	We denote by $\MF^\fin_K(\varphi,N)$ the full subcategory of $\MF_K(\varphi,N)$ consisting of objects which have finite dimension and whose Frobenius operators are bijective.
	We call an object of $\MF^\fin_K(\varphi,N)$ a {\it filtered $(\varphi,N)$-module}.
\end{definition}

Let $M\in\MF^\fin_K(\varphi,N)$ and choose a basis $\{e_1,\ldots,e_m\}$ of $M$ over $L$.
We define a matrix $A=(a_{i,j})_{i,j}\in\GL_m(L)$ by $\varphi(e_i)=\sum_{j=1}^ma_{i,j}e_j$.
Then the valuation of $\det A$ is independent of the choice of a basis.
We call the number
	\begin{equation}
	t_N(M):=v_p(\det A)
	\end{equation}
the {\it Newton number} of $M$, where $v_p$ is the additive valuation normalized by $v_p(p)=1$.
In addition we define the {\it Hodge number} of $M$ to be
	\begin{equation}
	t_H(M):=\sum_{n\in\bbZ}n\cdot\dim_K\Gr_F^nM_K.
	\end{equation}

\begin{definition}\label{def: admissible MF}
	An object $M\in\MF^\fin_K(\varphi,N)$ is said to be {\it admissible} if the following conditions hold:
	\begin{itemize}
	\item $t_N(M)=t_H(M)$,
	\item For any subobject $M'\subset M$, $t_N(M')\geq t_H(M')$.
	\end{itemize}
	We denote by $\MF^\ad_K(\varphi,N)$ the full subcategory of $\MF^\fin_K(\varphi,N)$ consisting of admissible objects.
	In addition we denote by $\MF^\ad_K(\varphi)\subset\MF^\ad_K(\varphi,N)$ the full subcategory consisting of objects whose monodromy operators are trivial.
\end{definition}

\begin{remark}
	Properly, an object of $\MF^\fin_K(\varphi,N)$ should said to be admissible if it comes from a finite-dimensional representation of $\Gal(\ol K/K)$ over $\bbQ_p$, and the condition in Definition \ref{def: admissible MF} should be called weak admissibility.
	However, today the following theorem is well-known, that means admissibility and weak admissibility are equivalent to each other.
	So we abuse the terminology for short.
\end{remark}

\begin{theorem}[Colmez--Fontaine \cite{CF}, Colmez \cite{Co}, Fontaine \cite{Fo03}, Berger \cite{Berger2}, Kisin \cite{Kisin}]\label{thm: admissibility}
	There exist equivalences of categories
		\begin{eqnarray}
		\label{eq: st equiv}&&\MF_K^\ad(\varphi,N)\cong\Rep_{\bbQ_p}^\st(\Gal(\overline{K}/K)),\\
		\label{eq: crys equiv}&&\MF_K^\ad(\varphi)\cong\Rep_{\bbQ_p}^\crys(\Gal(\overline{K}/K)),
		\end{eqnarray}
	where categories on the right hand side are categories of semistable representations and crystalline representations (on $\bbQ_p$ of the absolute Galois group), respectively.
	Moreover they are neutral tannakian categories over $\bbQ_p$.
\end{theorem}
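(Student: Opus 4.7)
The plan is to construct quasi-inverse functors via Fontaine's period rings and to reduce essential surjectivity to the Colmez--Fontaine ``weakly admissible implies admissible'' theorem. First I would recall the construction of the rings $B_\crys \subset B_\st \subset B_\dR$: these are $\bbQ_p$-algebras equipped with a continuous action of $G_K := \Gal(\ol{K}/K)$, a Frobenius $\varphi$, an $L$-linear monodromy operator $N$ on $B_\st$ satisfying $N\varphi = p\varphi N$ (with $N=0$ on $B_\crys$), and a decreasing filtration on $B_\dR$ which restricts, after fixing a uniformizer $\pi\in V$ and a compatible system of $p^n$-th roots of $\pi$, to a filtration on $B_\st \otimes_L K$. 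Then define
\[ D_\st(V) := (B_\st \otimes_{\bbQ_p} V)^{G_K}, \qquad V_\st(D) := \Fil^0(B_\st \otimes_L D)^{\varphi=1,\, N=0}, \]
and analogously $D_\crys, V_\crys$. Call $V$ semistable (resp.\ crystalline) when $D_\st(V)$ (resp.\ $D_\crys(V)$) has $L$-dimension equal to $\dim_{\bbQ_p} V$; by definition $\Rep_{\bbQ_p}^\st(G_K)$ and $\Rep_{\bbQ_p}^\crys(G_K)$ are the full subcategories of such representations.

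The formal half goes as follows. For semistable $V$, the $L$-module $D_\st(V)$ inherits $\varphi$, $N$, and a Hodge filtration from $B_\dR \otimes V$, giving a functor into $\MF_K^\fin(\varphi,N)$. Fontaine's inequality---a consequence of the explicit structure of $B_\st$ via the fundamental exact sequence---yields $t_H(D') \leq t_N(D')$ for every sub-$(\varphi,N)$-module $D' \subset D_\st(V)$ (with its induced filtration), with equality on $D_\st(V)$ itself, so $D_\st(V)$ is admissible in our sense. Fully faithfulness of $D_\st$ and a natural isomorphism $V_\st \circ D_\st \cong \id$ follow from the key computation $(B_\st^{\varphi=1,\, N=0}) \cap \Fil^0 (B_\st \otimes_L K) = \bbQ_p$.

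The hard part is essential surjectivity: every admissible $D$ arises from a semistable representation, and $D_\st \circ V_\st \cong \id$. This is precisely the Colmez--Fontaine theorem, whose heart is the claim that $\dim_{\bbQ_p} V_\st(D) = \dim_L D$ whenever $D$ is admissible. I would follow one of the established strategies: Colmez's original argument via Banach--Colmez spaces, where one compares the ``dimension'' of the functor $D \mapsto V_\st(D)$ on both sides and reduces to the rank one case (handled by Sen's theorem); Berger's approach reducing the problem to Kedlaya's slope filtration theorem for $\varphi$-modules over the Robba ring, after passing through $(\varphi,\Gamma)$-modules; or Kisin's approach via Breuil--Kisin modules and integral $p$-adic Hodge theory. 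Each of these is substantial and constitutes the main obstacle.

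Finally, the crystalline equivalence \eqref{eq: crys equiv} follows by restricting \eqref{eq: st equiv} to objects with $N = 0$ on both sides. For the tannakian statement, I would use that $D_\st$ is a tensor functor (coming from the $\bbQ_p$-algebra structure on $B_\st$), that Hodge and Newton numbers are additive on exact sequences (Proposition \ref{prop: numbers additive}), and that they are compatible with tensor products via the definition \eqref{eq: tensor MF} and the multiplicativity of $\det$. These imply that admissibility is preserved under tensor products, internal Homs, and duals, so $\MF_K^\ad(\varphi,N)$ is an abelian rigid tensor category. The composition of the equivalence with the forgetful functor to $\bbQ_p$-vector spaces is a faithful exact tensor functor, providing the required fiber functor that makes both categories neutral tannakian over $\bbQ_p$.
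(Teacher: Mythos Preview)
The paper does not prove this theorem at all: it is stated with attribution to Colmez--Fontaine, Colmez, Berger, and Kisin, and then used as a black box (the very next line is a remark on the dependence of \eqref{eq: st equiv} on a choice of $p$-adic logarithm). So there is no argument in the paper to compare your proposal against.

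That said, your outline is a reasonable summary of the standard architecture of the proof and correctly identifies the main input as the ``weakly admissible $\Rightarrow$ admissible'' theorem, together with the three known routes to it. For the purposes of this paper, nothing more than the citation is expected; if you intend to include a proof sketch, what you have written is appropriate in scope, though you should be careful that the filtration on $B_\st$ and the embedding $B_\st \hookrightarrow B_\dR$ depend on the choice of $\log$ (as the paper's remark notes), not merely on a uniformizer.
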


\begin{remark}
	The equivalence \eqref{eq: st equiv} in fact depends on the choice of the embedidng $B_\st\hookrightarrow B_\dR$ of Fontaine's $p$-adic period domains.
	More precisely it depends on the choice of a branch of the $p$-adic logarithm.
	The equivalence \eqref{eq: crys equiv} is canonical.
\end{remark}

We also note that admissibility is closed under extension, in the following sense.
A sequence in $\MF^\fin_K(\varphi,N)$
	\[0\rightarrow M'\rightarrow M\rightarrow M''\rightarrow 0\]
is said to be exact, if it is exact as a sequence of $L$-vector spaces and the induced maps $M'_K\rightarrow M_K$ and $M_K\rightarrow M''_K$ are strictly compatible with Hodge filtrations.
Note that this is equivalent to the condition that
	\[0\rightarrow F^nM'_K\rightarrow F^nM_K\rightarrow F^nM''_K\rightarrow 0\]
is exact for any $n\in\bbZ$.

\begin{proposition}[{\cite[Proposition 4.2.1 iii)]{Fo}}]\label{prop: numbers additive}
	For an exact sequence
		\[0\rightarrow M'\rightarrow M\rightarrow M''\rightarrow 0\]
	in $\MF_K^\fin(\varphi,N)$, we have
		\begin{align}
		t_N(M)=t_N(M')+t_N(M'')&&\text{and}&&t_H(M)=t_H(M')+t_H(M'').
		\end{align}
	Moreover, If two of $M$, $M'$, and $M''$ are admissible, then the other one is also admissible.
\end{proposition}

The Frobenius operators and monodromy operators on the tensor products and internal Hom are given by the same rule as in Definition \ref{def: phi N module}.
The Hodge filtrations are defined by
	\begin{eqnarray}
	\label{eq: tensor MF}&&F^n(M_K\otimes_KM'_K):=\sum_{j\in\bbZ}F^jM_K\otimes_K F^{n-j}M'_K,\\
	\label{eq: Hom MF}&&F^n\Hom_K(M'_K,M_K):=\left\{f\in\Hom_K(M'_K,M_K)\mid f(F^jM'_K)\subset F^{j+n}M_K\text{ for any $j\in\bbZ$}\right\}.
	\end{eqnarray}

Let $\pi\in V$ be a uniformizer.
In order to consider Hodge filtrations, we need the properness of strictly semistable log schemes, but allow horizontal divisor.

\begin{definition}
	Let $\cX$ be a proper strictly semistable weak formal log scheme adic over $V^\sharp$.
	We define the category of (unipotent) syntomic coefficients $\Syn_\pi(\cX/V^\sharp)$ as follows:
	\begin{itemize}
	\item An object of $\Syn_\pi(\cX/V^\sharp)$ is a triple $\frE=(\sE,\Phi,F^\bullet)$, where $(\sE,\Phi)\in F\Isoc^\dagger(Y_\pi/W^\varnothing)^\unip$ and $F^\bullet$ is a Hodge filtration on $\sE_\dR$.
	\item A morphism $f\colon(\sE',\Phi',F'^\bullet)\rightarrow(\sE,\Phi,F^\bullet)$ in $\Syn_\pi(\cX/V^\sharp)$ is a morphism $f\colon(\sE',\Phi')\rightarrow(\sE,\Phi)$ in $F\Isoc^\dagger(Y_\pi/W^\varnothing)^\unip$ such that the induced morphism $f_\dR\colon\sE'_\dR\rightarrow\sE_\dR$ preserves the Hodge filtrations.
	\end{itemize}
	We say a sequence
	\[0\rightarrow \frE'\rightarrow\frE\rightarrow\frE''\rightarrow 0\]
	in $\Syn_\pi(\cX/V^\sharp)$ is exact if the sequence
	\[0\rightarrow\sE'\rightarrow\sE\rightarrow\sE''\rightarrow 0\]
	of the underlying log overconvergent isocrystals is exact and the morphisms $\sE'_\dR\rightarrow\sE_\dR$ and $\sE_\dR\rightarrow\sE''_\dR$ are strictly compatible with the Hodge filtrations.
	This makes $\Syn_\pi(\cX/V^\sharp)$ an exact category.
\end{definition}

For two objects $\frE,\frE'\in\Syn_\pi(\cX/V^\sharp)$, we define the tensor product $\frE\otimes\frE'$ and internal Hom $\sheafhom(\frE',\frE)$ by those on $F\Isoc^\dagger(Y_\pi/W^\varnothing)^\unip$ and the Hodge filtrations given by a similar manner as \eqref{eq: tensor MF} and \eqref{eq: Hom MF}.

Consider a commutative diagram
	\begin{equation}\label{eq: diagram pull back syn}\xymatrix{
	\cX'\ar[r]\ar[d]_-f & V'^\sharp\ar[d]^-{\varrho_{V'/V}} \\
	\cX\ar[r] & V^\sharp,
	}\end{equation}
where $\cX'$ and $\cX$ are proper strictly semistable weak formal log schemes adic over $V'^\sharp$ and $V^\sharp$, respectively.
Then for uniformizers $\pi\in V$ and $\pi'\in V'$, the pull-back of log overconvergent $F$-isocrystals \eqref{eq: f pi pi'} induces a functor
	\begin{equation}\label{eq: Syn bc}
	f_{\pi,\pi'}^*\colon\Syn_\pi(\cX/V^\sharp)\rightarrow\Syn_{\pi'}(\cX'/V'^\sharp).
	\end{equation}

To conclude this section, we will introduce syntomic coefficients associated to filtered $(\varphi,N)$-modules.
We first give a naive definition.

\begin{definition}
	Let $\cX$ be a proper strictly semistable weak formal log scheme adic over $V^\sharp$, $\pi\in V$ a uniformizer, and $M\in\MF^{\mathrm{fin}}_K(\varphi,N)$.
	If we denote by $M^a$ the pseudo-constant log overconvergent $F$-isocrystal defined by the underlying $(\varphi,N)$-module of $M$,
	\begin{eqnarray*}
	M\otimes\cO_{\cX_\bbQ}&\xrightarrow{\cong}&(1+\sJ_\cX)\backslash(\sN^s_\cX\times(M\otimes_L\cO_{\cX_\bbQ}))=M^a_\dR\\
	m\otimes f&\mapsto&(\pi,m\otimes f)
	\end{eqnarray*}
	gives an isomorphism
	\begin{equation}\label{eq: MadR}
	(M_K\otimes_K\cO_{\cX_\bbQ},1\otimes d)\cong M^a_\dR
	\end{equation}
	in $\MIC^\dagger(\cX/V^\sharp)$.
	The Hodge filtration on $M_K$ defines a filtration on $M^a_\dR$ by \eqref{eq: MadR}.
	This gives an object of $\Syn_\pi(\cX/V^\sharp)$, which we denote by $M^a_\pi$.
	
	We denote by $\frO_\cX$ the constant syntomic coefficient associated to the unit object $L\in\MF^\ad_K(\varphi)$.
\end{definition}

However the functors $M\mapsto M^a_\pi$ are not compatible with the functor \eqref{eq: Syn bc}.
In order to give a compatible construction, we need to introduce the ``twist'' of Hodge filtrations. 
In addition, in order to balance against the normalization of monodromy operator on the Hyodo--Kato cohomology as in Definition \ref{def: HK map}, we need to multiply the monodromy operator by the ramification index.
These observations leads the following definition. 

\begin{definition}
	Let $\pi\in V$ be a uniformizer and $\log\colon K^\times\rightarrow K$ a branch of the $p$-adic logarithm.
	\begin{enumerate}
	\item For $M\in\MF^\fin_K(\varphi,N)$, we define an object $M(\pi,\log)\in\MF^\fin_K(\varphi,N)$ as follows:
	The underlying $\varphi$-module of $M(\pi,\log)$ is given by that of $M$.
	The monodromy operator $\wt{N}$ on $M(\pi,\log)$ is given by $\wt{N}:=eN$, where $e$ is the ramification index of $K$ over $\bbQ_p$.
	We define the Hodge filtration on $M(\pi,\log)_K=M_K$ by
	\[F^nM(\pi,\log)_K:=\exp(-\log\pi\cdot \wt{N})(F^nM_K).\]
	\item Let $\cX$ be a strictly semistable weak formal log scheme over $V^\sharp$.
		For $M\in\MF^\fin_K(\varphi,N)$, we call $M(\pi,\log)_\pi^a\in\Syn_\pi(\cX/V^\sharp)$ the  {\it pseudo-constant syntomic coefficient} associated to $M$ with respect to $\pi$ and $\log$.
		If $M\in\MF^\ad_K(\varphi)$, we call $M(\pi,\log)_\pi^a$ the {\it constant syntomic coefficient} associated to $M$.
	\end{enumerate}
\end{definition}

Then we have the following compatibility:

\begin{proposition}
	Consider a commutative diagram
	\[\xymatrix{
	\cX'\ar[r]\ar[d]_-f & V'^\sharp\ar[d]^-{\varrho_{V'/V}} \\
	\cX\ar[r] & V^\sharp,
	}\]
	as in \eqref{eq: diagram pull back syn}.
	Let $\pi\in V$ and $\pi'\in V'$ be uniformizers, and $\log\colon K^\times\rightarrow K$ a branch of the $p$-adic logarithm.
	For $M\in\MF^\fin_K(\varphi,N)$ we denote by $M':=M\otimes_LL'\in\MF^\fin_{K'}(\varphi,N)$ the object induced from $M$ obviously.
	Then we have a canonical isomorphism
	\[ f_{\pi,\pi'}^*M(\pi,\log)^a_\pi\cong M'(\pi',\log)^a_{\pi'}.\]
\end{proposition}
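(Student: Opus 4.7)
The plan is to realize both syntomic coefficients explicitly on $\cX'$ using Proposition \ref{prop: realization pseudo-const}, and then to check that they agree after applying the identifications that arise naturally from the relation $\pi=vw\pi'^\ell$ with $v\in 1+\frm'$ and $w\in\boldsymbol{\mu}'$, $\ell=e'/e$.

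First I write both sides down. For $f^*_{\pi,\pi'}M(\pi,\log)^a_\pi$, choose the chart $\gamma'\colon\cX'\to\cS$ determined by $s\mapsto w\pi'$; this lifts the image of the generator of $\cN_{k^0}$ in $\cN_{Y'_{\pi'}}$ (namely $\overline w s'$, by the description of $\varrho_{\pi,\pi'}$), so $w\pi'\in\sN^s_{\cX'}$ and the chart makes the diagram in Proposition \ref{prop: realization pseudo-const} commute. The resulting realization on $\cX'/W'^\varnothing$ is $(M\otimes_L\cO_{\cX'_\bbQ},\nabla)$ with $\nabla(m\otimes f)=m\otimes df+\wt N(m)\otimes f\,d\log(w\pi')=m\otimes df+\wt N(m)\otimes f\,d\log\pi'$, since $w\in W'^\times$ gives $d\log w=0$. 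The Hodge filtration inherited from $M(\pi,\log)^a_\pi$ on $\cX$ reads $\exp(-\log\pi\cdot\wt N)(F^n M_K)\otimes_K\cO_{\cX'_\bbQ}$. For $M'(\pi',\log)^a_{\pi'}$, the chart $\cX'\to V'^\sharp\xrightarrow{j_{\pi'}}\cS'$ identifies its realization on $\cX'/W'^\varnothing$ with the same underlying $\cO$-module $M\otimes_L\cO_{\cX'_\bbQ}$, connection $m\otimes df+\wt N'(m)\otimes f\,d\log\pi'$ (with $\wt N'=\ell\,\wt N\otimes 1$), and Hodge filtration $\exp(-\log\pi'^\ell\cdot\wt N)(F^n M_K\otimes_K K')\otimes_{K'}\cO_{\cX'_\bbQ}$.

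Next I construct the canonical isomorphism. On the de Rham side, since $\log\pi-\log\pi'^\ell=\log v$ (using $\log w=0$), the two Hodge filtrations on $M\otimes_L\cO_{\cX'_\bbQ}$ differ precisely by the $\cO_{\cX'_\bbQ}$-linear automorphism $\exp(\log v\cdot(\wt N\otimes 1))$. To promote this to an isomorphism $\phi$ at the log overconvergent $F$-isocrystal level, I follow the template of the isomorphism $\gamma$ constructed in the proof of Proposition \ref{prop: unif}, where the analogous correction $\exp(\log(ws'^\ell/s)\cdot N/\ell)$ mediates between the charts $\cS$ and $\cS'$ via the intermediate exactification $\wt\cS$ of $k'^0\hookrightarrow\cS\times_{W^\varnothing}\cS'$. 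Explicitly, $\phi$ combines (a) the canonical $(1+\sJ_{\cX'})$-equivariant bijection $\sN^s_{\cX'}\to\sN^{s'}_{\cX'}$ given by $\alpha\mapsto\alpha/w$, with (b) the module-side correction that reconciles the monodromies $\wt N$ and $\wt N'=\ell\wt N$ and which, after restriction to $\MIC^\dagger(\cX'/V'^\sharp)$ (killing the contributions along $d\log\pi'$), reduces exactly to $\exp(\log v\cdot(\wt N\otimes 1))$. I verify compatibility with the Frobenius structures using the relation $\wt N\varphi=p\varphi\wt N$ together with the explicit form of the correction, and compatibility with Hodge filtrations is then immediate from the de Rham computation above.

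The main obstacle is the construction in the previous paragraph: producing the module-side correction so that it is simultaneously compatible with the monodromy intertwining and with the Frobenius. It is at exactly this step that the intermediate object $\wt\cS$ used in the proof of Proposition \ref{prop: unif} plays a mediating role analogous to that in the base change of the Hyodo--Kato map, and the bookkeeping of the constants $\log v$ and $\log w=0$ (together with $v\in 1+\frm'$ and $w\in\boldsymbol{\mu}'\subset W'^\times$) is what makes the correction canonical.
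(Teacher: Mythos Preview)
Your de Rham/Hodge-filtration bookkeeping is essentially right and matches the paper: the two Hodge filtrations differ by $\exp(\log v\cdot eN)$ because $\log(\pi/\pi'^\ell)=\log(vw)=\log v$. The problem is in the construction of the isomorphism at the level of log overconvergent $F$-isocrystals. You build it from a realization on $\cX'$ via Proposition~\ref{prop: realization pseudo-const} and then invoke an ``analogy'' with the map $\gamma$ from the proof of Proposition~\ref{prop: unif}. But $\gamma$ is a quasi-isomorphism of cohomology complexes living on the auxiliary exactification $\wt\cZ_{\bullet,\bullet}$, not a morphism of isocrystals; in particular the correction factor $\exp(\log(ws'^\ell/s)\cdot N/\ell)$ is only a function on $\wt\cS_\bbQ$, not on $\cX'_\bbQ$ or on a general object of $\OC(Y'_{\pi'}/W'^\varnothing)$. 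Your step~(b), the ``module-side correction that reconciles the monodromies'', is never actually written down, and you yourself flag this as the main obstacle. As it stands, you have an $\cO_{\cX'_\bbQ}$-linear automorphism matching the Hodge filtrations, but no argument that it underlies a morphism in $F\Isoc^\dagger(Y'_{\pi'}/W'^\varnothing)$.

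The paper avoids this entirely by working directly with the intrinsic sheaf-theoretic description of $M^a$ in Definition~\ref{def: pseudo-constant}, rather than passing through a realization. On any $(Z,\cZ,i,h,\theta)\in\OC(Y'_{\pi'}/W'^\varnothing)$ one has, by unwinding the pullback, $(\ol f^*_{\pi,\pi'}M(\pi,\log)^a)_\cZ=(1+\sJ_\cZ)\backslash(\sN_\cZ^{s^\ell}\times(M\otimes_L\cO_{\cZ_\bbQ}))$, and the isomorphism $\eta_\cZ$ to $M'(\pi',\log)^a_\cZ=(1+\sJ_\cZ)\backslash(\sN_\cZ^{s}\times(M'\otimes_{L'}\cO_{\cZ_\bbQ}))$ is simply $(\alpha^\ell,m\otimes f)\mapsto(\alpha,m\otimes f)$. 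Well-definedness is a two-line check using that $M(\pi,\log)$ carries $\wt N=eN$ while $M'(\pi',\log)$ carries $e'N=\ell\cdot eN$, and compatibility with Frobenius and with restriction maps is immediate because the formula is given uniformly for every $\cZ$. Then one reads off what $\eta_\dR$ does in the trivializations \eqref{eq: MadR} for $\pi$ and $\pi'$, obtaining exactly the automorphism $\exp(\log(\pi/\pi'^\ell)\cdot eN)$ you found. The moral: go back to Definition~\ref{def: pseudo-constant} and build $\eta$ there; the detour through $\wt\cS$ and Proposition~\ref{prop: unif} is both unnecessary and the source of your gap.
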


\begin{proof}
	Let $e$ and $e'$ be the ramification indexes of $K$ and $K'$ over $\bbQ_p$, respectively.
	Let $\ell:=e'-e$ be the ramification index of $K'$ over $K$.
	For $(Z,\cZ,i,h,\theta)\in\OC(Y'_{\pi'}/W'^\varnothing)$, we define an isomorphism $\eta_\cZ\colon(\ol f^*_{\pi,\pi'}M(\pi,\log)^a)_\cZ\xrightarrow{\cong} M'(\pi',\log)^a_\cZ$ to be
	\begin{eqnarray*}
	(1+\sJ_\cZ)\backslash(\sN_\cZ^{s^\ell}\times(M\otimes_L\cO_{\cZ_\bbQ}))&\rightarrow& (1+\sJ_\cZ)\backslash(\sN_\cZ^{s}\times(M'\otimes_{L'}\cO_{\cZ_\bbQ}))\\
	(\alpha^\ell,m\otimes f)&\mapsto&(\alpha,m\otimes f).
	\end{eqnarray*}
	This is well-defined because for any $g\in 1+\sJ_\cZ$ we have
	\begin{eqnarray*}
	\eta_\cZ(g^\ell\alpha^\ell,m\otimes f)&=&\eta_\cZ(\alpha^\ell,\exp(\log g^\ell\cdot eN)(m\otimes f))\\
	&=&(\alpha,\exp(\log g\cdot e'N)(m\otimes f))=(g\alpha,m\otimes f).
	\end{eqnarray*}
	The isomorphisms $\eta_\cZ$ induces an isomorphism
		\[\eta\colon\ol f^*_{\pi,\pi'}M(\pi,\log)^a\xrightarrow{\cong} M'(\pi',\log)^a\]
	in $F\Isoc^\dagger(Y_{\pi'}/W'^\varnothing)$.
	
	The composition
	\begin{equation}\label{eq: MadR isom}
	f^*(M(\pi,\log)_K\otimes_K\cO_{\cX_\bbQ})\xrightarrow{\cong}(\ol f^*_{\pi,\pi'}M(\pi,\log)^a)_\dR\xrightarrow[\eta_\dR]{\cong}M'(\pi',\log)^a_\dR\xleftarrow{\cong}M'(\pi',\log)_{K'}\otimes_{K'}\cO_{\cX'_\bbQ}
	\end{equation}
	sends $m\otimes f\mapsto\exp(\log(\pi/\pi'^\ell)\cdot eN)(m\otimes f)$.
	Since 
	\[\exp(-\log\pi'\cdot e'N)=\exp(\log(\pi/\pi'^\ell)\cdot eN)\circ\exp(-\log\pi\cdot eN),\]
	we see that \eqref{eq: MadR isom} preserves the Hodge filtrations.
\end{proof}

%%%%%%%%%%%%%%%%%%%%%%%%%%
%
\section{$p$-adic Hodge cohomology and syntomic cohomology with coefficients}\label{sec: syn coh}
%
%%%%%%%%%%%%%%%%%%%%%%%%%%
In this section, we will give the definition of the $p$-adic Hodge cohomology and the syntomic cohomology with syntomic coefficients.
Then we will see that the syntomic cohomology computes the extension group of syntomic coefficients.

Let $H\Mod_K$ be the exact category of $K$-vector spaces with descending separated exhaustive filtration.
(The notation $H$ stands for Hodge filtrations. We do not use $F$ since it is confusing with Frobenius structure.)

We first define the dg category of (admissible) semistable $p$-adic Hodge complexes following \cite{DN}.
Note that they considered Galois action in order to include potentially semistable $p$-adic Hodge complexes.
Their construction without considering Galois action works as well.

\begin{definition}[{\cite[\S 2.10]{DN}}]
	\begin{enumerate}
	\item We define the dg category $ \sD_{\st,K}$ to be the homotopy limit
			\begin{equation*}
			 \sD_{\st,K}:=\holim(\sD^b(\Mod^\fin_L(\varphi,N))\xrightarrow{\Xi_\HK}\sD^b(\Mod_K)\xleftarrow{\Xi_\dR}\sD^b(H\Mod_K)),
			\end{equation*}
		where $\sD^b$ denote the bounded derived dg categories, $\Xi_\dR$ is the forgetful functor, and $\Xi_\HK$ is given by forgetting $\varphi,N$ and tensoring with $K$.
		Note that an object of $\mathsf{M}\in \sD_{\st,K}$ consists of objects $\mathsf{M}_\HK\in \sD^b(\Mod^\fin_L(\varphi,N))$, $\mathsf{M}_\dR\in\sD^b(H\Mod_K)$, and a quasi-isomorphism $\alpha_{\mathsf{M}}\colon\Xi_\HK(\mathsf{M}_\HK)\xrightarrow{\cong}\Xi_\dR(\mathsf{M}_\dR)$.
		The cohomology groups $H^n(\mathsf{M}_\HK)\otimes_LK\cong H^n(\mathsf{M}_\dR)$ define an object of $\MF_K(\varphi,N)$, which we denote by $H^n(\mathsf{M})$.
	\item An object $\mathsf{M}=(\mathsf{M}_\HK,\mathsf{M}_\dR,\alpha_{\mathsf{M}})$ is said to be {\it admissible} if $\mathsf{M}_\dR$ is strict (i.e.\ the differentials are strictly compatible with the filtration) and the cohomology groups $H^n(\mathsf{M})$ are admissible.
		We denote by $\sD_{\st,K}^\ad$ the full subcategory of $\sD_{\st,K}$ consisting of admissible objects.
	\item We denote by $D_{\st,K}$ and $D_{\st,K}^\ad$ the homotopy categories of $\sD_{\st,K}$ and $\sD_{\st,K}^\ad$, respectively.
	\end{enumerate}
\end{definition}

\begin{theorem}[{\cite[Theorem 2.17]{DN}}]
	There exists a canonical equivalence of dg categories
		\begin{equation}
		 \sD^b(\MF^\ad_K(\varphi,N))\xrightarrow{\cong} \sD^\ad_{\st,K},
		\end{equation}
	and hence an equivalence between their homotopy categories
		\begin{equation}\label{eq: adm complex}
		D^b(\MF^\ad_K(\varphi,N))\xrightarrow{\cong}D^\ad_{\st,K}.
		\end{equation}
\end{theorem}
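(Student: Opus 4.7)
The plan is to construct the functor \eqref{eq: adm complex} explicitly and then verify full faithfulness and essential surjectivity by exploiting the computation of morphism spaces in the homotopy limit category $\sD_\st$.

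First I would define the forward functor at the level of complexes. Given a bounded complex $M^\bullet$ in $\MF^\ad_K(\varphi,N)$, associate the object $\mathsf{M}=(\mathsf{M}_\HK,\mathsf{M}_\dR,\alpha_\mathsf{M})$ with $\mathsf{M}_\HK:=\Xi'_\HK(M^\bullet)\in\sD^b(\Mod^\fin_L(\varphi,N))$ obtained by forgetting the Hodge filtration, $\mathsf{M}_\dR:=\Xi'_\dR(M^\bullet)\in\sD^b(H\Mod_K)$ obtained by tensoring with $K$ and forgetting $\varphi,N$, and $\alpha_\mathsf{M}$ the tautological identification in $\sD^b(\Mod_K)$. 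Since $\MF^\ad_K(\varphi,N)$ is an abelian subcategory of $\MF^\fin_K(\varphi,N)$ in which short exact sequences are strict in the Hodge filtration (by Proposition \ref{prop: numbers additive} and the usual strictness of admissible objects), $\mathsf{M}_\dR$ is automatically a strict complex and the cohomology groups $H^n(\mathsf{M})=H^n(M^\bullet)$ are admissible. Hence the functor lands in $\sD_\st^\ad$, and descends to the homotopy categories.

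Next I would verify full faithfulness by computing $\Hom$ in both sides. In $D_\st$, the morphism space $\Hom(\mathsf{M},\mathsf{N})$ is given by the cohomology of the homotopy limit three-term complex
\[
R\Hom_{\Mod^\fin_L(\varphi,N)}(\mathsf{M}_\HK,\mathsf{N}_\HK)\oplus R\Hom_{H\Mod_K}(\mathsf{M}_\dR,\mathsf{N}_\dR)\rightarrow R\Hom_{\Mod_K}(\Xi_\HK\mathsf{M}_\HK,\Xi_\dR\mathsf{N}_\dR).
\]
On the other hand, for admissible filtered $(\varphi,N)$-modules $M,N$, a standard two-step injective-type resolution (reflecting the two ``compatibility data'' $\varphi,N$ and $F^\bullet$) computes $R\Hom_{\MF^\ad_K(\varphi,N)}(M,N)$ by exactly the same homotopy limit, since $\Ext^i_{\MF^\ad_K(\varphi,N)}$ vanishes for $i\geq 2$ (the category has cohomological dimension $1$) and is computed in low degrees via the same cone. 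Comparing the two computations yields the full faithfulness on morphism spaces, and then on all $\Ext$ groups, which suffices.

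For essential surjectivity I would argue that every object $\mathsf{M}\in\sD_\st^\ad$ is quasi-isomorphic to one in the image. The strictness hypothesis on $\mathsf{M}_\dR$ means that the Hodge filtration passes cleanly to cohomology, so one can replace $\mathsf{M}_\HK$ and $\mathsf{M}_\dR$ by quasi-isomorphic complexes concentrated in suitable degrees and glue them via $\alpha_\mathsf{M}$ into a single complex with values in $\MF^\ad_K(\varphi,N)$; iteratively truncating (using that cohomology is admissible, and that admissible modules are closed under kernels, cokernels and extensions in $\MF^\fin_K(\varphi,N)$) produces the required preimage. The main obstacle is precisely this reconstruction step: one must ensure at every stage that the intermediate terms remain admissible, which relies on the abelianness of $\MF^\ad_K(\varphi,N)$ together with strictness of $\mathsf{M}_\dR$ to rule out spurious jumps in the Hodge filtration. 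Finally, the dg enhancement is a formal consequence of the same argument applied to $\sD^b$ rather than $D^b$, since the forward functor is visibly a dg functor and the above $R\Hom$-comparison takes place at the level of complexes. Thus \eqref{eq: adm complex} follows, as already proved in \cite[Theorem 2.17]{DN}.
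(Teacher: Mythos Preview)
The paper does not give a proof of this theorem; it is stated purely as a citation of \cite[Theorem~2.17]{DN} and used as a black box in what follows. There is therefore no proof in the paper to compare your sketch against.

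On the substance of your sketch: the overall strategy---construct the forward functor, compare $R\Hom$ complexes for full faithfulness, and reconstruct a preimage by truncation for essential surjectivity---is the right one and is essentially what \cite{DN} carries out. However, your parenthetical claim that $\MF_K^\ad(\varphi,N)$ has cohomological dimension $1$ is false. Already $R\Hom$ in $\Mod_L^\fin(\varphi,N)$ has amplitude $[0,2]$ (it is computed by the square diagram \eqref{eq: abs HK holim}), and after incorporating the Hodge filtration the homotopy-limit formula for $R\Hom$ in $\MF_K^\ad(\varphi,N)$ (\cite[Proposition~2.7]{DN}, invoked in Proposition~\ref{prop: Leray ss}) still has amplitude $[0,2]$; via the equivalence of Theorem~\ref{thm: admissibility} this matches the Galois cohomological dimension of a $p$-adic local field. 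This error does not actually break your full-faithfulness step, since the $R\Hom$ comparison you describe works in all degrees simultaneously, but the justification you give for it is incorrect and should be removed.
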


Next we give the definition of $p$-adic Hodge cohomology and the admissibility of syntomic coefficients.

\begin{definition}
	Let $\cX$ be a proper strictly semistable weak formal log scheme adic over $V^\sharp$ and $\frE=(\sE,\Phi,F^\bullet)\in\Syn_\pi(\cX/V^\sharp)$.
	We define the Hyodo--Kato cohomology of $\cX$ with coefficients in $\frE$ to be
		\[R\Gamma_\HK(\cX,\frE)_\pi:=R\Gamma_\HK(Y_\pi,(\sE,\Phi))\]
	endowed with the Frobenius operator and the normalized monodromy operator $\bsN=e^{-1}N$.	
	We also define the de Rham cohomology of $\cX$ with coefficients in $\frE$ to be
		\[R\Gamma_\dR(\cX,\frE):=R\Gamma_\dR(\cX,\sE_\dR)\]
	endowed with the Hodge filtration.
	By Proposition \ref{prop: HK qis} and Remark \ref{rem: dg eq},  the Hyodo--Kato map
		\[\Psi_{\pi,\log}\colon R\Gamma_\HK(\cX,\frE)_\pi\rightarrow R\Gamma_\dR(\cX,\frE)\]
	defines an object of $ D_{\st,K}$.
	We denote it by $R\Gamma_{p\mathrm{H}}(\cX,\frE)_{\pi,\log}$ and call the {\it $p$-adic Hodge cohomology} of $\cX$ with coefficients in $\frE$ with respect to $\pi$ and $\log$.
	
	For the trivial coefficient $\frO_\cX$, we denote $R\Gamma_{p\mathrm{H}}(\cX)_{\pi,\log}:=R\Gamma_{p\mathrm{H}}(\cX,\frO_\cX)_{\pi,\log}$.
\end{definition}

\begin{definition}	
	An object $\frE\in\Syn_\pi(\cX/V^\sharp)$ is said to be {\it admissible} if the $p$-adic Hodge cohomology $R\Gamma_{p\mathrm{H}}(\cX,\frE)_{\pi,\log}$ lies in $D_{\st,K}^\ad$ for some choice of $\log$.
	We denote by $\Syn_\pi^\ad(\cX/V^\sharp)$ the full subcategory of $\Syn_\pi(\cX/V^\sharp)$ consisting of admissible objects.
\end{definition}

\begin{proposition}\label{prop: adm log}
	If $\frE\in\Syn_\pi(\cX/V^\sharp)$ is admissible, then $R\Gamma_{p\mathrm{H}}(\cX,\frE)_{\pi,\log}$ is admissible for any choice of $\log$.
\end{proposition}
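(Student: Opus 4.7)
The plan is to reduce the statement to a purely algebraic claim about filtered $(\varphi,N)$-modules, using Proposition \ref{prop: log} to control how the $p$-adic Hodge complex varies with $\log$. First, I would observe that $R\Gamma_\HK(\cX,\frE)_\pi$ (with its $\varphi$ and normalized monodromy) and $R\Gamma_\dR(\cX,\frE)$ (with its Hodge filtration) do not depend on $\log$; only the comparison $\Psi_{\pi,\log,K}$ does. In particular, the strictness of $R\Gamma_\dR(\cX,\frE)$---one of the two conditions defining $D^\ad_\st$---is automatic as soon as it holds for a single $\log$, so only the admissibility of the filtered $(\varphi,N)$-module structure on the cohomology groups $H^n_{p\mathrm{H}}(\cX,\frE)_{\pi,\log}$ needs to be compared for different choices of $\log$.

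Next, I would invoke Proposition \ref{prop: log}, which expresses $\Psi_{\pi,\log',K}$ as $\Psi_{\pi,\log,K}\circ\exp(-cN)$ on the Hyodo--Kato side, for a scalar $c\in K$ determined by $\log$, $\log'$, and $\pi$. Transporting the Hodge filtration back to $H^n_\HK(\cX,\frE)_\pi\otimes_L K$, this means that the two induced Hodge filtrations on the same underlying $(\varphi,N)$-module differ by the $K$-linear automorphism $\exp(cN)$. The proposition is therefore reduced to the following assertion: for every $(M,\varphi,N,F^\bullet)\in\MF^\ad_K(\varphi,N)$ and every $c\in K$, the filtered $(\varphi,N)$-module $(M,\varphi,N,\exp(cN)(F^\bullet))$ is admissible.

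To verify this, I would note that $t_N$ depends only on the $\varphi$-module structure and is therefore unchanged by the twist, while $t_H$ depends only on the dimensions of the graded pieces $\Gr^n_FM_K$, which are preserved since $\exp(cN)$ is a $K$-linear automorphism. For any $(\varphi,N)$-submodule $M'\subset M$, the inclusion $N(M'_K)\subset M'_K$ yields $\exp(cN)(M'_K)\subset M'_K$, so the induced filtration on $M'$ from $\exp(cN)(F^\bullet)$ is exactly $\exp(cN)$ applied to the induced filtration from $F^\bullet$. Hence $t_H(M')$ is preserved, and both defining inequalities $t_N(M)=t_H(M)$ and $t_N(M')\geq t_H(M')$ transfer unchanged. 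I do not expect any serious obstacle; the only point requiring care is tracking the scalar $c$, the normalization $\bsN=e^{-1}N$, and the correct direction of the twist when passing from the change-of-comparison formula to the change of filtration on cohomology.
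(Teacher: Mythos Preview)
Your proposal is correct and follows essentially the same approach as the paper: both observe that strictness of $R\Gamma_\dR$ is independent of $\log$, invoke Proposition \ref{prop: log} to identify the two Hodge filtrations on $H^n_\HK(\cX,\frE)_{\pi,K}$ as differing by $\exp(cN)$, and then check that for any $(\varphi,N)$-submodule $M'$ the automorphism $\exp(cN)$ preserves $M'_K$ and hence preserves $t_H(M')$. Your write-up makes the reduction to a standalone algebraic lemma about $\MF_K^\ad(\varphi,N)$ more explicit, but the substance is the same.
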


\begin{proof}
	The strictness of $R\Gamma_\dR(\cX,\frE)_{\pi,\log}$ is independent of the choice of $\log$ by definition.
	Recall that for two choices of $\log$ and $\log'$, we have an equality
		\[\Psi_{\pi,\log',K}=\Psi_{\pi,\log,K}\circ\exp(a\mathbf{N})\colon R\Gamma_\HK(\cX,\frE)_{\pi,K}\xrightarrow{\cong}R\Gamma_\dR(\cX,\frE)\]
	for some $a\in K$ by Proposition \ref{prop: log}.
	For a subobject $M\subset H^n_{\HK}(\cX,\frE)_\pi$ as a $(\varphi,N)$-module, we denote by $F^\bullet$ and $F'^\bullet$ the filtration on $M_K$ induced by the Hodge filtration on $H^n_\dR(\cX,\frE)$ via $\Psi_{\pi,\log}$ and $\Psi_{\pi,\log'}$, respectively.
	Noting that $\exp(a\mathbf{N})$ induces an automorphism on $M$ (as a vector space), we have
		\begin{eqnarray*}
		F^mM_K&:=&\Psi_{\pi,\log,K}^{-1}(F^mH^n_\dR(\cX,\frE))\cap M_K,\\
		F'^mM_K&:=&\Psi_{\pi,\log',K}^{-1}(F^mH^n_\dR(\cX,\frE))\cap M_K=\exp(-a\mathbf{N})\circ\Psi_{\pi,\log,K}^{-1}(F^mH^n_\dR(\cX,\frE))\cap M_K\\
		&=&\exp(-a\mathbf{N})(F^mM_K).
		\end{eqnarray*}
	Thus the Hodge numbers of $M_K$ with respect to $F^\bullet$ and $F'^\bullet$ coincides each other, and we see that $H^n_{p\mathrm{H}}(\cX,\frE)_{\pi,\log'}$ is admissible if and only if $H^n_{p\mathrm{H}}(\cX,\frE)_{\pi,\log}$ is so.
\end{proof}

Consider a commutative diagram
	\begin{equation*}
	\xymatrix{
	\cX'\ar[r]\ar[d]_-f & V'^\sharp\ar[d]^-{\varrho_{V'/V}} \\
	\cX\ar[r] & V^\sharp
	}\end{equation*}
as in \eqref{eq: diagram pull back syn}.
Namely $\cX'$ and $\cX$ are proper strictly semistable weak formal log schemes adic over $V'^\sharp$ and $V^\sharp$, respectively.
Let $\pi\in V$ and $\pi'\in V'$ be uniformizers, and $\log\colon K^\times\rightarrow K$ be a branch of the $p$-adic logarithm.
For an object $\frE\in\Syn_\pi(\cX/V^\sharp)$, the morphisms
	\begin{eqnarray*}
	&&f_\HK^*\colon R\Gamma_\HK(\cX,\frE)_\pi\rightarrow R\Gamma_\HK(\cX',f_{\pi,\pi'}^*\frE)_{\pi'},\\
	&&f_\dR^*\colon R\Gamma_\dR(\cX,\frE)\rightarrow R\Gamma_\dR(\cX',f_{\pi,\pi'}^*\frE)
	\end{eqnarray*}
in Proposition \ref{prop: unif} induce a morphism between $p$-adic Hodge cohomology
	\begin{equation}
	f_{p\mathrm{H}}^*\colon R\Gamma_{p\mathrm{H}}(\cX,\frE)_{\pi,\log}\rightarrow R\Gamma_{p\mathrm{H}}(\cX',f_{\pi,\pi'}^*\frE)_{\pi',\log}.
	\end{equation}

\begin{proposition}\label{prop: pH pi}
	Consider the diagram \eqref{eq: diagram pull back syn} as above, and assume that $f$ induces an isomorphism $\cX'_\bbQ\xrightarrow{\cong}\cX_\bbQ\times_{\Sp K}\Sp K'$.
	Then $f_{p\mathrm{H}}^*$ induces an isomorphism
		\[R\Gamma_{p\mathrm{H}}(\cX,\frE)_{\pi,\log}\otimes_LL'\xrightarrow{\cong}R\Gamma_{p\mathrm{H}}(\cX',f_{\pi,\pi'}^*\frE)_{\pi',\log}\]
	in $D_{\st,K'}$.
	In particular, $\frE$ is admissible if and only if $f_{\pi,\pi'}^*\frE$ is so.
\end{proposition}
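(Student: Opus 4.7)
The strategy is to prove the base change property separately for the Hyodo--Kato and de Rham sides of the $p$-adic Hodge complex, and then invoke Proposition \ref{prop: unif} for compatibility with $\Psi_{\pi,\log}$. Write $\sE':=\ol f_{\pi,\pi'}^*\sE$ and $\frE':=f_{\pi,\pi'}^*\frE$. Under the hypothesis, the morphism of dagger spaces $\cX'_\bbQ\to\cX_\bbQ$ identifies $\cX'_\bbQ$ with $\cX_\bbQ\hat\times_{\Sp K}\Sp K'$ together with the base change of the log structure, so in particular $\sE'_\dR\cong\sE_\dR\otimes_{\cO_{\cX_\bbQ}}\cO_{\cX'_\bbQ}$ as log connections and the Hodge filtrations of $\frE'$ are obtained from those of $\frE$ by tensoring with $K'$. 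Flat base change for coherent cohomology on an admissible affinoid covering then gives a quasi-isomorphism
\[
 f_\dR^*\colon R\Gamma_\dR(\cX,\frE)\otimes_KK'\xrightarrow{\cong}R\Gamma_\dR(\cX',\frE')
\]
which is strictly compatible with the Hodge filtrations.

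On the Hyodo--Kato side, observe that $R\Gamma_\HK$ is, by Lemma \ref{lem: total complex} and Remark \ref{rem: rig W}, the sum-total complex built from log rigid cohomology over $W^\varnothing$ (resp.\ $W'^\varnothing$) with the Kim--Hain generators $u^{[i]}$ (resp.\ $u'^{[i]}$). Applying Proposition \ref{prop: base change coh} to the strict smooth affine morphism $W'^\varnothing\to W^\varnothing$ and to each simplicial piece $\cZ_m$ of a local embedding datum, and then Lemma \ref{lem: assembly}, yields a quasi-isomorphism
\[
 R\Gamma_\rig(Y_\pi/W^\varnothing,(\sE,\Phi))\otimes_LL'\xrightarrow{\cong}R\Gamma_\rig(Y'_{\pi'}/W'^\varnothing,(\sE',\Phi'))
\]
compatible with $\wedge d\log s$ under the rule $d\log s\mapsto \ell\cdot d\log s'$, which is exactly the rule $u^{[i]}\mapsto\ell^iu'^{[i]}$ used in Proposition \ref{prop: unif}. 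Passing to the Kim--Hain totalisation and checking that the above identification intertwines $\varphi$ with $\varphi$ and $\bsN=e^{-1}N$ with $\bsN'=e'^{-1}N'$ (using $e'=e\ell$ and the matching factor $\ell^i$ on the $u$-generators), one obtains an isomorphism
\[
 f_\HK^*\colon R\Gamma_\HK(\cX,\frE)_\pi\otimes_LL'\xrightarrow{\cong}R\Gamma_\HK(\cX',\frE')_{\pi'}
\]
in $\sD^b(\Mod_{L'}^\fin(\varphi,N))$. Combining $f_\HK^*\otimes K'$ and $f_\dR^*$ via the compatibility \eqref{eq: HK bc} of Proposition \ref{prop: unif}, one lifts these isomorphisms to an isomorphism of the underlying objects of $\sD_\st$, hence an isomorphism in $\MF_{K'}(\varphi,N)$ on $H^n$ as claimed.

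It remains to deduce equivalence of admissibility. Strictness of the de Rham differentials is a statement about equalities of $K$-dimensions of the pieces $F^m$, and such equalities are preserved and reflected by the faithfully flat extension $K\hookrightarrow K'$, so $R\Gamma_\dR(\cX,\frE)$ is strict iff $R\Gamma_\dR(\cX',\frE')$ is. For the admissibility of the cohomology groups, $H^n_{p\mathrm H}(\cX',\frE')_{\pi',\log}\cong H^n_{p\mathrm H}(\cX,\frE)_{\pi,\log}\otimes_LL'$ in $\MF_{K'}(\varphi,N)$, and admissibility is preserved and reflected by such a finite base change: both Newton and Hodge numbers are multiplied by the degree $[L':L]$ under subobject-wise comparison, and any subobject of the base-changed module is the base change of a $\Gal(L'/L)$-invariant subobject of the original (equivalently, one invokes the equivalence \eqref{eq: st equiv} and the fact that restriction to $\Gal(\ol K/K')$ preserves and reflects semistability of finite-dimensional $\bbQ_p$-representations). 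Combining these, $\frE$ is admissible iff $\frE'$ is. The hard technical point is the Hyodo--Kato base change, which hinges on verifying that the strict smooth base change of log rigid cohomology (Proposition \ref{prop: base change coh}) passes cleanly through the Kim--Hain construction with the correct rescaling of $u^{[i]}$ by $\ell^i$.
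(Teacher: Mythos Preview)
Your argument for the de Rham side and the compatibility via Proposition \ref{prop: unif} is fine, and your treatment of admissibility under base change is correct. The gap is in your argument for the Hyodo--Kato side.

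You invoke Proposition \ref{prop: base change coh} for the morphism $W'^\varnothing\to W^\varnothing$, but that proposition requires the diagram over $T'\to T$ to be \emph{Cartesian}, i.e.\ $Y'_{\pi'}\cong Y_\pi\times_{k^\varnothing}k'^\varnothing$. The hypothesis of the proposition you are proving says nothing of the sort: it only asserts an isomorphism on \emph{generic fibres}. In general $\cX'$ may be a genuinely different semistable model of $\cX_\bbQ\times_{\Sp K}\Sp K'$ (for instance one obtained by further blow-ups when $V'/V$ is ramified), and then $Y'_{\pi'}$ has more irreducible components than $Y_\pi\times_kk'$; the map $\ol f_{\pi,\pi'}$ is not an isomorphism on special fibres and Proposition \ref{prop: base change coh} simply does not apply. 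Your sentence ``applying Proposition \ref{prop: base change coh} \ldots to each simplicial piece $\cZ_m$'' produces a complex computing $R\Gamma_\rig(Y_\pi\times_kk'/W'^\varnothing,\ldots)$, not $R\Gamma_\rig(Y'_{\pi'}/W'^\varnothing,\ldots)$.

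The paper's proof avoids this entirely: since $f_\dR^*$ is an isomorphism after $\otimes_KK'$ (immediate from the hypothesis on generic fibres), and since $\Psi_{\pi,\log,K}$ and $\Psi_{\pi',\log,K'}$ are quasi-isomorphisms by Proposition \ref{prop: HK qis} (unipotence of $\sE$), the compatibility \eqref{eq: HK bc} forces $f_\HK^*\otimes_LK'$ to be a quasi-isomorphism; faithful flatness of $L'\hookrightarrow K'$ then gives that $f_\HK^*\otimes_LL'$ is a quasi-isomorphism. In other words, the Hyodo--Kato base change is \emph{deduced} from the de Rham base change via the Hyodo--Kato map, rather than proved directly. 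You should replace your direct attack on the HK side by this bootstrap argument.
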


\begin{proof}
	Since $R\Gamma_\dR(\cX,\frE)\otimes_KK'\rightarrow R\Gamma_\dR(\cX',f_{\pi,\pi'}^*\frE)$ is an isomorphism, we obtain the assertion.
\end{proof}

Next we will see that a syntomic coefficient is admissible if it can be written as an iterated extension of pseudo-constant syntomic coefficients.
We first consider the case of trivial coefficient, passing through crystalline construction.

\begin{lemma}\label{lem: trivial coeff}
	Assume that $\cX$ is the weak completion along the special fiber of a proper strictly semistable log scheme over $V^\sharp$.
	Then the trivial coefficient $\frO_\cX$ is admissible.
\end{lemma}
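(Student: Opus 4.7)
The plan is to reduce the statement to the classical semistable comparison theorem. Write $\cX$ as the weak completion of a proper strictly semistable log scheme $\ol\cX$ over $V^\sharp$; then $Y_\pi=\ol\cX\times_{V^\sharp,\tau_\pi}k^0$ is a proper strictly semistable log scheme over $k^0$ in the usual algebraic sense.

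First I would verify strictness of $R\Gamma_\dR(\cX,\frO_\cX)$ with respect to the Hodge filtration. By GAGA for proper schemes, together with the comparison of algebraic and dagger log differentials, there is a canonical filtered quasi-isomorphism
\[
R\Gamma(\ol\cX_K,\omega^\bullet_{\ol\cX_K/K})\xrightarrow{\cong}R\Gamma_\dR(\cX,\frO_\cX),
\]
where the filtrations are the stupid filtrations on the log de Rham complexes. The $E_1$-degeneration of the Hodge-to-de Rham spectral sequence for proper log smooth schemes over a field of characteristic zero (Deligne--Illusie, Kato, Steenbrink) then yields the required strictness.

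Next I would invoke the comparison with crystalline Hyodo--Kato theory established in \cite{EY}. For $\cX$ as above, the rigid-analytic Hyodo--Kato cohomology $R\Gamma_\HK(\cX)_\pi$ is canonically quasi-isomorphic as a $(\varphi,N)$-complex to the rational classical log crystalline cohomology $R\Gamma_\crys(Y_\pi/W^0)\otimes_W L$, and when we take the branch $\log_\pi$ the rigid-analytic Hyodo--Kato map $\Psi_{\pi,\log_\pi}$ corresponds, under the above de Rham identification, to the original Hyodo--Kato map $\Psi_\pi^\crys$ of \cite{HK}. Consequently $R\Gamma_{p\mathrm{H}}(\cX,\frO_\cX)_{\pi,\log_\pi}$ coincides, in $ D_\st$, with the classical semistable $p$-adic Hodge complex of $\ol\cX$. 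By the semistable comparison theorem of Tsuji, Faltings, Niziol{}, and Beilinson, the cohomology groups $H^n_\crys(Y_\pi/W^0)_L$, equipped with the Hodge filtration transported from de Rham cohomology via $\Psi_\pi^\crys$, are admissible filtered $(\varphi,N)$-modules.

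Finally, Proposition \ref{prop: adm log} removes the dependence on the branch, so $\frO_\cX$ is admissible for every choice of $\log$. The one delicate point is the precise identification of $\Psi_{\pi,\log_\pi}$ with $\Psi_\pi^\crys$ under the simultaneous Hyodo--Kato and de Rham comparisons, but this is exactly the content of the main result of \cite{EY}; modulo that input, the remaining ingredients are GAGA-type statements for dagger completions of proper schemes and the classical admissibility of $H^n_\crys(Y_\pi/W^0)_L$, both of which are standard.
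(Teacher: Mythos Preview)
Your proposal is correct and follows essentially the same route as the paper: strictness of $R\Gamma_\dR(\cX)$ via $E_1$-degeneration (the paper cites \cite[Proposition 4.7.9]{Ts} or \cite[Theorem 1.3]{LP} rather than GAGA plus Deligne--Illusie), the identification of $\Psi_{\pi,\log_\pi}$ with $\Psi_\pi^\crys$ from \cite{EY}, and then the semistable comparison theorem. Your final appeal to Proposition \ref{prop: adm log} is harmless but unnecessary, since admissibility is defined to require only \emph{some} branch $\log$.
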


\begin{proof}
	The strictness of $R\Gamma_\dR(\cX)$ follows from \cite[Proposition 4.7.9]{Ts} or \cite[Theorem 1.3]{LP}.
	By \cite[Proposition 6.13]{EY}, the Hyodo--Kato map $\Psi_{\pi,\log_\pi}$ is compatible with the crystalline Hyodo--Kato map $\Psi_\pi^\crys$.
	Therefore the semistable conjecture which is now a theorem \cites{Ts,Fa,Ni,Bei2} implies that $R\Gamma_{p\mathrm{H}}(\cX)_{\pi,\log_\pi}$ is admissible.
\end{proof}

The $p$-adic Hodge cohomology of a pseudo-constant syntomic coefficient is computed as follows.
The following is a crucial property of the Hyodo--Kato map.

\begin{lemma}\label{lem: hk map of pconst}
	Let $\cX$ be a proper strictly semistable weak formal log scheme adic over $V^\sharp$ and $M\in\MF_K^\fin(\varphi,N)$.
	Then the following diagram commutes:
		\begin{equation}\label{eq: diag hk}
		\xymatrix{
		H_\HK^i(\cX,M^a_\pi)_{\pi,K}\ar[r]^-\cong\ar[d]^-{\Psi_{\pi,\log,K}} & M_K\otimes H^i_\HK(\cX)_{\pi,K}\ar[d]^-{\exp(-\log(\pi)\cdot N)\otimes\Psi_{\pi,\log,K}}\\
		H_\dR^i(\cX,M^a_\pi)\ar[r]^-\cong & M_K\otimes H^i_\dR(\cX).
		}\end{equation}
\end{lemma}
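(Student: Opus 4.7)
The plan is to trace the statement at the level of explicit \v{C}ech-type cocycles, reusing the computation already carried out in the proof of Proposition \ref{prop: pseudo-const}. Fix a local embedding datum for $Y_\pi$ over $\cS$, producing a simplicial object $(Z_\bullet, \cZ_\bullet, i_\bullet, h_\bullet, \theta_\bullet)$ whose structure maps $\cZ_r \to \cS$ identify $M^a_{\cZ_r}$ with $M \otimes_L \cO_{\cZ_r,\bbQ}$ and endow it with the connection $\nabla(m \otimes f) = m \otimes df + N(m) \otimes f\,d\log s$ (Proposition \ref{prop: realization pseudo-const}). Base-changing along $j_\pi\colon V^\sharp \hookrightarrow \cS$ gives simplicial weak formal log schemes $\cX_\bullet = \cZ_\bullet \times_{\cS, j_\pi} V^\sharp$ computing $R\Gamma_\dR(\cX, M^a_\pi)$, and under this base change the identification $M^a_{\cZ_r} \cong M \otimes \cO_{\cZ_r,\bbQ}$ restricts precisely to the canonical identification $M^a_\dR \cong M_K \otimes_K \cO_{\cX_\bbQ}$ (by the defining convention $m\otimes f \mapsto (\pi, m\otimes f)$). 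Hence the bottom horizontal arrow of \eqref{eq: diag hk} is nothing but the map induced by restriction from $\cZ_\bullet$ to $\cX_\bullet$.

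Next I would unravel the top horizontal isomorphism. Given $\alpha \in H^n_\HK(\cX)_\pi$ represented by $(\alpha_{q,r}u^{[q]})_{q,r}$ in the total complex of $\Gamma(\cZ_{\bullet,\bbQ}, \omega^\star_{\cZ_\bullet/W^\varnothing,\bbQ}[u])$ and $m \in M$, the proof of Proposition \ref{prop: pseudo-const} shows that $m \otimes \alpha$ corresponds to the class $\beta = [(\beta_{q,r}u^{[q]})_{q,r}]$ with
\[
\beta_{q,r} \;=\; \sum_{\ell=0}^{q} \binom{q}{\ell} N^\ell(m) \otimes \alpha_{q-\ell, r}.
\]
Tensoring with $K$ preserves this description. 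The key step is then a direct calculation: applying $\Psi_{\pi, \log}$ (which sends $u^{[q]} \mapsto (-\log\pi)^q/q!$ and restricts the differential forms to $\cX_\bullet$) yields in row $r$ the element
\[
\sum_{q \geq 0} \bar\beta_{q,r}\,\frac{(-\log\pi)^{q}}{q!}
\;=\; \sum_{j, \ell \geq 0} \frac{N^\ell(m)}{\ell!} \otimes \bar\alpha_{j,r}\,\frac{(-\log\pi)^{j+\ell}}{j!},
\]
where the reindexing $q = j + \ell$ and the identity $\binom{j+\ell}{\ell}/(j+\ell)! = 1/(j!\ell!)$ are used. This factors as
\[
\Bigl(\sum_\ell \tfrac{(-\log\pi)^\ell}{\ell!}N^\ell(m)\Bigr) \otimes \Bigl(\sum_j \bar\alpha_{j,r}\tfrac{(-\log\pi)^{j}}{j!}\Bigr) \;=\; \exp(-\log\pi\cdot N)(m) \otimes \Psi_{\pi,\log}(\alpha)|_r,
\]
which is exactly the image of $m \otimes \alpha$ under $\exp(-\log\pi \cdot N) \otimes \Psi_{\pi, \log, K}$.

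Finally, I would observe that both routes around the square in \eqref{eq: diag hk} are $K$-linear (after tensoring) and additive in $m \otimes \alpha$, so the above verification on pure tensors suffices. The main obstacle is essentially bookkeeping: one must carefully check that the identification of $M^a_\dR$ used to define the naive Hodge filtration on $M^a_\pi$ agrees, after restriction via $j_\pi$, with the identification coming from the realization functor over $\cS$, so that the ``identity'' bottom-row isomorphism really is induced by the natural restriction map. Once this is settled, the combinatorial identity $\binom{q}{\ell}/q! = 1/(\ell!(q-\ell)!)$ together with the reindexing above makes the commutativity immediate, and nothing beyond Proposition \ref{prop: realization pseudo-const} and Proposition \ref{prop: pseudo-const} is needed.
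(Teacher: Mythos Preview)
Your proposal is correct and follows essentially the same approach as the paper: both represent $m\otimes\alpha$ by the explicit cocycle $\beta_{q,r}=\sum_{\ell=0}^{q}\binom{q}{\ell}N^\ell(m)\otimes\alpha_{q-\ell,r}$ from the proof of Proposition~\ref{prop: pseudo-const}, apply $\Psi_{\pi,\log}$ via $u^{[q]}\mapsto(-\log\pi)^q/q!$, and reindex using $\binom{q}{\ell}/q!=1/(\ell!(q-\ell)!)$ to obtain the tensor factorization $\exp(-\log\pi\cdot N)(m)\otimes\Psi_{\pi,\log}(\alpha)$. Your additional remark about the compatibility of the two identifications of $M^a_\dR$ (the one defining $M^a_\pi$ versus the one coming from the realization over $\cS$) makes explicit a point the paper leaves implicit.
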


\begin{proof}
	As in the proof of Proposition \ref{prop: pseudo-const}, an element $\alpha\in H^n_\HK(\cX)_\pi$ is represented via \eqref{eq: HK Tot 1} by a family
	\[\Bigl(\sum_{j\geq 0}\alpha_{j,m,n}u^{[j]},\sum_{j\geq 0}\beta_{j,m,n}u^{[j]}\Bigr)_{m,n}\]
	with
	\begin{align*}
	\alpha_{j,m,n}\in\Gamma(\cZ_{m,n,\bbQ},\omega^{i-m}_{\cZ_{m,n}/W^\varnothing,\bbQ}),&&
	\beta_{j,m,n}\in\Gamma(\cZ_{m,n,\bbQ},\omega^{i-m-1}_{\cZ_{m,n}/W^\varnothing,\bbQ}),
	\end{align*}
	satisfying the conditions \eqref{eq: cocycle alpha} and \eqref{eq: cocycle beta}.
	
	Also we have seen that an element $\mu\otimes\alpha\in M\otimes H^i_\HK(\cX)_\pi$ corresponds to an element of $H^i_\HK(\cX,M_\pi^a)_\pi$ represented via \eqref{eq: HK Tot 2} by the family
	\[\Bigl(\sum_{j\geq 0}\gamma_{j,m,n}u^{[j]},\sum_{j\geq 0}\delta_{j,m,n}u^{[j]}\Bigr)_{m,n}\]
	with
	\begin{align*}
	\gamma_{j,m,n}=\sum_{\ell=0}^j\binom j{\ell}N^\ell(\mu)\otimes\alpha_{j-\ell,m,n},&&
	\delta_{j,m,n}=\sum_{\ell=0}^j\binom j{\ell}N^\ell(\mu)\otimes\beta_{j-\ell,m,n}.
	\end{align*}
	
	Then we have
	\begin{align*}
	&\Psi_{\pi,\log}\left(\Bigl(\sum_{j\geq 0}\gamma_{j,m,n}u^{[j]})_{m,n},\sum_{j\geq 0}\delta_{j,m,n}u^{[j]}\Bigr)_{m,n}\right)\\
	&=\Bigl(\sum_{j\geq0}\sum_{\ell=0}^j\frac{(-\log(\pi))^j}{j!}\binom{j}{\ell}N^\ell(\mu)\otimes\alpha_{j-\ell,m,n},\sum_{j\geq 0}\sum_{\ell=0}^j\frac{(-\log(\pi))^j}{j!}\binom{j}{\ell}N^\ell(\mu)\otimes\beta_{j-\ell,m,n}\Bigr)_{m,n}\\
	&=\Bigl(\sum_{\ell,\nu\geq 0}\frac{(-\log(\pi))^{\ell+\nu}}{\ell!\nu!}N^\ell(\mu)\otimes\alpha_{\nu,m,n},\sum_{\ell\nu\geq 0}\frac{(-\log(\pi))^{\ell+\nu}}{\ell!\nu!}N^\ell(\mu)\otimes\beta_{\nu,m,n}\Bigr)_{m,n},
	\end{align*}
	whose class in $H^i_\dR(\cX,M^a_\pi)$ corresponds to the class in $M_K\otimes H^i_\dR(\cX)$ represented by
		\begin{align*}
		&\sum_{\ell\geq 0}\frac{(-\log(\pi))^\ell}{\ell!}N^\ell(\mu)\otimes \Bigl(\sum_{\nu\geq 0}\frac{(-\log(\pi))^\nu}{\nu!}\alpha_{\nu,m,n},\sum_{\nu\geq 0}\frac{(-\log(\pi))^\nu}{\nu!}\beta_{\nu,m,n}\Bigr)_{m,n}\\
		&=\exp(-\log(\pi)\cdot N)(\mu)\otimes\Psi_{\pi,\log}\left(\Bigl(\sum_{j\geq 0}\alpha_{j,m,n}u^{[j]},\sum_{j\geq 0}\beta_{j,m,n}u^{[j]}\Bigr)_{j,m,n}\right).
		\end{align*}
	This shows the commutativity of \eqref{eq: diag hk}.
\end{proof}

\begin{proposition}\label{prop: pH tensor}
	Let $\cX$ be a proper strictly semistable weak formal log scheme adic over $V^\sharp$ and $M\in\MF_K^\fin(\varphi,N)$.
	Then we have a canonical isomorphism in $\MF_K^\fin(\varphi,N)$
		\begin{equation}\label{eq: hk tensor}
		H^i_{p\mathrm{H}}(\cX,M(\pi,\log)_\pi^a)_{\pi,\log}\cong M\otimes H^i_{p\mathrm{H}}(\cX)_{\pi,\log}.
		\end{equation}
\end{proposition}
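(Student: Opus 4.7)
The plan is to assemble the desired isomorphism in $\MF_K^\fin(\varphi,N)$ (equivalently, to construct an isomorphism in $D_\st$ at the level of $H^n$) by pairing the Hyodo--Kato side identification of Proposition \ref{prop: pseudo-const} with a twisted version of the natural de Rham identification $M(\pi,\log)^a_\dR\cong M(\pi,\log)_K\otimes_K\cO_{\cX_\bbQ}$. The twist on the de Rham side is chosen exactly so that the Hodge filtration and the Hyodo--Kato map are simultaneously respected; it will be provided, without surprise, by Lemma~\ref{lem: hk map of pconst}.

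On the Hyodo--Kato side, applying Proposition \ref{prop: pseudo-const} to the object $M(\pi,\log)\in\Mod_L^\fin(\varphi,\wt N)$ yields an isomorphism
\[
\Xi^n_\HK\colon H^n_\HK(\cX,M(\pi,\log)^a_\pi)_\pi\xrightarrow{\cong} M(\pi,\log)\otimes H^n_\HK(\cX)_\pi
\]
of $(\varphi,N)$-modules. Since $M(\pi,\log)$ has the same underlying $\varphi$-module as $M$ but with monodromy $\wt N=eN_M$, the unnormalized monodromy on the right is $eN_M\otimes 1+1\otimes N_{H^n_\HK}$; dividing by $e$, the normalized monodromy $\bsN$ on the left corresponds to $N_M\otimes 1+1\otimes\bsN_{H^n_\HK(\cX)_\pi}$, which is exactly the tensor-product monodromy on $M\otimes H^n_{p\mathrm{H}}(\cX)_{\pi,\log}$ in $\MF_K^\fin(\varphi,N)$. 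Frobenius compatibility is automatic from Proposition \ref{prop: pseudo-const}.

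On the de Rham side, the canonical identification $M(\pi,\log)^a_\dR\cong M(\pi,\log)_K\otimes_K\cO_{\cX_\bbQ}$ (whose underlying connection is the same as for $M^a_\pi$) gives an isomorphism of $K$-vector spaces
\[
\iota_\dR\colon H^n_\dR(\cX,M(\pi,\log)^a_\pi)\xrightarrow{\cong}M(\pi,\log)_K\otimes H^n_\dR(\cX).
\]
Under $\iota_\dR$ the Hodge filtration on the left is transported to $\sum_j F^jM(\pi,\log)_K\otimes F^{n-j}H^n_\dR(\cX)=\sum_j\exp(-\log\pi\cdot\wt N)(F^jM_K)\otimes F^{n-j}H^n_\dR(\cX)$. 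I will therefore define
\[
\Xi^n_\dR:=(\exp(\log\pi\cdot\wt N)\otimes\id)\circ\iota_\dR,
\]
which sends this filtration onto $\sum_j F^jM_K\otimes F^{n-j}H^n_\dR(\cX)$, i.e.\ precisely the tensor Hodge filtration on $M_K\otimes H^n_\dR(\cX)$.

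It remains to check that $(\Xi^n_\HK,\Xi^n_\dR)$ is compatible with the Hyodo--Kato maps. Applying Lemma \ref{lem: hk map of pconst} with $M$ replaced by $M(\pi,\log)$ (whose monodromy is $\wt N$) gives
\[
\iota_\dR\circ\Psi_{\pi,\log,K}^{M(\pi,\log)^a_\pi}=\bigl(\exp(-\log\pi\cdot\wt N)\otimes\Psi_{\pi,\log,K}\bigr)\circ\Xi^n_{\HK,K};
\]
composing both sides on the left with $\exp(\log\pi\cdot\wt N)\otimes\id$ collapses the twist to yield
\[
\Xi^n_\dR\circ\Psi_{\pi,\log,K}^{M(\pi,\log)^a_\pi}=(\id_M\otimes\Psi_{\pi,\log,K})\circ\Xi^n_{\HK,K},
\]
which is exactly the compatibility of $\Xi^n_\HK$ and $\Xi^n_\dR$ with the Hyodo--Kato maps of $H^n_{p\mathrm{H}}(\cX,M(\pi,\log)^a_\pi)_{\pi,\log}$ and $M\otimes H^n_{p\mathrm{H}}(\cX)_{\pi,\log}$. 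Together with the verified Frobenius, monodromy, and filtration compatibilities, this gives the required isomorphism in $\MF_K^\fin(\varphi,N)$.

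There is no substantive obstacle: the whole point of defining $M(\pi,\log)$ via the Hodge-filtration twist $\exp(-\log\pi\cdot\wt N)$, and of normalizing the monodromy by $e^{-1}$, is precisely to cancel the twist $\exp(-\log\pi\cdot N)$ that Lemma \ref{lem: hk map of pconst} forces on the Hyodo--Kato map, and to reconcile the monodromy on $M$ with the normalized monodromy on $H^n_\HK(\cX)_\pi$. The proof is thus a direct bookkeeping of these cancellations.
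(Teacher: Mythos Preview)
Your proof is correct and is precisely the unpacking of the paper's one-line argument ``Immediately follows from Lemma~\ref{lem: hk map of pconst}.'' You apply that lemma to $M(\pi,\log)$, twist the de Rham identification by $\exp(\log\pi\cdot\wt N)\otimes\id$ to match the Hodge filtrations, and verify that the normalization $\bsN=e^{-1}N$ together with $\wt N=eN_M$ yields the tensor-product monodromy---exactly the bookkeeping the paper leaves implicit.
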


\begin{proof}
	Immediately follows from Lemma \ref{lem: hk map of pconst}.
\end{proof}

\begin{corollary}\label{cor: example of adm}
	Assume that $\cX$ is the weak completion of a proper strictly semistable log scheme over $V^\sharp$.
	An object $\frE\in\Syn_\pi(\cX/V^\sharp)$ is admissible if it can be written as iterated extension of pseudo-constant syntomic coefficients.
\end{corollary}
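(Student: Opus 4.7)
The plan is to induct on the length of the iterated extension. First I would establish that each pseudo-constant syntomic coefficient $M(\pi,\log)^a_\pi$ (with $M\in\MF^\ad_K(\varphi,N)$) is admissible, and then show that admissibility is closed under extension in $\Syn_\pi(\cX/V^\sharp)$. Together with Lemma~\ref{lem: trivial coeff} and Proposition~\ref{prop: pH tensor}, this gives the result.

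For the pseudo-constant case, Proposition~\ref{prop: pH tensor} furnishes an isomorphism
\[
H^n_{p\mathrm{H}}(\cX,M(\pi,\log)^a_\pi)_{\pi,\log}\cong M\otimes H^n_{p\mathrm{H}}(\cX)_{\pi,\log}
\]
in $\MF^\fin_K(\varphi,N)$. By Lemma~\ref{lem: trivial coeff}, $H^n_{p\mathrm{H}}(\cX)_{\pi,\log}$ is admissible, and by Theorem~\ref{thm: admissibility} the category $\MF^\ad_K(\varphi,N)$ is tannakian, hence closed under tensor product, so the left-hand side is admissible. For the strictness of the de Rham complex, the description of $\sE_\dR$ in the definition of $M(\pi,\log)^a_\pi$ identifies $\sE_\dR\otimes\omega^\bullet_{\cX/V^\sharp,\bbQ}$ with $M_K\otimes_K\omega^\bullet_{\cX/V^\sharp,\bbQ}$ (the twist $\exp(-\log\pi\cdot\wt N)$ being a $K$-linear automorphism of $M_K$), and after splitting the Hodge filtration on the finite-dimensional $K$-vector space $M_K$ this complex becomes a finite direct sum of shifts of the de Rham complex of $\frO_\cX$; strictness then reduces to the strictness statement of Lemma~\ref{lem: trivial coeff}.

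For the induction step, suppose $0\to\frE'\to\frE\to\frE''\to 0$ is exact in $\Syn_\pi(\cX/V^\sharp)$ with $\frE'$ and $\frE''$ admissible. By definition of exactness, for each $n$ we obtain a short exact sequence $0\to F^n(\sE'_\dR\otimes\omega^\bullet)\to F^n(\sE_\dR\otimes\omega^\bullet)\to F^n(\sE''_\dR\otimes\omega^\bullet)\to 0$ of filtered de Rham complexes, yielding honest long exact sequences of the associated filtered de Rham cohomology groups. Combined with the long exact sequence in Hyodo--Kato cohomology (compatible with the $\varphi,N$ structure) and the Hyodo--Kato quasi-isomorphism (Proposition~\ref{prop: HK qis}), we get a long exact sequence of $p$-adic Hodge cohomology groups in $\MF^\fin_K(\varphi,N)$ whose connecting morphisms are strict with respect to Hodge filtrations. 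Since $\MF^\ad_K(\varphi,N)$ is an abelian subcategory of $\MF^\fin_K(\varphi,N)$ (Theorem~\ref{thm: admissibility}) closed under extensions (Proposition~\ref{prop: numbers additive}), it follows that $H^n_{p\mathrm{H}}(\cX,\frE)_{\pi,\log}$ is admissible; strictness of the de Rham complex of $\frE$ is obtained from those of $\frE'$ and $\frE''$ via a standard five-lemma argument applied to the graded pieces of the Hodge filtration.

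The main obstacle I anticipate lies in the extension step: carefully extracting strictness of the middle de Rham complex from strictness of the outer ones, and verifying that the induced long exact sequence consists of strict morphisms in $\MF^\fin_K(\varphi,N)$ so that one may legitimately apply the extension closure of $\MF^\ad_K(\varphi,N)$. Everything else is then a straightforward induction on the length of the iterated extension, using that admissibility is independent of $\log$ by Proposition~\ref{prop: adm log}.
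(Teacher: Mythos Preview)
Your proposal is correct and follows essentially the same route as the paper: reduce admissibility and strictness to the pseudo-constant case via the 2-out-of-3 property for strict filtered complexes together with extension-closure of admissibility (Proposition~\ref{prop: numbers additive}), and handle the pseudo-constant case via Proposition~\ref{prop: pH tensor} and Lemma~\ref{lem: trivial coeff}. The only cosmetic differences are that the paper phrases the reduction as a single appeal to the filtered derived category rather than an explicit induction, and proves strictness for $M(\pi,\log)^a_\pi$ by a dimension count ($E_1$-degeneration) rather than by splitting the Hodge filtration on $M_K$; your anticipated ``main obstacle'' about strictness of the long exact sequence is exactly the point the paper absorbs into the 2-out-of-3 fact.
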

	
\begin{proof}
	We first note that, for a distinguished triangle in the filtered derived category, if two of them are strict then another one is also strict.
	By this fact and Proposition \ref{prop: numbers additive} we may assume that $\frE=M(\pi,\log)^a_\pi$ is pseudo-constant.
	The strictness of $R\Gamma_\dR(\cX)$ (i.e. $E_1$-degeneration of the Hodge-de Rham spectral sequence for the trivial coefficient) gives
		\begin{equation}\label{eq: dim sum}
		\sum_{i\in \bbZ}\dim H^{n-i}(\cX_\bbQ,\omega^i_{\cX/V^\sharp,\bbQ})=\dim H^n_\dR(\cX).
		\end{equation}
	For $M(\pi,\log)^a_\pi$, we have
		\[H_\dR^n(\cX,M(\pi,\log)_\pi^a)\cong M(\pi,\log)_K\otimes H_\dR^n(\cX),\]
	and the $E_1$-term of the Hodge-de Rham spectral sequence $E_1^{i,j}\Rightarrow H^{i+j}_\dR(\cX,M(\pi,\log)^a_\pi)$ is given by
		\[E_1^{i,j}=H^{i+j}(\cX_\bbQ,\Gr^i_F(M(\pi,\log)_K\otimes\omega^\bullet_{\cX/V^\sharp,\bbQ}))=\bigoplus_{k\in \bbZ}\Gr_F^{i-k}M(\pi,\log)_K\otimes H^{i+j-k}(\cX_\bbQ,\omega^k_{\cX/V^\sharp,\bbQ}).\]
	By \eqref{eq: dim sum} we have
		\begin{eqnarray*}
		\sum_{i\in\bbZ}\sum_{k\in\bbZ}\dim \Gr_F^{i-k}M(\pi,\log)_K\otimes H^{n-k}(\cX_\bbQ,\omega^k_{\cX/V^\sharp,\bbQ})&=&(\dim M(\pi,\log)_K)\cdot(\dim H^n_\dR(\cX))\\
		&=&\dim H^n_\dR(\cX,M(\pi,\log)_\pi^a).
		\end{eqnarray*}
	This shows that the spectral sequence degenerates at $E_1$ and hence $R\Gamma_\dR(\cX,M(\pi,\log)_\pi^a)$ is strict.
	Moreover, by the isomorphism \eqref{eq: hk tensor} and Lemma \ref{lem: trivial coeff} we see that $H^n_{p\mathrm{H}}(\cX,M(\pi,\log)^a_\pi)_{\pi,\log_\pi}$ is admissible.
\end{proof}

As a corollary of Proposition \ref{prop: isoc on point} and Proposition \ref{prop: pH tensor}, we obtain the following.

\begin{corollary}\label{cor: syn for V}
	For any choice of $\pi$ and $\log$, the functor
		\begin{align*}
		\MF_K^\ad(\varphi,N)\rightarrow\Syn^\ad_\pi(V^\sharp/V^\sharp)&&
		\bigl(\text{resp. }\MF_K^\fin(\varphi,N)\rightarrow\Syn_\pi(V^\sharp/V^\sharp)\bigr)
		\end{align*}
	given by $M\mapsto M(\pi,\log)^a_\pi$ is an equivalence of categories with a quasi-inverse $H^0_{p\mathrm{H}}(\cX,\cdot)_{\pi,\log}$.
\end{corollary}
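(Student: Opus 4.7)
The plan is to reduce to the equivalence in Proposition~\ref{prop: isoc on point} by separately accounting for the Hodge filtration data, which for $\cX=V^\sharp$ lives on a finite dimensional $K$-vector space. First I would note that $Y_\pi=V^\sharp\times_{V^\sharp,\tau_\pi}k^0=k^0$ and that $\cX_\bbQ=\Sp K$, so a syntomic coefficient on $V^\sharp$ is nothing but an object of $F\Isoc^\dagger(k^0/W^\varnothing)^\unip$ together with a descending separated exhaustive filtration on its de Rham realization. By Proposition~\ref{prop: isoc on point}, $(\cdot)^a$ is an equivalence from $\Mod_L^\fin(\varphi,N)$ to $F\Isoc^\dagger(k^0/W^\varnothing)^\unip$ with quasi-inverse $H^0_\HK(k^0,\cdot)$.

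Next I would check that the twist $M\mapsto M(\pi,\log)$ is an auto-equivalence of $\MF_K^\fin(\varphi,N)$: its inverse sends $(\varphi,N',F'^\bullet)$ to $(\varphi,e^{-1}N',\exp(\log(\pi)\cdot N')(F'^\bullet))$, and any $(\varphi,N)$-linear map commutes with $\exp(-\log(\pi)\cdot eN)$, hence respects the twisted Hodge filtrations if and only if it respects the original ones. Combined with the identification $M(\pi,\log)^a_\dR\cong M_K$ from the definition of the pseudo-constant syntomic coefficient, this shows that $(\cdot)(\pi,\log)^a_\pi$ is fully faithful. For essential surjectivity, given $\frE=(\sE,\Phi,F^\bullet)$, Proposition~\ref{prop: isoc on point} furnishes $N'\in\Mod_L^\fin(\varphi,N)$ with $(\sE,\Phi)\cong N'^a$; transferring $F^\bullet$ via this isomorphism endows $N'$ with a filtered $(\varphi,N)$-module structure, and taking $M$ to be the image of this object under the inverse twist produces a preimage of $\frE$.

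To identify the quasi-inverse with $H^0_{p\mathrm{H}}(V^\sharp,\cdot)_{\pi,\log}$, I would invoke Proposition~\ref{prop: pH tensor} together with the direct computation $H^0_{p\mathrm{H}}(V^\sharp)_{\pi,\log}=L$ (the Hyodo--Kato map $\Psi_{\pi,\log,K}\colon K\to K$ is the identity and the Hodge filtration is trivial), giving $H^0_{p\mathrm{H}}(V^\sharp,M(\pi,\log)^a_\pi)_{\pi,\log}\cong M$ in $\MF_K^\fin(\varphi,N)$; crucially, by Lemma~\ref{lem: hk map of pconst} the factor $\exp(-\log(\pi)\cdot eN)$ appearing in the Hyodo--Kato map precisely cancels the twist that defines $F^\bullet M(\pi,\log)_K$, so the filtration pulled back to the Hyodo--Kato side is $F^\bullet M_K$ itself.

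For the admissible version, since $\cX=V^\sharp$ the de Rham complex is concentrated in degree zero, so strictness of differentials holds vacuously; therefore admissibility of $M(\pi,\log)^a_\pi$ is equivalent to admissibility of $H^0_{p\mathrm{H}}(V^\sharp,M(\pi,\log)^a_\pi)_{\pi,\log}\cong M$ as a filtered $(\varphi,N)$-module (and this is independent of $\log$ by Proposition~\ref{prop: adm log}). The main obstacle is simply keeping the bookkeeping of the two twists (the $e$-rescaling of $N$ and the exponential twist of $F^\bullet$) consistent across the Hyodo--Kato and de Rham sides; this is arranged so that they cancel in $H^0_{p\mathrm{H}}$, which is precisely why the functor $M\mapsto M(\pi,\log)^a_\pi$ is the correct one.
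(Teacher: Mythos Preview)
Your proposal is correct and follows essentially the same approach as the paper, which simply cites Proposition~\ref{prop: isoc on point} (to handle the underlying $F$-isocrystal) and Proposition~\ref{prop: pH tensor} (to identify the quasi-inverse via $H^0_{p\mathrm{H}}(V^\sharp,M(\pi,\log)^a_\pi)_{\pi,\log}\cong M\otimes H^0_{p\mathrm{H}}(V^\sharp)_{\pi,\log}=M$). You have spelled out the bookkeeping of the twist $M\mapsto M(\pi,\log)$ and the cancellation in the Hyodo--Kato map more explicitly than the paper does, but the logical skeleton is the same.
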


Let $\cX$ be a proper strictly semistable weak formal log scheme adic over $V^\sharp$ and $Y_\pi:=\cX\times_{V^\sharp,\tau_\pi}k^0$.
For an object $\frE=(\sE,\Phi,F^\bullet)\in\Syn_\pi(\cX/V^\sharp)$, we denote
	\[R\Gamma_\abs(\cX,\frE)_\pi:=R\Gamma_\abs(Y_\pi,(\sE,\Phi)).\]
By construction, the composition
	\[R\Gamma_\abs(\cX,\frE)_\pi\rightarrow R\Gamma_\HK(\cX,\frE)_\pi\xrightarrow{\Psi_{\pi,\log}} R\Gamma_\dR(\cX,\frE)\]
is independent of the choice of $\log$, which we denote by $\psi_\pi$.

Now we may define the syntomic cohomology as a $p$-adic absolute Hodge cohomology.

\begin{definition}
	For an object $\frE=(\sE,\Phi,F^\bullet)\in\Syn_\pi(\cX/V^\sharp)$, we define the {\it syntomic cohomology} $R\Gamma_\syn(\cX,\frE)_\pi$ of $\cX$ with coefficients in $\frE$ to be
		\begin{equation}\label{eq: syn coh}
		R\Gamma_\syn(\cX,\frE)_\pi:=\Cone\left(R\Gamma_\abs(\cX,\frE)_\pi\oplus F^0R\Gamma_\dR(\cX,\frE) \rightarrow R\Gamma_\dR(\cX,\frE)\right)[-1],
		\end{equation}	
	the cone of the map defined by $(x,y)\mapsto \psi_\pi (x)-y$.
	More precisely, by taking a local embedding $F$-datum for $Y_\pi$ over $\cS$, we obtain a simplicial weak formal log scheme $\cZ_\bullet$ with Frobenius lifts $\phi_\bullet$.
	For $m\in\bbN$, put $\cX_m:=\cZ_m\times_{\cS,j_\pi}V^\sharp$ and let $\cX'_m$ be the exactification of the diagonal embedding $Y_\pi\hookrightarrow\cX_m\times_{V^\sharp}\cX$.
	Then the morphism $R\Gamma_\rig(Y_\pi/W^\varnothing,\sE)\rightarrow R\Gamma_\dR(\cX,\frE)$ is expressed as the zigzag
		\begin{eqnarray*}R\Gamma(\cZ_{\bullet,\bbQ},\sE_{\cZ_\bullet}\otimes\omega^\star_{\cZ_\bullet/W^\varnothing,\bbQ})&\xrightarrow{\psi_\pi}& R\Gamma(\cX_{\bullet,\bbQ},\sE_{\cX_\bullet}\otimes\omega^\star_{\cX_\bullet/V^\sharp,\bbQ})\\
		&\xrightarrow{\cong}& R\Gamma(\cX'_{\bullet,\bbQ},\sE_{\cX'_\bullet}\otimes\omega^\star_{\cX'_\bullet/V^\sharp,\bbQ})\\
		&\xleftarrow{\cong}& R\Gamma(\cX_\bbQ,\sE_\dR\otimes\omega^\star_{\cX/V^\sharp,\bbQ}),
		\end{eqnarray*}
	and $R\Gamma_\abs(\cX,\frE)_\pi$ is represented by the cone
		\[R\Gamma_\abs(\cX,\frE)_\pi\cong\Cone\left(R\Gamma(\cZ_{\bullet,\bbQ},\sE_{\cZ_\bullet}\otimes\omega^\star_{\cZ_\bullet/W^\varnothing,\bbQ})\xrightarrow{\varphi_\bullet}R\Gamma(\cZ_{\bullet,\bbQ},\sE_{\cZ_\bullet}\otimes\omega^\star_{\cZ_\bullet/W^\varnothing,\bbQ})\right)[-1].\]
	Therefore the syntomic cohomology is precisely defined as
		\begin{equation}\label{eq: explicit syn}
		R\Gamma_\syn(\cX,\frE)_\pi:=\Cone\left(\begin{array}{c}
		R\Gamma(\cZ_{\bullet,\bbQ},\sE_{\cZ_\bullet}\otimes\omega^\star_{\cZ_\bullet/W^\varnothing,\bbQ})\oplus F^0R\Gamma(\cX_\bbQ,\sE_\dR\otimes\omega^\star_{\cX/V^\sharp,\bbQ})\\
		\downarrow\\
		R\Gamma(\cZ_{\bullet,\bbQ},\sE_{\cZ_\bullet}\otimes\omega^\star_{\cZ_\bullet/W^\varnothing,\bbQ})\oplus R\Gamma(\cX'_{\bullet,\bbQ},\sE_{\cX'_\bullet}\otimes\omega^\star_{\cX'_\bullet/V^\sharp,\bbQ})
		\end{array}\right)[-1],\end{equation}
	where the map in the cone is defined by $(x,y)\mapsto (\varphi_\bullet(x),\psi_\pi(x)-y)$.
	This is well-defined as an object of $ D^b(\Mod_{\bbQ_p})$.
	
	For the trivial coefficient $\frO_\cX$, we denote $R\Gamma_\syn(\cX)_\pi:=R\Gamma_\syn(\cX,\frO_\cX)_\pi$.
\end{definition}

For a commutative diagram \eqref{eq: diagram pull back syn} and $\frE=(\sE,\Phi,F^\bullet)\in\Syn_\pi(\cX/V^\sharp)$, the natural morphisms
	\begin{eqnarray*}
	\ol{f}_{\pi,\pi'}^*&\colon& R\Gamma_\rig(Y_\pi/W^\varnothing,(\sE,\Phi))\rightarrow R\Gamma_\rig(Y'_{\pi'}/W'^\varnothing,\ol{f}^*_{\pi,\pi'}(\sE,\Phi)),\\
	f_\dR^*&\colon& R\Gamma_\dR(\cX,\sE_\dR)\rightarrow R\Gamma_\dR(\cX',f^*\sE_\dR)
	\end{eqnarray*}
together induce a morphism between syntomic cohomology
	\begin{equation}
	f_\syn^*\colon R\Gamma_\syn(\cX,\frE)_\pi\rightarrow R\Gamma_\syn(\cX',f_{\pi,\pi'}^*\frE)_{\pi'}.
	\end{equation}
Considering the case $\cX'=\cX$, we see that the syntomic cohomology is independent of the choice of $\pi$ under the identification of syntomic coefficients via the canonical equivalence $\Syn_\pi(\cX/V^\sharp)\cong\Syn_{\pi'}(\cX/V^\sharp)$.

Finally we mention that the syntomic cohomology group is regarded as the extension group.

\begin{proposition}\label{prop: Ext syn}
	Let $\cX$ be a proper strictly semistable weak formal log scheme adic over $V^\sharp$.
	For any $\frE,\frE'\in\Syn_\pi(\cX/V^\sharp)$ we have canonical isomorphisms
		\begin{eqnarray*}
		\Hom_{\Syn_\pi(\cX/V^\sharp)}(\frE',\frE)&\xrightarrow{\cong}&H^0_\syn(\cX,\sheafhom(\frE',\frE))_\pi,\\
		\Ext^1_{\Syn_\pi(\cX/V^\sharp)}(\frE',\frE)&\xrightarrow{\cong}&H^1_\syn(\cX,\sheafhom(\frE',\frE))_\pi.
		\end{eqnarray*}
\end{proposition}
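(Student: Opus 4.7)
The plan is to unwind the cone defining $R\Gamma_\syn(\cX,\sheafhom(\frE',\frE))_\pi$ to produce a long exact sequence
\[\cdots\to H^{n-1}_\dR(\cX,\sheafhom(\frE',\frE))\to H^n_\syn(\cX,\sheafhom(\frE',\frE))_\pi\to H^n_\abs(\cX,\sheafhom(\frE',\frE))_\pi\oplus F^0H^n_\dR(\cX,\sheafhom(\frE',\frE))\to H^n_\dR(\cX,\sheafhom(\frE',\frE))\to\cdots\]
and then identify each side for $n=0,1$. The main input on the left term is Proposition \ref{prop: Ext abs HK}, which identifies $H^n_\abs$ with $\Hom$ resp.\ $\Ext^1$ in $F\Isoc^\dagger(Y_\pi/W^\varnothing)^\unip$ for $n=0,1$; on the right term, I will use the analogous description of $F^0H^0_\dR$ as morphisms of $\MIC^\dagger(\cX/V^\sharp)$ preserving the Hodge filtration, and the corresponding interpretation of $F^0H^1_\dR$ via strict extensions in the filtered category. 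The Hyodo--Kato map $\psi_\pi$ glues these two pictures, and I expect the gluing to match exactly the data defining a syntomic coefficient.

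For the $n=0$ case, an element of $H^0_\syn$ is a pair $(f,g)$ with $f\in\Hom_{F\Isoc^\dagger}((\sE',\Phi'),(\sE,\Phi))$ and $g\in F^0H^0_\dR(\sheafhom(\sE'_\dR,\sE_\dR))$ satisfying $\psi_\pi(f)=g$. Since $\psi_\pi(f)$ is precisely the image $f_\dR$ of $f$ under the realization functor \eqref{eq: dR functor}, such a pair is the same as a morphism $f\colon(\sE',\Phi')\to(\sE,\Phi)$ whose de Rham realization is horizontal and preserves $F^\bullet$. This is exactly a morphism in $\Syn_\pi(\cX/V^\sharp)$, giving the first isomorphism.

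For the $n=1$ case, I will run the standard construction: given an extension $0\to\frE\to\frF\to\frE'\to 0$ in $\Syn_\pi(\cX/V^\sharp)$, I will choose local splittings of the underlying log overconvergent $F$-isocrystal and separately a global splitting of the filtered de Rham module (using strict compatibility with the Hodge filtration and the fact that extensions of filtered vector spaces over a point are classified up to equivalence by the filtered Hom complex). The discrepancies between these splittings yield (i) a \v{C}ech cocycle with coefficients in $\sheafhom((\sE',\Phi'),(\sE,\Phi))$, producing a class in $H^1_\abs$ as in the proof of Proposition \ref{prop: Ext abs HK}; (ii) a class in $F^0H^1_\dR$ coming from the filtered splitting; and these two classes are glued by $\psi_\pi$ via the de Rham realization of the chosen splittings, yielding a class in $H^1_\syn$. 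Conversely, from a $1$-cocycle in the explicit complex \eqref{eq: explicit syn} one builds an extension by twisting direct sums by the given cocycle, exactly as in Proposition \ref{prop: Ext abs HK}, with the $F^0$-component specifying how the Hodge filtration extends. Checking that this assignment is well-defined on equivalence classes and inverse to the first construction is straightforward.

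The main obstacle I anticipate is handling the de Rham side cleanly: in $\MIC^\dagger(\cX/V^\sharp)$ with Hodge filtrations one needs to verify that an extension is recovered from $F^0H^1_\dR(\sheafhom)$ together with a splitting of the underlying $(\varphi,N)$-data, i.e.\ the extension class is determined by the filtered-Hom cohomology class precisely when exact sequences of syntomic coefficients are required to be strictly compatible with $F^\bullet$. Once this is set up, compatibility with the glue map $\psi_\pi$ is formal, and bijectivity follows from comparing the two long exact sequences (the one above and the six-term sequence of $\Hom,\Ext^1$ in $\Syn_\pi$).
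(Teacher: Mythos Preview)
Your approach is essentially the same as the paper's, which simply says to mimic the explicit cocycle computation of Proposition~\ref{prop: Ext abs HK} using the complex \eqref{eq: explicit syn}, and remarks that the $F^0$ in the cone reflects Griffiths transversality of extensions. Your sketch is more detailed but follows exactly this strategy: split the extension locally on the $F$-isocrystal side and on the filtered de Rham side, read off the resulting cocycle in the explicit total complex, and check this is inverse to the twisting-by-cocycle construction.

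One small point: in your long exact sequence you write $F^0H^n_\dR$, but the term coming from the cone is $H^n(F^0R\Gamma_\dR)$, and these differ unless the de Rham complex is strict (which is not assumed for arbitrary $\frE,\frE'\in\Syn_\pi(\cX/V^\sharp)$). This does not affect your argument because you never actually use the long exact sequence for the $n=1$ step; you work directly with cocycles in \eqref{eq: explicit syn}, and the filtered local splitting of a \emph{strict} short exact sequence in $\Syn_\pi$ naturally lands in $H^1(F^0R\Gamma_\dR(\sheafhom))$. Just correct the notation so the framing matches what you really do.
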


\begin{proof}
	Using an explicit description of the syntomic cohomology \eqref{eq: explicit syn}, one can prove our assertion in a similar way to the proof of Proposition \ref{prop: Ext abs HK}.
	Note that, the necessity of taking $F^0$ in \eqref{eq: syn coh} arises from the Griffith transversality of extension.
\end{proof}

Although the above proposition deals with only $0$-th and $1$-st cohomology groups, the following proposition suggests that the syntomic cohomology with admissible syntomic coefficient computes higher extension groups in a virtual abelian category extending admissible syntomic coefficients.

\begin{proposition}\label{prop: Leray ss}
	Let $\cX$ be a proper strictly semistable weak formal log scheme adic over $V^\sharp$.
	For $\frE\in\Syn^\ad_\pi(\cX)$, we regard $R\Gamma_{p\mathrm{H}}(\cX,\frE)_{\pi,\log}$ as an object of $ D^b(\MF_K^\ad(\varphi,N))$ via the equivalence \eqref{eq: adm complex}.
	Then there exist a canonical isomorphism
		\begin{equation}
		R\Gamma_\syn(\cX,\frE)_\pi\cong R\Hom_{\MF_K^\ad(\varphi,N)}(L,R\Gamma_{p\mathrm{H}}(\cX,\frE)_{\pi,\log})
		\end{equation}
	and a spectral sequence
		\begin{equation}
		E_2^{i,j}=\Ext^i_{\MF_K^\ad(\varphi,N)}(L,H^j_{p\mathrm{H}}(\cX,\frE)_{\pi,\log})\Rightarrow H^{i+j}_\syn(\cX,\frE)_\pi.
		\end{equation}
\end{proposition}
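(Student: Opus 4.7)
The plan is to mirror the argument of Proposition \ref{prop: Leray}, using the dg-categorical description \eqref{eq: adm complex} of $D^b(\MF_K^\ad(\varphi,N))$ as $D_\st^\ad$ and unpacking the derived Hom from the unit object.

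First, I would recall from \cite{DN} how morphisms in $\sD_\st$ are computed. By the very definition of $\sD_\st$ as a homotopy limit, for any two objects $\mathsf{M}=(\mathsf{M}_\HK,\mathsf{M}_\dR,\alpha_\mathsf{M})$ and $\mathsf{M}'=(\mathsf{M}'_\HK,\mathsf{M}'_\dR,\alpha_{\mathsf{M}'})$, the complex $R\Hom_{\sD_\st}(\mathsf{M}',\mathsf{M})$ fits into a distinguished triangle
\begin{equation*}
R\Hom_{\sD_\st}(\mathsf{M}',\mathsf{M})\to R\Hom_{D^b(\Mod(\varphi,N))}(\mathsf{M}'_\HK,\mathsf{M}_\HK)\oplus R\Hom_{D^b(H\Mod_K)}(\mathsf{M}'_\dR,\mathsf{M}_\dR)\to R\Hom_{D^b(\Mod_K)}(\Xi\mathsf{M}'_\HK,\Xi\mathsf{M}_\HK),
\end{equation*}
where the second map uses $\alpha_\mathsf{M}$ and $\alpha_{\mathsf{M}'}$. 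Under the equivalence \eqref{eq: adm complex}, this computes $R\Hom$ in $D^b(\MF_K^\ad(\varphi,N))$ when both sides are admissible.

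Next I would specialize to $\mathsf{M}'=L$, the unit object, and $\mathsf{M}=R\Gamma_{p\mathrm{H}}(\cX,\frE)_{\pi,\log}$. Three identifications are needed: (i) by Proposition \ref{prop: Leray} we already have $R\Hom_{D^b(\Mod_L(\varphi,N))}(L,R\Gamma_\HK(\cX,\frE)_\pi)\cong R\Gamma_\abs(\cX,\frE)_\pi$; (ii) for the filtered de Rham part, the unit $K\in H\Mod_K$ is a projective generator with $R\Hom_{D^b(H\Mod_K)}(K,V^\bullet)\cong F^0V^\bullet$ for any strict object $V^\bullet$; (iii) $R\Hom_{D^b(\Mod_K)}(K,\Xi R\Gamma_\HK(\cX,\frE)_{\pi,K})\cong R\Gamma_\HK(\cX,\frE)_{\pi,K}\cong R\Gamma_\dR(\cX,\frE)$ via $\Psi_{\pi,\log,K}$. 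The gluing map between (i) and (ii) through (iii) is exactly $\psi_\pi$ on the $R\Gamma_\abs$ factor and the inclusion on the $F^0R\Gamma_\dR$ factor. Feeding these into the triangle above and taking $[-1]$ yields precisely the cone in \eqref{eq: syn coh}, giving the canonical isomorphism
\begin{equation*}
R\Hom_{\MF_K^\ad(\varphi,N)}(L,R\Gamma_{p\mathrm{H}}(\cX,\frE)_{\pi,\log})\cong R\Gamma_\syn(\cX,\frE)_\pi.
\end{equation*}
The spectral sequence then comes from the standard hypercohomology spectral sequence $\Ext^i(L,H^j(-))\Rightarrow H^{i+j}R\Hom(L,-)$ applied to the bounded complex $R\Gamma_{p\mathrm{H}}(\cX,\frE)_{\pi,\log}\in D^b(\MF_K^\ad(\varphi,N))$.

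The main obstacle will be verifying that $R\Hom_{D^b(H\Mod_K)}(K,V^\bullet)\cong F^0V^\bullet$ actually holds at the derived level for the complex $R\Gamma_\dR(\cX,\frE)$; this is where the admissibility hypothesis is crucial, since it guarantees the strictness of the differentials of $R\Gamma_\dR(\cX,\frE)$ with respect to the Hodge filtration (so the stupid filtration on $F^0$ is well-behaved in the derived category) and ensures that one can honestly represent $R\Gamma_{p\mathrm{H}}(\cX,\frE)_{\pi,\log}$ as an object of $D_\st^\ad$ rather than merely of $D_\st$. After this point, matching the homotopy limit shape to the definition of $R\Gamma_\syn$ is a direct unravelling, and the independence of $\psi_\pi$ from $\log$ (observed just before the definition of syntomic cohomology) ensures that the resulting isomorphism is canonical.
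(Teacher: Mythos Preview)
Your proposal is correct and is essentially the same approach as the paper's, which simply cites \cite[Proposition 2.7]{DN}; you have unpacked the content of that citation (the homotopy-limit computation of $R\Hom$ in $\sD_\st$ from the unit object and its identification with the cone defining $R\Gamma_\syn$). Your remarks on where strictness enters and on the role of Proposition~\ref{prop: Leray} for the $(\varphi,N)$-module component are exactly the points underlying that reference.
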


\begin{proof}
	Immediately follows from the definition of the syntomic cohomology and the computation of extension groups in \cite[Theorem 3.1.5]{EK} (see also \cite[Proposition 2.7]{DN}).
\end{proof}

\begin{remark}
	The proof of \cite[Proposition 2.7]{DN} has a non-small gap, because the morphism $\Hom_{K^b(\mathrm{DF}_K)}(M,T[i])\rightarrow H^i(\Ker(f_{M,T}))$ in their proof is in fact not an isomorphism.
	For example, let $K=\bbQ_p$, $i=1$, and $M=T=\bbQ_p$ regarded as a complex with only $0$-th component.
	Then we have $\Hom_{K^b(\mathrm{DF}_{\bbQ_p})}(\bbQ_p,\bbQ_p[1])=0$, however $H^1(\Ker(f_{\bbQ_p,\bbQ_p}))\cong\bbQ_p$.
\end{remark}

\section*{Acknowledgements}
I would like to thank Kenichi Bannai for giving me helpful comments and great environment to research.
I am grateful to Veronika Ertl for her continuous interest in this work and helpful discussions for related topics.
I would also like to thank and Ju-Feng Wu and Yoshinosuke Hirakawa for many helpful comments.
I really appreciate the anonymous referee a lot of helpful suggestions and pointing out failures.
Most of this work was done when the author was a PhD. student, so the author would also like to thank the Faculty of Science and Technology at Keio University, in particular members of Bannai laboratory.

\begin{bibdiv}
\begin{biblist}

\bib{Ban}{article}{
   author={Bannai, Kenichi},
   title={Rigid syntomic cohomology and $p$-adic polylogarithms},
   journal={J. Reine Angew. Math.},
   volume={529},
   date={2000},
   pages={205--237},
   issn={0075-4102},
   review={\MR{1799937}},
   doi={10.1515/crll.2000.097},
}

\bib{Ban2}{article}{
   author={Bannai, Kenichi},
   title={Syntomic cohomology as a $p$-adic absolute Hodge cohomology},
   journal={Math. Z.},
   volume={242},
   date={2002},
   number={3},
   pages={443--480},
   issn={0025-5874},
   review={\MR{1985460}},
   doi={10.1007/s002090100351},
}

\bib{Ban3}{article}{
   author={Bannai, Kenichi},
   title={On the $p$-adic realization of elliptic polylogarithms for
   CM-elliptic curves},
   journal={Duke Math. J.},
   volume={113},
   date={2002},
   number={2},
   pages={193--236},
   issn={0012-7094},
   review={\MR{1909217}},
   doi={10.1215/S0012-7094-02-11321-0},
}

\bib{BK}{article}{
   author={Bannai, Kenichi},
   author={Kings, Guido},
   title={$p$-adic elliptic polylogarithm, $p$-adic Eisenstein series and
   Katz measure},
   journal={Amer. J. Math.},
   volume={132},
   date={2010},
   number={6},
   pages={1609--1654},
   issn={0002-9327},
   review={\MR{2766179}},
}

\bib{BK2}{article}{
   author={Bannai, Kenichi},
   author={Kings, Guido},
   title={$p$-adic Beilinson conjecture for ordinary Hecke motives
   associated to imaginary quadratic fields},
   conference={
      title={Algebraic number theory and related topics 2009},
   },
   book={
      series={RIMS K\^{o}ky\^{u}roku Bessatsu, B25},
      publisher={Res. Inst. Math. Sci. (RIMS), Kyoto},
   },
   date={2011},
   pages={9--30},
   review={\MR{2868567}},
}

\bib{BKT}{article}{
   author={Bannai, Kenichi},
   author={Kobayashi, Shinichi},
   author={Tsuji, Takeshi},
   title={On the de Rham and $p$-adic realizations of the elliptic
   polylogarithm for CM elliptic curves},
   language={English, with English and French summaries},
   journal={Ann. Sci. \'{E}c. Norm. Sup\'{e}r. (4)},
   volume={43},
   date={2010},
   number={2},
   pages={185--234},
   issn={0012-9593},
   review={\MR{2662664}},
   doi={10.24033/asens.2119},
}

\bib{Bei2}{article}{author={Beilinson, A.},title={On the crystalline period map},journal={Camb. J. Math.},volume={1},date={2013},number={1},pages={1--51},issn={2168-0930},review={\MR{3272051}},doi={10.4310/CJM.2013.v1.n1.a1},}

\bib{Berger2}{article}{
   author={Berger, Laurent},
   title={\'{E}quations diff\'{e}rentielles $p$-adiques et $(\phi,N)$-modules
   filtr\'{e}s},
   language={French, with English and French summaries},
   note={Repr\'{e}sentations $p$-adiques de groupes $p$-adiques. I.
   Repr\'{e}sentations galoisiennes et $(\phi,\Gamma)$-modules},
   journal={Ast\'{e}risque},
   number={319},
   date={2008},
   pages={13--38},
   issn={0303-1179},
   isbn={978-2-85629-256-3},
   review={\MR{2493215}},
}

\bib{Ber2}{misc}{
	author={Berthelot, Pierre},
	title={Cohomologie rigide et cohomologie rigide \`{a} support propre, Premi\`{e}re partie},
	status={Pr\'{e}publication IRMAR 96--03},
	date={1996},
	note={https://perso.univ-rennes1.fr/pierre.berthelot},
}

\bib{BDR}{article}{
   author={Bertolini, Massimo},
   author={Darmon, Henri},
   author={Rotger, Victor},
   title={Beilinson-Flach elements and Euler systems I: Syntomic regulators
   and $p$-adic Rankin $L$-series},
   journal={J. Algebraic Geom.},
   volume={24},
   date={2015},
   number={2},
   pages={355--378},
   issn={1056-3911},
   review={\MR{3311587}},
   doi={10.1090/S1056-3911-2014-00670-6},
}

\bib{Bes}{article}{
   author={Besser, Amnon},
   title={Syntomic regulators and $p$-adic integration. I. Rigid syntomic
   regulators},
   booktitle={Proceedings of the Conference on $p$-adic Aspects of the
   Theory of Automorphic Representations (Jerusalem, 1998)},
   journal={Israel J. Math.},
   volume={120},
   date={2000},
   number={part B},
   part={part B},
   pages={291--334},
   issn={0021-2172},
   review={\MR{1809626}},
   doi={10.1007/BF02834843},
}

\bib{Bes2}{article}{
   author={Besser, Amnon},
   title={Syntomic regulators and $p$-adic integration. II. $K_2$ of curves},
   booktitle={Proceedings of the Conference on $p$-adic Aspects of the
   Theory of Automorphic Representations (Jerusalem, 1998)},
   journal={Israel J. Math.},
   volume={120},
   date={2000},
   number={part B},
   part={part B},
   pages={335--359},
   issn={0021-2172},
   review={\MR{1809627}},
   doi={10.1007/BF02834844},
}
	
\bib{Bes3}{article}{
   author={Besser, Amnon},
   title={On the syntomic regulator for $K_1$ of a surface},
   journal={Israel J. Math.},
   volume={190},
   date={2012},
   pages={29--66},
   issn={0021-2172},
   review={\MR{2956231}},
   doi={10.1007/s11856-011-0188-0},
}

\bib{Bes4}{article}{
   author={Besser, Amnon},
   title={On the regulator formulas of Bertolini, Darmon and Rotger},
   journal={Res. Math. Sci.},
   volume={3},
   date={2016},
   pages={Paper No. 26, 18},
   issn={2522-0144},
   review={\MR{3539566}},
   doi={10.1186/s40687-016-0073-x},
}

\bib{BBdJR}{article}{
   author={Besser, A.},
   author={Buckingham, P.},
   author={de Jeu, R.},
   author={Roblot, X.-F.},
   title={On the $p$-adic Beilinson conjecture for number fields},
   journal={Pure Appl. Math. Q.},
   volume={5},
   date={2009},
   number={1},
   pages={375--434},
   issn={1558-8599},
   review={\MR{2520465}},
   doi={10.4310/PAMQ.2009.v5.n1.a12},
}

\bib{BdJ}{article}{
   author={Besser, Amnon},
   author={de Jeu, Rob},
   title={The syntomic regulator for the $K$-theory of fields},
   language={English, with English and French summaries},
   journal={Ann. Sci. \'{E}cole Norm. Sup. (4)},
   volume={36},
   date={2003},
   number={6},
   pages={867--924 (2004)},
   issn={0012-9593},
   review={\MR{2032529}},
   doi={10.1016/j.ansens.2003.01.003},
}

\bib{BdJ2}{article}{
   author={Besser, Amnon},
   author={de Jeu, Rob},
   title={The syntomic regulator and $K_4$ of curves},
   journal={Pacific J. Math.},
   volume={260},
   date={2012},
   number={2},
   pages={305--380},
   issn={0030-8730},
   review={\MR{3001797}},
   doi={10.2140/pjm.2012.260.305},
}

\bib{Ch}{article}{
   author={Chiarellotto, Bruno},
   title={Weights in rigid cohomology applications to unipotent
   $F$-isocrystals},
   language={English, with English and French summaries},
   journal={Ann. Sci. \'{E}cole Norm. Sup. (4)},
   volume={31},
   date={1998},
   number={5},
   pages={683--715},
   issn={0012-9593},
   review={\MR{1643966}},
   doi={10.1016/S0012-9593(98)80004-9},
}

\bib{CCM}{article}{author={Chiarellotto, Bruno},author={Ciccioni, Alice},author={Mazzari, Nicola},title={Cycle classes and the syntomic regulator},journal={Algebra Number Theory},volume={7},date={2013},number={3},pages={533--566},issn={1937-0652},review={\MR{3095220}},doi={10.2140/ant.2013.7.533},}

\bib{CL1}{article}{
   author={Chiarellotto, Bruno},
   author={Le Stum, Bernard},
   title={$F$-isocristaux unipotents},
   language={French, with English and French summaries},
   journal={Compositio Math.},
   volume={116},
   date={1999},
   number={1},
   pages={81--110},
   issn={0010-437X},
   review={\MR{1669440}},
   doi={10.1023/A:1000602824628},
}

\bib{CL2}{article}{
   author={Chiarellotto, Bruno},
   author={Le Stum, Bernard},
   title={Pentes en cohomologie rigide et $F$-isocristaux unipotents},
   language={French, with English summary},
   journal={Manuscripta Math.},
   volume={100},
   date={1999},
   number={4},
   pages={455--468},
   issn={0025-2611},
   review={\MR{1734795}},
   doi={10.1007/s002290050212},
}

\bib{Co}{article}{
   author={Colmez, Pierre},
   title={Espaces de Banach de dimension finie},
   language={French, with English and French summaries},
   journal={J. Inst. Math. Jussieu},
   volume={1},
   date={2002},
   number={3},
   pages={331--439},
   issn={1474-7480},
   review={\MR{1956055}},
   doi={10.1017/S1474748002000099},
}

\bib{CF}{article}{
   author={Colmez, Pierre},
   author={Fontaine, Jean-Marc},
   title={Construction des repr\'{e}sentations $p$-adiques semi-stables},
   language={French},
   journal={Invent. Math.},
   volume={140},
   date={2000},
   number={1},
   pages={1--43},
   issn={0020-9910},
   review={\MR{1779803}},
   doi={10.1007/s002220000042},
}

\bib{DN}{article}{
   author={D\'{e}glise, Fr\'{e}d\'{e}ric},
   author={Nizio\l , Wies\l awa},
   title={On $p$-adic absolute Hodge cohomology and syntomic coefficients.
   I},
   journal={Comment. Math. Helv.},
   volume={93},
   date={2018},
   number={1},
   pages={71--131},
   issn={0010-2571},
   review={\MR{3777126}},
   doi={10.4171/CMH/430},
}

\bib{DG}{book}{
   author={Demazure, Michel},
   author={Gabriel, Pierre},
   title={Groupes alg\'{e}briques. Tome I: G\'{e}om\'{e}trie alg\'{e}brique, g\'{e}n\'{e}ralit\'{e}s,
   groupes commutatifs},
   language={French},
   note={Avec un appendice {\it Corps de classes local} par Michiel
   Hazewinkel},
   publisher={Masson \& Cie, \'{E}diteur, Paris; North-Holland Publishing Co.,
   Amsterdam},
   date={1970},
   pages={xxvi+700},
   review={\MR{0302656}},
}

\bib{EK}{misc}{
	author={Emerton, Matthew},
	author={Kisin, Mark},
	title={Extensions of crystalline representations},
	status={preprint},
}
	
\bib{EY1}{article}{
   author={Ertl, Veronika},
   author={Yamada, Kazuki},
   title={Comparison between rigid and crystalline syntomic cohomology for
   strictly semistable log schemes with boundary},
   journal={Rend. Semin. Mat. Univ. Padova},
   volume={145},
   date={2021},
   pages={213--291},
   issn={0041-8994},
   review={\MR{4261656}},
   doi={10.4171/rsmup/81},
}
	
\bib{EY}{misc}{
	author={Ertl, Veronika},
	author={Yamada, Kazuki},
	title={Rigid analytic reconstruction of Hyodo--Kato theory},
	date={2022},
	status={preprint, arXiv:1907.10964},
}

\bib{EY2}{misc}{
	author={Ertl, Veronika},
	author={Yamada, Kazuki}
	title={Poincar\'{e} duality for rigid analytic Hyodo--Kato theory},
	date={2023},
	status={preprint, arXiv:2009.09160},
}

\bib{Fa}{article}{
   author={Faltings, Gerd},
   title={Almost \'{e}tale extensions},
   note={Cohomologies $p$-adiques et applications arithm\'{e}tiques, II},
   journal={Ast\'{e}risque},
   number={279},
   date={2002},
   pages={185--270},
   issn={0303-1179},
   review={\MR{1922831}},
}

\bib{Fo}{article}{
   author={Fontaine, Jean-Marc},
   title={Modules galoisiens, modules filtr\'{e}s et anneaux de Barsotti-Tate},
   language={French},
   conference={
      title={Journ\'{e}es de G\'{e}om\'{e}trie Alg\'{e}brique de Rennes},
      address={Rennes},
      date={1978},
   },
   book={
      series={Ast\'{e}risque},
      volume={65},
      publisher={Soc. Math. France, Paris},
   },
   date={1979},
   pages={3--80},
   review={\MR{563472}},
}

\bib{Fo03}{article}{
   author={Fontaine, Jean-Marc},
   title={Presque $C_p$-repr\'{e}sentations},
   language={French, with English summary},
   note={Kazuya Kato's fiftieth birthday},
   journal={Doc. Math.},
   date={2003},
   number={Extra Vol.},
   pages={285--385},
   issn={1431-0635},
   review={\MR{2046603}},
}

\bib{Gros}{article}{
   author={Gros, Michel},
   title={R\'{e}gulateurs syntomiques et valeurs de fonctions $L\;p$-adiques.
   II},
   language={French},
   journal={Invent. Math.},
   volume={115},
   date={1994},
   number={1},
   pages={61--79},
   issn={0020-9910},
   review={\MR{1248079}},
   doi={10.1007/BF01231754},
}

\bib{GK1}{article}{author={Grosse-Kl\"{o}nne, Elmar},title={Rigid analytic spaces with overconvergent structure sheaf},journal={J. Reine Angew. Math.},volume={519},date={2000},pages={73--95},issn={0075-4102},review={\MR{1739729}},doi={10.1515/crll.2000.018},}

\bib{GK1.5}{article}{
   author={Grosse-Kl\"{o}nne, Elmar},
   title={Finiteness of de Rham cohomology in rigid analysis},
   journal={Duke Math. J.},
   volume={113},
   date={2002},
   number={1},
   pages={57--91},
   issn={0012-7094},
   review={\MR{1905392}},
   doi={10.1215/S0012-7094-02-11312-X},
}

\bib{GK3}{article}{
   author={Grosse-Kl\"{o}nne, Elmar},
   title={Frobenius and monodromy operators in rigid analysis, and
   Drinfel\cprime d's symmetric space},
   journal={J. Algebraic Geom.},
   volume={14},
   date={2005},
   number={3},
   pages={391--437},
   issn={1056-3911},
   review={\MR{2129006}},
   doi={10.1090/S1056-3911-05-00402-9},
}

\bib{HZ}{article}{
   author={Hain, Richard M.},
   author={Zucker, Steven},
   title={Unipotent variations of mixed Hodge structure},
   journal={Invent. Math.},
   volume={88},
   date={1987},
   number={1},
   pages={83--124},
   issn={0020-9910},
   review={\MR{877008}},
   doi={10.1007/BF01405093},
}

\bib{HL}{article}{
   author={Hartl, Urs},
   author={L\"{u}tkebohmert, Werner},
   title={On rigid-analytic Picard varieties},
   journal={J. Reine Angew. Math.},
   volume={528},
   date={2000},
   pages={101--148},
   issn={0075-4102},
   review={\MR{1801659}},
   doi={10.1515/crll.2000.087},
}
	
\bib{HK}{article}{
   author={Hyodo, Osamu},
   author={Kato, Kazuya},
   title={Semi-stable reduction and crystalline cohomology with logarithmic
   poles},
   note={P\'{e}riodes $p$-adiques (Bures-sur-Yvette, 1988)},
   journal={Ast\'{e}risque},
   number={223},
   date={1994},
   pages={221--268},
   issn={0303-1179},
   review={\MR{1293974}},
}
		
\bib{Ka}{article}{
   author={Kato, Kazuya},
   title={Logarithmic structures of Fontaine-Illusie},
   conference={
      title={Algebraic analysis, geometry, and number theory},
      address={Baltimore, MD},
      date={1988},
   },
   book={
      publisher={Johns Hopkins Univ. Press, Baltimore, MD},
   },
   date={1989},
   pages={191--224},
   review={\MR{1463703}},
}

\bib{Ke}{article}{
   author={Kedlaya, Kiran S.},
   title={Semistable reduction for overconvergent $F$-isocrystals. I.
   Unipotence and logarithmic extensions},
   journal={Compos. Math.},
   volume={143},
   date={2007},
   number={5},
   pages={1164--1212},
   issn={0010-437X},
   review={\MR{2360314}},
   doi={10.1112/S0010437X07002886},
}

\bib{KH}{article}{
   author={Kim, Minhyong},
   author={Hain, Richard M.},
   title={A de Rham-Witt approach to crystalline rational homotopy theory},
   journal={Compos. Math.},
   volume={140},
   date={2004},
   number={5},
   pages={1245--1276},
   issn={0010-437X},
   review={\MR{2081156}},
   doi={10.1112/S0010437X04000442},
}
		
\bib{Kisin}{article}{
   author={Kisin, Mark},
   title={Crystalline representations and $F$-crystals},
   conference={
      title={Algebraic geometry and number theory},
   },
   book={
      series={Progr. Math.},
      volume={253},
      publisher={Birkh\"{a}user Boston, Boston, MA},
   },
   date={2006},
   pages={459--496},
   review={\MR{2263197}},
   doi={10.1007/978-0-8176-4532-8\_7},
}
			
\bib{LM}{article}{author={Langer, Andreas},   author={Muralidharan, Amrita},   title={An analogue of Raynaud's theorem: weak formal schemes and dagger   spaces},   journal={M\"{u}nster J. Math.},   volume={6},   date={2013},   number={1},   pages={271--294},   issn={1867-5778},   review={\MR{3148213}},}

\bib{LP}{article}{
   author={Li, Shizhang},
   author={Pan, Xuanyu},
   title={Logarithmic de Rham comparison for open rigid spaces},
   journal={Forum Math. Sigma},
   volume={7},
   date={2019},
   pages={Paper No. e32},
   review={\MR{4016499}},
   doi={10.1017/fms.2019.27},
}

\bib{Me}{article}{   author={Meredith, David},   title={Weak formal schemes},   journal={Nagoya Math. J.},   volume={45},   date={1972},   pages={1--38},   issn={0027-7630},   review={\MR{330167}},}

\bib{MW}{article}{
   author={Monsky, P.},
   author={Washnitzer, G.},
   title={Formal cohomology. I},
   journal={Ann. of Math. (2)},
   volume={88},
   date={1968},
   pages={181--217},
   issn={0003-486X},
   review={\MR{248141}},
   doi={10.2307/1970571},
}

\bib{NN}{article}{   author={Nekov\'{a}\v{r}, Jan},   author={Nizio\l , Wies\l awa},   title={Syntomic cohomology and $p$-adic regulators for varieties over $p$-adic fields},   note={With appendices by Laurent Berger and Fr\'{e}d\'{e}ric D\'{e}glise},   journal={Algebra Number Theory},   volume={10},   date={2016},   number={8},   pages={1695--1790},   issn={1937-0652},   review={\MR{3556797}},   doi={10.2140/ant.2016.10.1695},}

\bib{Ni}{article}{
   author={Nizio\l , Wies\l awa},
   title={Semistable conjecture via $K$-theory},
   journal={Duke Math. J.},
   volume={141},
   date={2008},
   number={1},
   pages={151--178},
   issn={0012-7094},
   review={\MR{2372150}},
   doi={10.1215/S0012-7094-08-14114-6},
}

\bib{Og}{article}{
   author={Ogus, Arthur},
   title={On the logarithmic Riemann-Hilbert correspondence},
   note={Kazuya Kato's fiftieth birthday},
   journal={Doc. Math.},
   date={2003},
   number={Extra Vol.},
   pages={655--724},
   issn={1431-0635},
   review={\MR{2046612}},
}

\bib{Shi1}{article}{
   author={Shiho, Atsushi},
   title={Crystalline fundamental groups. I. Isocrystals on log crystalline
   site and log convergent site},
   journal={J. Math. Sci. Univ. Tokyo},
   volume={7},
   date={2000},
   number={4},
   pages={509--656},
   issn={1340-5705},
   review={\MR{1800845}},
}
		
\bib{Shi2}{article}{
   author={Shiho, Atsushi},
   title={Crystalline fundamental groups. II. Log convergent cohomology and
   rigid cohomology},
   journal={J. Math. Sci. Univ. Tokyo},
   volume={9},
   date={2002},
   number={1},
   pages={1--163},
   issn={1340-5705},
   review={\MR{1889223}},
}

\bib{Shi3}{misc}{	author={Shiho, Atsushi},	title={Relative log convergent cohomology and relative rigid cohomology I},	status={preprint},	date={2008},	note={arXiv:0707.1742v2},}

\bib{Sp}{article}{
   author={Sprang, Johannes},
   title={The syntomic realization of the elliptic polylogarithm via the
   Poincar\'{e} bundle},
   journal={Doc. Math.},
   volume={24},
   date={2019},
   pages={1099--1134},
   issn={1431-0635},
   review={\MR{3992317}},
}

\bib{SZ}{article}{
   author={Steenbrink, Joseph},
   author={Zucker, Steven},
   title={Variation of mixed Hodge structure. I},
   journal={Invent. Math.},
   volume={80},
   date={1985},
   number={3},
   pages={489--542},
   issn={0020-9910},
   review={\MR{791673}},
   doi={10.1007/BF01388729},
}

\bib{Ts1}{article}{
   author={Tsuji, Takeshi},
   title={Syntomic complexes and $p$-adic vanishing cycles},
   journal={J. Reine Angew. Math.},
   volume={472},
   date={1996},
   pages={69--138},
   issn={0075-4102},
   review={\MR{1384907}},
   doi={10.1515/crll.1996.472.69},
}

\bib{Ts}{article}{
   author={Tsuji, Takeshi},
   title={$p$-adic \'{e}tale cohomology and crystalline cohomology in the
   semi-stable reduction case},
   journal={Invent. Math.},
   volume={137},
   date={1999},
   number={2},
   pages={233--411},
   issn={0020-9910},
   review={\MR{1705837}},
   doi={10.1007/s002220050330},
}

\bib{vdPS}{article}{
   author={van der Put, M.},
   author={Schneider, P.},
   title={Points and topologies in rigid geometry},
   journal={Math. Ann.},
   volume={302},
   date={1995},
   number={1},
   pages={81--103},
   issn={0025-5831},
   review={\MR{1329448}},
   doi={10.1007/BF01444488},
}

\bib{Wi}{book}{
   author={Wildeshaus, J\"{o}rg},
   title={Realizations of polylogarithms},
   series={Lecture Notes in Mathematics},
   volume={1650},
   publisher={Springer-Verlag, Berlin},
   date={1997},
   pages={xii+343},
   isbn={3-540-62460-0},
   review={\MR{1482233}},
   doi={10.1007/BFb0093051},
}

\end{biblist}
\end{bibdiv}

\end{document}